\documentclass[11pt,a4paper]{article}
\pdfoutput=1

\usepackage[a4paper,
            top=2.5cm,
            bottom=2.5cm,
            inner=3.5cm,
            outer=3.5cm]{geometry}
            
\usepackage[utf8]{inputenc}
\usepackage{microtype}
\usepackage{amsmath}
\usepackage{amsfonts}
\usepackage{amssymb}
\usepackage{amsthm}
\usepackage{graphicx}
\usepackage[all]{xy}
\usepackage{rotating}
\usepackage{stmaryrd}

\usepackage{enumerate}
\usepackage{mathtools}
\usepackage{stackrel}
\usepackage[nottoc]{tocbibind}
\usepackage{hyperref} 
\usepackage{caption}

\usepackage{cleveref}
\usepackage{csquotes}

\theoremstyle{plain}
\newtheorem{thm}{Theorem}[section]
\newtheorem{lem}[thm]{Lemma}

\newtheorem{cor}[thm]{Corollary}

\newtheorem{introthm}{Theorem}

\theoremstyle{definition}
\newtheorem{defn}[thm]{Definition}

\newtheorem{example}[thm]{Example}

\newtheorem{assumption}[thm]{Assumption}
\newtheorem{nota}[thm]{Notation}

\theoremstyle{remark}
\newtheorem{rem}[thm]{Remark}

\newtheorem*{acknowledgements}{Acknowledgements}

\newcommand{\N}{\mathbb{N}}
\newcommand{\Z}{\mathbb{Z}}
\newcommand{\Q}{\mathbb{Q}}
\newcommand{\R}{\mathbb{R}}
\newcommand{\C}{\mathbb{C}}

\newcommand{\cO}{\mathcal{O}}
\newcommand{\cP}{\mathcal{P}}

\newcommand{\cX}{\mathcal{X}}
\newcommand{\cY}{\mathcal{Y}}
\newcommand{\cA}{\mathcal{A}}
\newcommand{\cB}{\mathcal{B}}
\newcommand{\cU}{\mathcal{U}}
\newcommand{\cV}{\mathcal{V}}
\newcommand{\cW}{\mathcal{W}}

\newcommand{\cC}{\mathcal{C}}

\newcommand{\Ct}{\mathrm{C}}
\newcommand{\Cz}{\Ct_0}
\newcommand{\Cb}{\Ct_b}

\newcommand{\fm}{\mathfrak{m}}

\newcommand{\fM}{\mathfrak{M}}

\newcommand{\coarseStr}{\mathcal{E}}

\newcommand{\Diag}[1][]{{\Delta_{#1}}}
\newcommand{\HilbertMod}{\mathfrak{H}}

\DeclareMathOperator{\im}{im}
\DeclareMathOperator{\id}{id}

\DeclareMathOperator{\dist}{dist}
\DeclareMathOperator{\Var}{Var}

\DeclareMathOperator{\supp}{supp}
\DeclareMathOperator{\Pen}{Pen}
\DeclareMathOperator{\Cone}{Cone}
\DeclareMathOperator{\Zyl}{Zyl}
\DeclareMathOperator{\Ad}{Ad}
\DeclareMathOperator{\Prop}{prop}
\DeclareMathOperator{\Hom}{Hom}
\DeclareMathOperator{\ind}{ind}

\newcommand{\ev}[1]{\mathop{\textrm{ev}_{#1}}}

\newcommand{\swapcross}{\mathbin{\bar\times}}

\newcommand{\indexcross}[1]{\mathbin{{\times}_{#1}}}

\newcommand{\Homol}[1][0]{\mathrm{E}^{{\textsc{\romannumeral #1}}}}
\newcommand{\Cohom}[1][0]{\mathrm{E}_{{\textsc{\romannumeral #1}}}}
\newcommand{\HomolX}[1][0]{\mathrm{EX}^{{\textsc{\romannumeral #1}}}}
\newcommand{\CohomX}[1][0]{\mathrm{EX}_{{\textsc{\romannumeral #1}}}}

\newcommand{\bound}[1][0]{\partial^{{\textsc{\romannumeral #1}}}}
\newcommand{\cobound}[1][0]{\delta_{{\textsc{\romannumeral #1}}}}

\newcommand{\susp}[1][0]{\sigma_*^{{\textsc{\romannumeral #1}}}}
\newcommand{\cosusp}[1][0]{\sigma^*_{{\textsc{\romannumeral #1}}}}

\newcommand{\HX}{\mathrm{HX}}
\newcommand{\CX}{\mathrm{CX}}
\newcommand{\KX}{\mathrm{KX}}

\newcommand{\K}{\mathrm{K}}

\newcommand{\EE}{\mathrm{E}}

\newcommand{\rL}{\mathrm{L}}

\newcommand{\textCstar}{\ensuremath{\mathrm{C}^*\!}}

\newcommand{\Roe}{\mathrm{C}^*}
\newcommand{\Loc}{\mathrm{C}^*_{\mathrm{L}}}

\newcommand{\FiProLoc}{\mathrm{E}^*_{\mathrm{L}}}
\newcommand{\sHigCom}{\overline{\mathfrak{c}}}
\newcommand{\sHigCor}{\mathfrak{c}}

\newcommand{\sHigCorRed}{\mathfrak{c}^{\mathrm{red}}}

\newcommand{\Kom}{{\mathfrak{K}}}
\newcommand{\Lin}{{\mathfrak{B}}}
\newcommand{\multiplier}{{\mathcal{M}}}

\newcommand{\Estack}[1]{{\setlength{\arraycolsep}{0pt}\begin{array}{rl}#1\end{array}}}
\newcommand{\blank}{-}

\newcommand\subsetsim{\mathrel{\ooalign{\raise0.2ex\hbox{$\subset$}\cr\hidewidth\raise-0.8ex\hbox{\scalebox{0.9}{$\sim$}}\hidewidth\cr}}}
\newcommand\subsetapprox{\mathrel{\ooalign{\raise0.4ex\hbox{$\subset$}\cr\hidewidth\raise-0.8ex\hbox{\scalebox{0.9}{$\approx$}}\hidewidth\cr}}}

\newcommand{\admissible}{{\mathfrak{C}}}

\newcommand{\nbhfmly}{\mathcal{N}}
\newcommand{\nbhfmlysgm}{\mathcal{N}^\sigma}

\crefname{thm}{Theorem}{Theorems}
\crefname{lem}{Lemma}{Lemmas}
\crefname{defn}{Definition}{Definitions}
\crefname{prop}{Proposition}{Propositions}
\crefname{cor}{Corollary}{Corollaries}
\crefname{equation}{}{}

\author{Christopher Wulff\thanks{Supported by the DFG through the Priority Programme ``Geometry at Infinity'' (SPP 2026,  WU 869/1-1, WU 869/1-2, ``Duality and the coarse assembly map'')}}
\title{Secondary cup and cap products in coarse geometry}

\begin{document}
\maketitle

\begin{abstract}
We construct secondary cup and cap products on coarse \mbox{(co-)}homology theories from given cross and slant products. 
They are defined for coarse spaces relative to weak generalized controlled deformation retracts.

On ordinary coarse cohomology, our secondary cup product agrees with a secondary product defined by Roe. For coarsifications of topological coarse \mbox{(co-)}homology theories, our secondary cup and cap products correspond to the primary cup and cap products on Higson dominated coronas via transgression maps. And in the case of coarse $\K$-theory and -homology, the secondary products correspond to canonical primary products between the $\K$-theories of the stable Higson corona and the Roe algebra under assembly and co-assembly.
\end{abstract}

\section{Introduction}

Although the usefulness of multiplicative structures on \mbox{(co-)}homology theories is undisputed in algebraic topology, their coarse geometric counterparts have been neglected for quite a long time and Roe's secondary product on his coarse cohomology (cf.\ \cite[Section 2.4]{RoeCoarseCohomIndexTheory}) remained the only one of his kind for many years.

Only recently there has been more research on this topic, which was primarily motivated by applications to coarse index theory and thereby also to positive scalar curvature.
Cross products were the main multiplicative structure of interest, in particular the cross product between the analytic structure group and $\K$-homology (cf.\ \cite{Siegel_Thesis, XieYu_PSCrhoLoc, ZeidlerPSCProductSecondary, DeeleyGoffeng_RealizingI, Zenobi_SurgerytoAnalysis, EngelWulffZeidler}, albeit not all of these references work in a truely coarse set-up) but also the cross product between the $\K$-theories of Roe algebras (cf.\ \cite{EngelWulffZeidler}).
All of these cross products were complemented by slant products in \cite{EngelWulffZeidler} and, moreover, the slant product between the $\K$-theory of the Roe algebra and the $\K$-theory of the stable Higson corona also generalizes the pairing introduced in \cite{EmeMeyDualizing} to dualize the coarse assembly map.
Further multiplicative structures are the ring and module multiplications between the $\K$-theory of the stable Higson corona and the $\K$-theory of the Roe algebra constructed in \cite{WulffCoassemblyRinghomo,WulffTwisted}, which should be understood as cup and cap products and have applications to the index theory of twisted operators (see also  \cite{WulffFoliations} for a related cap product, which was based on a conjecture in \cite{RoeFoliations}).

Finally, there is also the secondary (cup) product on the coarse $\K$-theory defined in \cite{WulffCoassemblyRinghomo}, which is defined for the pairs of spaces $(X,\{p\})$ if $X$ is contractible to $p$ in a coarse geometric sense.
The purpose of the present paper is to generalize it to a vast class of coarse cohomology theories and to also introduce dual cap products between coarse homology and cohomology theories. Additionally, we define our new products relative to more general subspaces than just single points.

More precisely, the coarse \mbox{(co-)}homology theories under consideration are ones that satisfy the Eilenberg--Steenrod like axioms of \cite{wulff2020equivariant} including the so-called strong homotopy axiom, but here we shall stay within the non-equivariant world, because interesting group actions are rarely compatible with the subspaces and homotopies under consideration anyway.
Given three such coarse cohomology theories $\CohomX[1]^*,\CohomX[2]^*,\CohomX[3]^*$ and a cross product between them or two coarse homology theories $\HomolX[1]_*,\HomolX[3]_*$, a coarse cohomology theory $\CohomX[2]^*$ and a slant product between them, our secondary cup and cap products take the form
\begin{align*}
\Cup\colon &\CohomX[1]^m(X,A)\otimes\CohomX[2]^n(X,B) \to \CohomX[3]^{m+n-1}(X,A\cup B)\,,
\\\Cap\colon &\HomolX[1]_m(X,A\cup B)\otimes\CohomX[2]^n(X,B)\to \HomolX[3]_{m-n+1}(X,A)\,,
\end{align*}
respectively, and they exist whenever the subspaces $A,B\subset X$ are weak generalized controlled deformation retracts.

In a nutshell, the construction works as follows: Recall that one standard construction of primary cup products is pulling back a cross product under the diagonal map $\Delta\colon (X,A\cup B)\to (X,A)\times (X,B)$, i.\,e.\ $x\cup y\coloneqq\Delta^*(x\times y)$, and similarly primary cap products are obtained from slant products via $x\cap y\coloneqq \Delta_*(x)/y$.
If $A$ or $B$ is a weak generalized controlled deformation retract, then $\Delta$ is homotopic to maps $(X,A\cup B)\to (A,A)\times (X,B)$ or $(X,A\cup B)\to (X,A)\times (B,B)$, respectively, and hence the primary cup and cap products vanish.
Now, for the secondary products we combine the two homotopies to obtain a ``suspended diagonal map'' 
\[\Gamma\colon (X,A\cup B)\times([-1,1],\{-1,1\})\to (X,A)\times (X,B)\,,\]
whose domain equipped with a certain coarse structure is a kind of suspension of $(X,A\cup B)$. The secondary cup and cap products are then defined up to a sign by replacing $\Delta$ with $\Gamma$ in the equations above and additionally applying a suspension homomorphism.
This construction is compatible with the usual slogan that secondary products arise by comparing two different reasons for the vanishing of primary products.

Note also that whilst in algebraic topology one often goes the other way around and defines cross products from cup products by $x\times y\coloneqq \pi_1^*x\cup\pi_2^*y$ using the two projections $\pi_1\colon X\times Y\to X$ and $\pi_2\colon X\times Y\to Y$ (and similarily slant products from cap products by $x/y\coloneqq (\pi_1)_*(x\cap \pi_2^*y)$), this does not work in coarse geometry, because $\pi_1,\pi_2$ are not proper and hence not coarse maps. Thus, it is indeed imperative to start our construction with the more substantial cross and slant products.

This paper does not limit itself to the construction of the secondary cup and cap products and proving their basic properties. The main results are the following comparison theorems to other primary and secondary cup and cap products.

First of all, Roe's secondary (cup) product from \cite[Section 2.4]{RoeCoarseCohomIndexTheory}, which is defined at the level of cocycles, can be directly generalized to a secondary product in ordinary coarse cohomology
\[*\colon \HX^m(X,A;M_1)\otimes\HX^n(X,B;M_2)\to\HX^{m+n-1}(X,A\cup B;M_3)\]
for all coarsely connected countably generated coarse spaces $X$ with non-empty subspaces $A,B\subset X$ which are coarsely excisive. Here, the coefficients are multiplied using a given homomorphism of abelian groups $M_1\otimes M_2\to M_3$. However, we point out that we had to slightly correct the signs in Roe's definition. 
Similarly, a secondary (cap) product 
\[\divideontimes\colon \HX_m(X,A\cup B;M_1)\otimes\HX^n(X,B;M_2)\to\HX_{m-n+1}(X,A\cup B;M_3)\] 
between ordinary coarse homology and ordinary coarse cohomology can be defined under the same circumstances.

\begin{introthm}[cf.\ \Cref{lem:secprodcomparison,thm:seccapcomparison}]\label{introthm:secondaryproductcomparison}
Let $X$ be a coarsely connected countably generated coarse space, $A,B\subset X$ weak generalized controlled deformation retracts which are coarsely excisive and let $M_1\otimes M_2\to M_3$ be a homomorphism of abelian groups. Then the secondary product $*$ coincides with our secondary cup product $\Cup$ and the secondary product $\divideontimes$ coincides with our secondary cap product.
\end{introthm}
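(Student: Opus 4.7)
The plan is to perform the comparison at the level of explicit cocycle representatives in the Alexander--Spanier-type chain complex computing ordinary coarse (co)homology. I would treat the cup product statement first and deduce the cap product version by an analogous argument with chains and cochains interchanged, together with the identification of $\HX_{m-n+1}(X,A\cup B;M_3)$ with $\HomolX[3]_{m-n+1}(X,A)$ that arises from the coarse homotopy equivalence $(X,A\cup B) \simeq (X,A)$ supplied by the weak generalized controlled deformation retract structure on $B$.

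Concretely, I would unpack both sides. Roe's cocycle $\phi * \psi$ is built from cocycles $\phi, \psi$ by using the coarse excision of $\{A,B\}$ to split each $(m+n-1)$-tuple into pieces near $A$ and near $B$, then pairing via the coefficient map $M_1\otimes M_2 \to M_3$; the signs are those of the corrected formula already referenced in the statement. On the other side, $\phi \Cup \psi$ unfolds as the suspension of the pullback of the ordinary coarse cross product along the suspended diagonal $\Gamma\colon (X,A\cup B)\times([-1,1],\{-1,1\}) \to (X,A)\times(X,B)$; at the cocycle level this becomes an explicit formula parametrised by the suspension coordinate $t\in[-1,1]$ and the two homotopies $H_A, H_B$ witnessing that $A$ and $B$ are controlled deformation retracts.

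The core of the argument is to construct an explicit cochain $h \in C^{m+n-2}(X,A\cup B;M_3)$ whose coboundary realises the difference of these two cocycle representatives. Both representatives measure the failure of the primary cup product $\phi\cup\psi$ to be a relative cocycle on $(X,A\cup B)$: Roe's uses the excision splitting as the null-homotopy, whereas $\Cup$ uses the two controlled deformation retractions (encoded by $\Gamma$) as the null-homotopy. The comparison cochain $h$ is then the standard secondary-operation witness that two different null-homotopies produce cohomologous secondary cocycles. I would build $h$ by interpolating between the two splittings as $t$ ranges over $[-1,1]$, cutting the suspension interval at the point where the homotopy trajectories first lie in $A\cup B$ and gluing to the excision decomposition beyond that point, with the coefficient pairing threaded through in the same order as on both sides.

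The main obstacle is the simultaneous control of supports and signs. First, one must verify that every term of $h$ is a genuine Alexander--Spanier cochain with controlled support relative to the pair $(X,A\cup B)$; this is the step where coarse excisiveness of $\{A,B\}$ and the controlledness of the homotopies $H_A, H_B$ must be used jointly rather than in isolation, and where one must be careful that the excision splitting and the deformation homotopies are compatible on overlaps of their supports. Second, the sign bookkeeping is delicate: the suspension homomorphism, the cross product convention, and the Alexander--Spanier differential each contribute sign factors which must be synchronised with the corrected version of Roe's original formula, and this is where I expect the technical writing to become densest.
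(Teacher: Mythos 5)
The key obstacle in your proposal is the phrase ``construct an explicit cochain $h \in C^{m+n-2}(X,A\cup B;M_3)$ whose coboundary realises the difference of these two cocycle representatives.'' The secondary cup product $\Cup$ does \emph{not} come with a canonical cocycle representative in $\CX^{m+n-1}(X,A\cup B;M_3)$: by definition $\Cup$ is the composite $\cosusp\circ(\Gamma^{X,A,B})^*\circ\times$, where the suspension homomorphism $\cosusp$ is defined only at the cohomology level as the (one-sided) inverse to a connecting homomorphism. Without first producing a chain-level model for $\cosusp$ there is nothing to compare against $\varphi*\psi$ as cochains. Your ``interpolation cutting the suspension interval'' sketch hints at the right geometry, but as stated it begs precisely the question of how to come back down from the suspension.

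The paper's proof inverts the logic: it works \emph{upstairs} on the suspension $(X,A\cup B)\indexcross{\cU'}(I,\partial I)$ rather than trying to come down. The main technical work is a Mayer--Vietoris-type reformulation of the suspension: using the decomposition $X\indexcross{\cU'}I$ into the pieces over $I_\pm$ (each of which retracts onto an endpoint, killing cohomology), it identifies the connecting map $\cobound$ that $\cosusp$ inverts with a concrete chain-level map defined via the complexes $\CX^*_\pm$ and $\CX^*_\cup$. It then computes directly that $\cobound([\varphi]*[\psi]) = (-1)^m(\Gamma^{X,A,B})^*([\varphi]\times[\psi])$ at the cochain level, and applies $\cosusp$. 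So the comparison cochain you envisage never appears; instead one shows that both constructions have the same image under $\cobound$. A second, smaller issue: your description of $\varphi*\psi$ as ``splitting each tuple into pieces near $A$ and near $B$'' misreads Roe's formula. It is $\varphi*\psi = (-1)^{m+1}(s_a\varphi)\cup\psi + \varphi\cup(s_b\psi)$, built from the basepoint-conings $s_a$, $s_b$; excisiveness enters only in showing that $\CX^*(X,A\cup B)\hookrightarrow\CX^*(X,A+B)$ is a quasi-isomorphism, not as a tuple-splitting device. That quasi-isomorphism (the barycentric-subdivision argument of \Cref{lem:cochaininclusion}) is another ingredient you would need to make any cochain-level argument go through.
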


Secondly, a large class of coarse \mbox{(co-)}homology theories $\HomolX_*,\CohomX^*$ on the category of pairs of countably generated coarse spaces of bornologically bounded geometry can be obtained by coarsifying topological \mbox{(co-)}homology theories $\Homol_*,\Cohom^*$ for $\sigma$-locally compact spaces via a Rips complex construction.
If $\Homol_*$ or $\Cohom^*$ even satisfies the strong excision axiom and $(\partial X,\partial A)$ is a pair of Higson dominated coronas for the pair of coarse spaces $(X,A)$, then there are the transgression maps
\[T_*^{X,A}\colon\HomolX_{*+1}(X,A)\to \Homol_*(\partial X,\partial A)\,,\qquad T^*_{X,A}\colon\Cohom^*(\partial X,\partial A)\to\CohomX^{*+1}(X,A)\]
relating the coarse \mbox{(co-)}homology of the pair of coarse spaces to the topological \mbox{(co-)}homology of the coronas.
Furthermore, a cross or slant product between topological theories induces a cross or slant product between the coarse theories. Hence we obtain in particular a topological primary cup or cap product and a secondary coarse cup or cap product, and the next main theorem says that transgression is compactible with them.

\begin{introthm}[cf.\ \Cref{thm:transgressioncompatiblewithcupandcap}]  \label{introthm:transgression}
If $X$ is a countably generated coarse spaces of bornologically bounded geometry with a Higson dominated corona $\partial X$ and if $A,B\subset X$ are weak generalized controlled deformtion retracts with corresponding coronas $\partial A,\partial B$, then the associated above-mentioned transgression maps and cup or cap products are compatible in the following way:
\begin{align*}
\forall x\in\Cohom[1]^*(\partial X,\partial A),&y\in\Cohom[2]^*(\partial X,\partial B)\colon &T^*_{X,A\cup B}(x\cup y)&=T^*_{X,A}(x)\Cup T^*_{X,B}(y)
\\\forall x\in\HomolX[1]_*(X,A\cup B),&y\in\Cohom[2]^*(\partial X,\partial B)\colon&T_*^{X,A}(x\Cap T^*_{X,B}(y))&=T_*^{X,A\cup B}(x)\cap y
\end{align*}
\end{introthm}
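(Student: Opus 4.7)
My plan is to reduce both identities to known facts about primary cross and slant products, plus a description of the suspended diagonal on the level of coronas. I will assume two ingredients that should appear earlier in the paper: (i) the transgression maps are compatible with primary cross products and slant products, i.e.\ for factors $(X,A)$ and $(X,B)$ one has $T^*_{X,A}(x)\times T^*_{X,B}(y) = T^*_{(X,A)\times(X,B)}(x\times y)$ up to a fixed sign, and analogously for slant products; and (ii) transgression is natural with respect to coarse maps of pairs that restrict to continuous maps of the coronas, and is compatible with the suspension homomorphisms $\susp$, $\cosusp$ on both the coarse and topological sides.

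First I would unravel the definitions. By construction,
\[ T^*_{X,A}(x)\Cup T^*_{X,B}(y) \;=\; \pm\, \cosusp \Gamma^*\bigl(T^*_{X,A}(x)\times T^*_{X,B}(y)\bigr), \]
where $\Gamma\colon (X,A\cup B)\times([-1,1],\{-1,1\}) \to (X,A)\times(X,B)$ is the suspended diagonal from the introduction, and similarly $x\Cap T^*_{X,B}(y) = \pm\, \susp^{-1}(\Gamma_*(x)/T^*_{X,B}(y))$ for the cap product (with $\susp$ coming from the fact that the pair $(X,A\cup B)\times([-1,1],\{-1,1\})$ is a coarse suspension). On the topological corona side the analogous formulas are $\Delta^*(x\times y)$ and $\Delta_*(x)/y$ for the ordinary diagonal $\Delta\colon (\partial X,\partial A\cup\partial B)\to(\partial X,\partial A)\times(\partial X,\partial B)$.

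The central geometric step is to identify the action of $\Gamma$ on coronas. The corona of $(X,A\cup B)\times([-1,1],\{-1,1\})$ (with the coarse structure that makes $[-1,1]$ bounded) is itself a suspension of $(\partial X,\partial A\cup\partial B)$, and the two homotopies used to build $\Gamma$ are controlled, hence extend to the trivial homotopies on the corona. Consequently the map induced by $\Gamma$ on coronas factors as the topological suspension of $\Delta$, and this precisely matches the suspension factor $\cosusp$ (resp.\ $\susp$) in the coarse formula. Combining this with naturality of $T^*$ (resp.\ $T_*$) applied to $\Gamma$, and then with the cross/slant-product compatibility (i), yields
\[ \cosusp \Gamma^*\bigl(T^*_{X,A}(x)\times T^*_{X,B}(y)\bigr) = T^*_{X,A\cup B}\bigl(\Delta^*(x\times y)\bigr) = T^*_{X,A\cup B}(x\cup y), \]
and analogously for the cap-product identity with pushforwards replacing pullbacks.

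I expect the main obstacle to be bookkeeping: keeping track of the signs introduced by the suspension isomorphisms and by commuting cross-product factors, together with a careful check that the coarse structure on $(X,A\cup B)\times([-1,1],\{-1,1\})$ induces the expected suspension of the corona pair (so that (ii) can be applied). Once the corona-level description of $\Gamma$ is pinned down, the rest of the proof reduces to a diagram chase that is essentially the same for cup and cap, with pullback versus pushforward and cross versus slant being the only structural differences.
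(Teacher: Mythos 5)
Your proposal takes a genuinely different route from the paper, and the key geometric step has a gap that the paper's approach deliberately circumvents.

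The central claim you rely on — that $\Gamma$ ``factors as the topological suspension of $\Delta$'' on coronas because the homotopies ``extend to the trivial homotopies on the corona'' — is not established and is more delicate than you suggest. The domain of $\Gamma$ is the \emph{warped} product $(X,A\cup B)\indexcross{\cU}(I,\partial I)$, not the ordinary product with a bounded interval: the suspension condition (\Cref{def:suspensioncondition}) forces the neighborhoods $U_x$ to shrink far from $A\cup B$, so for a fixed $t\neq 0$ the map $H^A_{t^-}$ (resp.\ $H^B_{t^+}$) is \emph{not} a bounded distance from the identity. Consequently it is far from obvious that the corona of this warped product is a suspension of $(\partial X,\partial A\cup\partial B)$, and the identification of the boundary restriction of $\Gamma$ with $\Delta$ requires real work that the paper never carries out — it never computes the corona of a warped product at all.

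What the paper does instead is to avoid the geometric picture entirely. First (\Cref{lem:coarsetopologicalsecondarycomparison}) it replaces the coarse secondary product by a topological secondary product for the Rips complex $\cP(X')$ with a \emph{honest} product $\cP(X')\times(I,\partial I)$, using a map $i$ constructed in \cite[Lemma 5.13]{wulff2020equivariant} to compare the warped suspension with the ordinary product on Rips complexes. Then \Cref{thm:boundariescompatiblewithcupandcap} proves the required compatibility of connecting homomorphisms with the primary and secondary products purely by means of $*$-homomorphisms of function algebras ($\gamma,\beta,\Delta$ etc.\ in Figure~\ref{fig:Proofofmultiplicativitylemma}); the transgression maps and suspension isomorphisms are all realized as boundary maps of short exact sequences, so that the statement reduces to a diagram chase. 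This algebraic route substitutes for the corona-of-suspension identification that your argument would need. Your high-level blueprint (compatibility of transgression with cross/slant products and with suspension, plus naturality under $\Gamma$) is plausible, but hinges on precisely the geometric lemma about coronas that is the real substance of the proof — and which the paper handles differently.
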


Part of the importance of this theorem stems from the fact that the transgression maps are isomorphisms in many cases, in particular for pairs of open cones $(\cO K,\cO L)$ over pairs of compact metric spaces $(K,L)$ if we choose the base spaces themselves as coronas, $(\partial\cO K,\partial\cO L)=(K,L)$. Then the theorem implies that the theory of our secondary cup and cap products is at least as rich as the corresponding theory of primary cup and cap products on compact metrizable spaces, see \Cref{ex:ConesTransgression}.

Thirdly and most importantly, for all pairs coarsely connected proper metric spaces $(X,A)$ and all coefficient \textCstar-algebras $D$ there are the assembly and coassembly maps
\begin{align*}
\mu\colon\KX_*(X,A;D)&\to \K_*(\Roe(X,A;D))\,,
\\\mu^*\colon\K_{-*}(\sHigCor(X,A;D))&\to\KX^{1+*}(X,A;D)\,,
\end{align*}
respectively. On $\KX^*$ and $\KX_*$ we have our secondary cup and cap products, whereas on the $\K$-theory of the stable Higson coronas we have a primary cup product
\[\cup\colon\K_{-m}(\sHigCor(X,A;D))\otimes\K_{-n}(\sHigCor(X,B;E))\to \K_{-m-n}(\sHigCor(X,A\cup B;D\otimes E))\]
and if in addition $X$ has coarsely bounded geometry and $A$ is non-empty, then there is a primary cap product between the $\K$-theory of the Roe algebra and the $\K$-theory of the stable Higson corona,
\[\cap\colon\K_m(\Roe(X,A\cup B;D))\otimes \K_{-n}(\sHigCor(X,B;E))\to \K_{m-n}(\Roe(X,A;D\otimes E))\,.\]

\begin{introthm}[cf.\ \Cref{thm:cupcapassembly}] \label{introthm:assembly}
Let $X$ be a coarsely connected proper metric space, $A,B\subset X$ weak generalized controlled deformation retracts and let $D,E$ be \textCstar-algebras.
Then 
\[\mu^*(x\cup y)=\mu^*(x)\Cup\mu^*(y)\]
for all $x\in \K_*(\sHigCor(X,A;D)), y\in \K_*(\sHigCor(X,B;E))$. If in addition $X$ has coarsely bounded geometry, then
\[\mu(x\Cap\mu^*(y))=\mu(x)\cap y\]
for all $x\in \KX_*(X,A\cup B;D), y\in \K_*(\sHigCor(X,B;E))$.
\end{introthm}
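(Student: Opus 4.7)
The plan is to reduce both identities to the naturality of coassembly (resp.\ assembly) with respect to the three building blocks of the secondary products: cross (resp.\ slant) products, pullbacks (resp.\ pushforwards) along coarse maps of pairs, and the suspension isomorphism built into the definitions of $\Cup$ and $\Cap$. All three compatibilities should already be available from earlier parts of the paper, since $\mu^*$ and $\mu$ are natural transformations of (co)homology theories and since the cross and slant products on coarse $\K$-theory and on $\K$-theory of the Higson corona and the Roe algebra are set up so as to match up under (co)assembly.

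For the cup identity I would begin by expressing the primary product $x\cup y$ of \cite{WulffCoassemblyRinghomo,WulffTwisted} as a pullback $\Delta^*(x\times y)$ along the diagonal $\Delta\colon(X,A\cup B)\to(X,A)\times(X,B)$ of the Higson-corona cross product. Applying coassembly and invoking its compatibility with $\times$ and with $\Delta^*$ then gives
\[ \mu^*(x\cup y) = \Delta^*\bigl(\mu^*(x)\times\mu^*(y)\bigr) \]
in the coarse theory. The hypothesis that $A,B$ are weak generalized controlled deformation retracts is precisely what makes $\Delta$ coarsely homotopic, as a map of pairs, to maps factoring through $(A,A)\times(X,B)$ and through $(X,A)\times(B,B)$ respectively, and these two homotopies glue into the suspended diagonal $\Gamma$ of the introduction. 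By the definition of $\Cup$, applying $\sigma_*$ to $\Gamma^*\bigl(\mu^*(x)\times\mu^*(y)\bigr)$ yields $\mu^*(x)\Cup\mu^*(y)$, and the standard \textquotedblleft two null-homotopies determine a class in the suspension\textquotedblright\ argument identifies this with $\Delta^*\bigl(\mu^*(x)\times\mu^*(y)\bigr)$.

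For the cap identity the argument is dual. I would expand the primary cap product of \cite{WulffCoassemblyRinghomo,WulffTwisted} as $u\cap v=\Delta_*(u)/v$ with $/$ the slant product, so that $\mu(x)\cap y=\Delta_*\mu(x)/y$ via the naturality of $\mu$ with respect to pushforwards and slant products (the latter in the form $\mu(\blank)/\mu^*(\blank)$). The secondary expression $\mu(x\Cap\mu^*(y))$ unfolds, by the definition of $\Cap$, as $\sigma_*\Gamma_*\mu(x)/y$, and the same glued-homotopy argument now matches the two expressions on the Roe-algebra side.

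The hardest part will be rigorously checking that coassembly and assembly intertwine the cylinder-level pullback and pushforward along $\Gamma$ with the suspension isomorphism: while naturality of $\mu$ and $\mu^*$ with respect to each individual coarse map is formal, one needs to be careful with the specific coarse structure on $(X,A\cup B)\times([-1,1],\{-1,1\})$ on which the secondary products are built, and to match the degree shift $*\mapsto 1+*$ in the coassembly map with the suspension degree shift in $\Cup$ and $\Cap$. Sign conventions will also require care, as the author has already had to correct for Roe's original definition. Once those bookkeeping points are secured, the proof is essentially a diagram chase using the naturality properties listed in the first paragraph.
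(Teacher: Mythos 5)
Your central step for the cup identity, namely
\[
\mu^*(x\cup y)=\Delta^*\bigl(\mu^*(x)\times\mu^*(y)\bigr),
\]
cannot hold for degree reasons, and this is not a bookkeeping point you can fix later but a structural obstruction. The coassembly map is degree-shifting: $\mu^*\colon\K_{-*}(\sHigCor(X,A;D))\to\KX^{1+*}(X,A;D)$, because it is a connecting homomorphism of a short exact sequence of $\textCstar$-algebras. So if $x$ lives in degree $-m$ and $y$ in degree $-n$, then $\mu^*(x\cup y)\in\KX^{1+m+n}$, whereas $\Delta^*(\mu^*(x)\times\mu^*(y))\in\KX^{2+m+n}$; the two sides live in different groups. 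Relatedly, the paper shows that the class $\Delta^*(\mu^*(x)\times\mu^*(y))$ \emph{vanishes} whenever $A$ or $B$ is a weak generalized controlled deformation retract (this is precisely why a secondary product is introduced), so your proposed equality would equate a generally non-zero class with zero. The ``two null-homotopies determine a class in the suspension'' slogan is a correct intuition for where the secondary product comes from, but it is not a naturality statement that connects $\Delta^*(\cdots)$ and $\sigma^*\Gamma^*(\cdots)$; these live in different degrees and there is no canonical map between them that would make the chase close up.

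The correct compatibility, which is what the paper actually proves, is a Leibniz-rule-type statement for connecting homomorphisms: roughly $\cobound[3](x\cup y)=\cobound[1](x)\Cup\cobound[2](y)$, with the primary product on the corona side on the left and the \emph{secondary} product on the coarse side on the right, where the $\cobound$'s realize coassembly. This is Theorem~\ref{thm:boundariescompatiblewithcupandcap}, whose proof is a direct diagram chase at the level of $\textCstar$-algebra short exact sequences using the suspended diagonal $\Gamma$, and the cup case of Theorem~\ref{thm:cupcapassembly} then follows by generalizing~\cite[Theorem~9.1]{WulffCoassemblyRinghomo} along the same lines. For the cap identity the paper goes a different and substantially harder route: the primary cap product on the Roe algebra is defined via an explicit $*$-homomorphism $\fm$, not by pushing forward along $\Delta$ and slanting, and the comparison is proved via a stronger localized statement (Theorem~\ref{thm:localizedcap}) involving localization algebras and a carefully chosen covering isometry whose commutation relations with the Higson multiplication identify the two products. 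Your dual reasoning $u\cap v=\Delta_*(u)/v$ again neglects the degree shift of $\mu^*$ and does not match the paper's definition of the primary cap product, so the proposed chase does not assemble into a proof.
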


The assembly map $\mu$ plays an important role in non-commutative coarse geometry. It is subject to the important coarse Baum--Connes isomorphism conjecture and it is the index map in coarse index theory, i.\,e.\ it maps fundamental classes of elliptic operators to their coarse indices. In these fields, the additional algebraic structures provided by the theorem could turn out to be very useful.

We do not have any concrete applications at the moment, but one short-term objective of the cap products could be to develop a coarse version of Poincaré duality.

Furthermore, the author is also thankful to Alexander Engel for pointing out that the second part of \Cref{introthm:assembly} seems to explain a strange phenomenon concerning the index formula of \cite[Theorem 8.7]{WulffTwisted}. Let $M$ be a complete Riemannian manifold of bounded geometry, $K\subset M$ a non-empty compact subset and $C_1,C_2$ \textCstar-algebras. Furthermore, let $D$ be the Dirac operator of a Dirac bundle $S\to M\setminus K$ of Hilbert $C_1$-modules and let $E\to M\setminus K$ be a bundle of Hilbert-$C_2$-modules of vanishing variation. Then the index formula says that the coarse index $\ind(D_E)\in \K_*(\Roe(M,K;C_1\otimes C_2))$ of $D$ twisted by $E$ is equal to the cap product of the coarse index $\ind(D)\in \K_*(\Roe(M,K;C_1))$ of $D$ with a class $\llbracket E\rrbracket\in\K_*(\sHigCor(M,K;C_2))$, and a more general index formula probably holds if we replace $K$ by arbitrary closed subsets $A,B\subset M$.
Now, the strange phenomenon is as follows: The condition on the bundle $E$ says that it can be trivialized by a smooth projection valued function $P\colon M\setminus K\to C_2\otimes\Kom$ whose variation vanishes at infinity, but this function itself is not part of the presumed data. Different choices of $P$ might result in different classes $\llbracket E\rrbracket$, but the coarse index $\ind(D_E)$ is independent of it according to a simple application of bordism invariance. 
In the case where $K$ is additionally a weak generalized controlled deformation retract of $M$, \Cref{introthm:assembly} provides an explanation: The coarse index $\ind(D)$ is the image of a fundamental class $[D]\in\KX_*(M,K;C_1)$ under the assembly map $\mu$ and hence we have
\[\ind(D_E)=\ind(D)\cap\llbracket E\rrbracket=\mu([D])\cap\llbracket E\rrbracket=\mu([D]\Cap\mu^*(\llbracket E\rrbracket))\,.\]
Here, the class $\mu^*(\llbracket E\rrbracket)\in\KX^{1-*}(X,K;C_2)$ should be a purely topological invariant of $E$, i.\,e.\ independent of $P$ (the author has not checked the details), and therefore the cap product is indeed independent of $P$.

The big remaining mystery about our secondary products is the question of when we really need the subspaces $A,B\subset X$ to be weak generalized controlled deformation retracts. 
As we have seen in \Cref{introthm:secondaryproductcomparison}, for the secondary products on ordinary coarse \mbox{(co-)}homology we can exchange this condition for the apparently unrelated coarse excisiveness. 
Furthermore, \Cref{thm:localizedcap} will show that the secondary cap product between coarse $\K$-homology and coarse $\K$-theory
\[\cap\colon\KX_m(X,A\cup B;D)\otimes\KX^n(X,B;E)\to \KX_{m-n+1}(X,A;D\otimes E)\]
pulls back under the coassembly map $\mu^*\colon \K_{1-n}(\sHigCor(X,B;E))\to\KX^n(X,B;E)$ to a primary cap product which, surprisingly, can be defined without any conditions on $A,B\subset X$ except that $A$ should be non-empty.
For now, the controlled deformations of $X$ onto $A$ and $B$ are an essential part of the construction, but the above examples indicate that this condition should possibly not play such a big role.

The present paper is organized as follows. First of all, we should mention that \emph{we expect the reader to be fully familiar with the coarse geometric language and axiomatic framework for coarse \mbox{(co-)}homology theories presented in \cite{wulff2020equivariant}}. 
In fact, the contents of that paper (in the non-equivariant set-up) were originally planned to be part of the present paper, before the author realized that they are so extensive that outsourcing them into a separate publication is fully justified.
In the further course of this paper, we will only recall the less intuitive definitions.

One of the new notions from \cite{wulff2020equivariant} is generalized coarse homotopies. \Cref{sec:homotopies} generalizes them even further to generalized controlled homotopies, which are fundamental to the construction of our secondary products, and provides a toolkit for working with them.
Afterwards we can formulate in \Cref{sec:framework} the framework and standing assumptions on the \mbox{(co-)}homology theories under consideration and the underlying categories of coarse spaces.

The further ingredients for our secondary products, namely coarse suspension and the coarse cross and slant products, will be introduced in \Cref{sec:suspension,sec:crossslant}.
The secondary products will then be constructed in \Cref{sec:secondaryfromdeformations}, where we will also prove their basic properties.

The final three Sections \ref{sec:ordinary}, \ref{sec:transgression}, \ref{sec:assembly} discuss the matters leading to Theorems \ref{introthm:secondaryproductcomparison}, \ref{introthm:transgression}, \ref{introthm:assembly}, respectively.

\begin{acknowledgements}
The author would like to thank Alexander Engel for insightful conversations.
\end{acknowledgements}

\tableofcontents

\section{Generalized controlled and coarse homotopies}
\label{sec:homotopies}

We refer the reader to \cite[Section 2.1]{wulff2020equivariant} for most of the coarse geometric terminology used in this paper. However, we need to expand more on the novel notion of \emph{generalized coarse homotopies}, which was introduced and shown to generalize the classical coarse homotopies in \cite[Section 2.2]{wulff2020equivariant}.
More precisely, this section is about introducing the even more general \emph{generalized controlled coarse homotopies} and developing a large toolkit for working with it, which consists mainly of non-trivial statements analogue to trivial properties of topological homotopies.

Recall that for $E\subset X\times X$ and $F\subset Y\times Y$ we write
\[E\swapcross F\coloneqq \{(x,z,y,w)\mid (x,y)\in E\wedge (z,w)\in F)\}\subset (X\times Y)\times(X\times Y)\]
and if $E,F\subset X\times X$ and $A\subset X$ we furthermore write
\begin{align*}
E\circ F&\coloneqq \{(x,z)\mid\exists y\colon (x,y)\in E\wedge (y,z)\in F\}
\\E\circ A&\coloneqq \{x\mid\exists y\in A\colon (x,y)\in E\}\,.
\end{align*}
If $E$ is an entourage in a coarse space $X$ and $A\subset X$, then we denote the latter set also by $\Pen_E(A)$.

\begin{defn}Let $(X,\coarseStr_X)$ be a coarse space and $K$ a connected compact Hausdorff space.
We denote by $\nbhfmly(X;K)$ the set of all families $\cU=\{U_x\}_{x\in X}$ of neighborhoods $U_x$ of the diagonal $\Diag[K]$ in $K\times K$ indexed over points $x$ of $X$.
For each such $\cU$ we define the \emph{warped cartesian product} $X\indexcross{\cU}K$ to be the set $X\times K$ equipped with the coarse structure $\coarseStr_{\cU}$ generated by the entourages
\begin{align*}
E\swapcross\Diag[K]=&\{((x,s),(y,s))\mid (x,y)\in E\wedge s\in K\}&\text{for }&E\in\coarseStr_X\text{ and}
\\E_{\cU}\coloneqq&\{((x,s),(x,t))\mid x\in X\wedge (s,t)\in U_x\}\,.
\end{align*}
We call another such collection $\cV= \{V_x\}_{x\in X}\in\nbhfmly(X;K)$ a \emph{refinement} of $\cU= \{U_x\}_{x\in X}$ if $V_x\subset U_x$ for all $x\in X$.
If $(Y,\coarseStr_Y)$ is another a coarse space, $L$ another compact Hausdorff space, $f\colon Y\to X$ a controlled map and $g\colon L\to K$ continuous, then we define the \emph{pullback} of $\cU$ under $f\times g$ as the collection $(f\times g)^*(\cU)\coloneqq\{(g\times g)^{-1}(U_{f(y)})\}_{y\in Y}\in\nbhfmly(Y;L)$ of neighborhoods of the diagonal in $L\times L$.
\end{defn}

\begin{lem}\label{lem:warpedtrivialproperties}
With the notation as in the definition we have:
\begin{enumerate}
\item\label{item:warpedbounded} The bounded subsets of a warped cartesian product $X\indexcross{\cU}K$ are exactly the subsets which are contained in some $A\times K$ with $A\subset X$ bounded.

\item\label{item:warpedrefinement} If $\cV$ is a refinement of $\cU$, then the coarse structure $\coarseStr_\cV$ is finer than $\coarseStr_\cU$ and hence the identity map $X\indexcross{\cV}K\to X\indexcross{\cU}K$ is a coarse map.

\item\label{item:warpedfunctorial} The warped cartesian product with respect to the pullback $(f\times g)^*(\cU)$ has the property that $f\times g\colon Y\indexcross{(f\times g)^*(\cU)}L\to X\indexcross{\cU}K$ is a controlled map. If $f$ is not only a controlled but even a coarse map, then $f\times g$ is also a coarse map.

\end{enumerate}
\end{lem}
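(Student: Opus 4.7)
The plan is to reduce each claim to a calculation at the level of the generating entourages of $\coarseStr_\cU$.

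For \ref{item:warpedbounded}, the ``only if'' direction follows as soon as we observe that the projection $\pi_X\colon X\indexcross{\cU}K\to X$ is controlled: each generator $E\swapcross\Diag[K]$ projects under $\pi_X\times\pi_X$ to $E\in\coarseStr_X$ and each $E_\cU$ projects to $\Diag[X]$. Hence $\pi_X$ sends any bounded $B$ to a bounded set $A\coloneqq\pi_X(B)$ of $X$, and trivially $B\subset A\times K$. For the converse, I plan to show that $(A\times K)\times(A\times K)$ is an entourage of $\coarseStr_\cU$ whenever $A\subset X$ is bounded and non-empty. Fix a point $x_0\in A$ and choose $E\in\coarseStr_X$ with $A\times A\subset E$. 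The key non-trivial input is the classical fact that in a connected compact Hausdorff space $K$, every neighborhood $U$ of the diagonal satisfies $K\times K=U^{\circ n}$ for some $n\in\N$; the standard proof considers the open equivalence relation $\bigcup_n W^{\circ n}$ for a symmetric open $W\subset U$, whose equivalence classes are clopen and hence, by connectedness, reduce to a single class, with a uniform $n$ coming from compactness. Applied to $U_{x_0}$, this gives $n$ with $E_\cU^{\circ n}\supset\{((x_0,s),(x_0,t)):s,t\in K\}$, and pre- and post-composing with $E\swapcross\Diag[K]$ produces an entourage containing $(A\times K)\times(A\times K)$, as required.

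Item~\ref{item:warpedrefinement} is essentially bookkeeping: the inclusions $V_x\subset U_x$ yield $E_\cV\subset E_\cU$, while the generators $E\swapcross\Diag[K]$ coincide for $\cU$ and $\cV$. Hence every generator of $\coarseStr_\cV$ is already an entourage of $\coarseStr_\cU$, so $\coarseStr_\cV\subset\coarseStr_\cU$ and the identity is controlled. Properness of the identity is automatic because \ref{item:warpedbounded} characterises the bounded sets in terms of $X$ alone, independently of the choice $\cU$ or $\cV$.

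For \ref{item:warpedfunctorial} I again plan to check controlledness on generators. The image of $F\swapcross\Diag[L]$ under $(f\times g)\times(f\times g)$ is contained in $(f\times f)(F)\swapcross\Diag[K]$, which is an entourage because $f$ is controlled. The image of $F_{(f\times g)^*\cU}$ is, by the very definition of the pullback, contained in $E_\cU$. Hence $f\times g$ is controlled. If $f$ is moreover proper, then properness of $f\times g$ follows at once from \ref{item:warpedbounded}, since the preimage of a bounded set of the form $A\times K$ is $f^{-1}(A)\times L$ with $f^{-1}(A)$ bounded. The only genuinely non-trivial ingredient in the whole lemma is the compact-connectedness input in \ref{item:warpedbounded}; every other step is a careful but routine unravelling of the generating entourages.
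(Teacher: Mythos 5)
Your proof is correct, and for items~\ref{item:warpedrefinement} and~\ref{item:warpedfunctorial} it matches the paper's argument exactly (check generators; use item~\ref{item:warpedbounded} for properness). The only place you genuinely diverge is item~\ref{item:warpedbounded}: the paper disposes of it by citing the special case $K=[a,b]$ in an earlier reference, whereas you give a complete self-contained argument for arbitrary connected compact Hausdorff $K$. Your key step --- that for any neighborhood $U$ of the diagonal in such a $K$ there is $n$ with $U^{\circ n}=K\times K$, proved via the clopen equivalence classes of $\bigcup_n W^{\circ n}$ for a symmetric open $W\subset U$ and compactness to extract a uniform $n$ --- is precisely the correct replacement for the interval-specific reasoning in the cited source, and your composition $(E\swapcross\Diag[K])\circ E_\cU^{\circ n}\circ(E\swapcross\Diag[K])$ does indeed contain $(A\times K)\times(A\times K)$ once one notes that $E_\cU^{\circ n}$ fixes the $X$-coordinate, so $U_{x_0}^{\circ n}=K\times K$ suffices. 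What you buy over the paper's version is a proof that is legible without chasing a reference; what the paper buys is brevity and consistency with its own earlier development.
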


\begin{proof}
As $K$ is compact and Hausdorff, \ref{item:warpedbounded} can be proven just like the special case of $K$ being an interval in \cite[Section 2.1]{wulff2020equivariant}.
Point \ref{item:warpedrefinement} follows trivially from $E_\cV\subset E_\cU$ and \ref{item:warpedfunctorial} follows from $(f\times g\times f\times g)(E_{(f\times g)^*(\cU)})\subset E_\cU$ for controlledness together with \ref{item:warpedbounded} for properness.
\end{proof}

\begin{lem}\label{lem:warpedcartesianproducts}
Let $X$ be a coarse space and $K,L$ be compact connected Hausdorff spaces.
\begin{enumerate}
\item\label{item:warpedfusion} Given $\cU=\{U_x\}_{x\in X}\in\nbhfmly(X;K)$ and $\cV=\{V_{x,s}\}_{(x,s)\in X\times K}\in\nbhfmly(X\indexcross{\cU}K;L)$ there is a collection $\cW=\{W_x\}_{x\in X}\in\nbhfmly(X;K\times L)$ such that the identity map
\[X\indexcross{\cW}(K\times L)\leftrightarrow(X\indexcross{\cU}K)\indexcross{\cV}L\]
is a coarse equivalence
\item\label{item:warpedcommutative} Given $\cU=\{U_x\}_{x\in X}\in\nbhfmly(X;K)$ and $\cV=\{V_x\}_{x\in X}\in\nbhfmly(X;L)$, then upon identifying them with the pullback collections $\{U_x\}_{(x,t)\in X\times L}\in\nbhfmly(X\indexcross{\cV}L;K)$ and $\{V_x\}_{(x,s)\in X\times K}\in\nbhfmly(X\indexcross{\cU}K;L)$, respectively,
the canonical bijection  
\[(X\indexcross{\cU}K)\indexcross{\cV}L\leftrightarrow(X\indexcross{\cV}L)\indexcross{\cU}K\]
is a coarse equivalence.
\item\label{item:warpedfission} Given $\cW=\{W_x\}_{x\in X}\in\nbhfmly(X;K\times L)$, there are collections $\cU=\{U_x\}_{x\in X}\in\nbhfmly(X,K)$ and $\cV=\{V_{x,s}\}_{(x,s)\in X\times K}\in\nbhfmly(X\indexcross{\cU}K;L)$ such that the identity map
\[(X\indexcross{\cU}K)\indexcross{\cV}L\to X\indexcross{\cW}(K\times L)\]
is a coarse map.
\end{enumerate}
\end{lem}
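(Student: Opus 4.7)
My plan is to treat (ii), (iii), (i) in that order of increasing difficulty.

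For (ii), the canonical bijection $(x,s,u)\mapsto(x,u,s)$ preserves bounded sets by \Cref{lem:warpedtrivialproperties}. The coarse structure on either side is generated by three analogous families: entourages of the form $E\swapcross\Diag[K]\swapcross\Diag[L]$ (resp.\ with $K,L$ swapped) coming from $\coarseStr_X$; entourages of the form $E_\cU\swapcross\Diag[L]$ (resp.\ the pullback of $E_\cU$ to $X\indexcross{\cV}L$); and the pullback of $E_\cV$ to $X\indexcross{\cU}K$ (resp.\ $E_\cV\swapcross\Diag[K]$). Under the swap, each generator on one side is mapped to a generator on the other, so the two coarse structures coincide and the bijection is a coarse equivalence.

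For (iii), I fix an open neighborhood $\tilde W_x\subset W_x$ of $\Diag[K\times L]$. For each $(s,u)\in K\times L$ I choose a symmetric open product neighborhood $O_{s,u}\times P_{s,u}\times O_{s,u}\times P_{s,u}\subset\tilde W_x$ after identifying $(K\times L)^2\cong K^2\times L^2$. By compactness of $L$, finitely many $P_{s,u_1},\ldots,P_{s,u_m}$ cover $L$; setting $O_s:=\bigcap_j O_{s,u_j}$ gives an open neighborhood of $s$ in $K$ such that for all $s',s''\in O_s$ and all $u\in L$ one has $((s',u),(s'',u))\in\tilde W_x$ (picking the $j$ with $u\in P_{s,u_j}$). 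Then $U_x:=\bigcup_{s\in K}O_s\times O_s$ is an open neighborhood of $\Diag[K]$, and $V_{x,s}:=\{(u,v):((s,u),(s,v))\in W_x\}$ is automatically a neighborhood of $\Diag[L]$. The inclusions $E_\cU\swapcross\Diag[L]\subset E_\cW$ and $E_\cV\subset E_\cW$ hold by construction, so the identity is coarse.

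For (i), I build $W_x$ as a union of three pieces: an open neighborhood of $\Diag[K\times L]$ ensuring the neighborhood property, together with two lower-dimensional pieces that directly encode the generators on the right-hand side. Specifically, for each $x\in X$ I choose a finite cover $\{O_{s_1^x},\ldots,O_{s_{n_x}^x}\}$ of $K$ with $O_{s_i^x}\times O_{s_i^x}\subset U_x$ together with open neighborhoods $P_{s_i^x}\subset V_{x,s_i^x}$ of $\Diag[L]$, and set
\[W_x:=\bigcup_{i=1}^{n_x}(O_{s_i^x}\times O_{s_i^x})\times P_{s_i^x}\;\cup\;\{((s,u),(s',u)):(s,s')\in U_x\}\;\cup\;\{((s,u),(s,v)):(u,v)\in V_{x,s}\}.\]
The first piece is open and contains $\Diag[K\times L]$, so $W_x$ is a neighborhood. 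The inclusions $E_\cU\swapcross\Diag[L]\subset E_\cW$ and $E_\cV\subset E_\cW$ are immediate from the last two pieces, yielding coarseness of $\id\colon(X\indexcross{\cU}K)\indexcross{\cV}L\to X\indexcross{\cW}(K\times L)$. Conversely, each pair in the open piece admits a three-step decomposition $(s,u)\to(s_i^x,u)\to(s_i^x,u')\to(s',u')$ lying in $(E_\cU\swapcross\Diag[L])\circ E_\cV\circ(E_\cU\swapcross\Diag[L])$, while the other two pieces contribute only $E_\cU\swapcross\Diag[L]$ and $E_\cV$; hence the identity is also coarse in the reverse direction.

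The main obstacle is the interplay in (i) between the neighborhood requirement on $W_x$ and the need to decompose $E_\cW$ into the generators of the left-hand coarse structure via a composition of uniformly bounded length. Without any continuity assumption on $s\mapsto V_{x,s}$, a naive choice such as $\tilde U_x\times Q_x$ with $Q_x\subset\bigcap_s V_{x,s}$ fails because the intersection can collapse to the diagonal; separating the open, neighborhood-ensuring piece from the two lower-dimensional, generator-encoding pieces neatly circumvents this issue. In all three parts, the preservation of bounded subsets follows from \Cref{lem:warpedtrivialproperties}.
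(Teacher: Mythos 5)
Your proposal is correct in all three parts. A comparison with the paper's proof:

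For part (ii) the paper simply declares the claim trivial; your generator-by-generator check is the same thought, spelled out.

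For part (iii) your construction of $U_x$ (tube lemma applied to a finite subcover of $L$) is exactly the paper's, but your choice of $V_{x,s}$ differs: you take $V_{x,s}$ to be the $s$-slice of $W_x$, which is automatically a neighborhood of $\Diag[L]$, whereas the paper runs the compactness argument a second time (in $K$) to manufacture a neighborhood $V_x$ that is uniform in $s$. Your variant is marginally simpler and exploits the freedom that $V_{x,s}$ is allowed to depend on $s$; both are valid.

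Part (i) is where the approaches genuinely diverge. The paper never invokes compactness: it sets
\[W_x=\bigcup_{(s,t)\in K\times L}\big(U_x\circ\{s\}\big)\times\big(V_{(x,s)}\circ\{t\}\big)\times\big(U_x\circ\{s\}\big)\times\big(V_{(x,s)}\circ\{t\}\big)\]
and decomposes any pair in $E_\cW$ as a four-fold composition $(E_\cU\swapcross\Diag[L])\circ E_\cV\circ E_\cV^{-1}\circ(E_\cU\swapcross\Diag[L])^{-1}$ passing through the ``center'' $(s,t)$ of the indexing set. You instead use compactness of $K$ to fix a finite cover $\{O_{s_i^x}\}$, which lets you build $W_x$ from three pieces and decompose the open piece in only three steps through the fixed centers $s_i^x$. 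Both are correct; the paper's is slightly more economical in not needing compactness, while yours makes the required composition shorter and renders the equivalence transparent by literally including the generator sets as the second and third pieces of $W_x$. Your observation that a naive product $\tilde U_x\times Q_x$ with $Q_x\subset\bigcap_s V_{x,s}$ can collapse to the diagonal is sound and correctly motivates the need for a more careful construction, though in fact neither proof ever attempts that ansatz.
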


\begin{proof}
Note that properness of all of the maps is clear from \Cref{lem:warpedtrivialproperties}.\ref{item:warpedbounded} and it remains to show controlledness.

For all $x\in X$, $(s',s)\in U_x$ and $(t',t)\in V_{(x,s)}$ we have
\[((x,s',t'),(x,s,t))=((x,s',t'),(x,s,t'))\circ ((x,s,t'),(x,s,t))\in (E_\cU\swapcross\Diag[L]) \circ E_{\cV}\,,\]
so when we define $\cW$ as the collection of the open neighborhoods
\[W_x\coloneqq\bigcup_{(s,t)\in K\times L}(U_x\circ\{s\})\times (V_{(x,s)}\circ\{t\}))\times (U_x\circ\{s\})\times (V_{(x,s)}\circ\{t\}))\]
and we recall that $U_x\circ\{s\}\coloneqq\{s'\in K\mid (s',s)\in U_x\}$ etc.,
then $E_\cW\subset (E_\cU\swapcross\Diag[L]) \circ E_{\cV} \circ E_{\cV}^{-1}\circ (E_\cU\swapcross\Diag[L])^{-1}$, showing that the map from the left to the right is controlled. For the other direction we use that $E_\cU\swapcross\Diag[L]\subset E_\cW$ and $E_\cV\subset E_\cW$.

The second claim is trivial. 

For the third claim we note that due to the compactness of $L$ we can find for each $(x,s)\in X\times K$ a neighborhood $U'_{x,s}\subset K$ of $s$ such that $U'_{x,s}\times\{t\}\times U'_{x,s}\times\{t\}\subset W_x$ for all $t\in L$ and similarily the compactness of $K$ implies for each $(x,t)\in X \times L$ the existence of a neighborhood $V'_{x,t}\subset L$ of $t$ such that $\{s\}\times V'_{x,t}\times\{s\}\times V'_{x,t}\subset W_x$ for all $s\in K$.
Defining $U_x\coloneqq\bigcup_{s\in K}U'_{x,s}\times U'_{x,s}$ and $V_x\coloneqq \bigcup_{t\in L}V'_{x,t}\times V'_{x,t}$ we see that $E_\cU\swapcross \Diag[L]\subset E_\cW\supset E_\cV$.
\end{proof}

Now we recall \cite[Definition 2.14]{wulff2020equivariant} about generalized coarse homotopies and simultaneously introduce generalized controlled homotopies.

\begin{defn}\label{defn:controlledcoarsehomotopy}
A \emph{generalized controlled/coarse homotopy} between two con\-trolled/ coarse maps $f,g\colon X\to Y$ is a controlled/coarse map $H\colon X\indexcross{\cU}I\to Y$, where $I=[a,b]$ is a closed interval and $\cU\in\nbhfmly(X;I)$, which restricts to $f$ on $X\times\{a\}$ and to $g$ on $X\times\{b\}$.

If $f,g\colon (X,A)\to (Y,B)$ are controlled/coarse maps between pairs of coarse spaces, then a \emph{generalized controlled/coarse homotopy} between them is a generalized coarse homotopy between the absolute controlled/coarse maps $f,g\colon X\to Y$ which takes $A\times I$ to $B$.

In both cases we call $f$ \emph{generalized controlledly/coarsely homotopic} to $g$.
\end{defn}

\begin{lem}\label{lem:homotopytogeneralizedhomotopy}
Let $f\colon Y\to X$ be a controlled/coarse map between coarse spaces, $g_0,g_1\colon L\to K$ two continuous maps between compact Hausdorff spaces and $\cU=\{U_x\}_{x\in X}\in\nbhfmly(X;K)$. If $I$ is a closed interval and $H\colon L\times I\to K$ is a homotopy between $g_0,g_1$, then there is a common refinement $\cU'\in\nbhfmly(X;K)$ of $(f\times g_i)^*(\cU)$ ($i=0,1$) 
 and $\cV'=\{V'_y\}_{y\in Y}\in\nbhfmly(Y;I)$ such that $f\times H\colon (Y\indexcross{\cU'}L)\indexcross{\cV'}I\to X\indexcross{\cU}K$ is a generalized controlled/coarse homotopy between $f\times g_0,f\times g_1\colon Y\indexcross{\cU'}L\to X\indexcross{\cU}K$.
\end{lem}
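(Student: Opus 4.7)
The plan is to exploit the uniform continuity of $H\colon L\times I\to K$, which holds because $L\times I$ is compact Hausdorff and therefore carries a unique compatible uniformity. This uniformity coincides with the product uniformity, and in particular admits a base of product entourages $U'\times V'$ with $U'$ a neighborhood of $\Diag[L]$ and $V'$ a neighborhood of $\Diag[I]$. Consequently, for each $y\in Y$ the open neighborhood $U_{f(y)}$ of $\Diag[K]$ in $K\times K$ pulls back under $H\times H$ to an open neighborhood of $\Diag[L\times I]$ containing a product $\tilde U'_y\times \tilde V'_y$ of the indicated form. Setting $\cU'\coloneqq\{\tilde U'_y\}_{y\in Y}\in\nbhfmly(Y;L)$ and $\cV'\coloneqq\{\tilde V'_y\}_{y\in Y}\in\nbhfmly(Y;I)$ produces the desired data.

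Next I would observe that $\cU'$ automatically refines $(f\times g_i)^*(\cU)$ for $i=0,1$: specializing to $t=t'=a$ (resp.\ $t=t'=b$) inside $\tilde V'_y$ gives $(g_0(s),g_0(s'))\in U_{f(y)}$ (resp.\ $(g_1(s),g_1(s'))\in U_{f(y)}$) whenever $(s,s')\in \tilde U'_y$, i.e.\ $\tilde U'_y\subset (g_i\times g_i)^{-1}(U_{f(y)})$. By \Cref{lem:warpedtrivialproperties}\ref{item:warpedfunctorial} together with \ref{item:warpedrefinement}, $f\times g_i\colon Y\indexcross{\cU'}L\to X\indexcross{\cU}K$ is then a controlled (resp.\ coarse) map.

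Then I would verify controlledness of $f\times H$ on the three families of generators of the coarse structure on $(Y\indexcross{\cU'}L)\indexcross{\cV'}I$. On $E\swapcross \Diag[L\times I]$ with $E\in\coarseStr_Y$, the image is $(f\times f)(E)\swapcross \Diag[K]$, which is an entourage of $X\indexcross{\cU}K$ by controlledness of $f$. On $E_{\cU'}\swapcross\Diag[I]$, a typical pair $((y,s,t),(y,s',t))$ with $(s,s')\in\tilde U'_y$ is sent to $((f(y),H(s,t)),(f(y),H(s',t)))$; since $(t,t)\in\tilde V'_y$, the choice of $\tilde U'_y\times\tilde V'_y$ forces $(H(s,t),H(s',t))\in U_{f(y)}$, so the image lies in $E_\cU$. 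On $E_{\cV'}$, the same estimate applied with the roles of $L$ and $I$ reversed shows that images of pairs $((y,s,t),(y,s,t'))$ with $(t,t')\in\tilde V'_y$ land in $E_\cU$. The boundary values are immediate: $(f\times H)|_{t=a}=f\times g_0$ and $(f\times H)|_{t=b}=f\times g_1$. If $f$ is additionally a coarse map, properness of $f\times H$ follows by applying \Cref{lem:warpedtrivialproperties}\ref{item:warpedbounded} twice: the preimage of any $A\times K$ with $A\subset X$ bounded is $f^{-1}(A)\times L\times I$, and $f^{-1}(A)$ is bounded.

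The only subtle point I anticipate is justifying that a single $\cV'$ indexed over $Y$ (rather than over $Y\indexcross{\cU'}L$) is sufficient; this hinges on the compactness of $L$, which lets us absorb the $s$-dependence into a uniform estimate so that the neighborhood $\tilde V'_y$ depends only on $y$. The analogous remark applies to $\cU'$ with the roles of $L$ and $I$ interchanged. Everything else is bookkeeping about generating entourages, so I do not expect further obstacles.
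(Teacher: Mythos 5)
Your proof is correct, and it is in substance the same compactness argument the paper assembles by citing \Cref{lem:warpedtrivialproperties}.\ref{item:warpedfunctorial} and \Cref{lem:warpedcartesianproducts}.\ref{item:warpedfission}: the paper pulls $\cU$ back under $f\times H$ to $\cW\in\nbhfmly(Y;L\times I)$, gets controlledness of $f\times H$ on $Y\indexcross{\cW}(L\times I)$ for free from \ref{item:warpedfunctorial}, and then splits $\cW$ into $\cU',\cV'$ via \ref{item:warpedfission}, whose proof is exactly the compactness bookkeeping (extract a product entourage inside the neighborhood $(H\times H)^{-1}(U_{f(y)})$ of $\Diag[L\times I]$) that you carry out by hand. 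The paper's route buys brevity; yours buys self-containedness, plus one small genuine improvement: by specializing $(t,t')$ to the diagonal at an endpoint of $I$ you show that $\cU'$ refines $(f\times g_i)^*(\cU)$ automatically, whereas the paper only asserts that this can be arranged by a further refinement. A minor notational point: where you write $\tilde U'_y\times\tilde V'_y$ you mean the shuffled product entourage $\tilde U'_y\swapcross\tilde V'_y$ of $L\times I$; and note that the $U_x$ are only required to be neighborhoods, not open sets, but since preimages of neighborhoods of the diagonal under $H\times H$ are again neighborhoods of the diagonal, your argument goes through unchanged.
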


\begin{proof}
If $\cW$ ist the pullback of $\cU$ under the map $f\times H$, then according to \Cref{lem:warpedtrivialproperties}.\ref{item:warpedfunctorial} and \Cref{lem:warpedcartesianproducts}.\ref{item:warpedfusion} we have a controlled map
\[(Y\indexcross{\cU'}L)\indexcross{\cV'}I\xrightarrow{\id} Y\indexcross{\cW}(L\times I)\xrightarrow{f\times H} X\indexcross{\cU}K
\]for certain collections $\cU',\cV'$. If necessary, $\cU'$ can be refined to be finer than  $(f\times g_i)^*(\cU)$ for $i=0,1$.

If in addition $f$ is proper, then so is $f\times H$.
\end{proof}

The lemma does not give too much information about how much finer $\cU'$ is in comparison to the pullback collections $(f\times g_i)^*(\cU)$. Although this is almost always sufficient, we need a more sophisticated version in one special case.

\begin{lem}\label{lem:homotopydomaindeformationretraction}
Let $X$ be a coarse space, $I$ a closed interval and $\cU\in\nbhfmly(X;I)$. Then there is a refinement $\cU'\in\nbhfmly(X;I)$ of $\cU$ and $\cV'\in\nbhfmly(X;[0,1])$ such that for all Lipschitz maps $g_0,g_1\colon I\to I$ the maps $\id_X\times g_0,\id_X\times g_1\colon  X\indexcross{\cU'}I\to X\indexcross{\cU'}I$ are coarse maps and
\[H\colon  (X\indexcross{\cU'}I)\indexcross{\cV'}[0,1] \to X\indexcross{\cU'}I\,,\quad (x,s,t)\mapsto (x,(1-t)g_0(s)+tg_1(s))\]
is a generalized coarse homotopy between them.

In particular, if $g_0=\id_I$ and $g_1\colon I\twoheadrightarrow J\subset I$ is the canonical retraction onto a closed interval or a point, then $X\indexcross{\cU'}J$ is a strong generalized coarse deformation retract of $X\indexcross{\cU'}I$.
\end{lem}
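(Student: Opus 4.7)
The plan is to take $\cU'$ and $\cV'$ to be uniform neighborhoods of the diagonals whose widths depend only on $x\in X$ but not on the Lipschitz maps. Since each $U_x$ is an open neighborhood of the compact set $\Diag[I]\subset I\times I$, one can pick $\epsilon_x>0$ with
\[U'_x\coloneqq\{(s,t)\in I\times I:|s-t|<\epsilon_x\}\subset U_x\,,\]
which automatically ensures that $\cU'$ refines $\cU$. Then we choose $\delta_x>0$ with $\delta_x\cdot|I|<\epsilon_x$, where $|I|$ denotes the length of $I$, and set
\[V'_x\coloneqq\{(t,t')\in[0,1]\times[0,1]:|t-t'|<\delta_x\}\,.\]

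The crucial structural observation is that by the convexity of $I$, every iterated composition $E_{\cU'}^k$ contains $\{((x,s),(x,t)):|s-t|<k\epsilon_x\}$, and all such compositions remain entourages of $X\indexcross{\cU'}I$. Hence for a Lipschitz map $g$ with constant $L$, the image of the generator $E_{\cU'}$ under $(\id_X\times g)\times(\id_X\times g)$ is contained in the $L\epsilon_x$-neighborhood and thus in $E_{\cU'}^{\lceil L\rceil}$; the generator $E\swapcross\Diag[I]$ is manifestly preserved; and properness follows from \Cref{lem:warpedtrivialproperties}.\ref{item:warpedbounded} because $\id_X\times g$ leaves the $X$-coordinate untouched. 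This establishes that $\id_X\times g_0$ and $\id_X\times g_1$ are coarse.

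For the homotopy $H$, I will check the three generating entourages of $(X\indexcross{\cU'}I)\indexcross{\cV'}[0,1]$ individually. The image of $E\swapcross\Diag[I]\swapcross\Diag[{[0,1]}]$ lies in $E\swapcross\Diag[I]$; with $L\coloneqq\max\{\mathrm{Lip}(g_0),\mathrm{Lip}(g_1)\}$, the image of $E_{\cU'}\swapcross\Diag[{[0,1]}]$ is bounded by $L\epsilon_x$ and thus sits in $E_{\cU'}^{\lceil L\rceil}$; and the decisive estimate for $E_{\cV'}$ is
\[\bigl|((1-t)g_0(s)+tg_1(s))-((1-t')g_0(s)+t'g_1(s))\bigr|=|t-t'|\cdot|g_0(s)-g_1(s)|<\delta_x\cdot|I|<\epsilon_x\,,\]
placing this image inside $E_{\cU'}$ itself. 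Properness is again immediate, and the boundary values $H(x,s,0)=(x,g_0(s))$, $H(x,s,1)=(x,g_1(s))$ are evident from the formula.

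For the particular case, $g_0=\id_I$ and the retraction $g_1\colon I\twoheadrightarrow J$ are both $1$-Lipschitz, and since $g_1(s)=s$ for $s\in J$ we obtain $H(x,s,t)=(x,s)$ on $X\times J\times[0,1]$, so the homotopy is stationary on the subspace $X\indexcross{\cU'}J$, giving the asserted strong generalized coarse deformation retraction. The only conceptually non-trivial point, and the one worth highlighting, is the insistence that a single $\cU'$ works for \emph{all} Lipschitz maps: this relies precisely on the coarse structure being closed under finite composition, which absorbs any individual (finite) Lipschitz constant into a suitable power $E_{\cU'}^{\lceil L\rceil}$, even though no uniform Lipschitz bound is available.
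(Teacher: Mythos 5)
Your proof is correct and follows essentially the same approach as the paper's: choose $\cU'$ to be made of uniform $\varepsilon_x$-strips in $I\times I$, observe that Lipschitz constants get absorbed into powers $E_{\cU'}^k$ via convexity of $I$, and check the three generating entourages. The only cosmetic difference is that you tighten $\cV'$ by an extra factor $\delta_x$ with $\delta_x\cdot|I|<\varepsilon_x$ so that the image of $E_{\cV'}$ lands in $E_{\cU'}$ itself, whereas the paper keeps $V'_x$ at the same scale $\varepsilon_x$ and instead absorbs the $|I|$-factor into a power $E_{\cU'}^n$ with $n>\diam I$; both choices work and the difference is immaterial.
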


Here, we have denoted the restriction of $\cU'$ to a collection in $\nbhfmly(X;J)$ by the same letter and `strong' is supposed to mean that the subspace $X\indexcross{\cU'}J$ stays pointwise fixed throughout the homotopy.

\begin{proof}
Given $\cU=\{U_x\}_{x\in X}$, we can choose for each $x\in X$ an $\varepsilon_x>0$ such that
\[U'_x\coloneqq\{(s_1,s_2)\in I^2\mid |s_1-s_2|<\varepsilon_x\}\subset U_x\]
and subsequently we also define 
$V'_x\coloneqq\{(s_1,s_2)\in[0,1]^2\mid |s_1-s_2|<\varepsilon_x\}$. Then $\cU'\coloneqq\{U'_x\}_{x\in X}$ and $\cU'\coloneqq\{V'_x\}_{x\in X}$ do the job, because $H\times H$ clearly maps $E_{\cU'}\swapcross\Diag[{[0,1]}]$ into the entourage $E_{\cU'}^{m}=E_{\cU'}\circ\dots\circ E_{\cU'}$ for $m\in\N$ bigger than the Lipschitz constants of $g_0,g_1$, it maps $E_{\cV'}$ into $E_{\cU'}^{n}$ for $n\in\N$ bigger than the diameter of $I$ and of course it also maps $E\swapcross\Diag[{I\times [0,1]}]$ into $E\swapcross\Diag[I]$ for all entourages $E$ of $X$.
\end{proof}

\section{Framework and standing assumptions}
\label{sec:framework}

In the present paper we always use the axiomatic framework of \cite[Section 3]{wulff2020equivariant} for (non-equivariant) coarse \mbox{(co-)}homology theories on an admissible category $\admissible$ of pairs of coarse spaces.
Most of the axioms obvious analogues to Eilenberg--Steenrod axioms for topological \mbox{(co-)}homology theories, so we do not see the necessity for recalling all of them in detail here. 
There are just two axioms, one of them mandatory and one of them optional, which we should mention to fix some terminology.

All coarse \mbox{(co-)}homology theories are postulated to satisfy the \emph{excision axiom}, which says that all excisions as defined in the following definition induce isomorphisms.

\begin{defn}
An inclusion map between objects of $\admissible$ of the form $(X\setminus C,X\setminus A)\to (X,A)$ with $C\subset A\subset X$ is called an \emph{excision} if for every entourage $E$ of $X$ there is an entourage $F$ of $X$ such that
$\Pen_E(A)\setminus C\subset\Pen_F(A\setminus C)$.

A \emph{triad} in $\admissible$ is a triple $(X;A,B)$ such that both $(X,A)$ and $(X,B)$ are objects in $\admissible$.
We call a triad $(X;A,B)$ in $\admissible$ \emph{(coarsely) excisive} in $\admissible$ if for every entourage $E$ of $X$ there is an entourage $F$ of $X$ such that $\Pen_E(A)\cap\Pen_E(B)\subset\Pen_F(A\cap B)$.
\end{defn}
It is readily verified that $(X\setminus C,X\setminus A)\to (X,A)$ is an excision if an only if $(X;A,X\setminus C)$ is an excisive triad. 
We emphasize, however, that in this case we have $X=A\cup (X\setminus C)$, whereas general excisive triads $(X;A,B)$ do not need to satisfy $X=A\cup B$.

The other noteworthy axiom is the \emph{strong homotopy axiom}, which holds for a coarse \mbox{(co-)}homology theory if generalized coarsely homotopic coarse maps (cf.\ \Cref{defn:controlledcoarsehomotopy}) induce the same map on \mbox{(co-)}homology.
This axiom is optional, because in general we only demand the weaker \emph{homotopy axiom}, which says that maps which are coarsely homotopic in the classical sense induce the same map.

We are going to construct our secondary products only for coarse \mbox{(co-)}homology theories which satisfy this strong homotopy axiom.
These include the ordinary coarse \mbox{(co-)}homology theories and coarsifications of topological \mbox{(co-)}homology theories for $\sigma$-locally compact spaces, which we will analyze later on in \Cref{sec:ordinary,sec:transgression}.

There are two main examples of coarse \mbox{(co-)}homology theories for which we do not know if the strong homotopy axiom holds, namely the $\K$-theory of the stable Higson coronas and the $\K$-theory of the Roe algebras.
Thus, they cannot be equipped with our secondary products, but that does not need to bother us at all, because they have even better primary products instead. This will be the topic of Section \Cref{sec:assembly}.

We also have to pose some additional assumptions on the admissible category $\admissible$. 
Recall that admissibility means that $\admissible$ is a full sub-category of the category of pairs of coarse spaces which contains $\emptyset=(\emptyset,\emptyset)$ and such that if $(X,A)$ is an object in $\admissible$, then so are $(X,X),(A,A),X=(X,\emptyset)$ and $A=(A,\emptyset)$.
However, this is not enough to make our constructions work.
On the one hand, the theory of generalized controlled homotopies will play an essential role and therefore we need $\admissible$ to be closed under warped products with intervals.
On the other hand, our secondary cup and cap products will be obtained from cross and slant products and hence we also assume that $\admissible$ is closed under taking products of coarse spaces. 
More precisely, what we need is the following.

\begin{assumption}\label{ass:assumptiononadmissible}
We assume that $\admissible$ is an admissible category with the following properties:
\begin{itemize}
\item If $(X,A),(Y,B)$ are objects in $\admissible$, then so are
\begin{gather*}
(X,A)\times (Y,B)\coloneqq (X\times Y, X\times B \cup A\times Y)\,,
\\(X\times B \cup A\times Y,X\times B)\quad\text{and}\quad (X\times B \cup A\times Y,A\times Y)\,.
\end{gather*}
 
\item If $(X,A)$ is an object in $\admissible$, $I=[a,b]$ is a closed interval with subsets $I_2\subset I_1\subset I$ which are closed and have finitely many components (i.\,e.\ $I_1,I_2$ are finite disjoint unions of closed intervals and points) and $\cU\in\nbhfmly(X;I)$, then 
\[(X,A)\indexcross{\cU}(I,I_i)\coloneqq (X\indexcross{\cU}I,X\times I_i\cup A\times I)\]
for $i=1,2$ and 
\[(X\times I_1\cup A\times I,X\times I_2\cup A\times I)\]
equipped with the subspace coarse structure of $X\indexcross{\cU}I$ are also objects of $\admissible$.
\end{itemize}
\end{assumption}

\section{Coarse suspension}
\label{sec:suspension}

As a first step towards the secondary products, we construct the coarse suspension homomorphisms, which relate the \mbox{(co-)}homology of a pair of coarse spaces $(X,A)$ with the \mbox{(co-)}homology of a warped product $(X,A)\indexcross{\cU}(I,\partial I)$. But this works only if the collection $\cU=\{U_x\}_{x\in X}$ makes the intervall $I$ become arbitrary wide as we go far away from $A$, that is, the $U_x$ become arbitrarily small, because otherwise $X\indexcross{\cU}I$ would be coarsely equivalent to the subspace $X\times\partial I\cup A\times I$ and hence the \mbox{(co-)}homology of $(X,A)\indexcross{\cU}(I,\partial I)$ would vanish.

\begin{defn}\label{def:suspensioncondition}
Let $(X,A)$ be a pair of coarse spaces and $I$ a closed interval. We say that $\cU=\{U_x\}_{x\in X}\in\nbhfmly(X;I)$ satisfies the \emph{suspension condition} if  for each $\varepsilon>0$ there is a penumbra $\Pen_{E}(A)$ of $A$ such that $\sup_{(s,t)\in U_x}|s-t|\leq\varepsilon$ for all $x\in X\setminus\Pen_E(A)$. The set of all such $\cU$ is denoted by $\nbhfmlysgm(X,A;I)$.
\end{defn}

\begin{rem}\label{rem:suspensioncondition}
Several things should be pointed out about this definition.
\begin{enumerate}
\item If $\cU$ satisfies the suspension condition and $\cU'$ is a refinement of $\cU$, then $\cU'$ also satisfies the suspension condition.
\item\label{item:suspensioncountability} The set $\nbhfmlysgm(X,A;I)$ is non-empty if and only if there is a sequence $E_1,E_2,\dots$ of entourages of $X$ such that $X=\bigcup_{i=1}^\infty \Pen_E(A)$. In this case, each family in $\nbhfmly(X;I)$ has a refinement in $\nbhfmlysgm(X,A;I)$.

In practice, this is not a big issue, as many interesting admissible categories are countable generated.

\item The suspension condition can only be satisfied if each coarse component of $X$ intersects $A$ nontrivially. Coincidentally, this is the only case which we will be interested in later on. 
If a coarse component $X_0\subset X$ is disjoint from $A$, then the classical way of performing coarse suspension on $X_0$ seems more reasonable, that is, simply taking its coarse product with $\R$ instead of the warped product with a closed interval relative to the endpoints.
\end{enumerate}
\end{rem}

\begin{lem}\label{lem:suspensionspace}
Let $(X,A)$ be a pair of coarse spaces, $I$ a closed interval with boundary $\partial I= \{e_1,e_2\}$ (we do not specify which of $e_1,e_2$ is which end) and $\cU\in\nbhfmlysgm(X,A;I)$.
Then the inclusion 
\[(X,A)\quad\to\quad\left(X\times\partial I \cup A\times I\,,\quad X\times\{e_2\}\cup A\times I\right)\,,\quad x\mapsto (x,e_1)\,,\]
where the spaces on the right are equipped with the subspace coarse structure of $X\indexcross{\cU}I$,
is an excision and hence induces isomorphisms on coarse \mbox{(co-)}homology.
\end{lem}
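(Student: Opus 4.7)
The plan is to exhibit this inclusion as an excision and then invoke the excision axiom. I would set $Y\coloneqq X\times\partial I\cup A\times I$ and $B\coloneqq X\times\{e_2\}\cup A\times I$ and define
\[C\coloneqq X\times\{e_2\}\cup A\times(I\setminus\{e_1\})\subset B\,,\]
so that $Y\setminus C=X\times\{e_1\}$ and $B\setminus C=A\times\{e_1\}$. Under the obvious coarse equivalence $X\cong X\times\{e_1\}$ (inherited from the subspace structure of $X\indexcross{\cU}I$), the map $x\mapsto(x,e_1)$ is then the inclusion $(Y\setminus C,B\setminus C)\hookrightarrow(Y,B)$, and it remains to verify that for every entourage $E^*$ of $Y$ there is an entourage $F$ of $Y$ with $\Pen_{E^*}(B)\setminus C\subset\Pen_F(B\setminus C)$.

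For this I would choose a symmetric entourage $E$ of $X$ and an $n\in\N$ with $E^*\subset\bigl((E\swapcross\Diag[I])\cup E_\cU\cup\Diag\bigr)^n$, which is possible since the coarse structure on $X\indexcross{\cU}I$ is generated by the $E\swapcross\Diag[I]$ and the $E_\cU$. Any point of $\Pen_{E^*}(B)\cap(X\times\{e_1\})$ is then witnessed by a chain of length at most $n$ in $X\indexcross{\cU}I$ that starts at $(x,e_1)$ and ends at some $(y,t)\in B$, each step being either spatial (moving only the $X$-coordinate within $E$), warp (moving only the $I$-coordinate at some base point $x_j$ within $U_{x_j}$), or trivial. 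Because warp steps preserve the $X$-coordinate and spatial steps preserve the $I$-coordinate, the chain decomposes additively: the total $X$-movement lies in $E^n$, and the total $I$-displacement equals the sum of $|s_{i_j-1}-s_{i_j}|$ over the warp steps.

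The argument then splits into two cases. If $(y,t)\in A\times I$, then $y\in A$ and the spatial bound alone already gives $x\in\Pen_{E^n}(A)$. If instead $(y,t)\in X\times\{e_2\}$, the warp displacements must sum to at least $\delta\coloneqq|e_1-e_2|$; invoking the suspension condition for some $\varepsilon<\delta/n$ produces an entourage $E_\varepsilon$ of $X$ such that $\diam(U_x)\le\varepsilon$ whenever $x\notin\Pen_{E_\varepsilon}(A)$. Because $n\varepsilon<\delta$, at least one warp step must occur at a base point $x_j\in\Pen_{E_\varepsilon}(A)$, and combining this with the spatial bound up to step $j$ yields $x\in\Pen_{E^n\circ E_\varepsilon}(A)$.

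In both cases $F\coloneqq\bigl((E^n\circ E_\varepsilon)\swapcross\Diag[I]\bigr)\cap(Y\times Y)$ is an entourage of $Y$ with $(x,e_1)\in\Pen_F(A\times\{e_1\})=\Pen_F(B\setminus C)$, which is the required inclusion; the isomorphisms on coarse \mbox{(co-)}homology then follow from the excision axiom. The step I expect to be the most delicate is the additive decomposition of an arbitrary chain: one must argue carefully that spatial and warp steps interact cleanly in any composition, so that a single warp step near $A$ really suffices to trace the base point $x$ back to $A$ via only the fixed entourage $E^n\circ E_\varepsilon$, rather than accumulating a worse dependence on~$n$.
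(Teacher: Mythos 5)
Your proposal is correct and follows essentially the same strategy as the paper's own proof: decompose an arbitrary entourage into a bounded number of elementary spatial and warp steps, use the pigeonhole principle to locate a warp step whose $I$-displacement exceeds the threshold from the suspension condition, conclude that its base point lies in a fixed penumbra of $A$, and then trace the $X$-coordinate back to the starting point to get the required controlled penumbra of $A\times\{e_1\}$. The only cosmetic differences are that the paper writes the entourage as a composition $(E_1\swapcross\Diag[I])\circ E_\cU\circ\cdots\circ E_\cU$ rather than a power of a union, and it sets its threshold at $|e_1-e_2|/(n+1)$ while you use $\varepsilon<\delta/n$; these are interchangeable.
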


\begin{proof}
Any entourage of $X\indexcross{\cU} I$ is contained in one of the form 
\[E=(E_1\swapcross \Diag[I])\circ E_\cU\circ (E_2\swapcross \Diag[I])\circ E_\cU\circ \dots\circ E_\cU\circ (E_n\swapcross \Diag[I])\circ E_\cU\]
with $n\in\N$ and entourages $E_1,\dots,E_n$ of $X$. Let $E'$ be an entourage of $X$ such that $\sup_{(s,t)\in U_x}|s-t|\leq\frac{|e_1-e_2|}{n+1}$ for all $x\in X\setminus\Pen_ {E'}(A)$ and define the entourage 
\[F'\coloneqq E'\cup (E_1\circ E')\cup (E_1\circ E_2\circ E')\cup\dots\cup (E_1\circ\dots\circ E_n\circ E')\]
of $X$. 
Then for each $(x,e_1)\in \Pen_E(X\times\{e_2\})$ we can find points $x_0=x,x_1,\dots,x_n\in X$ and $s_0=e_1,s_1,\dots,s_n=e_2 \in I$ such that $(x_{i-1},x_i)\in E_i$ and $(s_{i-1},s_i)\in U_{x_i}$ for all $i=1,\dots,n$. But this is only possible if $|s_{i-1}-s_i|\geq\frac{|e_1-e_2|}n$ for at least one $i$, which implies $x_i\in \Pen_{E'}(A)$ and therefore $x\in\Pen_{F'}(A)$.
This implication shows
\[\Pen_E(X\times\{e_2\}\cup A\times I)\cap (X\times\{e_1\})\subset\Pen_F(A\times\{e_1\})\]
for the entourage $F\coloneqq (F'\cup (E_1\circ\dots\circ E_n))\swapcross\Diag[I]$, which is exactly the excisiveness condition of \cite[Definition 3.2]{wulff2020equivariant} for the spaces under consideration.
\end{proof}

\begin{lem}\label{lem:suspensionhomomorphism}
Let $(X,A)$ be a pair of coarse spaces in $\admissible$, $I$ a closed interval with boundary $\partial I= \{e_1,e_2\}$ and $\cU\in\nbhfmlysgm(X,A;I)$.  
Then for any coarse homology theory $\HomolX_*$ or coarse cohomology theory $\CohomX^*$ on $\admissible$ satisfying the strong homotopy axiom there are homomorphisms
\begin{align*}
\susp&\colon \HomolX_*(X,A)\to \HomolX_{*+1}((X,A)\indexcross{\cU}(I,\partial I ))
\\\cosusp&\colon \CohomX^{*+1}((X,A)\indexcross{\cU}(I,\partial I ))\to \CohomX^*(X,A)
\end{align*}
such that $\sigma_*$ is a right inverse to
\[\HomolX_{*+1}((X,A)\indexcross{\cU}(I,\partial I ))\xrightarrow{\partial}\HomolX_*(X\times\partial I \cup A\times I, X\times\{e_2\}\cup A\times I)\cong\HomolX_*(X,A)\]
and $\sigma^*$ is a left inverse to
\[\CohomX^*(X,A)\cong \CohomX^*(X\times\partial I \cup A\times I, X\times\{e_2\}\cup A\times I) \xrightarrow{\delta}\CohomX^{*+1}((X,A)\indexcross{\cU}(I,\partial I ))\]
where the connecting homomorphisms are those associated to the triple
\[\left(X\indexcross{\cU}I\,,\quad X\times\partial I \cup A\times I\,,\quad X\times\{e_2\}\cup A\times I\right)\]
and the isomorphisms are those induced by the excision from \Cref{lem:suspensionspace}. 
Furthermore, they are natural under coarse maps $f\colon (Y,B)\to (X,A)$, homeomorphism $g\colon J\to I$ of intervals which preserves the chosen distinction of the end points and the associated coarse map on the warped products
\[f\times g\colon (Y,B)\indexcross{\cV}(J,\partial J)\to (X,A)\indexcross{\cU}(I,\partial I)\]
from \Cref{lem:warpedtrivialproperties}.\ref{item:warpedfunctorial}, with $\cV$ being a refinement of $(f\times g)^*(\cU)$ which automatically is an element of $\nbhfmlysgm(Y,B;J)$.
Also, each $\cU$ has a refinement $\cU'$ for which $\susp,\cosusp$ are isomorphisms.
\end{lem}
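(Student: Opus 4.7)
The plan is to realise $\susp$ and $\cosusp$ as \mbox{(co-)}boundary splittings in the long exact sequence of the triple
\[T_\cU = (X\indexcross{\cU}I,\ X\times\partial I \cup A\times I,\ X\times\{e_2\}\cup A\times I)\,,\]
by first showing that for a carefully chosen refinement $\cU'$ of $\cU$ the relative \mbox{(co-)}homology of the pair $(X\indexcross{\cU'}I,\ X\times\{e_2\}\cup A\times I)$ vanishes, and then transferring the resulting isomorphism from $\cU'$ down to $\cU$ via the canonical refinement map.

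For the vanishing I would apply \Cref{lem:homotopydomaindeformationretraction} with $g_0 = \id_I$ and $g_1\colon I\twoheadrightarrow\{e_2\}$ the constant retraction, obtaining a refinement $\cU'$ of $\cU$ and an auxiliary $\cV'$ such that $H(x,s,t) = (x,(1-t)s + te_2)$ is a generalized coarse homotopy between $\id_{X\indexcross{\cU'}I}$ and the retraction $r\colon (x,s)\mapsto (x,e_2)$. A direct inspection shows that $H$ carries the subspace $X\times\{e_2\}\cup A\times I$ into itself at every time parameter (since the second coordinate is a convex combination in $I$ and the first coordinate stays in $A$ whenever it started there), so $H$ is actually a generalized coarse homotopy \emph{of pairs}. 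Since $r$ furthermore factors through the pair $(X\times\{e_2\},X\times\{e_2\})$ whose relative \mbox{(co-)}homology is zero, the strong homotopy axiom forces both $\HomolX_*(X\indexcross{\cU'}I,\ X\times\{e_2\}\cup A\times I)$ and $\CohomX^*(X\indexcross{\cU'}I,\ X\times\{e_2\}\cup A\times I)$ to vanish.

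Plugging this into the long exact sequence of $T_{\cU'}$ sandwiches the boundary map $\partial'$ (resp.\ the coboundary map $\delta'$) between zero terms, so both are isomorphisms. I define $\susp^{\cU'}$ as the composition of the excision isomorphism of \Cref{lem:suspensionspace} with $(\partial')^{-1}$, and $\cosusp^{\cU'}$ dually. For a general $\cU$, letting $\iota\colon (X,A)\indexcross{\cU'}(I,\partial I)\to (X,A)\indexcross{\cU}(I,\partial I)$ denote the canonical refinement map from \Cref{lem:warpedtrivialproperties}.\ref{item:warpedrefinement}, I set $\susp\coloneqq \iota_*\circ\susp^{\cU'}$ and $\cosusp\coloneqq\cosusp^{\cU'}\circ\iota^*$. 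Naturality of the connecting maps and of the excision isomorphism under $\iota$ together yield that $\susp$ is a right inverse to $\partial$ followed by excision, and dually that $\cosusp$ is a left inverse to excision followed by $\delta$; independence of $\susp$ and $\cosusp$ from the particular refinement $\cU'$ follows by comparing any two choices through a common refinement and invoking naturality once more.

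Naturality under a coarse map $f\times g\colon (Y,B)\indexcross{\cV}(J,\partial J) \to (X,A)\indexcross{\cU}(I,\partial I)$ then reduces to a diagram chase using functoriality of the long exact sequence of a triple and of the excision map, provided one picks compatible ambient refinements for instance by refining $\cV$ further so as to simultaneously refine the pullback $(f\times g)^*\cU'$ and to satisfy the hypotheses of \Cref{lem:homotopydomaindeformationretraction} for $Y$. The automatic membership $\cV\in\nbhfmlysgm(Y,B;J)$ whenever $\cV$ refines the pullback of an element of $\nbhfmlysgm(X,A;I)$ follows from the fact that $f$, as a coarse map between pairs, carries penumbrae of $B$ into penumbrae of $A$. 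The main technical obstacle in my view is really the very first step: verifying that the strong generalized coarse deformation retract of \Cref{lem:homotopydomaindeformationretraction} lifts to a deformation retract of pairs, because the subspace $X\times\{e_2\}\cup A\times I$ has to be preserved during the homotopy and the factorisation through the trivial pair has to be compatible with the subspace coarse structure. Once that step is secured, everything else is a formal consequence of the Eilenberg--Steenrod-style axioms.
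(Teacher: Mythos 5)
Your proposal follows the paper's proof very closely: exactly as in the paper, you invoke \Cref{lem:homotopydomaindeformationretraction} with $g_0 = \id_I$, $g_1 = \mathrm{const}_{e_2}$ to produce a refinement $\cU'$ for which $(X,A)\indexcross{\cU'}(I,\{e_2\})$ is generalized coarsely homotopy equivalent to $(X,X)$, so that the connecting homomorphism of the triple becomes an isomorphism, you then define $\susp$, $\cosusp$ by pushing/pulling along the refinement map, and you obtain naturality and independence of $\cU'$ through common refinements — this is precisely the paper's argument, and your careful verification that $H(x,s,t)=(x,(1-t)s+te_2)$ preserves the subspace $X\times\{e_2\}\cup A\times I$ is the point the paper leaves implicit.

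One small caveat: your justification of the parenthetical "automatic" membership $\cV\in\nbhfmlysgm(Y,B;J)$ runs in the wrong direction. You argue from "$f$ carries penumbrae of $B$ into penumbrae of $A$", which is true but not what the suspension condition for $(f\times g)^*\cU$ requires; what one would need is the \emph{reverse} implication, namely that $f^{-1}$ of a penumbra of $A$ be contained in a penumbra of $B$, and that can fail for a general coarse map of pairs (e.g.\ $f\colon(\R,\{0\})\to(\R^2,\R\times\{0\})$, $f(y)=(y,0)$, with $U_{(s,t)}$ depending only on $|t|$). The paper's own proof sidesteps this by simply taking $\cV$ to lie in $\nbhfmlysgm(Y,B;J)$ as part of the data, which is the safe reading; it is worth noting the "automatically" in the lemma's statement is the shaky part, not your surrounding argument.
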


\begin{proof}
According to the \Cref{lem:homotopydomaindeformationretraction} there is a refinement $\cU'$ of $\cU$ such that 
\((X,A)\indexcross{\cU'}(I,\{e_2\})\)
is generalized coarsely homotopy equivalent to $(X,X)$ and therefore $\HomolX_*$, $\CohomX^*$ vanish on them. 
Naturality of the connecting homomorphisms associated to the triples 
\[\left(X\indexcross{\cU^{(\prime)}}I\,,\quad X\times\partial I \cup A\times I\,,\quad X\times\{e_2\}\cup A\times I\right)\]
under the coarse map $X\indexcross{\cU'}I\to X\indexcross{\cU}I$ together with the excision isomorphisms yields the two commutative triangles
\[\xymatrix{
\HomolX_{*+1}((X,A)\indexcross{\cU}(I,\partial I ))\ar[r]&\HomolX_*(X,A)&\CohomX^{*+1}((X,A)\indexcross{\cU'}(I,\partial I ))
\\\HomolX_{*+1}((X,A)\indexcross{\cU'}(I,\partial I ))\ar[u]\ar[ur]_-{\cong}&\CohomX^*(X,A)\ar[r]\ar[ur]_-{\cong}&\CohomX^{*+1}((X,A)\indexcross{\cU}(I,\partial I ))\ar[u]
}\]
which show the existence of one-sided inverses for $\cU$ and two-sided inverses for $\cU'$. 

To prove naturality of this construction, we pick a common refinement $\cV'$ of $\cV$ and $(f\times g)^*(\cU')$ with the property of \Cref{lem:homotopydomaindeformationretraction} and then the claim follows directly from the naturality of connecting homomorphisms under the maps of the diagram
\[\xymatrix{Y\indexcross{\cV}J\ar[r]&X\indexcross{\cU}I\\Y\indexcross{\cV'}J\ar[r]\ar[u]&X\indexcross{\cU'}I\ar[u]}\]
relative to certain subspaces.
In particular, the case $f=\id_X$, $g=\id_I$ shows that the maps $\susp$ and $\cosusp$ are independent of the choice of the refinement $\cU'$ in the construction.
\end{proof}

\begin{defn}
We call $(X,A)\indexcross{\cU}(I,\partial I)$ as in \Cref{lem:suspensionspace} a \emph{suspension} of $(X,A)$. If $e_1$ is the lower endpoint of the interval $I$, then we call the homomorphisms $\sigma_*$ and $\sigma^*$ from \Cref{lem:suspensionhomomorphism} the associated \emph{suspension homomorphisms/isomorphisms}.
\end{defn}

Note that under the condition of \Cref{rem:suspensioncondition}.\ref{item:suspensioncountability}, the definition of the suspension homomorphisms can be extended to all $\cU\in\nbhfmly(X;I)$ by choosing a refinement $\cU'\in\nbhfmlysgm(X,A;I)$ of $\cU$ and composing the suspension homomorphisms for $\cU'$ with the homomoprhisms induced by the refinement. 
However, they are harder to work with, because the connecting homomorphisms, to which they are supposed to be one-sided inverses, do not exist for $\cU$. 

The following lemma tells us how the choice of the distinguished endpoints and the order of consecutive suspensions affects the signs.

\begin{lem}\label{lem:suspensionsign}
Let $\HomolX_*$ be a coarse homology or $\CohomX^*$ a coarse cohomology theory on $\admissible$ satisfying the strong homotopy axiom. Furthermore, let $(X,A)$ be a pair of coarse spaces in $\admissible$, $I=[a,b]$ a closed interval and $\cU\in\nbhfmlysgm(X,A;I)$.
\begin{enumerate}
\item\label{item:intervalorientation} If $J$ is another closed interval, $f\colon J\to I$ is an orientation preserving and $g\colon J\to I$ an orientation reversing homeomorphism, then there is a common refinement $\cV$ of $(\id\times f)^*(\cU)$ and $(\id\times g)^*(\cU)$ such that the homomorphisms on $\HomolX_*$ or $\CohomX^*$ induced by the two coarse maps
\[\id_X\times f,\id_X\times g\colon (X,A)\indexcross{\cV}(J,\partial J)\to (X,A)\indexcross{\cU}(I,\partial I)\]
are negatives of one another.
\item\label{item:suspensionreflection} If we interchange the roles of $e_1$ and $e_2$ in \Cref{lem:suspensionhomomorphism}, then the homomorphisms $\sigma_*$ and $\sigma^*$ will change signs. In particular, if $e_1$ is the upper endpoint of the interval, then the resulting maps are exactly the negative of the suspension homomorphisms.
\item\label{item:doublesuspension} If $J$ is another closed interval  and $\cV\in\nbhfmlysgm(X,A;J)$, then the suspensions with repect to $I$ commute with the suspensions with respect to $J$ up to the sign $-1$, that is, the diagrams
\[\xymatrix{
\HomolX_*(X,A)\ar[r]^{\sigma_*}\ar[d]^{\sigma_*}&\HomolX_{*+1}((X,A)\indexcross{\cU}(I,\partial I))\ar[d]^{\sigma_*}
\\\HomolX_{*+1}((X,A)\indexcross{\cV}(J,\partial J))\ar[r]^{\sigma_*}
&\HomolX_{*+2}((X,A)\indexcross{\cU}(I,\partial I)\indexcross{\cV}(J,\partial J))
\\\CohomX^{*+2}((X,A)\indexcross{\cU}(I,\partial I)\indexcross{\cV}(J,\partial J))\ar[r]^{\sigma^*}\ar[d]^{\sigma^*}
&\CohomX^{*+1}((X,A)\indexcross{\cV}(J,\partial J))\ar[d]^{\sigma^*}
\\\CohomX^{*+1}((X,A)\indexcross{\cU}(I,\partial I))\ar[r]^{\sigma^*}&\CohomX^*(X,A)
}\]
commute up to the sign $-1$.
\end{enumerate}
\end{lem}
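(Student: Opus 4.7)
I would prove the three parts in the order (ii), (i), (iii), since each is used in the next. Throughout I denote by $\sigma_*^{(e_1,e_2)}$ the suspension homomorphism of \Cref{lem:suspensionhomomorphism} for the given ordering of distinguished endpoints, and similarly by $\sigma_*^{(e_2,e_1)}$ the one for the opposite ordering.

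For (ii), both $\sigma_*^{(e_1,e_2)}$ and $\sigma_*^{(e_2,e_1)}$ are right inverses of connecting homomorphisms $\partial_{e_2},\partial_{e_1}$ that both factor through the single connecting homomorphism $\partial$ of the triple $(X\indexcross{\cU}I, X\times\partial I\cup A\times I, A\times I)$, landing in $H_*(X\times\partial I\cup A\times I, A\times I)$. Repeated excision of each endpoint separately identifies this intermediate group with $H_*(X,A)\oplus H_*(X,A)$, and the two boundaries $\partial_{e_2},\partial_{e_1}$ appear as the two coordinate projections of $\partial$. By \Cref{lem:homotopydomaindeformationretraction}, after refining to $\cU'$ the pair $(X\indexcross{\cU'}I, A\times I)$ strongly deformation retracts onto $(X\times\{e_1\}, A\times\{e_1\})\cong(X,A)$, so both endpoint inclusions $(X\times\{e_k\}, A\times\{e_k\})\hookrightarrow(X\indexcross{\cU'}I, A\times I)$ descend to the identity on $H_*(X,A)$. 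Hence the inclusion map in the long exact sequence of the triple becomes the addition $(a_1,a_2)\mapsto a_1+a_2$, and the image of $\partial$ lies in its kernel, namely the anti-diagonal $\{(a,-a)\}$. Therefore $\partial_{e_1}=-\partial_{e_2}$, and taking two-sided inverses (which exist after refinement by \Cref{lem:suspensionhomomorphism}) yields $\sigma_*^{(e_2,e_1)}=-\sigma_*^{(e_1,e_2)}$; the cohomology version is dual.

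For (i), \Cref{lem:homotopytogeneralizedhomotopy} combined with the straight-line interpolation between two homeomorphisms of the same orientation produces, after a common refinement, a generalized coarse homotopy between $\id_X\times f_0$ and $\id_X\times f_1$. This reduces the claim to the case $J=I$, $f=\id_I$, and $g=\tau$ where $\tau(t)=a+b-t$ is the linear reflection. Since $\tau$ exchanges the endpoints, $\id_X\times\tau$ is a map of triples from the one defining $\sigma_*^{(e_1,e_2)}$ to the one defining $\sigma_*^{(e_2,e_1)}$ that induces the identity on $H_*(X,A)$ after excision. By naturality of the connecting homomorphism, $\partial_{e_1}\circ(\id_X\times\tau)_*=\partial_{e_2}$, and post-composing with $\sigma_*^{(e_2,e_1)}$ gives $(\id_X\times\tau)_*=\sigma_*^{(e_2,e_1)}\circ\partial_{e_2}=-\sigma_*^{(e_1,e_2)}\circ\partial_{e_2}=-\id$ by part (ii), yielding the required sign.

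For (iii), the two iterated suspensions are right inverses of the iterated connecting homomorphisms $\partial^{I}\circ\partial^{J}$ and $\partial^{J}\circ\partial^{I}$ arising from the two nested triples in $X\indexcross{\cU}I\indexcross{\cV}J$. The classical anti-commutativity of two independent connecting homomorphisms in a bi-triple---provable by a direct diagram chase on relative cycles, or equivalently from the anti-commutativity of the $d_1$-differentials on the $E_1$-page of the spectral sequence of a double filtration---gives $\partial^{I}\circ\partial^{J}=-\partial^{J}\circ\partial^{I}$, and an elementary computation shows that the right inverses of anti-commuting maps are themselves negatives of each other, yielding the claimed anti-commutativity of the suspensions. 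The main subtlety throughout is ensuring that refinements $\cU',\cV'$ are chosen fine enough for the suspension homomorphisms to be genuine two-sided isomorphisms and thus uniquely determined by the sign identities; the identities then transport back to general $\cU,\cV$ by the naturality provided in \Cref{lem:suspensionhomomorphism}.
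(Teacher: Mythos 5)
Your proof runs in the opposite order from the paper's, deriving \ref{item:suspensionreflection} first from a Mayer--Vietoris-style decomposition of the intermediate group $\HomolX_*(X\times\partial I\cup A\times I,A\times I)$, then \ref{item:intervalorientation} by naturality and \ref{item:suspensionreflection}, and finally \ref{item:doublesuspension} by an abstract anticommutativity of iterated connecting homomorphisms. The paper instead proves \ref{item:intervalorientation} directly via a doubling construction $Y=X\indexcross{\cU}I\cup_{X\times\{a\}}X\indexcross{\cU}I$ whose $\K$-theory splits, deduces \ref{item:suspensionreflection} as a corollary, and obtains \ref{item:doublesuspension} from a rotation homeomorphism of $I\times J$ together with \ref{item:suspensionreflection} and homotopy invariance.

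Parts \ref{item:suspensionreflection} and \ref{item:intervalorientation} of your argument are essentially sound, but you are skipping over nontrivial excision and Mayer--Vietoris verifications. The identification $\HomolX_*(X\times\partial I\cup A\times I,A\times I)\cong\HomolX_*(X,A)^{\oplus 2}$ is not simply ``repeated excision of each endpoint'': the two slices $X\times\{e_1\}$ and $X\times\{e_2\}$ are \emph{not} coarsely disjoint inside $X\indexcross{\cU}I$ (they come together over $A$), and you need the suspension condition on $\cU$ to see that the triad $(X\times\partial I\cup A\times I\,;\,X\times\{e_1\}\cup A\times I,\,X\times\{e_2\}\cup A\times I)$ is coarsely excisive, whereupon the splitting follows from the long exact sequence of a triple together with a split epimorphism provided by the excision $(W_2,A\times I)\to(Z,W_1)$. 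Once that is in place the rest of \ref{item:suspensionreflection} and the reduction in \ref{item:intervalorientation} go through (note, though, the indexing slip $\sigma_*^{(e_2,e_1)}\circ\partial_{e_2}$ in the middle of \ref{item:intervalorientation}: applying the inverse of $\partial_{e_1}$, which is $\sigma_*^{(e_1,e_2)}$ after refinement, gives $(\id_X\times\tau)_*=\sigma_*^{(e_1,e_2)}\circ\partial_{e_2}=-\id$).

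The real gap is in \ref{item:doublesuspension}. Your cited justifications --- a ``direct diagram chase on relative cycles'' and ``anti-commutativity of the $d_1$-differentials of a double filtration'' --- both presuppose an underlying chain-level model, which abstract coarse (co-)homology theories satisfying the paper's axioms need not possess. The anticommutativity of two iterated connecting homomorphisms is \emph{not} automatic from the Eilenberg--Steenrod axioms without further argument; what is available is the Eilenberg--Steenrod hexagonal lemma, whose hypotheses you would have to verify for the relevant triads inside $X\indexcross{\cU}I\indexcross{\cV}J$, and you do not address this. The paper avoids this issue entirely by producing a rotation homeomorphism $g$ of $(I,\partial I)\times(J,\partial J)$ that is homotopic to the identity, applying \Cref{lem:homotopytogeneralizedhomotopy} and \Cref{lem:warpedcartesianproducts}.\ref{item:warpedfission} to make $\id_X\times g$ a coarse map generalized coarsely homotopic to the identity, and then reading off the sign from \ref{item:suspensionreflection} because $g$ reverses the orientation of one of the intervals. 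You should either replace the spectral-sequence appeal by an actual application of the hexagonal lemma with verified hypotheses, or adopt the rotation-homeomorphism argument, which fits more cleanly into the homotopy-theoretic framework you already use for \ref{item:intervalorientation}.
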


\begin{proof}
Consider the coarse space $Y\coloneqq X\indexcross{\cU}I\cup_{X\times\{a\}}X\indexcross{\cU}I$ obtained by gluing together two copies of $X\indexcross{\cU}I$ along the common subspace $X\times\{a\}$ and equipping it with the canonical coarse structure for which the two canonical inclusions $\iota_1,\iota_2\colon X\indexcross{\cU}I\to Y$ are inclusions as coarse subspaces, that is, it is generated by entourages of the form $(\iota_i\times\iota_i)(E)$ for entourages $E$ of $X\indexcross{\cU}I$ and $i=1,2$. 
Then there are three canonical coarse maps $p,p_1,p_2\colon Y\to X\indexcross{\cU}I$: The map $p$ is simply defined to be the identity on both halves $\im(\iota_i)$ of $Y$, whereas $p_i$ maps only $\im(\iota_i)$ identically to $X\indexcross{\cU}I$ and projects the other half $\im(\iota_{3-i})$ onto the subspace $X\times\{a\}$.

Note that the coarse space $Y$ can be identified with some warped cartesian product $X\indexcross{\cW}K$ for the intervall $K=I\cup_{\{a\}}I$ and it contains the two subspaces 
\begin{align*}
Z&\coloneqq \iota_1(X\times\{a,b\}\cup A\times I)\cup\iota_2(X\indexcross{\cU'}I)\,,
\\W&\coloneqq \iota_1(X\times\{a,b\}\cup A\times I)\cup\iota_2(X\times\{a,b\}\cup A\times I)\,.
\end{align*}
Then \Cref{ass:assumptiononadmissible} tells us that $(Y,Z),(Y,W),(Z,W)$ are also objects in $\admissible$ up to this identification.
Furthermore we note that the two inclusions $\iota_1\colon (X,A)\indexcross{\cU}(I,\partial I)\to (Y,Z)$ and $\iota_2\colon (X,A)\indexcross{\cU}(I,\partial I)\to (Z,W)$ are obviously excisions. With these identifications it is now clear that the long exact sequence in $\HomolX_*$ associated to the triple $(Y,Z,W)$ decomposes into the split short exact sequences
\[\xymatrix{
\HomolX_*((X,A)\indexcross{\cU}(I,\partial I))\ar@{^(->}[r]_-{\iota_2}&\HomolX_*(Y,W)\ar@{->>}[r]_-{p_1}\ar@{-->>}@/_2pc/[l]_-{p_2}&\HomolX_*((X,A)\indexcross{\cU}(I,\partial I))\ar@{_(-->}@/_2pc/[l]_-{\iota_1}
}\]
and hence we get the vertical isomorphisms in the diagram
\[\xymatrix{
\HomolX_*((X,A)\indexcross{\cV}(J,\partial J))\ar[r]^-{(\id\times h)_*}\ar@/_/[dr]_-{((\id\times f)_*,(\id\times g)_*)\qquad}&\HomolX_*(Y,W)\ar[r]^-{p}\ar@<-0.5ex>[d]_{((p_1)_*,(p_2)_*)}&\HomolX_*((X,A)\indexcross{\cU}(I,\partial I))
\\&\qquad\qquad\mathclap{\HomolX_*((X,A)\indexcross{\cU}(I,\partial I))^2}\qquad\qquad\ar@/_/[ur]_-{+}\ar@<-0.5ex>[u]_{(\iota_1)_*+(\iota_2)_*}&
}\]
which are inverse to each other and make the right part commute.
The map $h$ and commutativity of the left part is obtained as follows. 
We can choose some homeomorphism $h\colon J\to K$ such that $p_1\circ (\id\times h)=\id\times f'$ and $p_2\circ (\id\times h)=\id\times g'$ for some continuous maps $f',g'\colon (J,\partial J)\to (I,\partial I)$ which are homotopic to $f$ and $g$, respectively, and $p\circ (\id\times h)=\id\times h'$ with a continuous map $h'\colon (J,\partial J)\to (I,\{b\})\subset (I,\partial I)$ which is homotopic to the constant map with image $b$. Then, using \Cref{lem:homotopytogeneralizedhomotopy} shows that there is a common refinement $\cV$ of $(\id\times f)^*(\cU)$ and $(\id\times g)^*(\cU)$ (among others) such that $p_1\circ (\id\times h)$, $p_2\circ (\id\times h)$ and $p\circ (\id\times h)$ are generalized coarsely homotopic to $\id\times f$, $\id\times g$ and $\id\times\mathrm{const}_b$, respectively, as maps $(X,A)\indexcross{\cV}(J,\partial J)\to (X,A)\indexcross{\cU}(I,\partial I)$. Therefore, the left part of the diagram commutes, too, and the composition of the horizontal arrows factor through $\HomolX_*(X\times\{b\},X\times\{b\})=0$. The homological version of the first claim follows, and the cohomological version is completely dual.

For the second claim we choose an orientation preserving homeomorphism $f\colon J\to I$ and an orientation reversing homeomorphism $g\colon J\to I$ and let $\cV$ be the associated collection from the first statement, which automatically also satisfies the suspension condition. Then naturality of the homomorphisms from \Cref{lem:suspensionhomomorphism} shows that we have a commutative diagram
\[\xymatrix{
\HomolX_*(X,A)\ar[r]^{\sigma}\ar[dr]^{\sigma}\ar[d]_{\sigma}&\HomolX_{*+1}((X,A)\indexcross{\cU}(I,\partial I))
\\\HomolX_{*+1}((X,A)\indexcross{\cU}(I,\partial I))&\HomolX_{*+1}((X,A)\indexcross{\cV}(J,\partial J))\ar[u]_{(\id\times f)_*}\ar[l]^{(\id\times g)_*}
}\]
where the horizontal and the diagonal $\sigma$ are associated to choosing the lower endpoints of the interval and the vertical one is associated to choosing the upper endpoint. The homological version of the second property now follows from the first, and dually we obtain the cohomological version.

For the third claim we choose a homeomorphism $g\colon (I,\partial I)\times (J,\partial J)\to (I,\partial I)\times (J,\partial J)$ which rotates the rectangle $I\times J$ by ninety degrees up to the obvious distortion. It is homotopic to the identity and hence \Cref{lem:homotopytogeneralizedhomotopy} in combination with \Cref{lem:warpedcartesianproducts}.\ref{item:warpedfission} gives us refinements $\cU',\cV'$ of $\cU,\cV$ such that
\[\id_X\times g,\id_{X\times I\times J} \colon (X,A)\indexcross{\cU'}(I,\partial I)\indexcross{\cV'}(J,\partial J)\to (X,A)\indexcross{\cU}(I,\partial I)\indexcross{\cV}(J,\partial J)\]
are two coarse maps which are generalized coarsly homotopic and hence induce the same map on $\HomolX_*,\CohomX^*$.
Composing the double suspension 
\[\HomolX_*(X,A)\xrightarrow{\sigma}\HomolX_{*+1}((X,A)\indexcross{\cU'}(I,\partial I))\xrightarrow{\sigma}\HomolX_{*+2}((X,A)\indexcross{\cU'}(I,\partial I)\indexcross{\cV'}(J,\partial J))\]
with $(\id_{X\times I\times J})_*$ yields the top right composition of the first diagram by naturality of the suspension map and similiarily composing it with $(\id_X\times g)_*$ yields the negative of the bottom left composition, because here the orientation of one of the intervals is reversed and hence part 2 gives us a sign $-1$. For $\CohomX^*$ we just have to dualize the proof.
\end{proof}

\section{Coarse cross and slant products}
\label{sec:crossslant}

We will now introduce our notions of cross and slant products between coarse homology and coarse cohomology theories. 
This is essentially done by turning diagrams similar to the ones found in \cite[Chapter VII]{DoldTopology}, which describe the compatibility of cross and slant products with connecting homomorphisms, into axioms.
An instant consequence of these axioms will be the compatibility of cross and slant products with suspension.

In topology, cross and slant products in the relative case are usually subject to a certain excisiveness condition. Let us note right away that the corresponding coarse geometric analogue is always satisfied.
\begin{lem}\label{lem:crossproductexcisiveness}
If $(X,A)$, $(Y,B)$ are objects in $\admissible$, then $(X\times Y;A\times Y,X\times B)$ is an excisive triad in $\admissible$. In particular, the inclusions $i\colon A\times (Y,B)\to (A\times Y\cup X\times B,X\times B)$ and $j\colon (X,A)\times B\to (A\times Y\cup X\times B,A\times Y)$ are excisions.
\end{lem}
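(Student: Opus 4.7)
The plan is to compute penumbras directly on entourages of product form and thereby verify excisiveness without any enlargement. My first step is to recall that every entourage $E$ of the product coarse space $X\times Y$ is contained in one of the form $E_1\swapcross E_2$ with $E_1=(\pi_X\times\pi_X)(E)$ and $E_2=(\pi_Y\times\pi_Y)(E)$; this is immediate from the definition of the product coarse structure, so it suffices to verify excisiveness for such product entourages.

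Next I would record the elementary identity
\[\Pen_{E_1\swapcross E_2}(A\times Y)=\Pen_{E_1}(A)\times Y\,,\]
together with its symmetric counterpart $\Pen_{E_1\swapcross E_2}(X\times B)=X\times\Pen_{E_2}(B)$ (using that entourages may be assumed to contain the respective diagonals). Intersecting these gives
\[\Pen_{E_1\swapcross E_2}(A\times Y)\cap\Pen_{E_1\swapcross E_2}(X\times B)=\Pen_{E_1}(A)\times\Pen_{E_2}(B)=\Pen_{E_1\swapcross E_2}(A\times B)\,,\]
so the excisiveness condition for the triad $(X\times Y;A\times Y,X\times B)$ is satisfied with $F=E_1\swapcross E_2$, i.e.\ no enlargement is needed at all. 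Membership of this triad in $\admissible$ is covered directly by \Cref{ass:assumptiononadmissible}.

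For the ``in particular'' clause I would apply the equivalence already recorded in the excerpt between excisions and excisive triads. For $i$, setting $Z=A\times Y\cup X\times B$ and $C=(X\setminus A)\times B$ we have $Z\setminus C=A\times Y$ and $(X\times B)\setminus C=A\times B$, so $i$ being an excision is equivalent to the subspace triad $(Z;X\times B,A\times Y)$ being excisive. This follows from the first part: a product entourage $E_1\swapcross E_2$, intersected with $Z\times Z$, generates the subspace coarse structure of $Z$, and the penumbras relative to $Z$ are just the intersections with $Z$ of the penumbras computed in $X\times Y$, so the same identity forces $F=(E_1\swapcross E_2)\cap(Z\times Z)$ to work. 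The argument for $j$ is symmetric. The only point requiring a little care here is this translation to the subspace coarse structure; the rest is a one-line computation.
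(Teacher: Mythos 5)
Your proof is correct and for the main computation it is essentially identical to the paper's, which also reduces to product entourages (implicitly assuming they contain the diagonal) and observes that $\Pen_{E\swapcross F}(A\times Y)\cap\Pen_{E\swapcross F}(X\times B)=\Pen_E(A)\times\Pen_F(B)=\Pen_{E\swapcross F}(A\times B)$, so that no enlargement of the entourage is needed. The paper leaves the ``in particular'' clause as a consequence of its earlier remark that excisions correspond to excisive triads; your explicit translation to the subspace triad in $Z=A\times Y\cup X\times B$, together with the observation that penumbras in the subspace coarse structure are just intersections with $Z$ of penumbras in $X\times Y$, is a correct and useful spelling out of that step.
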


\begin{proof}
If $E$ and $F$ are entourages in $X$ and $Y$, respectively, then 
\begin{align*}
\Pen_{E\swapcross F}(A\times Y)\cap \Pen_{E\swapcross F}(X\times B) &=\Pen_E(A)\times Y\cap X\times \Pen_F(B)\nonumber
\\&=\Pen_E(A)\times \Pen_F(B)\nonumber
\\&=\Pen_{E\swapcross F}(A\times B)\,. \qedhere
\end{align*}
\end{proof}

\begin{defn}\label{defn:crossproducts}
A \emph{cross product} between two coarse cohomology theories $\CohomX[1]^*,\CohomX[2]^*$ on $\admissible$ with values in a third one $\CohomX[3]^*$ is a collection of transformations
\[\times=\times^{m,n}\colon \CohomX[1]^m(X,A)\otimes\CohomX[2]^n(Y,B)\to\CohomX[3]^{m+n}((X,A)\times (Y,B))\]
for all $m,n\in\Z$ which are natural under morphisms of $\admissible\times\admissible$
and which are compatible with the connecting homomorphisms $\cobound[1]$, $\cobound[2]$, $\cobound[3]$ of the three cohomology theories in the sense that the diagram
\[\xymatrix{
\CohomX[1]^m(A)\otimes \CohomX[2]^n(Y,B)\ar[r]^\times\ar[dd]^{\cobound[1]\otimes\id}&\CohomX[3]^{m+n}(A\times (Y,B))\ar[d]^{\cong}_{(i^*)^{-1}}
\\&\CohomX[3]^{m+n}(A\times Y\cup X\times B,X\times B)\ar[d]^{\cobound[3]}
\\\CohomX[1]^{m+1}(X,A)\otimes\CohomX[2]^n(Y,B)\ar[r]^{\times}&\CohomX[3]^{m+n+1}((X,A)\times (Y,B))
}\]
commutes and the diagram
\[\xymatrix{
\CohomX[1]^m(X,A)\otimes \CohomX[2]^n(B)\ar[r]^\times\ar[dd]^{\id\otimes\cobound[2]}&\CohomX[3]^{m+n}((X,A)\times B)\ar[d]^{\cong}_{(j^*)^{-1}}
\\&\CohomX[3]^{m+n}(A\times Y\cup X\times B,A\times Y)\ar[d]^{\cobound[3]}
\\\CohomX[1]^{m+1}(X,A)\otimes\CohomX[2]^n(Y,B)\ar[r]^{\times}&\CohomX[3]^{m+n+1}((X,A)\times (Y,B))
}\]
commutes up to a sign $(-1)^{m}$.
We shall denote such cross products simply by $\times\colon\CohomX[1]^*\otimes\CohomX[2]^*\to\CohomX[3]^*$.
\end{defn}

\begin{defn}\label{defn:slantproducts}
A \emph{slant product} between a coarse homology theory $\HomolX[1]_*$ and a coarse cohomology theory $\CohomX[2]^*$ with values in another coarse homology theory $\HomolX[3]_*$ is a collection of transformations of the form
\[/\coloneqq/^n_m\colon \HomolX[1]_m((X,A)\times (Y,B))\otimes\CohomX[2]^n(Y,B)\to\HomolX[3]_{m-n}(X,A)\]
for all $m,n\in\Z$ which are natural with respect to both pairs of spaces in the sense that
the corresponding transformations 
\[\HomolX[1]_m((X,A)\times (Y,B))\to\Hom(\CohomX[2]^n(Y,B),\HomolX[3]_{m-n}(X,A))\]
are natural under morphisms of $\admissible\times\admissible$
and which are compatible with the connecting homomorphisms $\bound[1]$, $\cobound[2]$, $\bound[3]$ of the three theories in the sense that the diagram
\[\xymatrix{
\HomolX[1]_m((X,A)\times (Y,B))\otimes \CohomX[2]^n(Y,B)\ar[r]^-{/}\ar[d]^{\bound[1]\otimes\id}
&\HomolX[3]_{m-n}(X,A)\ar[dd]^{\bound[3]}
\\\HomolX[1]_{m-1}(A\times Y\cup X\times B,X\times B)\otimes \CohomX[2]^n(Y,B)\ar[d]_{(i_*)^{-1}}^{\cong}&
\\\HomolX[1]_{m-1}(A\times (Y,B))\otimes\CohomX[2]^n(Y,B)\ar[r]^-{/}
&\HomolX[3]_{m-n-1}(A)
}\]
commutes and the diagram
\[\xymatrix{
{\begin{array}{c}\HomolX[1]_m((X,A)\times (Y,B))\\\otimes \CohomX[2]^n(B)\end{array}}
\ar[d]^-{\bound[1]\otimes\id}\ar[r]^-{\id\otimes\cobound[2]}
&{\begin{array}{c}\HomolX[1]_m((X,A)\times (Y,B))\\\otimes \CohomX[2]^{n+1}(Y,B)\end{array}}\ar[dd]^{/}
\\{\begin{array}{c}\HomolX[1]_{m-1}(A\times Y\cup X\times B,A\times Y)\\\otimes \CohomX[2]^n(B)\end{array}}\ar[d]_{(j_*)^{-1}}^{\cong}&
\\\HomolX[1]_{m-1}((X,A)\times B)\otimes\CohomX[2]^{n}(B)\ar[r]^{/}
&\HomolX[3]_{m-n-1}(X,A)
}\]
commutes up to a sign $(-1)^{m}$.
We shall denote such slant products simply by $/\colon\HomolX[1]_*\otimes\CohomX[2]^*\to\HomolX[3]_*$.
\end{defn}

Note that the signs in both definitions are chosen in accordance with the usual sign heuristics, in which connecting homomorphisms are symbols of order 1:
\begin{align*}
\cobound[3](x\times y)&=(\cobound[1]x)\times y & \cobound[3](x\times y)&=(-1)^{\deg(x)}\cdot x\times (\cobound[2]y)
\\\bound[3](x/y)&=(\bound[1]x)/y & (\bound[1]x)/y&=(-1)^{\deg(x)}\cdot x/(\cobound[2]y)
\end{align*}

\begin{lem}\label{lem:crossslantcompatiblewithsuspension}
Let $(X,A)$ and $(Y,B)$ be pairs of coarse spaces in $\admissible$ and let $p_X\colon X\times Y\to X$, $p_Y\colon X\times Y\to Y$ be the canonical controlled projections.
If $I$ is a closed interval and $\cU\in\nbhfmlysgm(X,A;I)$, then 
\[\widetilde\cU\coloneqq(p_X\times\id_I)^*\cU\in\nbhfmlysgm((X,A)\times Y;I)\subset\nbhfmlysgm((X,A)\times (Y,B);I)\]
and $((X,A)\indexcross{\cU}(I,\partial I))\times (Y,B)$ is canonically coarsely equivalent to $((X,A)\times(Y,B))\indexcross{\widetilde\cU}(I,\partial I)$. Furthermore, the associated suspension homomorphisms are compatible with cross and slant products in the sense that the diagrams 
\[\xymatrix{
\CohomX[1]^{m+1}((X,A)\indexcross{\cU}(I,\partial I))\otimes\CohomX[2]^n(Y,B)
\ar[r]^{\times}\ar[d]^{\cosusp[1]\otimes\id}
&\CohomX[3]^{m+n+1}(((X,A)\times(Y,B))\indexcross{\widetilde\cU}(I,\partial I))
\ar[d]^{\cosusp[3]}
\\\CohomX[1]^m(X,A)\otimes\CohomX[2]^n(Y,B)
\ar[r]^{\times}
&\CohomX[3]^{m+n}((X,A)\times(Y,B))
}\]
and
\[\xymatrix{
\HomolX[1]_{m-1}((X,A)\times(Y,B))\otimes\CohomX[2]^n(Y,B)
\ar[d]^{\susp[1]\otimes\id}\ar[r]^-{/}
&\HomolX[3]_{m-n-1}(X,A)
\ar[d]^{\susp[3]}
\\\HomolX[1]_{m}(((X,A)\times(Y,B))\indexcross{\widetilde\cU}(I,\partial I))\otimes\CohomX[2]^n(Y,B)
\ar[r]^-{/}
&\HomolX[3]_{m-n}((X,A)\indexcross{\cU}(I,\partial I))
}\]
commute. Similarily, if If $J$ is a closed interval, $\cV\in\nbhfmlysgm(Y,B;J)$ and
\(\widetilde\cV\coloneqq(p_Y\times\id_J)^*\cV\),
then we have the diagrams
\[\xymatrix{
\CohomX[1]^{m}(X,A)\otimes\CohomX[2]^{n+1}((Y,B)\indexcross{\cV}(J,\partial J))
\ar[r]^{\times}\ar[d]^{\id\otimes\cosusp[2]}
&\CohomX[3]^{m+n+1}(((X,A)\times(Y,B))\indexcross{\widetilde\cV}(J,\partial J))
\ar[d]^{\cosusp[3]}
\\\CohomX[1]^m(X,A)\otimes\CohomX[2]^n(Y,B)
\ar[r]^{\times}
&\CohomX[3]^{m+n}((X,A)\times(Y,B))
}\]
and
\[\xymatrix{
\Estack{&\HomolX[1]_{m-1}((X,A)\times(Y,B))
\\\otimes&\CohomX[2]^{n+1}((Y,B)\indexcross{\cV}(J,\partial J))}
\ar[d]^-{\susp[1]\otimes\id}
\ar[r]^-{\id\otimes\cosusp[2]}
&\Estack{\HomolX[1]_{m-1}((X,A)\times&(Y,B))
\\\otimes\CohomX[2]^{n}&(Y,B)}
\ar[d]^-{/}
\\\Estack{\HomolX[1]_{m}(((X,A)\times(Y,B))\indexcross{\widetilde\cV}(J,\partial J))&
\\\otimes\CohomX[2]^{n+1}((Y,B)\indexcross{\cV}(J,\partial J))&}
\ar[r]^-{/}
&\HomolX[3]_{m-n-1}(X,A)
}\]
commuting up to the sign $(-1)^{m}$.
\end{lem}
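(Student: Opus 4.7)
The plan is to split the proof into three parts: first verify the preliminary claims about $\widetilde\cU$ and the coarse equivalence, then prove the first (cross-product, first-factor) diagram in detail, and finally observe that the remaining three diagrams are handled by parallel arguments with the appropriate sign bookkeeping.

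For the preliminaries, the inclusion $\widetilde\cU\in\nbhfmlysgm((X,A)\times Y;I)\subset\nbhfmlysgm((X,A)\times(Y,B);I)$ will follow at once from the suspension condition on $\cU$: if an entourage $E$ of $X$ witnesses the condition for $\cU$ at scale $\varepsilon$, then $E\swapcross\Diag[Y]$ witnesses it for $\widetilde\cU$, since $\Pen_{E\swapcross\Diag[Y]}(A\times Y)=\Pen_E(A)\times Y$. For the coarse equivalence of the two warped products, I would identify their coarse structures via generating entourages: the generator $E_\cU\swapcross F$ on the left decomposes as the composition $(\Diag[X\times I]\swapcross F)\circ(E_\cU\swapcross\Diag[Y])$, whose two factors translate on the right to a product-type entourage and to $E_{\widetilde\cU}$ respectively, and the reverse inclusion is analogous; boundedness is preserved by \Cref{lem:warpedtrivialproperties}.\ref{item:warpedbounded}.

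The central observation is that $\cosusp$ (and dually $\susp$) is characterized as a one-sided inverse---two-sided after refining $\cU$ to a suitable $\cU'$---of the composition
\[\CohomX^*(X,A)\xrightarrow{(i^*)^{-1}}\CohomX^*(X_1,X_2)\xrightarrow{\delta_{\mathrm{triple}}}\CohomX^{*+1}((X,A)\indexcross{\cU}(I,\partial I))\,,\]
where $X_1=X\times\partial I\cup A\times I$, $X_2=X\times\{e_2\}\cup A\times I$, $i$ is the excision of \Cref{lem:suspensionspace}, and the triple coboundary factors as $\delta_{\mathrm{triple}}=\delta_{\mathrm{pair}}\circ j^*$ with $j\colon(X_1,\emptyset)\hookrightarrow(X_1,X_2)$ the inclusion of pairs. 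To prove the first diagram, I would show that each of these three factors commutes with the cross product: the maps $i^*$ and $j^*$ by naturality under morphisms in $\admissible\times\admissible$, invoking \Cref{lem:crossproductexcisiveness} to identify the product-side excisions appearing in \Cref{defn:crossproducts}; and $\delta_{\mathrm{pair}}$ by the first coboundary axiom of \Cref{defn:crossproducts} applied to the pair $(X\indexcross{\cU}I,X_1)$, using the identification $X_1\times Y\cup(X\indexcross{\cU}I)\times B=(X\times Y)\times\partial I\cup(A\times Y\cup X\times B)\times I$ to match the induced triple on the product side. Composing these compatibilities shows that the displayed composition commutes with $\times$, hence so does its inverse $\cosusp$ for $\cU'$, and naturality of cross products under the refinement map $\cU'\to\cU$ extends the result to general $\cU$.

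The remaining three diagrams will follow by the same template with the following bookkeeping: the second cross-product diagram uses the second coboundary axiom of \Cref{defn:crossproducts} in place of the first, which introduces the sign $(-1)^m$; the two slant-product diagrams are handled dually, invoking the naturality and boundary axioms of \Cref{defn:slantproducts}, where again the first axiom carries no sign and the second carries the sign $(-1)^m$. The main technical obstacle I anticipate lies in consistently matching the many triples, excisions, and product pairs at each step---in particular confirming that the pair $(X\indexcross{\cU}I,X_1)\times(Y,B)$ maps correctly into the triple $((X\times Y)\indexcross{\widetilde\cU}I,(X\times Y)_1,(X\times Y)_2)$ governing $\cosusp[3]$---but once the coarse equivalence of the preliminaries is in hand, this reduces to a purely combinatorial diagram chase.
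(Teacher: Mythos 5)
Your proposal is correct and takes essentially the same route as the paper. The paper's own argument also passes first to a refinement $\cU'$ making $\cosusp$ a genuine isomorphism, factors the inverse of $\cosusp$ through the excision $(X,A)\hookrightarrow(Z,C)$, the map $j^*$ forgetting the smaller subspace, and the connecting homomorphism of the pair $(X\indexcross{\cU'}I,Z)$, and checks commutativity with $\times$ at each stage (naturality for the first two, the coboundary axiom of \Cref{defn:crossproducts} for the third) before deducing the general case by naturality under refinement. The one step you flag as a potential obstacle is indeed the one the paper verifies explicitly, namely that the inclusion $(Z,C)\times(Y,B)\hookrightarrow(\widetilde Z,\widetilde C)$ is an excision; your Pen-set computation strategy for that is the right one.
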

\begin{proof}
The first statement about $\tilde\cU$ and the warped products is obvious. 

Let $I=[e_1,e_2]$.
If $\cU'$ is the refinement of $\cU$ with the properties of \Cref{lem:homotopydomaindeformationretraction} (such that in particular $X\times \{e_2\}$ is canonically a strong generalized coarse deformation retract of $X\indexcross{\cU'}I$),
then $\widetilde\cU'\coloneqq(p_X\times\id_I)^*(\cU')$ is a refinement of $\widetilde U$ with the analogous properties.
By naturality of the cross products under the coarse maps induced by these refinements, it is sufficient to prove the first diagram for $\cU'$ and $\widetilde\cU'$ instead of $\cU$ and $\widetilde\cU$.
Recall that for these refinements the suspension homomorphisms are even isomorphisms.

For this task we use the abbreviations
\begin{align*}
Z&\coloneqq X\times\partial I \cup A\times I
&\widetilde Z&\coloneqq X\times Y\times\partial I \cup (A\times Y\cup X\times B)\times I
\\C&\coloneqq X\times\{e_2\}\cup A\times I
&\widetilde C&\coloneqq X\times Y\times \{e_2\}\cup (A\times Y\cup X\times B)\times I
\end{align*}
where the left two are coarse subspaces of $X\indexcross{\cU'}I$ and the right two of $(X\times Y)\indexcross{\widetilde\cU'}I$.
Recall that by passing to the refinements $\cU'$ and $\widetilde\cU'$ of $\cU$ and $\widetilde\cU$, respectively, the suspension homomorphisms become isomorphisms inverse to the connecting homomorphisms asociated to the triples $(X\indexcross{\cU'}I,Z,C)$ and $((X\times Y)\indexcross{\widetilde\cU'}I,\widetilde Z,\widetilde C)$ under the isomorphisms induced by the excisions $(X,A)\cong(X,A)\times\{e_1\}\subset (Z,C)$ and $(X,A)\times(Y,B)\cong(X,A)\times(Y,B)\times\{e_1\}\subset (\widetilde Z,\widetilde C)$.
Note furthermore that the inclusion 
\[(Z,C)\times(Y,B)\subset (Z\times Y\cup(X\indexcross{\cU'}I)\times Y,C\times Y\cup(X\indexcross{\cU'}I)\times B)\cong(\widetilde Z,\widetilde C)\]
is an excision, too, because for all entourages $E$ of $X\indexcross{\cU'}I$ and $F$ of $Y$ we have
\begin{align*}
\Pen_{E\swapcross F}(Z\times Y)&\cap\Pen_{E\swapcross F}(C\times Y\cup(X\indexcross{\cU'}I)\times B)=
\\&=\Pen_E(Z)\times Y\cap (\Pen_E(C)\times Y\cup (X\indexcross{\cU'}I)\times \Pen_F(B))
\\&=\Pen_E(C)\times Y\cup\Pen_E(Z)\times \Pen_F(B)
\\&=\Pen_{E\swapcross F}(C\times Y\cup Z\times B)\,.
\end{align*}
Now consider the following diagram, in which all arrows not labeled as cross products or connecting homomorphisms are induced by inclusions.
\[\xymatrix{
\Estack{\CohomX[1]^m(&X,A)\\\otimes\CohomX[2]^n(&Y,B)}
\ar[rr]^-{\times}
&&\Estack{\CohomX[3]^{m+n}(&(X,A)\\&\times(Y,B))}
\\\Estack{\CohomX[1]^m(&Z,C)\\\otimes\CohomX[2]^n(&Y,B)}
\ar[u]_-{\cong}\ar[d]\ar[r]^-{\times}
&\Estack{\CohomX[3]^{m+n}(&(Z,C)\\\times&(Y,B))}
\ar[ur]
\ar[d]
&\CohomX[3]^{m+n}(\widetilde Z,\widetilde C)
\ar[u]_-{\cong}\ar[d]\ar[l]_-{\cong}
\\\Estack{\CohomX[1]^m(&Z)\otimes\\\CohomX[2]^n(&Y,B)}
\ar[d]^-{\cobound[1]\otimes\id}\ar[r]^-{\times}
&\CohomX[3]^{m+n}(Z\times(Y,B))
&\CohomX[3]^{m+n}(\widetilde Z,X\times B\times I)
\ar[l]_-{\cong}
\ar[d]^-{\cobound[3]}
\\\Estack{\CohomX[1]^{m+1}(&(X,A)\\&\indexcross{\cU'}(I,\partial I))\\\otimes\CohomX[2]^n(&Y,B)}
\ar[rr]^-{\times}
&&\Estack{\CohomX[3]^{m+n+1}(&((X,A)\\&\phantom{(}\times(Y,B))\\&\indexcross{\widetilde\cU'}(I,\partial I))}
}\]
The upper two third of the diagram commutes trivially or by naturality of the cross products.
Some of the arrows are marked as isomorphisms, because we know that they are induced by excisions. If we invert all of them, then the upper two third of the diagram still commutes and the lower third becomes commutative by the postulated compatibility of the cross products with connecting homomorphisms.
Thus, the cross products are compatible with the left and right vertical compositions, which are exactly the inverses to the suspension isomorphisms.

The claimed commutativity of the second and third diagram (up to the signs $+1$, $(-1)^m$, respectively) are proven completely analogously. The fourth diagram should better first be rewritten as
\[\xymatrix{
\HomolX[1]_{m-1}((X,A)\times(Y,B))
\ar[r]^-{/}\ar[d]^-{\susp[1]}
&\Hom(\CohomX[2]^n(Y,B),\HomolX[3]_{m-n-1}(X,A))
\ar[d]^{(\cosusp[2])^*}
\\\HomolX[1]_{m}(((X,A)\times(Y,B))\indexcross{\widetilde\cV}(J,\partial J))
\ar[r]^-{/}
&\Estack{\Hom(&\CohomX[2]^{n+1}((Y,B)\indexcross{\cV}(J,\partial J)),\\&\HomolX[3]_{m-n-1}(X,A))}
}\]
and then commutativity up to the sign $(-1)^m$ can also be proven in exactly the same manner.
\end{proof}

\section{Secondary cup and cap products from weak generalized controlled deformation retracts}
\label{sec:secondaryfromdeformations}

In this section we finally define our secondary cup and cap products and prove their basic properties. All constructions and proofs are closely modelled after \cite[Section 8]{WulffCoassemblyRinghomo}. The main difference is that here we work solely in the coarse geometric world, whereas in that reference the secondary product was constructed first in the topological set-up and then coarsified in a later section, which is similar to what we will do in \Cref{sec:transgression}.

\begin{defn}\label{defn:GWCDR}
We call a subset $A$ of a coarse space $X$ a \emph{weak generalized controlled deformation retract} if there is $\cU\in\nbhfmlysgm(X,A;[0,1])$ and a generalized controlled homotopy 
\begin{equation}\label{eq:generalizeddeformationpair}
H\colon (X,A)\indexcross{\cU} ([0,1],\{1\})\to (X,A)
\end{equation}
between the identity map $H_0\coloneqq H|_{(X,A)\times\{0\}}=\id_{(X,A)}$ and, as \eqref{eq:generalizeddeformationpair} already encodes, a controlled map $H_1\coloneqq H|_{(X,A)\times\{1\}}\colon (X,A)\to (A,A)\subset (X,A)$. 
\end{defn}
The word `generalized' refers to the usage of a generalized controlled homotopy and the word `weak' indicates that the restriction of $H_1\colon X\to A$ to $A$ is not necessarily the identity.
\begin{example}
\label{ex:conedeformations}
The big advantage of allowing this weakness is its flexibility, which can be seen, for example, in the case of open cones  over compact metric spaces (cf.\ \cite[Section 3.1]{EngelWulff}).
If $L$ is a closed subspace of a compact metric space $K$, then the open cone $\cO L$ is a weak generalized controlled deformation retract of the open cone $\cO$, where $H$ can simply be chosen to be a contraction along straight lines to the apex of the cone and we do not have to worry about reaching the identity on $A$ at time $1$ or any other time. 
\end{example}

\begin{example}
Let $G$ be a finitely generated group with word metric $d$ and neutral element $e$.
Assume that a generalized controlled contraction 
\[H\colon (G,\{e\})\indexcross{\cU} ([0,1],\{1\})\to (G,\{e\})\]
with $\cU=\{U_g\}_{g\in G}\in\nbhfmly(G;[0,1])$ is given. Then there are $R,S\in\N$ such that $H\times H$ maps $E_1\swapcross\Diag[{[0,1]}]$ into $E_R$ and $E_\cU$ into $E_S$. For each $g\in G$ we pick $0=t_{g,0}<t_{g,1}<\dots<t_{g,N_g}=1$ such that $(t_{g,n-1},t_{g,n})\in U_g$ and hence $d(H(g,t_{g,n-1}),H(g,t_{g,n}))\leq S$ for all $n=1,\dots, N_g$. Thus, we find a finally constant function $\gamma_g\colon \N\to G$ such that $\gamma_g(S\cdot n)= H(g,t_{g,n})$ for all $n=1,\dots, N_g$, $\gamma_g(m)=g$ for all $m\geq S\cdot N_g$ and $d(\gamma_g(m+1),\gamma_g(m))\leq 1$ for all $m\in\N$. It is now easy to see that $\gamma_g,\gamma_h$ are asynchronous $(R+3S)$-fellow-travellers for all $g,h\in G$ with $d(g,h)\leq 1$ and therefore $G$ is asynchronously combable as defined in \cite[Definition 2.6]{BridsonGilman_FormalLanguageTheory}.

It would be interesting to know if or under which conditions the converse also holds true, but it is not clear, how a construction of a generalized controlled contraction should work. The problem is that each two combing pathes in an asynchronously combable group can be reparametrized such that they stay close to each other, but all of these reparametrizations together are a priori not assumed to be compatible in any way.
\end{example}

Note that we did not include the collection $\cU$ and the homotopy $H$ itself into the data of weak generalized controlled deformation retracts. This is, because up to refining $\cU$ sufficiently the homotopy is unique up to another generalized controlled homotopy, as the special case $f=\id$ of the following lemma shows.

\begin{lem}\label{lem:homotopieshomotopic}
Let 
\begin{align*}
H\colon (X,A)\indexcross{\cU} ([0,1],\{1\})&\to (X,A)
\\H'\colon (X',A')\indexcross{\cU'} ([0,1],\{1\})&\to (X',A')
\end{align*}
be two generalized controlled homotopies associated to weak generalized controlled deformation retracts $A\subset X$ and $A'\subset X'$, respectively, and let $f\colon (X,A)\to (X',A')$ be a coarse map. Then there is a common refinement $\cV\in\nbhfmlysgm(X,A;[0,1])$ of $\cU$ and $(f\times\id)^*\cU'$ such that
\[f\circ H,H'\circ(f\times \id)\colon (X,A)\indexcross{\cV} ([0,1],\{1\})\to (X',A')\]
are generalizedly controlledly homotopic via a homotopy of coarse maps $\tilde H_t$ which agree with $f$ on $(X,A)\times\{0\}$.
\end{lem}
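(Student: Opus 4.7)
The plan is to construct a second-order homotopy $\tilde H$ between $f\circ H$ and $H'\circ(f\times\id)$ by interpolating through $H'$. The key observation is that both maps land in $A'$ when $s=1$: indeed $f(H(x,1))\in A'$ since $H(x,1)\in A$, and $H'(f(x),1)\in A'$ since $H'_1$ lands in $A'$. So the desired homotopy can be realised as a map on the square $[0,1]_s\times[0,1]_t$ whose entire edge $s=1$ stays in $A'$, whose edge $s=0$ is constantly $f$, whose edge $t=0$ is $f\circ H$ and whose edge $t=1$ is $H'\circ(f\times\id)$.

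Concretely, I would set
\[\tilde H(x,s,t)\coloneqq H'\bigl(f\bigl(H(x,\phi(s,t))\bigr),\psi(s,t)\bigr),\]
using the piecewise bilinear Lipschitz functions $\phi(s,t)=s\cdot\min\{2(1-t),1\}$ and $\psi(s,t)=s\cdot\min\{2t,1\}$ on $[0,1]^2$. One checks directly that $\tilde H(x,s,0)=f(H(x,s))$, $\tilde H(x,s,1)=H'(f(x),s)$ and $\tilde H(x,0,t)=f(x)$. Moreover at $s=1$ either $t\leq 1/2$ and $\phi(1,t)=1$, so $\tilde H(x,1,t)=H'(f(H(x,1)),2t)\in A'$ because $H(x,1)\in A$; or $t\geq 1/2$ and $\psi(1,t)=1$, so $\tilde H(x,1,t)=H'(f(H(x,2(1-t))),1)\in A'$ because $H'(\cdot,1)$ takes values in $A'$. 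The analogous verification with $x\in A$ shows that $\tilde H$ sends the full relative boundary $A\times[0,1]^2\cup X\times\{1\}\times[0,1]$ into $A'$.

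For the coarse-geometric bookkeeping, I would first pick a common refinement $\cV_0$ of $\cU$ and $(f\times\id)^*\cU'$; since both lie in $\nbhfmlysgm(X,A;[0,1])$, so does $\cV_0$. Applying \Cref{lem:homotopydomaindeformationretraction} to $\cV_0$ yields the required $\cV$ together with an auxiliary collection on $[0,1]$ with respect to which composition with Lipschitz reparametrizations (such as $\phi$ and $\psi$) remains controlled. By \Cref{lem:warpedcartesianproducts}.\ref{item:warpedfusion} the iterated warped product $(X\indexcross{\cV}[0,1])\indexcross{\cV'}[0,1]$ is coarsely equivalent to $X\indexcross{\cW}[0,1]^2$ for an appropriate $\cW$, and the formula for $\tilde H$ factors as a chain of controlled maps
\[X\indexcross{\cW}[0,1]^2\longrightarrow X\indexcross{\cV}[0,1]\xrightarrow{f\circ H} X'\indexcross{\cU'}[0,1]\xrightarrow{H'}X',\]
where the first arrow uses the Lipschitzness of $\phi$, paired with the $\psi$-coordinate via the pullback of $\cU'$ along $f\circ H$. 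Properness of $\tilde H$ follows from properness of $f,H,H'$ together with \Cref{lem:warpedtrivialproperties}.\ref{item:warpedbounded}.

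The main obstacle is the careful management of the collections along this composition: ensuring that the pullback of $\cU'$ under $f\circ H$ meshes correctly with the warped structure induced by $\phi$ and $\psi$ requires tracking several refinements simultaneously, and the verification that $\tilde H$ respects the relative subspace structure must be performed separately on the two regions $t\in[0,1/2]$ and $t\in[1/2,1]$ where $\phi,\psi$ have different linear expressions. Each individual step is merely an application of the toolkit developed in \Cref{sec:homotopies}, but the global accounting is where the proof will spend the bulk of its effort.
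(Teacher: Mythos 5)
Your proposal is correct and follows essentially the same route as the paper. The paper's proof defines $\tilde H\coloneqq H'\circ(f\circ H\times\id_{[0,1]})\circ(\id_X\times h)$ with
\[h\colon [0,1]\times[-1,1]\to[0,1]^2,\quad (s,t)\mapsto\begin{cases}(s(1+t),s)&t\leq 0\\(s,s(1-t))&t\geq 0,\end{cases}\]
which is exactly your $(\phi,\psi)$ after reparametrizing $[-1,1]$ to $[0,1]$ and reversing the direction of the homotopy parameter; the two-region case analysis and the boundary/relative verifications match, and both proofs invoke the same toolkit (\Cref{lem:warpedtrivialproperties}.\ref{item:warpedfunctorial} together with \Cref{lem:warpedcartesianproducts}.\ref{item:warpedfusion},\ref{item:warpedfission}) to produce the refined collections and establish controlledness, with properness coming for free because at least one of $\phi,\psi$ returns the original $x$-dependence. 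One small inaccuracy: your displayed chain $X\indexcross{\cW}[0,1]^2\to X\indexcross{\cV}[0,1]\xrightarrow{f\circ H}X'\indexcross{\cU'}[0,1]\xrightarrow{H'}X'$ drops the $\psi$-coordinate at the first arrow; the intermediate spaces must carry a second warped interval factor, i.e.\ the first arrow should land in $(X\indexcross{\cV}[0,1])\indexcross{\cV'}[0,1]$ (or an equivalent warped square), precisely as in the paper's formula. You flag this informally in the sentence afterward, so the idea is right, but the displayed composition as written is not literally $\tilde H$.
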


\begin{proof}
Consider the topological homotopy 
\[h\colon [0,1]\times [-1,1]\to [0,1]^2\,,\quad (s,t)\mapsto \begin{cases}(s(1+t),s)&t\leq 0\\(s,s(1-t))&t\geq 0\,.\end{cases}\]
According to \Cref{lem:warpedtrivialproperties}.\ref{item:warpedfunctorial} together with parts \ref{item:warpedfusion} and \ref{item:warpedfission} of \Cref{lem:warpedcartesianproducts} there are $\cV,\cV'$ such that 
\[\tilde H\coloneqq H'\circ(f\circ H\times\id_{[0,1]})\circ (\id_X\times h)\colon (X\indexcross{\cV}[0,1])\indexcross{\cV'}[-1,1]\to X\]
is a controlled map.
As $\cV$ can be chosen to be a refinement of $\cU\in\nbhfmlysgm(X,A;[0,1])$, we also have $\cV\in\nbhfmlysgm(X,A;[0,1])$.

We have $\tilde H(x,s,-1)=H'(f\circ H(x,0),s)=H'(f(x),s)$ and $\tilde H(x,s,+1)=H'(f\circ H(x,s),0)=f\circ H(x,s)$. 
Furthermore, for each $t\in[-1,1]$ we have $\tilde H(x,0,t)=H'(f\circ H(x,0),0)=f(x)$ and 
\[\tilde H(x,1,t)=\begin{cases}H'(f\circ H(x,1+t),1)\in A'&t\leq 0\\H'(\underbrace{f\circ H(x,1)}_{\in A},1-t)\in A'&t\geq 0\,.\end{cases}\]
As $\tilde H$ clearly maps $A\times [0,1]\times[-1,1]$ into $A'$, too, the claim follows.
\end{proof}

\begin{defn}
We call a triad $(X;A,B)$ in $\admissible$ a \emph{deformation triad} if $A,B\subset X$ are weak generalized controlled deformation retracts.
\end{defn}

\begin{lem}\label{lem:GammaXABdef}
If $(X;A,B)$ is a deformation triad and $H^A$ and $H^B$ are associated generalized controlled homotopies, then \begin{align*}
\Gamma^{X,A,B}\colon (X,&A\cup B)\indexcross{\cU}([-1,1],\{-1,1\})\to (X,A)\times (X,B)
\\(x,&t)\mapsto \begin{cases} (H^A(x,-t),x)& t\leq 0\\(x,H^B(x,t))&t\geq 0\end{cases}
\end{align*}
is a coarse map
for sufficently fine $\cU\in\nbhfmlysgm(X,A\cup B;[-1,1])$. In this case we say that $\cU$ \emph{supports} $\Gamma^{X,A,B}$.
\end{lem}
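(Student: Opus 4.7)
The plan is to rewrite $\Gamma^{X,A,B}$ as a composition of simpler maps and then invoke the general functoriality and refinement machinery from \Cref{sec:homotopies}. Let $\cU^A\in\nbhfmlysgm(X,A;[0,1])$ and $\cU^B\in\nbhfmlysgm(X,B;[0,1])$ be the neighborhood families supporting $H^A$ and $H^B$, and define Lipschitz maps $r^A,r^B\colon[-1,1]\to[0,1]$ by $r^A(t)=\max(-t,0)$ and $r^B(t)=\max(t,0)$. Since $H^A(x,0)=x=H^B(x,0)$, the piecewise formula for $\Gamma^{X,A,B}$ unifies into
\[
\Gamma^{X,A,B}(x,t) \;=\; \bigl(H^A(x, r^A(t)),\; H^B(x, r^B(t))\bigr),
\]
and consequently $\Gamma^{X,A,B}$ factors as
\[
X\indexcross{\cU}[-1,1] \;\xrightarrow{\psi}\; \bigl(X\indexcross{\cU^A}[0,1]\bigr)\times\bigl(X\indexcross{\cU^B}[0,1]\bigr) \;\xrightarrow{H^A\times H^B}\; X\times X,
\]
where $\psi(x,t)=\bigl((x,r^A(t)),(x,r^B(t))\bigr)$.

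The main substantive step is to choose $\cU$ fine enough that $\psi$ is controlled. By \Cref{lem:warpedtrivialproperties}.\ref{item:warpedfunctorial} this happens whenever $\cU$ is a common refinement of the pullback collections $(\id_X\times r^A)^*\cU^A$ and $(\id_X\times r^B)^*\cU^B$: the generating entourages $E\swapcross\Diag[{[-1,1]}]$ and $E_{\cU}$ then land inside products of corresponding entourages of the two factors. Composing with the product of the controlled maps $H^A$ and $H^B$ immediately yields controlledness of $\Gamma^{X,A,B}$. Properness is then immediate from \Cref{lem:warpedtrivialproperties}.\ref{item:warpedbounded}, since on each of the halves $t\leq 0$ and $t\geq 0$ one of the two coordinates of $\Gamma^{X,A,B}(x,t)$ equals $x$, so the preimage of any bounded set $C_1\times C_2$ is contained in $(C_1\cup C_2)\times[-1,1]$.

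Verification that $\Gamma^{X,A,B}$ respects the prescribed subset structure is a routine case analysis: $H^A_1(X)\subset A$ handles the boundary $X\times\{-1\}$, $H^B_1(X)\subset B$ handles $X\times\{1\}$, and $H^A(A\times\blank)\subset A$, $H^B(B\times\blank)\subset B$ handle $A\times[-1,1]$ and $B\times[-1,1]$ respectively, so every point of $X\times\{-1,1\}\cup (A\cup B)\times[-1,1]$ lands in $A\times X\cup X\times B$. For the suspension condition, one uses the elementary estimate
\[
|s-t|\;\leq\;|r^A(s)-r^A(t)|+|r^B(s)-r^B(t)|\,:
\]
for any $x\notin\Pen_E(A)\cup\Pen_E(B)=\Pen_E(A\cup B)$ both terms on the right become simultaneously small by the suspension conditions on $\cU^A$ and $\cU^B$, so $\cU\in\nbhfmlysgm(X,A\cup B;[-1,1])$.

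The only obstacle worth flagging is the mutual compatibility of the three demands on $\cU$ — refining the two pullback collections while lying in $\nbhfmlysgm(X,A\cup B;[-1,1])$ — but this is routinely resolved by a common refinement, since both controlledness of $\psi$ (\Cref{lem:warpedtrivialproperties}.\ref{item:warpedrefinement}) and the suspension condition (\Cref{rem:suspensioncondition}) are preserved under passing to finer neighborhood families.
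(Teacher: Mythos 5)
Your proof is correct and follows the same approach the paper sketches: the paper simply asserts that $\cU$ is obtained ``in the obvious way'' from $\cU^A$ and $\cU^B$, that the suspension condition is clear, and that $\Gamma$ is controlled because $H^A, H^B$ are and proper because one coordinate always equals $x$. Your factorization through the Lipschitz retractions $t\mapsto t^\pm$ and the estimate $|s-t|\le|s^--t^-|+|s^+-t^+|$ are exactly the details the paper omits, so this is a fully spelled-out version of the paper's argument.
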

\begin{proof}
The collection $\cU$ is obtained in the obvious way from the collections 
\begin{align*}
\cU^A&\in \nbhfmlysgm(X,A;[0,1])\subset \nbhfmlysgm(X,A\cup B;[0,1])
\\\cU^B&\in \nbhfmlysgm(X,B;[0,1])\subset \nbhfmlysgm(X,A\cup B;[0,1])
\end{align*}
defining the coarse structures on the domains of $H^A$ and $H^B$, respectively, and it is clear that it satisfies the suspension condition.
The map $\Gamma$ is controlled, because $H^A,H^B$ are, and it is proper, because there is always one component of $\Gamma^{X,A,B}(x,t)$ equal to $x$.
\end{proof}

The map $\Gamma^{X,A,B}$ defined in this lemma will from now on be omnipresent. Therefore, it is convenient to introduce some abreviations to simplify the formulas.
\begin{nota}
For the remainder of this paper we shall use the letter $I$ exclusively for the interval $[-1,1]$. Furthermore, we define $I_+\coloneqq [0,1]$ and $I_-\coloneqq [-1,1]$.

Given any real number $t\in\R$, we let $t^+\coloneqq\max(t,0)$ and $t^-\coloneqq-\min(t,0)$. 
Then, for any map of the form $H\colon X\times I_+\to X$ we define the maps 
\[H_\pm\colon X\times I\to X\,,\quad (x,t)\mapsto H(x,t^\pm)\]
and for each $t\in[0,1]$ we write $H_t$ for $H(-,t)\colon X\to X$.
\end{nota}
Using this notation we can rewrite the map $\Gamma^{X,A,B}$ very concisely as
\[\Gamma^{X,A,B}=(H^A_-,H^B_+)\colon (x,t)\mapsto (H^A_{t^-}(x), H^B_{t^+}(x))\,.\]

\begin{defn}\label{def:secondarycupcap}
Let $(X;A,B)$ be a deformation triad in $\admissible$ and define $\Gamma^{X,A,B}$ as in \Cref{lem:GammaXABdef} with $\cU\in\nbhfmlysgm(X,A\cup B;I)$ supporting it.
\begin{itemize}
\item Given a cross product $\times\colon\CohomX[1]^*\otimes\CohomX[2]^*\to\CohomX[3]^*$ between coarse cohomology theories on $\admissible$ satisfying the strong homotopy axiom, the associated secondary cup product $\Cup$ is defined as $(-1)^m$ times the composition
\begin{align*}
\CohomX[1]^m(X,A)\otimes\CohomX[2]^n(X,B)&\xrightarrow{\times}\CohomX[3]^{m+n}((X,A)\times (X,B))
\\&\xrightarrow{(\Gamma^{X,A,B})^*}\CohomX[3]^{m+n}((X,A\cup B)\indexcross{\cU}(I,\partial I))
\\&\xrightarrow{\sigma^*}\CohomX[3]^{m+n-1}(X,A\cup B)\,.
\end{align*}
\item Given a slant product $/\colon\HomolX[1]_*\otimes\CohomX[2]^*\to\HomolX[3]_*$ between coarse homology and cohomology theories on $\admissible$ satisfying the strong homotopy axiom, the associated secondary cap product $\Cap$ is defined as $(-1)^{m+1}$ times the composition
\begin{align*}
\HomolX[1]_m(X,A\cup B)&\otimes\CohomX[2]^n(X,B)\to
\\&\xrightarrow{\sigma_*\otimes\id}\HomolX[1]_{m+1}((X,A\cup B)\indexcross{\cU}(I,\partial I))\otimes\CohomX[2]^n(X,B)
\\&\xrightarrow{(\Gamma^{X,A,B})_*\otimes\id}\HomolX[1]_{m+1}((X,A)\times (X,B))\otimes\CohomX[2]^n(X,B)
\\&\xrightarrow{/}\HomolX[3]_{m-n+1}(X,A)\,.
\end{align*}
\end{itemize}
\end{defn}

\begin{lem}[Naturality and well-definedness]
Let $(X;A,B)$, $(X';A',B')$ be deformation triads in $\admissible$ and let $f\colon X\to X'$ be a coarse map with $f(A)\subset A'$ and $f(B)\subset B'$.
Then the secondary cross and slant products are natural under $f$ in the sense that $f^*(x\Cup y)=(f^*x)\Cup(f^*y)$ and $f_*(x\Cap f^*y)=(f_*x)\Cap y$.

In particular, the special case $f=\id$ shows that the definition of the secondary cup and cap products is independent of the choice of $H^A,H^B$ and $\cU$.
\end{lem}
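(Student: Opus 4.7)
The plan is to reduce both identities to the following geometric claim: after suitable refinement of $\cU\in\nbhfmlysgm(X,A\cup B;I)$, the two coarse maps
\[(f\times f)\circ\Gamma^{X,A,B},\quad \Gamma^{X',A',B'}\circ(f\times\id_I)\colon (X,A\cup B)\indexcross{\cU}(I,\partial I)\to (X',A')\times(X',B')\]
are generalizedly coarsely homotopic, so by the strong homotopy axiom they induce the same map on the relevant \mbox{(co-)}homology theory. Granting this, naturality of $\Cup$ is immediate: combining naturality of $\times$ (under $f\times f$) and of $\sigma^*$ (under $f\times\id_I$, \Cref{lem:suspensionhomomorphism}) with the claim shows that both $f^*(x\Cup y)$ and $(f^*x)\Cup(f^*y)$ equal $(-1)^m\cdot\sigma^*\circ((f\times f)\circ\Gamma^{X,A,B})^*(x\times y)$. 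The statement for $\Cap$ follows in parallel fashion, upon invoking naturality of the slant product in the form $(f_1)_*(z/f_2^*w)=((f_1\times f_2)_*z)/w$ together with the naturality of $\sigma_*$.

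To produce the homotopy I apply \Cref{lem:homotopieshomotopic} separately to the pairs $(H^A,H^{A'})$ and $(H^B,H^{B'})$ with respect to $f$. After passing to common refinements of the supporting collections, this provides generalized controlled homotopies $\tilde H^A$ and $\tilde H^B$ parametrized by a new parameter $s\in[-1,1]$ which restrict at $s=-1$ to $H^{A'}\circ(f\times\id)$ and $H^{B'}\circ(f\times\id)$, at $s=+1$ to $f\circ H^A$ and $f\circ H^B$, and which are constant at $f$ on the slice $X\times\{0\}\times[-1,1]$. Mimicking the construction of $\Gamma^{X,A,B}$, I then set
\[\tilde\Gamma(x,t,s)\coloneqq\bigl(\tilde H^A(x,t^-,s),\ \tilde H^B(x,t^+,s)\bigr)\,.\]
This is unambiguous at $t=0$, since both halves yield $(f(x),f(x))$, and it specializes at $s=\pm 1$ to the two coarse maps in question.

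The remaining task, which is the main bookkeeping obstacle, is to verify that $\tilde\Gamma$ is a coarse map of pairs from a warped product of $(X,A\cup B)\indexcross{\cU'}(I,\partial I)$ with $[-1,1]$ (for some common refinement $\cU'$ of $\cU$, still in $\nbhfmlysgm(X,A\cup B;I)$) into $(X',A')\times(X',B')$. Controlledness and properness follow by iteratively reorganizing warped products via \Cref{lem:warpedtrivialproperties,lem:warpedcartesianproducts}, exactly as in the proof of \Cref{lem:GammaXABdef}. The subspace condition holds because $\tilde H^A$ sends $A\times[0,1]\times[-1,1]$ and $X\times\{1\}\times[-1,1]$ into $A'$, and analogously for $\tilde H^B$: hence $\tilde\Gamma$ maps both $(A\cup B)\times I\times[-1,1]$ and $X\times\partial I\times[-1,1]$ into $A'\times X'\cup X'\times B'$. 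The strong homotopy axiom then yields the desired identities of induced maps, and specializing to $X=X'$, $A=A'$, $B=B'$, $f=\id_X$ with potentially differing data $(H^A,H^B,\cU)$ on source and target proves the well-definedness claim.
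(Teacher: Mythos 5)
Your argument is essentially the paper's: reduce everything to showing $(f\times f)\circ\Gamma^{X,A,B}$ and $\Gamma^{X',A',B'}\circ(f\times\id_I)$ are generalizedly coarsely homotopic (via \Cref{lem:homotopieshomotopic}), then invoke the strong homotopy axiom and naturality of $\times$, $/$, and the suspension maps. The paper states the homotopy equivalence of the two $\Gamma$-maps as following directly from \Cref{lem:homotopieshomotopic}, leaving implicit the combination of the two individual homotopies; you make this step explicit by writing out $\tilde\Gamma(x,t,s)=(\tilde H^A(x,t^-,s),\tilde H^B(x,t^+,s))$ and checking compatibility at $t=0$ and the subspace conditions — a worthwhile and correct elaboration, not a different route.
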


\begin{proof}
First we note that the secondary products are independent of the choice of $\cU$, because the suspension maps are natural under refinement of $\cU$.
Then, if we construct $\Gamma^{X,A,B}$ and $\Gamma^{X',A',B'}$ from arbitrary generalized coarse homotopies associated to the four deformation pairs,  \Cref{lem:homotopieshomotopic} tells us that the coarse maps 
\begin{align*}
(f\times f)\circ \Gamma^{X,A,B}\,,&\quad\Gamma^{X',A',B'}\circ (f\times\id_I)\colon
\\& (X,A\cup B)\indexcross{\cU}(I,\partial I)\to (X',A')\times (X',B')
\end{align*}
are generalized coarsely homotopic for sufficiently fine $\cU$ and hence induce the same map on $\CohomX[3]^*$ and $\HomolX[1]_*$. Naturality of the cross and slant products finishes the proofs of the claims.
\end{proof}

\begin{thm}[Graded commutativity of $\Cup$]
Let $\CohomX[1]^*$, $\CohomX[2]^*$, $\CohomX[3]^*$ be coarse cohomology theories on $\admissible$ satisfying the strong homotopy axiom. Assume that $\times\colon\CohomX[1]^*\otimes\CohomX[2]^*\to\CohomX[3]^*$ and $\times\colon\CohomX[2]^*\otimes\CohomX[1]^*\to\CohomX[3]^*$ are cross products which are mutually graded commutative in the sense that 
\[\xymatrix{
\CohomX[1]^m(X,A)\otimes\CohomX[2]^n(Y,B)\ar[d]^{\cong}_{\operatorname{flip}}\ar[r]^{\times}&\CohomX[3]^{m+n}((X,A)\times(Y,B))\ar[d]^{\cong}_{(\operatorname{flip})^*}
\\\CohomX[2]^n(Y,B)\otimes\CohomX[1]^m(X,A)\ar[r]^{\times}&\CohomX[3]^{m+n}((Y,B)\times(X,A))
}\]
commutes up to a sign $(-1)^{mn}$ for all pairs $(X,A)$ and $(Y,B)$ in $\admissible$. Then the associated secondary cup products are mutually graded commutative in the sense that 
\[\xymatrix{
\CohomX[1]^m(X,A)\otimes\CohomX[2]^n(X,B)\ar[d]^{\cong}_{\operatorname{flip}}\ar[r]^{\Cup}&\CohomX[3]^{m+n-1}(X,A\cup B)
\\\CohomX[2]^n(X,B)\otimes\CohomX[1]^m(X,A)\ar[ur]_{\Cup}&
}\]
commutes up to a sign $(-1)^{(m-1)(n-1)}$ for every deformation triad $(X;A,B)$ in $\admissible$.
\end{thm}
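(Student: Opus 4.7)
By Definition \ref{def:secondarycupcap} we have
\[
x \Cup y = (-1)^m \sigma^*\bigl((\Gamma^{X,A,B})^*(x \times y)\bigr), \qquad y \Cup x = (-1)^n \sigma^*\bigl((\Gamma^{X,B,A})^*(y \times x)\bigr),
\]
so the plan is to transform the right expression into the left one while tracking all signs. The hypothesis of mutual graded commutativity immediately gives $y \times x = (-1)^{mn}\, \operatorname{flip}^*(x \times y)$, where $\operatorname{flip}\colon (X,B)\times(X,A)\to(X,A)\times(X,B)$ denotes the coordinate swap.

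The essential geometric ingredient is the identity
\[\operatorname{flip} \circ \Gamma^{X,B,A} \;=\; \Gamma^{X,A,B} \circ (\id_X \times g),\]
where $g\colon I\to I$, $g(t)=-t$, is the orientation-reversing involution of $I=[-1,1]$. This follows by direct inspection from the formulas $\Gamma^{X,A,B}=(H^A_-, H^B_+)$ and $\Gamma^{X,B,A}=(H^B_-, H^A_+)$ together with the obvious identities $(-t)^\pm = t^\mp$. After passing to a common sufficiently fine refinement of the collections supporting both $\Gamma$-maps and the interval reflection, so that all maps in sight are coarse, naturality of the cross product yields
\[(\Gamma^{X,B,A})^*(y\times x) = (-1)^{mn}\,(\id_X\times g)^*(\Gamma^{X,A,B})^*(x\times y).\]

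To finish, I apply Lemma \ref{lem:suspensionsign}.\ref{item:intervalorientation} with $f=\id_I$ (orientation preserving) and our $g$ (orientation reversing): the two induced pullbacks on cohomology are negatives of one another, and combined with naturality of $\sigma^*$ under the refinement (Lemma \ref{lem:suspensionhomomorphism}) this gives $\sigma^* \circ (\id_X\times g)^* = -\sigma^*$ on the relevant group. Collecting everything,
\[
y \Cup x = (-1)^n \cdot (-1)^{mn} \cdot (-1) \cdot \sigma^*(\Gamma^{X,A,B})^*(x \times y) = (-1)^{1+m+n+mn}\,(x \Cup y),
\]
and since $1+m+n+mn = (m+1)(n+1) \equiv (m-1)(n-1) \pmod 2$, the asserted sign is confirmed.

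The main obstacle is purely bookkeeping: one has to choose a single refinement of the various neighborhood collections on which the identity $\operatorname{flip}\circ\Gamma^{X,B,A}=\Gamma^{X,A,B}\circ(\id_X\times g)$ holds as an equation of coarse maps, on which the appeal to Lemma \ref{lem:suspensionsign}.\ref{item:intervalorientation} is valid, and with respect to which the suspension homomorphisms are natural. No new ideas beyond those already developed in Section \ref{sec:suspension} are required; all three sources of signs, namely $(-1)^m$ and $(-1)^n$ from the definition of $\Cup$, $(-1)^{mn}$ from graded commutativity, and the $(-1)$ from the orientation reversal, combine mechanically.
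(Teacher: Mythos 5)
Your proof is correct and follows essentially the same route as the paper. The paper organizes the argument as a three-square commutative diagram and cites \Cref{lem:suspensionsign}.\ref{item:suspensionreflection} for the bottom square, whereas you invoke \Cref{lem:suspensionsign}.\ref{item:intervalorientation} together with naturality of $\sigma^*$ directly; since item \ref{item:suspensionreflection} is itself derived from item \ref{item:intervalorientation}, this is the same content. The key identity $\operatorname{flip}\circ\Gamma^{X,B,A}=\Gamma^{X,A,B}\circ(\id_X\times g)$, the treatment of refinements, and the final sign bookkeeping all match the paper's reasoning.
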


Of course, it is readily verified that if any cross products is given then there is a uniquely determined second one such that the hypothesis of the theorem is satisfied.
Note also that we really want to have this sign $(-1)^{(m-1)(n-1)}$, because the ``correct'' degree of $\CohomX^m$ with respect to secondary products is $m-1$.

\begin{proof}
Consider the diagram
\[\xymatrix{
\CohomX[1]^m(X,A)\otimes\CohomX[2]^n(X,B)\ar[r]_{\cong}^{\operatorname{flip}}\ar[d]^{\times}
&\CohomX[2]^n(X,B)\otimes\CohomX[1]^m(X,A)\ar[d]^{\times}
\\\CohomX[3]^{m+n}((X,A)\times(X,B))\ar[r]_{\cong}^{(\operatorname{flip})^*}\ar[d]^{(\Gamma^{X,A,B})^*}
&\CohomX[3]^{m+n}((X,B)\times(X,A))\ar[d]^{(\Gamma^{X,B,A})^*}
\\\CohomX[3]^{m+n}((X,A\cup B)\indexcross{\cV}([-1,1],\{-1,1\}))\ar[r]\ar[d]^{\sigma^*}
&\CohomX[3]^{m+n}((X,B\cup A)\indexcross{\cU}([-1,1],\{-1,1\}))\ar[d]^{\sigma^*}
\\\CohomX[3]^{m+n-1}(X,A\cup B)\ar[r]^{-\id}
&\CohomX[3]^{m+n-1}(X,B\cup A)
}\]
in which the third horizontal arrow is induced by the reflection map on $[-1,1]$ with $\cV$ chosen fine enough depending on $\cU$ as in \Cref{lem:suspensionsign}.\ref{item:intervalorientation}. The lower square commutes by \Cref{lem:suspensionsign}.\ref{item:suspensionreflection}, the middle square commutes and the upper square commutes by the sign $(-1)^{mn}$. 
Recalling that the left and right columns are the secondary cup products up to the signs $(-1)^m$ and $(-1)^n$, respectively, the claim follows.
\end{proof}

\begin{thm}[Associativity of $\Cup$]
Let $\CohomX[1]^*$, $\CohomX[2]^*$, $\CohomX[3]^*$, $\CohomX[4]^*$, $\CohomX[5]^*$, $\CohomX[6]^*$, be coarse cohomology theories satisfying the strong homotopy axiom and assume that there are cross products $\times\colon\CohomX[1]^*\otimes\CohomX[2]^*\to\CohomX[5]^*$, $\times\colon\CohomX[2]^*\otimes\CohomX[3]^*\to\CohomX[4]^*$, $\times\colon\CohomX[1]^*\otimes\CohomX[4]^*\to\CohomX[6]^*$, $\times\colon\CohomX[5]^*\otimes\CohomX[3]^*\to\CohomX[6]^*$ which are associative in the sense that the diagram
\[\xymatrix{
\CohomX[1]^*\otimes\CohomX[2]^*\otimes\CohomX[3]^*\ar[r]^-{\id\otimes\times}\ar[d]^{\times\otimes\id}
&\CohomX[1]^*\otimes\CohomX[4]^*\ar[d]^{\times}
\\\CohomX[5]^*\otimes\CohomX[3]^*\ar[r]^{\times}&\CohomX[6]^*
}\]
commutes in the obvious sense, that is,
\[\xymatrix{
\CohomX[1]^l(X,A)\otimes\CohomX[2]^m(Y,B)\otimes\CohomX[3]^n(Z,C)\ar[d]^-{\times\otimes\id}\ar[r]^{\id\otimes\times}
&\CohomX[1]^l(X,A)\otimes\CohomX[4]^{m+n}((Y,B)\times(Z,C))\ar[d]^{\times}
\\\CohomX[5]^{l+m}((X,A)\times(Y,B))\otimes\CohomX[3]^n(Z,C)\ar[r]^{\times}
&\CohomX[6]^{l+m+n}((X,A)\times (Y,B)\times (Z,C))
}\]
commutes for all pairs of coarse spaces $(X,A),(Y,B),(Z,C)$ in $\admissible$ and all $l,m,n\in\Z$. Then the corresponding secondary cup products are also associative in the sense that 
\[\xymatrix{
\CohomX[1]^l(X,A)\otimes\CohomX[2]^m(X,B)\otimes\CohomX[3]^n(X,C)\ar[d]^-{\Cup\otimes\id}\ar[r]^{\id\otimes\Cup}
&\CohomX[1]^l(X,A)\otimes\CohomX[4]^{m+n-1}(X,B\cup C)\ar[d]^{\Cup}
\\\CohomX[5]^{l+m-1}(X,A\cup B)\otimes\CohomX[3]^n(X,C)\ar[r]^{\Cup}
&\CohomX[6]^{l+m+n-2}(X,A\cup B\cup C)
}\]
commutes for every coarse space $X$ with subspaces $A,B,C\subset X$ such that $(X;A,B)$, $(X;B,C)$, $(X;A\cup B,C)$, $(X;A,B\cup C)$ are deformation triads in $\admissible$ and all $l,m,n\in\Z$.
\end{thm}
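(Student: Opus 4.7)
My plan is to unfold both sides of the associativity identity using the definitions and then apply the compatibility lemmas to reduce both to the form $\pm\sigma^*\sigma^*F^*(x\times y\times z)$ for explicit coarse maps $F$. For the left hand side, substitute $x\Cup y=(-1)^l\sigma^*(\Gamma^{X,A,B})^*(x\times y)$ and apply the first diagram of \Cref{lem:crossslantcompatiblewithsuspension} (which involves no sign) to slide this $\sigma^*$ past the cross product with $z$, then naturality of $\sigma^*$ from \Cref{lem:suspensionhomomorphism} to commute $(\Gamma^{X,A\cup B,C})^*$ past the suspension, and finally naturality of $\times$ together with the assumed associativity of $\times$. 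The outcome is
\[(x\Cup y)\Cup z=(-1)^{m-1}\,\sigma^*\sigma^*\,F_{LHS}^*(x\times y\times z),\]
where $F_{LHS}\coloneqq(\Gamma^{X,A,B}\times\id_{(X,C)})\circ(\Gamma^{X,A\cup B,C}\times\id_{I_t})$ maps the doubly warped pair $(X,A\cup B\cup C)\indexcross{}(I_s,\partial I_s)\indexcross{}(I_t,\partial I_t)$ into $(X,A)\times(X,B)\times(X,C)$. The parallel computation on the right hand side is identical in structure, except that the inner $\sigma^*$ now sits in the second cross-product slot, so the second diagram of \Cref{lem:crossslantcompatiblewithsuspension} applies and introduces an extra sign $(-1)^l$. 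This yields
\[x\Cup(y\Cup z)=(-1)^{m}\,\sigma^*\sigma^*\,F_{RHS}^*(x\times y\times z),\]
where $F_{RHS}\coloneqq(\id_{(X,A)}\times\Gamma^{X,B,C})\circ(\Gamma^{X,A,B\cup C}\times\id_{I_v})$.

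\textbf{Stage 2 (homotopy of triple diagonals).} I then want to show that $F_{LHS}$ and $F_{RHS}$ induce the same homomorphism in cohomology, modulo a possible swap of suspension intervals. Writing them out piecewise on the four quadrants of the square reveals that each is of the shape $(x,p,q)\mapsto(H^A(x,\ast),H^B(x,\ast),H^C(x,\ast))$ on the two quadrants where the outer $\Gamma$ contributes only the identity, and contains nested compositions $H^A\circ H^{A\cup B}$ or $H^B\circ H^{B\cup C}$ on the other two. To connect them, I plan to introduce an intermediate ``symmetric'' triple diagonal
\[\widetilde F(x,p,q)=\bigl(H^A(x,\alpha(p,q)),H^B(x,\beta(p,q)),H^C(x,\gamma(p,q))\bigr)\]
with continuous parameter functions $\alpha,\beta,\gamma\colon I\times I\to[0,1]$ arranged so that on each edge of $\partial(I\times I)$ at least one of them is identically $1$, ensuring $\widetilde F$ is a pair map into the relative subspace $A\times X\times X\cup X\times B\times X\cup X\times X\times C$. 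A generalized controlled homotopy $F_{LHS}\simeq\widetilde F$ can then be built by linear interpolation of the time parameters (made rigorous on a triply warped product via \Cref{lem:homotopytogeneralizedhomotopy} and \Cref{lem:warpedcartesianproducts}), while the nested compositions $H^A\circ H^{A\cup B}$ are replaced by plain $H^A$ using the flexibility provided by \Cref{lem:homotopieshomotopic} (applied with $f=\id$ to transfer between different generalized controlled contractions of $X$ onto $A$). The same procedure gives $\widetilde F\simeq F_{RHS}$, and the strong homotopy axiom then yields $F_{LHS}^*=F_{RHS}^*$ in cohomology.

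\textbf{Stage 3 (signs) and main obstacle.} \Cref{lem:suspensionsign}.\ref{item:doublesuspension} tells us that interchanging two iterated suspensions introduces a sign $-1$; combined with the canonical coarse equivalence of \Cref{lem:warpedcartesianproducts}.\ref{item:warpedcommutative} which identifies the pair $(I_s,I_t)$ with $(I_v,I_u)$ after the swap, this exactly compensates the $(-1)$ gap between $(-1)^{m-1}$ and $(-1)^{m}$ obtained in Stage 1, closing the associativity diagram. The essential difficulty will be Stage 2: explicitly choosing a common refined warping collection in $\nbhfmlysgm(X,A\cup B\cup C;I\times I\times[0,1])$ on which all the interpolating homotopies are jointly controlled, and verifying that the intermediate maps throughout the deformation remain pair maps into $A\times X\times X\cup X\times B\times X\cup X\times X\times C$. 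This is a more elaborate version of the bookkeeping already carried out in \Cref{lem:GammaXABdef}, now for three coupled deformation homotopies distributed over a two-parameter shape.
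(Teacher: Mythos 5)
Your Stages 1 and 3 mirror the paper's own strategy: the paper reduces the associativity of $\Cup$ to the commutativity-up-to-generalized-coarse-homotopy of a square of ``double diagonal'' maps (\Cref{lem:associativity}), and your sign bookkeeping via \Cref{lem:crossslantcompatiblewithsuspension} and \Cref{lem:suspensionsign}.\ref{item:doublesuspension} matches the corresponding part of Figure~\ref{fig:ProofofAssociativity} in the proof of \Cref{thm:AssociativityCupCap}. The reduction to $\pm\sigma^*\sigma^*F^*(x\times y\times z)$, the relative sign $(-1)$ between the two sides, and the compensating sign from swapping the two warped intervals are all correct.

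Stage 2, however, contains a genuine gap. \emph{Linear interpolation of the time parameters does not produce a homotopy through pair maps.} Concretely, if you try to shrink the nested factor $H^{B\cup C}_{s^+}$ in
$F_{RHS}(x,s,t)=\bigl(H^A_{s^-}(x),\,H^B_{t^-}\circ H^{B\cup C}_{s^+}(x),\,H^C_{t^+}\circ H^{B\cup C}_{s^+}(x)\bigr)$
towards the identity, then at the edge $s=1$ the middle and last coordinates become $H^B_{t^-}(y)$ and $H^C_{t^+}(y)$ with $y=H^{B\cup C}_{1-u}(x)$, a generic point of $X$ for $0<u\le1$; for generic $t\in(-1,1)$ the triple then leaves $A\times X^2\cup X\times B\times X\cup X^2\times C$. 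The paper's \Cref{lem:associativity} circumvents this by a five-stage homotopy that first \emph{inserts} the factor $H^B_{s^+}$ in front of the middle coordinate (locking that coordinate into $B$ at $s=1$) before deleting $H^{B\cup C}_{s^+}$, and symmetrically inserts $H^B_{t^-}$ before deleting $H^{A\cup B}_{t^-}$; without these auxiliary insertions the interpolation fails the boundary condition. Relatedly, your appeal to \Cref{lem:homotopieshomotopic} to ``replace $H^A\circ H^{A\cup B}$ by plain $H^A$'' is misapplied: that lemma compares two generalized controlled contractions of $X$ onto a fixed subspace, but $H^A_{s^-}\circ H^{A\cup B}_{t^-}$ has two independently varying parameters and for fixed $s$ is not itself a contraction onto $A$, so the lemma does not give the replacement you need. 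The content you are missing is precisely \Cref{lem:associativity}, whose explicit five-stage homotopy (with the case-by-case check that it remains a pair map) is what the paper flags as ``the details'' being worked out in the proof of \Cref{thm:AssociativityCupCap}. Once you substitute that lemma for your Stage 2 sketch, the rest of your argument goes through.
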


We omit the proof, because it is just a generalization of a coarse geometric analogue of \cite[Theorem 8.4]{WulffCoassemblyRinghomo} and we prefer to work out the details in the proof of the following theorem about the associativity of secondary cap with secondary cup products, which works completely analogously.

\begin{thm}[Associativity of $\Cap$ with $\Cup$]\label{thm:AssociativityCupCap}
Let $\HomolX[1]_*$, $\HomolX[5]_*$, $\HomolX[6]_*$ be coarse homology theories and $\CohomX[2]^*$, $\CohomX[3]^*$, $\CohomX[4]^*$, be coarse cohomology theories, all satisfying the strong homotopy axiom,  and assume that there are slant products $/\colon\HomolX[1]_*\otimes\CohomX[3]^*\to\HomolX[5]_*$, $/\colon\HomolX[1]_*\otimes\CohomX[4]^*\to\HomolX[6]_*$, $/\colon\HomolX[5]_*\otimes\CohomX[2]^*\to\HomolX[6]_*$ and a cross products $\times\colon\CohomX[2]^*\otimes\CohomX[3]^*\to\CohomX[4]^*$ which are associative in the sense that the diagrams
\[\xymatrix{
\Estack{&\HomolX[1]_k((X,A)\times(Y,B)\times(Z,C))\\&\otimes\CohomX[2]^m(Y,B)\otimes\CohomX[3]^n(Z,C)}\ar[r]^-{\id\otimes\times}
\ar[d]^{\id\otimes\operatorname{flip}}&\Estack{&\HomolX[1]_k((X,A)\times(Y,B)\times(Z,C))\\&\otimes\CohomX[4]^{m+n}((Y,B)\times(Z,C))}\ar[dd]^{/}
\\\Estack{&\HomolX[1]_k((X,A)\times(Y,B)\times(Z,C))\\&\otimes\CohomX[3]^n(Z,C)\otimes\CohomX[2]^m(Y,B)}\ar[d]^-{/\otimes\id}&
\\\HomolX[5]_{k-n}((X,A)\times(Y,B))\otimes\CohomX[2]^m(Y,B)\ar[r]^{/}&\HomolX[6]_{k-m-n}(X,A)
}\]
commute up to a sign $(-1)^{mn}$ for all pairs of coarse spaces $(X,A)$, $(Y,B)$, $(Z,C)$ in $\admissible$ and all $l,m,n\in\Z$. Then the corresponding secondary cap and cup products are also associative in the sense that 
\[\xymatrix{
\Estack{&\HomolX[1]_k(X,A\cup B\cup C)\\&\otimes\CohomX[2]^m(X,B)\otimes\CohomX[3]^n(X,C)}\ar[r]^-{\id\otimes\Cup}
\ar[d]^{\id\otimes\operatorname{flip}}
&\Estack{&\HomolX[1]_k(X,A\cup B\cup C)\\&\otimes\CohomX[4]^{m+n-1}(X,B\cup C)}\ar[dd]^{\Cap}
\\\Estack{&\HomolX[1]_k(X,A\cup B\cup C)\\&\otimes\CohomX[3]^n(X,C)\otimes\CohomX[2]^m(X,B)}\ar[d]^-{\Cap\otimes\id}&
\\\HomolX[5]_{k-n+1}(X,A\cup B)\otimes\CohomX[2]^m(X,B)\ar[r]^-{\Cap}&\HomolX[6]_{k-m-n+2}(X,A)
}\]
commutes  up to a sign $(-1)^{(m-1)(n-1)}$ for every coarse space $X$ with subspaces $A,B,C\subset X$ such that $(X;A,B)$, $(X;B,C)$, $(X;A\cup B,C)$, $(X;A,B\cup C)$ are deformation triads in $\admissible$ and all $l,m,n\in\Z$.
\end{thm}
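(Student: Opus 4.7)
The strategy is to extend the approach used for the associativity of the primary cap product, building a doubly-suspended ``triple diagonal'' that serves as a common bridge between the two iterated secondary products.

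First I would construct, from the three generalized controlled deformation homotopies $H^A, H^B, H^C$, a coarse map
\[\Gamma^{X,A,B,C}\colon (X, A\cup B\cup C)\indexcross{\cU}(I,\partial I)\indexcross{\cV}(I,\partial I) \to (X,A)\times (X,B)\times (X,C)\]
for sufficiently fine collections $\cU, \cV$. Concretely I would send $(x,s,t)$ to a triple whose three components are built from $H^A_{s^-}$, an interpolation of $H^B$ depending on $s^+$ and $t^-$, and $H^C_{t^+}$, arranged so that on each face of the square $I\times I$ at least one component of the target lies in the prescribed subspace. Properness and controlledness follow as in \Cref{lem:GammaXABdef} from the corresponding properties of the individual $H^A, H^B, H^C$, after an application of \Cref{lem:warpedcartesianproducts} to identify the iterated warped product correctly.

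Second, I would produce two generalized controlled homotopies that relate $\Gamma^{X,A,B,C}$ to the expressions appearing on either side of the square. Postcomposing with the projection $(X,A)\times(X,B)\times(X,C)\to (X,A)\times (X,B\cup C)$ and collapsing one interval factor via \Cref{lem:homotopydomaindeformationretraction} should give a map generalized controlledly homotopic to $\Gamma^{X,A,B\cup C}$ followed by $\id\times\Gamma^{X,B,C}$ (up to the reshuffling of warped products from \Cref{lem:warpedcartesianproducts}). Symmetrically, projecting onto $(X,A\cup B)\times(X,C)$ recovers $\Gamma^{X,A\cup B,C}$ followed by $\Gamma^{X,A,B}\times\id$. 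Existence of these homotopies is secured by linear interpolation on $I\times I$ together with \Cref{lem:homotopieshomotopic} and \Cref{lem:homotopytogeneralizedhomotopy}.

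Third, I would unfold both sides of the claimed square via \Cref{def:secondarycupcap} and insert as common hub the expression
\[\sigma_*\sigma_*(\Gamma^{X,A,B,C})_*(x)\,/\,(y\times z)\]
living in a doubly-suspended homology group. Chaining together the hypothesized associativity of $/$ with $\times$ (contributing the sign $(-1)^{mn}$), the compatibility of slant products with suspension in each variable slot (\Cref{lem:crossslantcompatiblewithsuspension}, applied twice, once per interval factor), and the commutativity of double suspension up to sign (\Cref{lem:suspensionsign}.\ref{item:doublesuspension}), together with the two homotopies from the previous paragraph, identifies this hub with $x\Cap (y\Cup z)$ on one side of the square and, after applying the flip, with $(x\Cap z)\Cap y$ on the other.

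The main obstacle will be sign bookkeeping. Each definitional sign from $\Cup$ and $\Cap$, each invocation of \Cref{lem:crossslantcompatiblewithsuspension}, and the double-suspension swap contribute sign factors depending on $k, m, n$, which must combine to precisely $(-1)^{(m-1)(n-1)}$ over and above the $(-1)^{mn}$ from the primary associativity. The identity $(m-1)(n-1)=mn-m-n+1$ indicates the provenance of each correction term (two degree shifts from the two $\Cap$'s on the left-hand side versus one $\Cap$ and one $\Cup$ on the right-hand side), but verifying it requires writing every compatibility diagram with all degrees tracked. A secondary technical obstacle is arranging simultaneous refinements of the neighborhood-families $\cU,\cV$ so that every intermediate map is genuinely a coarse map and every intermediate homotopy is defined on a common warped product; this is handled by repeated application of \Cref{lem:warpedcartesianproducts} and \Cref{lem:homotopieshomotopic} in the spirit of the constructions in \Cref{sec:suspension}.
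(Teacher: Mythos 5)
Your proposal follows essentially the same strategy as the paper. The paper's \Cref{lem:associativity} directly constructs an explicit $5$-stage generalized controlled homotopy on the double warped product $(X,A\cup B\cup C)\indexcross{\cW}\big((I,\partial I)\times(J,\partial J)\big)$ between the two compositions $(\id\times\Gamma^{X,B,C})\circ(\Gamma^{X,A,B\cup C}\times\id)$ and $(\Gamma^{X,A,B}\times\id)\circ\tilde\Gamma^{X,A\cup B,C}$, and then assembles the $3\times3$ diagram of Figure~\ref{fig:ProofofAssociativity} exactly as you describe in your third paragraph, invoking the primary associativity (sign $(-1)^{mn}$), \Cref{lem:crossslantcompatiblewithsuspension} twice, and \Cref{lem:suspensionsign}.\ref{item:doublesuspension}; your middle map $\Gamma^{X,A,B,C}$ is precisely the stage-$2$ value of that homotopy, so splitting the interpolation into two halves around a named hub is a cosmetic rearrangement rather than a different proof. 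One correction to your second step: the mechanism you propose for producing the two halves --- postcomposing with a projection $(X,A)\times(X,B)\times(X,C)\to(X,A)\times(X,B\cup C)$ and collapsing an interval factor via \Cref{lem:homotopydomaindeformationretraction} --- does not work as stated, because there is no coarse map with that source and target respecting the relative subspaces ($A\times X\times X\cup X\times B\times X\cup X\times X\times C$ has no canonical image in $A\times X\cup X\times(B\cup C)$). The homotopies must instead be written out explicitly by interpolating the composed deformation retractions term by term, as the paper does in the proof of \Cref{lem:associativity}; once that is done, your sign bookkeeping and diagram assembly go through and match the paper's.
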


Again, the signs are in accordance with the sign heuristics, because in the formulas $x/(y\times z)=(-1)^{\deg(y)\deg(z)}(x/z)/y$ and $x\Cap(y\Cup z)=(-1)^{(\deg(y)-1)(\deg(z)-1)}(x\Cap z)\Cap y$ the symbols $y,z$ are being interchanged.

The important step in the proofs of both theorems is the following lemma.

\begin{lem}\label{lem:associativity}
Let $X$ be a coarse space with subspaces $A,B,C\subset X$ such that $(X;A,B)$, $(X;B,C)$, $(X;A\cup B,C)$, $(X;A,B\cup C)$ are deformation triads in $\admissible$. Furthermore, let $J$ also denote the interval $[-1,1]$, just like $I$ does. Here we use the two different letters to distinguish the two copies of the same interval.
Then there are $\cU\in\nbhfmlysgm(X,A\cup B;I)$, $\cV\in\nbhfmlysgm(X,B\cup C;J)$ and $\cW\in \nbhfmlysgm(X,A\cup B\cup C;I\times J)$ such that the diagram
\[\xymatrix@C=20ex{
{\begin{matrix}(X,A\cup B\cup C)\indexcross{\cW}\\ \big((I,\partial I)\times (J,\partial J)\big)\end{matrix}}
\ar[r]^-{\Gamma^{X,A,B\cup C}\times\id_{(J,\partial J)}}
\ar[d]^-{\tilde\Gamma^{X,A\cup B,C}}
&{\begin{matrix}(X,A)\times\\\big((X,B\cup C)\indexcross{\cV}(J,\partial J)\big)\end{matrix}}
\ar[d]^-{\id_{(X,A)}\times\Gamma^{X,B,C}}
\\{\begin{matrix}\big((X,A\cup B)\indexcross{\cU}(I,\partial I)\big)\\\times(X,C)\end{matrix}}
\ar[r]^-{\Gamma^{X,A,B}\times\id_{(X,C)}}
&(X,A)\times(X,B)\times(X,C)
}\]
commutes up to generalized coarse homotopy, where $\tilde\Gamma^{X,A\cup B, C}$ denotes $\Gamma^{X,A\cup B,C}\times\id_{(I,\partial I)}$ up to the obvious identifications. That is, the horizontal $\Gamma$-maps take the $I$-interval as input and the vertical ones use $J$.
\end{lem}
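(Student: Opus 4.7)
The proof is closely modeled on the argument for \cite[Theorem 8.4]{WulffCoassemblyRinghomo} and proceeds by explicit construction of a generalized controlled homotopy between the two compositions. First, using the abbreviated $H^?_{s^\pm}$ notation, both compositions unravel to concrete formulas: the top-right route evaluates to
\[
P_1(x,s,t)=\bigl(H^A_{s^-}(x),\; H^B_{t^-}(H^{B\cup C}_{s^+}(x)),\; H^C_{t^+}(H^{B\cup C}_{s^+}(x))\bigr),
\]
while the bottom-left route evaluates to
\[
P_2(x,s,t)=\bigl(H^A_{s^-}(H^{A\cup B}_{t^-}(x)),\; H^B_{s^+}(H^{A\cup B}_{t^-}(x)),\; H^C_{t^+}(x)\bigr).
\]

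The plan is to define a generalized controlled homotopy
\[
F\colon (X,A\cup B\cup C)\indexcross{\cW'}((I,\partial I)\times(J,\partial J))\indexcross{\cV}[0,1]\;\to\;(X,A)\times(X,B)\times(X,C)
\]
with $F|_{r=0}=P_1$ and $F|_{r=1}=P_2$, where $\cW'$ is a sufficiently fine refinement of $\cW$. A natural candidate is the explicit interpolation
\[
F(x,s,t,r)=\Bigl(H^A_{s^-}\bigl(H^{A\cup B}_{rt^-}(x)\bigr),\; H^B_{(1-r)t^-+rs^+}\bigl(H^{B\cup C}_{(1-r)s^+}(H^{A\cup B}_{rt^-}(x))\bigr),\; H^C_{t^+}\bigl(H^{B\cup C}_{(1-r)s^+}(x)\bigr)\Bigr),
\]
whose endpoints visibly reduce to $P_1$ and $P_2$ because every $H^?$ collapses to $\id$ when its parameter hits $0$. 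The parameter functions $(s,t,r)\mapsto s^\pm,\,t^\pm,\,rt^-,\,(1-r)s^+,\,(1-r)t^-+rs^+$ are all Lipschitz on $I\times J\times[0,1]$, so controlledness of $F$ reduces via the toolkit of \Cref{lem:warpedtrivialproperties,lem:warpedcartesianproducts} (pullback together with fission, fusion and commutation of warped products) to the generalized controlledness of the individual deformations $H^A,H^B,H^C,H^{A\cup B},H^{B\cup C}$. The compatible choice of $\cU,\cV,\cW$ supporting the various $\Gamma$-maps in the diagram is obtained by first taking the collections supporting $\Gamma^{X,A,B\cup C},\Gamma^{X,A\cup B,C},\Gamma^{X,A,B},\Gamma^{X,B,C}$, pulling them back to $X\times I\times J$ and then passing to a common refinement satisfying the suspension condition; the additional collection on $[0,1]$ comes from \Cref{lem:homotopydomaindeformationretraction}. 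Properness follows at once from properness of the constituent $\Gamma$-maps.

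The main obstacle is not the interpolation itself but showing that $F$ is a map of pairs at every intermediate time $r$. On the distinguished subspace $X\times\partial(I\times J)\cup(A\cup B\cup C)\times I\times J$ the image must land in $A\times X^2\cup X\times B\times X\cup X^2\times C$, and a case analysis is needed, indexed by which of $A,B,C$ contains $x$ (when $x\in A\cup B\cup C$) and which boundary face of $I\times J$ the pair $(s,t)$ inhabits. The crucial ingredients are that $H^A,H^B,H^C$ preserve $A,B,C$ respectively while $H^{A\cup B}$ and $H^{B\cup C}$ preserve $A\cup B$ and $B\cup C$. For example, when $x\in A$ and the intermediate point $H^{A\cup B}_{rt^-}(x)$ drifts into $B\setminus A$, the first coordinate of $F(x,s,t,r)$ may leave $A$; one then has to check that either the second coordinate stays in $B$ (when $H^{B\cup C}_{(1-r)s^+}$ does not push it out of $B$) or the third coordinate stays in $C$ (when it does), relying in each subcase on the appropriate invariance of a single deformation. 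Keeping this bookkeeping coherent across all quadrants of $I\times J$ and all values of $r$ is the heart of the technical work, and if the interpolation $F$ as written does not handle some subcase directly, it can be replaced by a concatenation of two or three shorter homotopies, with \Cref{lem:homotopieshomotopic} invoked to swap any constituent deformation for a generalizedly controlledly homotopic one that better respects the required invariance.
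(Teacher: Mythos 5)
Your general approach—an explicit generalized controlled homotopy between the two compositions of $\Gamma$-maps, with the controlledness and properness handled by the warped-product toolkit—is in the right spirit, and your identification of the subspace condition as the critical technical obstacle is correct. However, the interpolation $F$ you propose does not satisfy the subspace condition, and you have not actually constructed the concatenation you gesture towards, so there is a genuine gap.

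Concretely, take $s=1$, $t=0$, $r=1/2$, and $x\notin A\cup B\cup C$. Then $s^+=1$, $s^-=t^+=t^-=0$, and
\[
F(x,1,0,\tfrac12)=\bigl(x,\ H^B_{1/2}(H^{B\cup C}_{1/2}(x)),\ H^{B\cup C}_{1/2}(x)\bigr),
\]
whose three coordinates are an arbitrary point of $X$ and two intermediate stages of deformations, none of which need lie in $A$, $B$ or $C$. The problem is structural: at $s=+1$ the subspace condition is enforced at the endpoints by having $H^{B\cup C}_1(x)\in B\cup C$ (at $r=0$) respectively $H^B_1\circ\cdots\in B$ (at $r=1$); your interpolation replaces the outer parameters $s^+$ of $H^{B\cup C}$ and $H^B$ by $(1-r)s^+$ and $(1-r)t^-+rs^+$, both of which are strictly between $0$ and $1$ for $0<r<1$, so neither endpoint mechanism fires. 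The paper avoids this by a four-stage concatenation: it first \emph{inserts} $H^B_{s^+}$ in front of the second coordinate (so that at $s=1$ the leading factor $H^B_1$ already pins the coordinate into $B$ before anything else moves), then drops $H^{B\cup C}_{s^+}$ from the second and third coordinates, then inserts $H^{A\cup B}_{t^-}$, then drops $H^B_{t^-}$. At each stage only one ingredient changes, and the previously inserted "protective" outer factor keeps the relevant coordinate inside $A$, $B$, or $C$ on the boundary faces. Your "concatenation of two or three shorter homotopies" hedge is pointing in this direction, but without the explicit staged formula the argument is incomplete, and the naive single-step $F$ genuinely fails.
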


\begin{proof}
Consider the map $H\colon X\times I\times J\times[0,4]\to X^3$ which is given by 
\[\begin{array}{rlcccr}
H(x,s,t,0)&\coloneqq(&H^A_{s^-}(x),&H^B_{t^-}\circ H^{B\cup C}_{s^+}(x),&H^C_{t^+}\circ H^{B\cup C}_{s^+}(x)&)
\\H(x,s,t,1)&\coloneqq(&H^A_{s^-}(x),&H^B_{s^+}\circ H^B_{t^-}\circ H^{B\cup C}_{s^+}(x),&H^C_{t^+}\circ H^{B\cup C}_{s^+}(x)&)
\\H(x,s,t,2)&\coloneqq(&H^A_{s^-}(x),&H^B_{s^+}\circ H^B_{t^-}(x),&H^C_{t^+}(x)&)
\\H(x,s,t,3)&\coloneqq(&H^A_{s^-}\circ H^{A\cup B}_{t^-}(x),&H^B_{s^+}\circ H^B_{t^-}\circ H^{A\cup B}_{t^-}(x) ,&H^{\cC}_{t^+}(x)&)
\\H(x,s,t,4)&\coloneqq(&H^A_{s^-}\circ H^{A\cup B}_{t^-}(x),& H^B_{s^+}\circ H^{A\cup B}_{t^-}(x) ,& H^{\cC}_{t^+}(x)&)
\end{array}\]
if the homotopy parameter is one of the integers between $0$ and $4$ and interpolate between these values in the obvious way on $[0,4]\setminus\{0,1,2,3,4\}$. So for example, for $u\in[0,1]$ we have
\[H(x,s,t,u)=(H^A_{s^-}(x),H^B_{u\cdot s^+}\circ H^B_{t^-}\circ H^{B\cup C}_{s^+}(x),H^C_{t^+}\circ H^{B\cup C}_{s^+}(x))\,.\]
It is straightforward to verify that it maps the set
\[\big((A\cup B\cup C)\times I\times J
\cup X\times (\partial I\times J \cup I\times\partial J)\big)\times[0,4]\]
into
\( (A\times X^2)\cup(X\times B\times X)\cup(X^2\times C)\).
For $s=-1$ and $t=+1$ this follows easily from $H^A_1$ having image in $A$ and $H^C_1$ having image in $C$, respectively.
We check the other cases exemplarily for $u\in[0,1]$, the rest being similar.
\begin{itemize}
\item If $t=-1$, then the middle component lies in $\im(H^B_{u\cdot s^+}\circ H^B_1)\subset H^B_{u\cdot s^+}(B)\subset B$.
\item If $s=+1$ or $x\in B\cup C$, then $H^{B\cup C}_{s^+}(x)$ lies in $B$ or in $C$. In the first case the middle component lies in $B$ and in the second case the third component lies in $C$.
\item If $x\in A$, then the first component is $H^A_{s^-}(x)\in A$.
\end{itemize}
Thus, the construction gives rise to a map of pairs
\[H\colon (X,A\cup B\cup C)\times(I,\partial I)\times (J,\partial J)\times[0,4]\to(X,A)\times(X,B)\times(X,C)\]
and we have
\begin{align*}
(\id_{(X,A)}\times&\Gamma^{X,B,C})\circ(\Gamma^{X,A,B\cup C}\times\id_{(J,\partial J)})(x,s,t)=
\\&=(\id_{X,A}\times\Gamma^{X,B,C}) (H^A_{s^-}(x),H^{B\cup C}_{s^+}(x),t)
\\&=(H^A_{s^-}(x),\,H^B_{t^-}\circ H^{B\cup C}_{s^+}(x),\,H^C_{t^+}\circ H^{B\cup C}_{s^+}(x))=H(x,s,t,0)
\end{align*}
as well as
\begin{align*}
(\Gamma^{X,A,B}&\times\id_{X,C})\circ \tilde\Gamma^{X,A\cup B,C}(x,s,t)=
\\&=(\Gamma^{X,A,B}\times\id_{X,C}) (H^{A\cup B}_{t^-}(x),s,H^{\cC}_{t^+}(x))
\\&=(H^A_{s^-}\circ H^{A\cup B}_{t^-}(x),\, H^B_{s^+}\circ H^{A\cup B}_{t^-}(x) ,\,H^{\cC}_{t^+}(x))=H(x,s,t,4)
\end{align*}
for all $x\in X,s,t\in [-1,1]$.
The collections of neighborhoods $\cU$, $\cV$ are chosen as in \Cref{lem:GammaXABdef} and by using \Cref{lem:warpedtrivialproperties,lem:warpedcartesianproducts} in the same way as before we also obtain as sufficiently fine $\cW$ such that the maps in the diagram are coarse maps and $H$ is a generalized controlled homotopy between the two compositions. Note furthermore that for all $s,t,u$ at least one component of $H(x,s,t,u)$ is equal to $x$ and therefore $H$ is also proper.
\end{proof}

\begin{proof}[Proof of \Cref{thm:AssociativityCupCap}]
Consider the diagram in Figure \ref{fig:ProofofAssociativity}, in which the single squares commute up to the signs which are displayed within them. We explain it in a bit more detail, starting at the bottom right corner:
\begin{sidewaysfigure}
$\xymatrix@C=15ex@R=10ex{
\HomolX[1]_k(X,A\cup B\cup C)
\ar[r]^{\susp[1]}
\ar[d]^{\susp[1]}
\ar@{}[dr]|*+[o][F-]{-1}
&{\Estack{\HomolX[1]_{k+1}(&(X,A\cup B\cup C)\\&\indexcross{\cU'}(I,\partial I))}}
\ar[d]^{\susp[1]}
\ar[r]^-{(\Gamma^{X,A,B\cup C})_*}
\ar@{}[dr]|*+[o][F-]{+1}
&{\Estack{\HomolX[1]_{k+1}(&(X,A)\times \\ &(X,B\cup C))}}
\ar[r]^-{/}
\ar[d]^{\susp[1]}
&{\Estack{\Hom(\CohomX[4]^{m+n-1}(X,B\cup C)&,\\\HomolX[6]_{k-m-n+2}(X,A)&)}}
\ar[d]^{(\cosusp[4])^*}
\\
{\Estack{\HomolX[1]_{k+1}(&(X,A\cup B\cup C)\\&\indexcross{\cV'}(J,\partial J))}}
\ar[r]^{\susp[1]}
\ar[d]_-{(\Gamma^{X,A\cup B,C})_*}
\ar@{}[dr]|*+[o][F-]{+1}
&{\Estack{\HomolX[1]_{k+1}((X,A\cup &B\cup C)\\\indexcross{\cW}(&(I,\partial I)\\&\times(J,\partial J)))}}
\ar[d]_-{(\Gamma^{X,A\cup B,C})_*}
\ar[r]^-{(\Gamma^{X,A,B\cup C})_*}
\ar@{}[dr]|*+[o][F-]{+1}
&{\Estack{\HomolX[1]_{k+2}((&X,A)\times\\ (&(X,B\cup C)\\&\indexcross{\cV}(J,\partial J)))}}
\ar[d]_-{(\Gamma^{X,B,C})_*}
\ar[r]^-{/}
\ar@{}[dr]|*+[o][F-]{+1}
\ar@{}[ur]|*++[o][F-]{(-1)^{k+2}}
&{\Estack{\Hom(\CohomX[4]^{m+n}((X,B\cup C)&\\\indexcross{\cV}(J,\partial J)&),\\\HomolX[6]_{k-m-n+2}(X,A&))}}
\ar[d]_-{((\Gamma^{X,B,C})^*)^*}
\\
{\Estack{\HomolX[1]_{k+1}(&(X,A\cup B) \\ &\times(X,C))}}
\ar[r]^{\susp[1]}
\ar[d]_-{/}
\ar@{}[dr]|*+[o][F-]{+1}
&{\Estack{\HomolX[1]_{k+2}((&(X,A\cup B)\\&\indexcross{\cU}(I,\partial I)) \\ &\times(X,C))}}
\ar[r]^-{(\Gamma^{X,A,B})_*}
\ar[d]_-{/}
\ar@{}[dr]|*+[o][F-]{+1}
&{\Estack{\HomolX[1]_{k+2}(&(X,A) \\ &\times(X,B)\\&\times(X,C))}}
\ar[d]_-{/}
\ar[r]^-{/}
\ar@{}[dr]|*++[o][F-]{(-1)^{mn}}
&{\Estack{\Hom(\CohomX[4]^{m+n}((X,B)&\\\times (X,C)&),\\\HomolX[6]_{k-m-n+2}(X,A&))}}
\ar[d]_-{(\times)^*}
\\
{\Estack{\Hom(\CohomX[3]^n(&X,C),\\\HomolX[5]_{k-n+1}(&X,A\cup B))}}
\ar[r]^-{(\susp[1])_*}
&{\Estack{\Hom(\CohomX[3]^n(&X,C),\\\HomolX[5]_{k-n+2}(&(X,A\cup B) \\ &\indexcross{\cU}(I,\partial I)))}}
\ar[r]^-{(\Gamma^{X,A,B})_*}
&{\Estack{\Hom(\CohomX[3]^n(&X,C),\\\HomolX[5]_{k-n+2}(&(X,A) \\ &\times(X,B)))}}
\ar[r]^-{/}
&{\Estack{\Hom(\CohomX[2]^m(X,B)&\\\otimes\CohomX[3]^n(X,&C),\\\HomolX[6]_{k-m-n+2}(X,&A))}}
}$\caption{Proving Theorem \ref{thm:AssociativityCupCap}.}
\label{fig:ProofofAssociativity}
\end{sidewaysfigure}

Commutativity of the bottom right square up to the sign $(-1)^{mn}$ is the postulated associativity between the given slant and cross products. 

Next, $\cU\in\nbhfmlysgm(X,A\cup B;I)$ and $\cV\in\nbhfmlysgm(X,B\cup C;J)$ are chosen fine enough such that $\Gamma^{X,A,B}\colon (X,A\cup B)\indexcross{\cU}(I,\partial I)\to (X,A)\times (Y,B)$ and $\Gamma^{X,B,C}\colon (X,B\cup C)\indexcross{\cV}(J,\partial J)\to (X,B)\times (Y,C)$ become coarse maps, and then the middle bottom an middle right square commute by naturality of the cross and slant products, respectively.

The top right and bottom left square commute up to the signs $(-1)^{k+2}$, $+1$, respectively, by \Cref{lem:crossslantcompatiblewithsuspension}. Commutativity of the middle square follows from \Cref{lem:associativity} for a certain $\cW\in\nbhfmlysgm(X,A\cup B\cup C;I\times J)$ which depends on $\cU$ and $\cV$.

\Cref{lem:warpedcartesianproducts} tells us that by refining $\cW$ we can assume that $X\indexcross{\cW}(I\times J)$ is canonically coarsely equivalent to $(X\indexcross{\cU'}I)\indexcross{\cV'}J$ and $(X\indexcross{\cV'}J)\indexcross{\cU'}I$ for some $\cU'\in\nbhfmlysgm(X,A\cup B\cup C;I)$ and $\cV'\in\nbhfmlysgm(X,A\cup B\cup C;J)$ and by additionally chosing them finer than $\cU,\cV$, respectively, we can ensure that
\begin{align*}
\Gamma^{X,A,B\cup C}&\colon (X,A\cup B\cup C)\indexcross{\cU'}(I,\partial I)\to(X,A)\times(X,B\cup C)\quad\text{and}
\\\Gamma^{X,A\cup B,C}&\colon (X,A\cup B\cup C)\indexcross{\cV'}(J,\partial J)\to(X,A\cup B)\times(X,C)
\end{align*}
are coarse maps.
Then the middle top and middle left square commute by naturality of the suspension maps and the top left square commutes up to the sign $-1$ by \Cref{lem:suspensionsign}.\ref{item:doublesuspension}.

The maps on the left hand side compose to $(-1)^{k+1}\cdot \Cap$, the maps at the bottom compose to $(-1)^{k-n+2}\cdot \Cap$, the maps at the top compose to $(-1)^{k+1}\cdot \Cap$ and the maps on the right compose to $((-1)^m\cdot \Cup)^*$.
Collecting all the signs mentioned above we end up with $(-1)^{(m-1)(n-1)}$, which was to be shown.
\end{proof}

\section{Secondary cup and cap products on ordinary coarse \mbox{(co-)}homology}
\label{sec:ordinary}

The first secondary product in coarse geometry ever was the one Roe constructed on his coarse cohomology $\HX^*$ in \cite[Section 2.4]{RoeCoarseCohomIndexTheory}. The multiplication maps $\HX^m(X)\otimes\HX^n(X)\to\HX^{m+n-1}(X)$ are explicitely constructed on the level of cocycles, but they are only well-defined for $m,n\geq 1$. 
The issues with well-definedness disappear if one considers the secondary cup product not on the absolute cohomology groups but on the groups $\HX^*(X,\{o\})$ relative to a base-point $o\in X$ instead.

Roe actually only considered the absolute groups for proper metric spaces and with coefficients in $\R$, but it is indeed straightforward to generalize them to a coarse cohomology theory $\HX^*(\blank,\blank;M)$ on the admissible category of pairs of countably generated coarse spaces, which happens to satisfy \Cref{ass:assumptiononadmissible}, for any choice of abelian group $M$ as coefficients. It is called the \emph{ordinary coarse cohomology} and satisfies the strong homotopy axiom, see \cite[Section 4.2]{wulff2020equivariant}.

In \Cref{defn:RoeSecCup} below we also give a straightforward generalization of Roe's secondary product to maps
\[*\colon \HX^m(X,A;M_1)\otimes\HX^n(X,B;M_2)\to\HX^{m+n-1}(X, A\cup B;M_3)\]
for all excisive triads $(X;A,B)$ of coarsely connected countably generated coarse spaces and abelian groups $M_1,M_2,M_3$ together with a bilinear map $M_1\times M_2\to M_3\colon (r,s)\mapsto r\cdot s$, e.\,g.\ $M_1=M_2=M_3=R$ could be a ring and $\cdot$ the multiplication map on $R$.
However, it should be pointed out that we use different signs in our definition than Roe did in his, because Roe's claim in \cite[Example 5.28(v)]{RoeCoarseCohomIndexTheory} is only true for our corrected choice of signs.

The purpose of this section is to show that the secondary product $*$ agrees with our secondary cup product $\Cup$ obtained from a canonical cross product if $(X;A,B)$ is an excisive deformation triad.
Note that it is somewhat of a mystery why the two products $*$ and $\Cup$ can only be constructed under different, seemingly unrelated assumptions, namely excisive triads versus deformation triads, although the products are equal.

Dual to the ordinary coarse cohomology there is also a ordinary coarse homology $\HX_*(\blank,\blank;M)$. In a special case, its definition has already been given in \cite[page 453]{YuCyclicCohomology}, and the general version was developed in \cite[Section 6.3]{BunkeEngel_homotopy}. We refer to \cite[Section 4.1]{wulff2020equivariant} for proofs of basic properties, including that it is a coarse homology theory satisfying the strong excision axiom.
A secondary product between ordinary coarse homology and ordinary coarse cohomology can be defined in a similar manner to Roe's product at the level of cycles. We will show that it agrees with the secondary cap product obtained from a canonical slant product.

We start by recalling the definition of ordinary coarse cohomology, which can be written down for arbitrary coarse spaces, but only yields a coarse cohomology theory for countably generated ones.

\begin{defn}
Let $X$ be a coarse space and $M$ an abelian group. For $m\in\N$ we define $\CX^m(X;M)$ as the group of all functions $\varphi\colon X^{m+1}\to M$ whose support \(\supp(\varphi)\) intersects each penumbra of the multidiagonal in a precompact subset, i.\,e.\ a finite union of bounded subsets.
For $m\in\Z\setminus\N$ we define $\CX^m(X;M)\coloneqq 0$. 
These groups together with the Alexander--Spanier coboundary maps $\delta\colon\CX^m(X;M)\to\CX^{m+1}(X;M)$, that is
\begin{equation}\label{eq:AScoboundary}
\delta\varphi(x_0,\dots,x_{m+1})\coloneqq\sum_{i=0}^{m+1}(-1)^i\varphi(x_0,\dots,x_{i-1},x_{i+1},\dots,x_{m+1})\,,
\end{equation}
constitute a cochain complex.
If $A\subset X$ is a subspace, then we define the cochain complex $\CX^*(X,A;M)$ as the kernel of the surjectiv cochain map $\CX^*(X;M)\to \CX^*(A;M)$ and its cohomology is the \emph{ordinary coarse cohomology} $\HX^*(X,A;M)$ of the pair $(X,A)$ with coefficients in $M$.
\end{defn}

In the upcoming constructions, excisiveness enters the game through the following subcomplexes:
Given a countably generated coarse space $X$ with subspaces $A,B\subset X$ we define 
\[\CX^*(X,A+B;M)\coloneqq \ker\big(\CX^*(X;M)\to\CX^*(A,M)\oplus\CX^*(B;M)\big)\,.\]
It contains $\CX^*(X,A\cup B;M)$ as a subcomplex.

\begin{lem}\label{lem:cochaininclusion}
Assume that $(X;A,B)$ is an excisive triad.
Then the inclusion
\begin{equation}\label{eq:cochaininclusion}
\iota\colon\CX^*(X,A\cup B;M)\xrightarrow{\subset}\CX^*(X,A+B;M)
\end{equation} 
is a cochain homotopy equivalence.
\end{lem}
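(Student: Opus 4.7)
The plan is to prove the lemma by exhibiting an explicit cochain contraction of the quotient complex
\[
Q^* \coloneqq \CX^*(X, A+B; M)/\iota(\CX^*(X, A\cup B; M)).
\]
Since $\iota$ is an injective cochain map, a null-homotopy of $\id_{Q^*}$ together with the resulting splitting will promote $\iota$ to a cochain homotopy equivalence in the standard way, so it suffices to contract $Q^*$.

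First, I will reinterpret $Q^*$ concretely: an element of $Q^m$ is represented by a function $\varphi\colon X^{m+1}\to M$ vanishing on both $A^{m+1}$ and $B^{m+1}$, modulo those vanishing on all of $(A\cup B)^{m+1}$. Via restriction and extension-by-zero this is naturally identified with the group of functions $\psi\colon (A\cup B)^{m+1}\to M$ vanishing on $A^{m+1}\cup B^{m+1}$ and subject to the usual precompact-penumbrae support condition inherited from $\CX^m(X;M)$. The differential is then simply the Alexander--Spanier coboundary from \eqref{eq:AScoboundary} restricted to this subgroup.

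Second, I will choose a coloring $\rho\colon A\cup B\to \{0,1\}$ with $\rho(A\setminus B)=\{0\}$ and $\rho(B\setminus A)=\{1\}$ (arbitrary on $A\cap B$), and use it to construct a contracting homotopy $h\colon Q^m\to Q^{m-1}$ by an alternating-sum formula keyed to the positions where the binary string $(\rho(x_0),\dots,\rho(x_m))$ switches value. The purely algebraic identity $\delta h+h\delta=\id_{Q^*}$ then reduces to the standard combinatorial calculation underlying the acyclicity of the normalized simplicial cochain complex of the two-point set $\{A,B\}$, reflecting the contractibility of the nerve of the cover $\{A,B\}$ of $A\cup B$.

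Third and most crucially, I will verify that $h$ respects the precompact-penumbrae support condition; this is exactly where coarse excisiveness of $(X;A,B)$ enters. Whenever $\psi\in Q^m$, the tuples $(x_0,\dots,x_m)$ on which $\psi$ is non-zero must contain at least one coordinate in $A\setminus B$ and one in $B\setminus A$; if such a tuple further lies in a penumbra of the multidiagonal for an entourage $E$, then two of its coordinates are forced into $\Pen_E(A)\cap\Pen_E(B)$, which by the excisiveness assumption is contained in $\Pen_F(A\cap B)$ for a suitable $F$. This confines the relevant tuples to a coarsely controlled subset of $X^{m+1}$, so that the alternating-sum formula for $h\psi$ inherits precompactness of support from $\psi$. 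The main obstacle is precisely this last step: the algebraic contraction of $Q^*$ is routine by analogy with the Cech--Alexander--Spanier story for a two-element cover, but threading the coarse-geometric bookkeeping through the combinatorics of $h$ so as to confirm compatibility with the support condition is the technical heart of the argument, and it is here that the excisiveness hypothesis is indispensable.
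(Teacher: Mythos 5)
Your overall reduction is legitimate and genuinely different in structure from the paper's argument: since the short exact sequence $0\to\CX^*(X,A\cup B;M)\to\CX^*(X,A+B;M)\to Q^*\to 0$ is degreewise split (extend-by-zero furnishes a section), a cochain contraction of $Q^*$ does upgrade $\iota$ to a cochain homotopy equivalence by the standard lifting argument, and your identification of $Q^*$ with the complex of precompactly-supported functions on $(A\cup B)^{*+1}$ vanishing on $A^{*+1}\cup B^{*+1}$ is correct. The paper instead constructs a two-sided homotopy inverse $S$ directly via a coarsely-controlled barycentric subdivision, so the two proof architectures really are distinct.

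The gap is in your second step, and it is not a cosmetic one: no contracting homotopy of $Q^*$ can be ``keyed to the coloring $\rho$'' alone, because such a formula never introduces a new point, and a formula that only re-uses the coordinates of the input tuple cannot satisfy $\delta h+h\delta=\id$. Concretely, take $m=1$, so $Q^0=0$ and one needs $h\delta=\id$ on $Q^1$. For $\psi\in Q^1$, among the triples $(y_0,y_1,y_2)$ with $y_i\in\{x_0,x_1\}$ the only nonvanishing values of $\delta\psi$ are
\[
(\delta\psi)(x_0,x_1,x_0)=(\delta\psi)(x_1,x_0,x_1)=\psi(x_0,x_1)+\psi(x_1,x_0),
\]
and no $M$-linear combination of these recovers $\psi(x_0,x_1)$, since Alexander--Spanier cochains are not antisymmetric. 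More structurally: if one drops the support condition and takes $A\cap B=\emptyset$, the complex of functions on non-monochromatic tuples has $H^1\cong M\neq 0$ (Mayer--Vietoris with the three acyclic Alexander--Spanier complexes of $A$, $B$, $A\cup B$), so the ``algebraic identity'' you call routine is simply false at the combinatorial level; acyclicity is restored only by inserting points of $A\cap B$. Your appeal to the contractible nerve of the cover $\{A,B\}$ already presupposes this — the edge of that nerve \emph{is} $A\cap B$ — but your proposed formula never uses $A\cap B$. Thus excisiveness is needed not merely as coarse bookkeeping in step three but already to construct $h$ at all: one must insert, for each tuple, a nearby point of $A\cap B$, and coarse excisiveness is exactly what supplies such a point uniformly. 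This is precisely the content of the barycenters $b(Q)\in A\cap B$ in the paper's subdivision operator; your plan would be repaired by replacing the coloring-based formula with a local cone on such barycenters, at which point it essentially reproduces the paper's construction in a different packaging.
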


\begin{proof}
As the coarse structure on $X$ is assumed to be countably generated, there is a sequence $E_0\subset E_1\subset E_2\subset\dots$ of entourages such that each entourage is contained in one of them. Let $F_0\subset F_1\subset F_2\subset \dots$ be the corresponding sequence of entourages given to us by the excisiveness condition of $A,B$, that is, 
$\Pen_{E_n}(A)\cap\Pen_{E_n}(B)\subset \Pen_{F_n}(A\cap B)$ for all $n\in\N$. We may assume that all $E_n,F_n$ are symmetric.

To nonempty each finite subset $Q\subset X$ we assign a ``barycenter'' $b(Q)\in X$ as follows: 
If $Q\subset A\cup B$ and $Q\cap A\not=\emptyset\not=Q\cap B$, then we let $n\in \N$ be the smallest number such that $Q\times Q\subset E_n$. It follows that $Q\subset \Pen_{E_n}(A)\cap\Pen_{E_n}(B)\subset \Pen_{F_n}(A\cap B)$ and we pick $b(Q)\in A\cap B$ such that $Q\cap \Pen_{F_n}(\{b(Q)\})\not=\emptyset$ and hence $Q\subset \Pen_{E_n\circ F_n}(\{b(Q)\})$. In all other cases we just pick any $b(Q)\in Q$. We may assume $b(\{q\})=q$ for all $q\in X$, e.\,g.\ simply by demanding $E_0=F_0=\Diag[X]$.

Now, the ``barycentric subdivision'' of $\mathbf{x}=(x_0,\dots,x_m)\in X^{m+1}$ is the collection of $(m+1)$-tuples
\[\mathbf{x}_\sigma\coloneqq \big(x_{\sigma(0)}, b(\{x_{\sigma(0)},x_{\sigma(1)}\}),b(\{x_{\sigma(0)},x_{\sigma(1)},x_{\sigma(2)}\}),\dots, b(\{x_{\sigma(0)},\dots,x_{\sigma(m)}\})\big)\]
for all permutations $\sigma\in S_{m+1}$.
Note that the barycenters have been chosen in such a way that if every two points of $x$ are at most $E_n$ apart, then every two points of each $\mathbf{x}_\sigma$ are at most $F_n\circ E_n\circ E_n\circ F_n$ apart. 
In other words: for each penumbra $P$ of the multidiagonal in $X^{p+1}$ there is another penumbra $P'$ such that $\mathbf{x}\in P\implies \mathbf{x}_\sigma\in P'$. This implies that for each coarse cochain $\varphi\in \CX^m(X;M)$ the map
\[S\varphi\colon X^{m+1}\to M\,,\quad \mathbf{x}\mapsto\sum_{\sigma\in S_{m+1}}\pm\varphi(\mathbf{x}_\sigma)\]
is also a coarse cochain. Note furthermore that if $\mathbf{x}\in (A\cup B)^{m+1}$, then $\mathbf{x}_\sigma\in A^{m+1}\cup B^{m+1}$ for each $\sigma$, so $S\varphi\in \CX^m(X,A\cup B;M)$ for all $\varphi\in \CX^m(X,A+B;M)$.

Now, we all know from our first course on algebraic topology that there is a certain set of signs which nobody dares to write down for which the standard calculations show that $S\colon \CX^*(X,A+B;M)\to \CX^*(X,A\cup B;M)$ is in fact a cochain map. Furthermore, the same properties of our barycenters ensure that the standard construction known from algebraic topology also gives us cochain homotopies $S\circ \iota\simeq \id$ and $\iota\circ S\simeq\id$.
\end{proof}

We can apply the lemma right away to  the excisive triad from \Cref{lem:crossproductexcisiveness}.
It is then straightforward to check that we really have a cochain map in the following definition and obtain a well-defined cross product which satisfies the axioms of \Cref{defn:crossproducts}.

\begin{defn}
Let $(X,A)$ and $(Y,B)$ be countably generated coarsely connected coarse spaces. 
Then the cross product
\begin{align*}
\HX^m(X,A;M_1)\otimes \HX^n(Y,B;M_2)&\to \HX^{m+n}(X\times Y,A\times Y+X\times B;M_3)
\\&\cong \HX^{m+n}((X,A)\times (Y,B);M_3)
\end{align*}
is induced by the cochain map
\begin{align*}
\CX^*(X,A;M_1)\otimes \CX^*(Y,B;M_2)&\to \CX^*(X\times Y,A\times Y+X\times B;M_3)
\end{align*}
which takes the tensor product of $\varphi\in\CX^m(X,A;M_1)$ and $\psi\in\CX^n(Y,B;M_2)$ to 
\[\varphi\times\psi\colon \big((x_0,y_0),\dots,(x_{m+n},y_{m+n})\big)\mapsto \varphi(x_0,\dots, x_m)\cdot \psi(y_m,\dots,y_{m+n})\,.\]
\end{defn}

Thus, \Cref{def:secondarycupcap} gives rise to a secondary cup product 
\[\HX^m(X,A;M_1)\otimes\HX^n(X,B;M_2)\to\HX^{m+n-1}(X,A\cup B;M_3)\]
on ordinary coarse cohomology for all deformation triads $(X;A,B)$ of countably generated coarse spaces.

Next, we introduce the generalization of Roe's secondary product. To this end, let $\varphi\cup\psi$ denote the usual cup product of functions $\varphi\colon X^{m+1}\to M_1$ and $\psi\colon X^{m+1}\to M_2$, that is, 
\[\varphi\cup\psi\colon X^{m+n+1}\to M_3\,,\quad (x_0,\dots,x_{m+n})\mapsto \varphi(x_0,\dots,x_m)\cdot\psi(x_m,\dots,x_{m+n})\,.\]
Furthermore, for any function $\varphi$ on $X^{m+1}$ and $o\in X$ we define the function $s_o\varphi\colon (x_1,\dots, x_m)\mapsto \varphi(o,x_0,\dots, x_m)$ on $X^m$.

\begin{defn}\label{defn:RoeSecCup}
Let $(X;A,B)$ be an excisive triad of coarsely connected countably generated coarse spaces with $A,B$ nonempty and choose base-points $a\in A$, $b\in B$.
Then the \emph{Roe secondary (cup) product} of two coarse cohomology classes $[\varphi]\in\HX^m(X,A;M_1)$ and $[\psi]\in\CX^n(X,B;M_2)$ is the class
$[\varphi]*[\psi]\in\HX^{m+n-1}(X,A\cup B;M_3)$ represented by the coarse cocycle
\begin{equation*}\varphi*\psi\coloneqq(-1)^{m+1}(s_a\varphi)\cup\psi+\varphi\cup (s_b\psi)\in \CX^{m+n-1}(X,A+B;M_3)\,.
\end{equation*}
\end{defn}

Although $s_a\varphi,s_b\psi$ do not satisfy the support condition of coarse cochains, $\varphi*\psi$ clearly does and lies in the claimed cochain group.
Note that here we have used that $X$ is coarsely connected, because otherwise $(a,x_0,\dots,x_{m-1})$ or $(b,x_0,\dots,x_{n-1})$ lies outside of all penumbras of the diagonal if the other points do not lie in the same coarse components as $a$ or $b$, respectively.

It is readily verified that $\varphi*\psi$ is even a cocycle and that the resulting cohomology class is independent of the choice of the base-points $a,b$.
The details of the calculations work exactly as in the proof of \cite[Proposition 2.33]{RoeCoarseCohomIndexTheory},
just that our definition of the product differs from Roe's, as mentioned above, by the sign $(-1)^{m+1}$. 
What is needed in the calculations is that the  Alexander--Spanier coboundary map \eqref{eq:AScoboundary} satisfies $\cobound (\varphi\cup\psi)=(\cobound \varphi)\cup\psi+ (-1)^m \varphi\cup(\cobound\psi)$ and we have $\cobound s_a\varphi+s_a\cobound\varphi=\varphi$ for $m\geq 1$ as well as $\cobound s_a\varphi+s_a\cobound\varphi=\varphi-\varphi(a)$ for $m=0$ and similarily for $s_b,\psi$. 
The summands $\varphi(a)$ if $m=0$ and $\psi(b)$ if $n=0$ appearing in the latter formulas are actually the reason why Roe's secondary product is not well defined on the absolute cohomology groups in these degrees, but in our case they vanish because $\varphi,\psi$ are cocycles relative to $A,B$, respectively.

\begin{thm}\label{lem:secprodcomparison}
Let $(X;A,B)$ be an excisive deformation triad of coarsely connected countably generated coarse spaces.
Then the associated secondary products $*$ and $\Cup$ agree.
\end{thm}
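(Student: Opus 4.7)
The plan is to compare the two products at the cochain level, using \Cref{lem:cochaininclusion} to realize both as classes in the (quasi-isomorphic) complex $\CX^*(X, A+B; M_3)$, where Roe's cocycle $\varphi*\psi$ naturally lives. First I will unwind $\cosusp^*$ according to \Cref{lem:suspensionhomomorphism}: after replacing $\cU\in\nbhfmlysgm(X, A\cup B; I)$ by a sufficiently fine refinement (which is harmless by the naturality of both products), the map $\cosusp^*$ is inverse to the composition of the excision isomorphism $\HX^{m+n-1}(X, A\cup B) \cong \HX^{m+n-1}(Z, W)$ from \Cref{lem:suspensionspace} with the connecting homomorphism of the triple
\[
(Y, Z, W) \coloneqq \bigl(X\indexcross{\cU}I,\; X\times\partial I \cup (A\cup B)\times I,\; X\times\{1\}\cup (A\cup B)\times I\bigr).
\]
At the cochain level, computing $\cosusp^*[\Phi]$ for $\Phi \coloneqq (\Gamma^{X,A,B})^*(\varphi\times\psi) \in \CX^{m+n}(Y)$ therefore reduces to finding a coarse cochain $\tilde\eta \in \CX^{m+n-1}(Y)$ that vanishes on $W$ and satisfies $\cobound\tilde\eta = \Phi$; its restriction $\tilde\eta|_{X\times\{-1\}}$ then automatically vanishes on $A\cup B$ and represents $\cosusp^*[\Phi]$.

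The heart of the proof is the explicit construction of such a primitive $\tilde\eta$ and the identification of $\tilde\eta|_{X\times\{-1\}}$ with $(-1)^m(\varphi*\psi) = -s_a\varphi\cup\psi + (-1)^m\varphi\cup s_b\psi$ modulo coboundaries in $\CX^*(X, A+B; M_3)$. I will build $\tilde\eta$ piecewise on the two halves of $I$: on $\{t\leq 0\}$ the restriction of $\Phi$ factors through the ``$A$-side'' map $(x,t)\mapsto (H^A_{-t}(x), x)$, which is homotopic through $X\times X$ to the diagonal $(x,t)\mapsto (x,x)$ via the rescaling $s\mapsto (H^A_{-s\cdot t}(x), x)$; the Alexander--Spanier chain-homotopy formula applied to this interpolation produces an explicit primitive of $\Phi|_{t\leq 0} - \varphi\cup\psi$. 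A symmetric construction on $\{t\geq 0\}$ involving $H^B$ produces a primitive of $\Phi|_{t\geq 0} - \varphi\cup\psi$. The two half-primitives agree at $t = 0$ (both vanish there after subtracting the common $\varphi\cup\psi$) and are then corrected by adding suitable multiples of $s_a\varphi\cup\psi$ and $\varphi\cup s_b\psi$, whose coboundaries $\varphi\cup\psi$ and $(-1)^m\varphi\cup\psi$ compensate for the remaining $\varphi\cup\psi$ terms while also arranging $\tilde\eta|_W = 0$. Evaluating the resulting cochain at $t = -1$ and using $H^A_1(X)\subset A$, together with the identity $s_a\varphi(x_1,\ldots,x_{m-1}) = \varphi(a,x_1,\ldots,x_{m-1})$, causes the bulky chain-homotopy terms to collapse to a multiple of $s_a\varphi\cup\psi$ from the $A$-side; the $B$-side contributes only through its correction term $\varphi\cup s_b\psi$, since the original $H^B$-chain-homotopy part vanishes at $t=-1$.

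The main obstacle is the careful sign bookkeeping: one must reconcile the $(-1)^m$ from \Cref{def:secondarycupcap}, the $(-1)^{m+1}$ in Roe's formula, the Leibniz signs in $\cobound(s_a\varphi\cup\psi)$ and $\cobound(\varphi\cup s_b\psi)$, and the orientation conventions for $I$ baked into $\cosusp^*$ (where $e_1 = -1$ is distinguished). This is precisely the step at which Roe's original sign in \cite[Section 2.4]{RoeCoarseCohomIndexTheory} is found to require the corrected prefactor $(-1)^{m+1}$. A secondary technical point is verifying that the candidate $\tilde\eta$ actually lies in $\CX^{m+n-1}(Y)$: although $s_a\varphi$ is typically not a coarse cochain, every summand of $\tilde\eta$ has the form of a cup product with the coarse cochain $\psi$ (or $\varphi$), which forces all arguments into a bounded set on any penumbra of the multidiagonal, exactly as in the argument showing $s_a\varphi\cup\psi\in\CX^{m+n-1}(X, A+B)$. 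The degenerate cases $m=0$ or $n=0$ require only the obvious reinterpretation of the undefined $s_a\varphi$ or $s_b\psi$ terms as the vanishing quantities $\varphi(a)$ or $\psi(b)$.
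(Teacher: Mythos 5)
Your opening reduction — computing $\sigma^*[\Phi]$ by producing a primitive $\tilde\eta\in\CX^{m+n-1}(X\indexcross{\cU'}I)$ that vanishes on $W=X\times\{1\}\cup(A\cup B)\times I$, has coboundary $\Phi$, and then restricting it to $X\times\{-1\}$ — is a valid reformulation and matches the spirit of the paper's starting point. But the ``piecewise'' construction of $\tilde\eta$ does not produce an element of $\CX^{m+n-1}(X\indexcross{\cU'}I)$: an element of that group is a function on $(X\times I)^{m+n}$, and tuples there do not respect the decomposition $I=I_-\cup I_+$ — a tuple can straddle $t=0$, and your recipe leaves $\tilde\eta$ undefined on such tuples. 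Consequently the cochain whose restriction at $t=-1$ you want to read off has not actually been constructed. This is exactly where the proof in the paper needs, and uses, a Mayer--Vietoris argument: after passing to a refinement $\cU'$, it introduces the acyclic complexes $\CX^*_\pm$ on the two halves $X\indexcross{\cU'}I_\pm$, the subcomplex $\CX^*_\cup=\ker\bigl((\iota_+)^*-(\iota_-)^*\colon\CX^*_+\oplus\CX^*_-\to\CX^*(X,A+B)\bigr)$, and shows that the restriction map $\kappa\colon\CX^*((X,A\cup B)\indexcross{\cU'}(I,\partial I))\to\CX^*_\cup$ is a quasi-isomorphism; this is where \Cref{lem:cochaininclusion} and the excisiveness hypothesis actually enter, not only in the outer framing as you suggest. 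The whole computation then takes place with \emph{pairs} of half-cochains and never asks that they glue to a single cochain on $X\indexcross{\cU'}I$.

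Even granting the half-primitives, your claim that the Alexander--Spanier prism-operator output at $t=-1$ ``collapses'' to a multiple of $s_a\varphi\cup\psi$ because $H^A_1(X)\subset A$ is not justified: $H^A_1(X)\subset A$ puts individual entries of the argument into $A$, but $\varphi$ only vanishes on tuples \emph{all} of whose entries lie in $A$, so the prism summands need not vanish; and the mere fact that $P_-(\varphi\times\psi)|_{t=-1}$ and $-s_a\varphi\cup\psi$ are both primitives of $-\varphi\cup\psi$ does not make them equal or even cohomologous without further argument. The paper avoids prism operators entirely. It uses instead that $s_a$ and $s_b$ are cone operators, $\cobound s_a\varphi=\varphi$ and $\cobound s_b\psi=\psi$ for cocycles of positive degree, to write down the lift $\bigl(\varphi\cup(H^B_+)^*(s_b\psi),\,(-1)^m(H^A_-)^*(s_a\varphi)\cup\psi\bigr)\in\CX^{m+n-1}_+\oplus\CX^{m+n-1}_-$ of $\varphi*\psi$ directly; a one-line Leibniz computation shows its coboundary is $(-1)^m\kappa(\Phi)$, giving $(-1)^m(\Gamma^{X,A,B})^*([\varphi]\times[\psi])=\cobound([\varphi]*[\psi])$, from which the theorem follows by applying the left inverse $\sigma^*$.
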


\begin{proof}
The first step in the proof is to reformulate the suspension homomorphism in a Mayer--Vietoris like fashion. As before, we write $I\coloneqq [-1,1]$, $I_+\coloneqq [0,1]$, $I_-\coloneqq [-1,0]$ and let $\cU\in\nbhfmlysgm(X,A\cup B;I)$ be a collection which supports $\Gamma^{X,A,B}$.

Then \Cref{lem:homotopydomaindeformationretraction} provides us with a refinement $\cU'\in\nbhfmlysgm(X,A\cup B;I)$ of $\cU$ such that in particular all of the inclusions
\[X\times\{-1\}\subset X\indexcross{\cU'}I_-\subset X\indexcross{\cU'}I\supset X\indexcross{\cU'}I_+\supset X\times\{1\}\]
are strong generalized coarse deformation retracts in the canonical way and, furthermore,
\[f\colon X\indexcross{\cU'} I_+\to X\indexcross{\cU'}I\,,\quad (x,s)\mapsto (x,2s-1)\]
is a coarse equivalence which is canonically generalized coarsely homotopic to the inclusion.
We then obtain the diagram
\[\xymatrix{
\HX^*((X,A\cup B)\indexcross{\cU'}(I,I_-\cup\{1\});M_3)\ar[r]_-{(\mathrm{incl.})^*}\ar[d]^-\cong_-{(\mathrm{incl.})^*}
&\HX^*((X,A\cup B)\indexcross{\cU'}(I,\partial I);M_3)\ar[d]^{\cong}_{\sigma^*}\ar[dl]^-{f^*}
\\\HX^*((X,A\cup B)\indexcross{\cU'}(I_+,\partial I_+);M_3)\ar[r]_-{\cong}^-{\sigma^*}
&\HX^{*-1}(X,A\cup B;M_3)
}\]
where the upper left triangle commutes by homotopy invariance and the lower right triangle commutes by naturality of the suspension homomorphisms (\Cref{lem:suspensionhomomorphism}). The two suspension homomorphisms and the left vertical arrow are in fact isomorphisms by the above-mentioned properties of $\cU'$. 
Thus, all other arrows in the diagram are also isomorphisms.

Now, as $(X;A,B)$ was assumed to be an excisive triad, so are 
\begin{align*}
(X\indexcross{\cU'} I_+\,;\quad X\times\{+1\}\cup B\times I_+\,,\quad A\times I_+)&\qquad\text{and}
\\(X\indexcross{\cU'} I_-\,;\quad X\times\{-1\}\cup A\times I_-\,,\quad B\times I_-)&\,.
\end{align*}
Thus, \Cref{lem:cochaininclusion} and the choice of $\cU'$ imply that the cohomologies of the complexes
\begin{align*}
\CX^*_+&\coloneqq \CX^*(X\indexcross{\cU'}I_+, X\times\{1\}\cup B\times I_++A\times I_+;M_3)
\\\CX^*_-&\coloneqq \CX^*(X\indexcross{\cU'}I_-, X\times\{-1\}\cup A\times I_-+B\times I_-;M_3)
\end{align*}
vanish and hence, the induced upper horizontal arrow $\kappa'$ in the commutative diagram with exact columns 
\[\xymatrix@C=5em{
0\ar[d]&0\ar[d]
\\\CX^*((X,A\cup B)\indexcross{\cU'}(I_+,\partial I_+);M_3)\ar[d]\ar[r]^-{\kappa'}
&\CX^*_\cup\ar[d]
\\\CX^*((X,A\cup B)\indexcross{\cU'}(I_+,\{1\});M_3)\ar[r]^-{(\mathrm{incl.},0)}\ar[d]
&\CX^*_+\oplus\CX^*_-
\ar[d]_{(\iota_+)^*-(\iota_-)^*}
\\ \Estack{\CX^*(&X\times\partial I_+\cup (A\cup B)\times I_+,\\&X\times \{1\}\cup (A\cup B)\times I_+;M_3)}\ar[r]^-{(\iota_+)^*}\ar[d]
&\CX^*(X,A+B)\ar[d]
\\0&0
}\]
induces an isomorphism on cohomology, because the cohomology of the middle cochain complexes vanish and the bottom horizontal map is induced by an excision. Here, $\CX^*_\cup$ is simply the kernel of the map below it and $\iota_\pm\colon X\to X\indexcross{\cU'_\pm}I_\pm$ denote the inclusions as the subspace $X\times\{0\}$.

Furthermore, the cochain complexes $\CX^*((X,A\cup B)\indexcross{\cU'}(I,I_-\cup\{1\});M_3)$ and $\CX^*((X,A\cup B)\indexcross{\cU'}(I,\partial I);M_3)$ map canonically to $\CX^*_\cup$ and we obtain a diagram
\[\xymatrix@C=7em@R=3em{
\CX^*((X,A\cup B)\indexcross{\cU'}(I,\partial I);M_3)
\ar[dd]_{f^*}^{\cong}\ar[drr]^{\kappa}&&
\\&{}\save[]+<-4em,0em>*{\CX^*((X,A\cup B)\indexcross{\cU'}(I,I_-\cup\{1\});M_3)}\ar[ul]_{\cong}\ar[dl]^{\cong}\ar[r]\restore&\CX^*_\cup
\\\CX^*((X,A\cup B)\indexcross{\cU'}(I_+,\partial I_+);M_3)
\ar[urr]_{\cong}^{\kappa'}&&
}\]
where the left square triangle commutes up to cochain homotopy (see \cite[Lemma 4.9]{wulff2020equivariant}) and the right two triangles commute trivially.
In particular, the outer triangle commutes in cohomology and combining this with the naturality of connecting homomorphisms under $f^*$ and the diagram of exact sequences above, we see that the diagram
\[\xymatrix{
\Estack{\HX^{*-1}(&X\times\partial I\cup (A\cup B)\times I,\\&X\times \{1\}\cup (A\cup B)\times I)}
\ar[d]_{\cobound}^{\cong}\ar[r]^-{\cong}
&\HX^{*-1}(X,A+ B)\ar[d]_{\cobound}^{\cong}
\\\HX^*((X,A\cup B)\indexcross{\cU'}(I,\partial I);M_3)\ar[r]^-{\kappa_*}_-{\cong}&\HX^*_\cup
}\]
commutes. At the left we have the connecting homomorphism in the long exact sequence of a triple and at the right we have the Mayer--Vietoris like connecting homomorphism coming from the right exact column of the diagram above. Both of them are isomorphism by what we have seen before. Moreover, we also know that the canonical map $\kappa_*$ at the bottom and the excision at the top are also isomorphisms.

With this diagram at hand, proving the claim is a simple calculation. Let $\varphi\in\CX^m(X,A;M_1)$ and $\psi\in\CX^n(X,B;M_2)$ be coarse cocycles. Then $\varphi*\psi$ is the image of 
\[(\varphi\cup (H^B_+)^*(s_b\psi),(-1)^{m} (H^A_-)^*(s_a\varphi)\cup\psi)\in \CX^{m+n-1}_+\oplus \CX^{m+n-1}_- \] under $(\iota_+)^*-(\iota_-)^*$ and it is mapped to 
\[(-1)^m\cdot(\varphi\cup (H^B_+)^*\psi,(H^A_-)^*\varphi\cup\psi)\in \CX^{m+n}_\cup\subset \CX^{m+n}_+\oplus \CX^{m+n}_- \]
under the coboundary map. But the latter is exactly the image of 
\[(-1)^m\cdot(\Gamma^{X,A,B})^*(\varphi\times\psi)\in\CX^{m+n}(X\indexcross{\cU'}I, X\times\{-1\}\cup A\times I+X\times\{1\}\cup B\times I;M_3)\]
under the canonical map $\kappa_*$, 
showing that $(-1)^m\cdot(\Gamma^{X,A,B})^*([\varphi]\times[\psi])=\cobound ([\varphi]*[\psi])$. By applying the left inverse $\sigma^*$ of $\cobound$ to this equation we obtain $[\varphi]\Cup[\psi]=[\varphi]*[\psi]$.
\end{proof}

Now we turn our attention to ordinary coarse homology and first recall its definition.

\begin{defn}
Let $X$ be a coarse space and $M$ an abelian group. For $m\in\N$ we define $\CX_m(X;M)$ as the group of all infinite formal sums $c=\sum_{\mathbf{x}\in X^{p+1}}m_{\mathbf{x}}\mathbf{x}$ such that
\begin{itemize}
\item the set $\supp(c)\coloneqq\{\mathbf{x}\in X^{m+1}\mid m_{\mathbf{x}}\not=0\}$ is a penumbra of the multidiagonal;
\item the set $\supp(c)\cap K$ is finite for all bounded $K\subset X^{m+1}$, or equivalently, for all precompact $K$. 
\end{itemize}
For $m\in\Z\setminus\N$ we define $\CX_m(X;M)\coloneqq 0$. These groups together with the boundary maps 
$\partial\colon\CX_{m}(X;M)\to\CX_{m-1}(X;M)$ which are defined on the single summands by
\[\partial(x_0,\dots,x_m)\coloneqq \sum_{i=0}^m(-1)^i(x_0,\dots,x_{i-1},x_{i+1},\dots,x_m)\]
constitute a chain complex. 
If $A\subset X$ is a coarse subspace, then  $\CX_*(A;M)$ is a subcomplex of $\CX_*(X;M)$ and we define the \emph{coarse chain complex} $\CX_*(X,A;M)\coloneqq \CX_*(X;M)/\CX_*(A;M)$. Its homology is the \emph{ordinary coarse homology} $\HX_*(X,A;M)$ of the pair of coarse spaces $(X,A)$ with coefficients in $M$.
\end{defn}

For the construction of slant products we need to make use of the excisiveness of \Cref{lem:crossproductexcisiveness} again, just as it was the case for cross products. For subspaces $A,B\subset X$ we define
\[\CX_*(X,A+B;M)\coloneqq \CX_*(X;M)/(\CX_*(A;M)+\CX_*(B;M))\,.\]
The proof of the following lemma is completely dual to the proof of \Cref{lem:cochaininclusion}.
\begin{lem} Assume that $(X;A,B)$ is an excisive triad of countably generated coarse spaces. Then the canonical quotient map
\[\pi\colon \CX_*(X,A+B;M) \twoheadrightarrow \CX_*(X,A\cup B;M)\]
is a chain homotopy equivalence.\qed
\end{lem}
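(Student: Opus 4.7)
The plan is to dualize the proof of \Cref{lem:cochaininclusion} essentially verbatim. Using that $X$ is countably generated, I would pick the same sequences $E_0\subset E_1\subset\dots$ of entourages together with $F_0\subset F_1\subset\dots$ provided by the excisiveness of $(X;A,B)$, so that $\Pen_{E_n}(A)\cap\Pen_{E_n}(B)\subset\Pen_{F_n}(A\cap B)$ for all $n$, and assign to every nonempty finite subset $Q\subset X$ the same barycenter $b(Q)\in X$ as there: if $Q\subset A\cup B$ meets both $A$ and $B$, take the smallest $n$ with $Q\times Q\subset E_n$ and pick $b(Q)\in A\cap B$ with $Q\cap\Pen_{F_n}(\{b(Q)\})\neq\emptyset$; otherwise choose $b(Q)\in Q$ arbitrarily.

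On a single generator the barycentric subdivision operator is then
\[S(x_0,\dots,x_m)\coloneqq\sum_{\sigma\in S_{m+1}}\pm\,\mathbf{x}_\sigma\in \CX_m(X;M)\]
with the same choice of signs and with $\mathbf{x}_\sigma$ the subdivided tuple defined at the beginning of the proof of \Cref{lem:cochaininclusion}. First I would extend $S$ to an endomorphism of $\CX_*(X;M)$, which needs two checks: the barycenters move each coordinate only inside an enlarged entourage $F_n\circ E_n$, so $S$ sends a chain whose support lies in a penumbra $P$ to a chain whose support lies in a slightly larger penumbra $P'$; and since each $\mathbf{x}_\sigma$ is determined by $\mathbf{x}$, local finiteness of the support is preserved. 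The same computation as in the cochain case (namely that if the first vertex $x_{\sigma(0)}$ of $\mathbf{x}_\sigma$ lies in, say, $A$, then every subsequent barycenter is forced into $A$ via $A\cap B$, hence $\mathbf{x}_\sigma\in A^{m+1}\cup B^{m+1}$ whenever $\mathbf{x}\in(A\cup B)^{m+1}$) then shows that $S$ maps $\CX_*(A\cup B;M)$ into $\CX_*(A;M)+\CX_*(B;M)$. Hence $S$ descends to a chain map $\bar S\colon\CX_*(X,A\cup B;M)\to \CX_*(X,A+B;M)$, which will be the homotopy inverse of $\pi$.

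For the homotopies $\bar S\circ\pi\simeq\id$ and $\pi\circ\bar S\simeq\id$, I would dualize the standard chain-level homotopy between the identity and barycentric subdivision. Namely, the same barycenter data produces an operator $T\colon\CX_m(X;M)\to\CX_{m+1}(X;M)$ with $\partial T+T\partial=S-\id$, and the same penumbra and local-finiteness estimates ensure that $T$ extends to the coarse chain complex. Because every intermediate tuple appearing in $T(x_0,\dots,x_m)$ is built from points of $\{x_0,\dots,x_m\}$ and barycenters of its subsets, the first-vertex argument above applies once more to show that $T$ preserves the relative subcomplex $\CX_*(A\cup B;M)$ modulo $\CX_*(A;M)+\CX_*(B;M)$. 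It therefore descends to the required chain homotopies on both sides.

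The main obstacle is nothing conceptually new: it is the routine bookkeeping of signs and penumbra bounds for $S$ and $T$ in the homological setting, which is precisely the ``standard calculations'' invoked in the proof of \Cref{lem:cochaininclusion}, merely transposed from cocycles to cycles.
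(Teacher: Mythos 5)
Your proposal is correct and is exactly the dualization the paper has in mind; the paper itself gives no further argument beyond declaring the proof to be ``completely dual to the proof of \Cref{lem:cochaininclusion}.'' You correctly identify the two descent conditions needed — that $S$ and the prism operator $T$ send $\CX_*(A\cup B;M)$ into $\CX_*(A;M)+\CX_*(B;M)$ and preserve $\CX_*(A;M)+\CX_*(B;M)$, respectively — and verify them via the same barycenter/first-vertex argument as in the cochain case.
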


\begin{defn}
Let $(X,A)$ and $(Y,B)$ be pairs of countably generated coarse spaces. Then the slant product
\begin{align*}
\HX_m((X,A)&\times(X,B);M_1)\otimes \HX^n(Y,B;M_2)\cong
\\&\cong \HX_m(X\times Y,A\times Y+X\times B;M_1)\otimes \HX^n(Y,B;M_2)
\\&\to \HX_{m-n}(X,A;M_3)
\end{align*}
is induced by the chain map
\[\CX_*(X\times Y,A\times Y+X\times B;M_1)\to \CX_*(X,A;M_3)\otimes(\CX^*(Y,B;M_2))^*\]
which is defined on the single summands $m_1\cdot (x_0,y_0,\dots,x_m,y_m)$ of a coarse chain in $\CX_m(X\times Y,A\times Y+X\times B;M_1)$ and coarse cochains $\varphi\in\CX^n(Y,B;M_2)$  by
\[(m_1\cdot (x_0,y_0,\dots,x_m,y_m))/\varphi\coloneqq (m_1\cdot \varphi(y_{m},\dots,y_{m-n})) (x_0,\dots,x_{m-n})\,,\]
yielding a coarse chain in $\CX_{m-n}(X,A;M_3)$.
\end{defn}

Again it is straighforward to check that we really have a chain map in the definition and obtain a well-defined slant product satisfying the axioms of \Cref{defn:slantproducts}. The computations rely on the observation that $(\bound c)/\varphi=\partial(c/\varphi)+(-1)^mc/(\cobound\varphi)$ if $m$ is the degree of the chain $c$.

This definition differs from the one found in most textbooks in so far as our cocycle swallows not the first $n+1$ entries but the last $n+1$ ones and this in reversed order. The change is necessary to fulfill the sign conventions of  \Cref{defn:slantproducts}. To be precise, our definition is the right one for slanting away cohomology classes from the right, whereas many textbooks prefer to slant them away from the left.

Dually to the Roe secondary cup product we can now define a secondary cap product as follows. For any infinte formal linear combination $c=\sum_{\mathbf{x}\in X^{m+1}}m_{\mathbf{x}}\mathbf{x}$ of $(m+1)$-tuples and any point $o\in X$ we define the infinite formal linear combination of $(m+2)$-tuples $r_oc\coloneqq \sum_{\mathbf{x}\in X^{m+1}}m_{\mathbf{x}}(o,\mathbf{x})$. Also, we define the cap product of any linear combination $c$ as above with all $m_{\mathbf{x}}\in M_1$ and any $\varphi\colon X^{n+1}\to M_2$ with $n\leq m$ as the linear combination 
\[c\cap\varphi\coloneqq \sum_{(x_0,\dots,x_m)\in X^{m+1}}(m_{(x_0,\dots,x_m)}\cdot\varphi(x_m,\dots,x_{m-n}))(x_0,\dots,x_{m-n})\]
with coefficients in $M_3$. Again, this cap product differs from the one usually found in textbooks. Note that the equality $(\bound c)\cap\varphi=\bound (c\cap\varphi) +(-1)^mc\cap(\cobound \varphi)$ holds for all $m$ and we have $\bound r_oc+r_o\bound c=c$ for $m\geq 1$ as well as $\bound r_oc+r_o\bound c=c-(o)$ for $m=0$.
In the following, $o=a$ will be a point in $A$ and hence the summand $(o)$ will be divided out when passing to the coarse chain complex relative to $A$.

\begin{defn}\label{defn:RoeSecCap}
Let $(X;A,B)$ be an excisive triad of coarsely connected countably generated coarse spaces with $A,B$ nonempty and choose base-points $a\in A$, $b\in B$.
Then the \emph{Roe secondary cap product} of a coarse homology class $[c]\in\HX_m(X,A\cup B;M_1)$ represented by a coarse cycle $c\in\CX_m(X,A+B;M_1)$ and a coarse cohomology class $[\varphi]\in\HX^n(X,B;M_2)$ is the class
$[c]\divideontimes[\varphi]\in\HX_{m-n+1}(X,A\cup B;M_3)$ represented by the coarse cycle
\begin{equation*}
c\divideontimes\varphi\coloneqq - r_a(c\cap\varphi) -(-1)^m c\cap(s_b\varphi)\in \CX_{m-n+1}(X,A;M_3)\,.
\end{equation*}
\end{defn}
It is again straightforward to check that $c\divideontimes\varphi$ is indeed a well-defined coarse cycle in the claimed chain group and that the resulting homology class is independent of the choice of the base-points $a,b$.

\begin{thm}\label{thm:seccapcomparison}
Let $(X;A,B)$ be an excisive deformation triad of coarsely connected countably generated coarse spaces.
Then the associated secondary products $\divideontimes$ and $\Cap$ agree.
\end{thm}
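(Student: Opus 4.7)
The proof dualizes that of \Cref{lem:secprodcomparison} to the homological setting. First, using the same refinement $\cU'$ of $\cU$ as in that proof, I will set up the dual of its commutative diagram of cochain complexes with exact columns. By the excisiveness of $(X;A,B)$, the coarse excisiveness of the triads built from $X \indexcross{\cU'} I_\pm$, and the acyclicity of the intermediate chain complexes ensured by the strong deformation retract properties of $\cU'$, this diagram realizes the suspension homomorphism $\sigma_*$ as the inverse on homology of a Mayer--Vietoris-type connecting map, after identifying its source via the canonical isomorphism $\HX_m(X, A+B;M_1) \cong \HX_m(X, A\cup B;M_1)$.

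Next, I will construct an explicit chain-level representative of $\sigma_*[c]$ for a given coarse cycle $c \in \CX_m(X, A+B; M_1)$. The homological lift should consist of a pair of chains $(c_+, c_-) \in \CX_{m+1}^+ \oplus \CX_{m+1}^-$ dual to the cochain lift
\[(\alpha,\beta) = \bigl(\varphi\cup(H^B_+)^*(s_b\psi),\, (-1)^m(H^A_-)^*(s_a\varphi)\cup\psi\bigr)\]
used in the cohomology proof. Concretely, this lift should be built by modifying a prism construction on $c$ with appropriate basepoint-operator-valued weights so that the coarse support conditions of the warped product are satisfied and its boundary in the Mayer--Vietoris complex matches $c$ up to sign.

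Third, I will apply $(\Gamma^{X,A,B})_*$ and slant with a cocycle representative $\varphi$ of $[\varphi]$. The restriction of $\Gamma^{X,A,B}$ to $X\indexcross I_+$ sends the second coordinate along $H^B_+$, and the slant produces, through a computation mirroring the cochain identity $(\iota_+)^*(\varphi\cup(H^B_+)^*(s_b\psi)) = \varphi\cup s_b\psi$, a contribution of $\pm c\cap(s_b\varphi)$ modulo boundaries. Dually, the restriction to $X\indexcross I_-$ produces $\pm r_a(c\cap\varphi)$ modulo chains in $A$ (which vanish in the target quotient). Tracking signs carefully, the assembled contribution matches $(-1)^{m+1}(c\divideontimes\varphi)$, which upon multiplication by the $(-1)^{m+1}$ factor in \Cref{def:secondarycupcap} yields $[c]\Cap[\varphi] = [c]\divideontimes[\varphi]$.

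The main obstacle lies in constructing the chain-level lift correctly: a naive prism lift of $c$ fails because its pushforward under $\Gamma^{X,A,B}$ followed by the slant with $\varphi$ produces a chain whose $y$-coordinates in the slant positions all lie in $B$ (where $\varphi$ vanishes) or whose $x$-coordinates in the output positions all lie in $A$ (where the target chain quotient is zero). The correct lift must incorporate the basepoint operators $r_a$ and $s_b$ into its combinatorial structure from the outset, dualizing how the cochain lift uses $s_a\varphi$ and $s_b\psi$ rather than $\varphi$ and $\psi$ themselves. Once this lift is written down, the verification reduces to a direct chain-level computation using the identities $\partial r_a + r_a\partial = \id$ and $\delta s_b + s_b\delta = \id$ (modulo the appropriate vanishing conditions on chains in $A$ and functions on $B$), exactly paralleling the cohomological proof.
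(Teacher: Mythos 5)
Your high-level plan—dualize the diagram from \Cref{lem:secprodcomparison}, represent $\sigma_*[c]$ by lifting $c$ through the Mayer--Vietoris short exact sequence of chain complexes, push forward by $\Gamma^{X,A,B}$, slant with $\varphi$, and track signs—is the right skeleton, and it matches the paper's opening moves. The divergence, and the gap, is in your second step.

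You claim that a ``naive'' prism lift of $c$ fails because after $(\Gamma^{X,A,B})_*$ and slanting, the slant coordinates lie in $B$ (killing $\varphi$) or the output coordinates lie in $A$ (killing the chain in the quotient). That is not the case. The pushforward of the prism $P^+c$ by $\Gamma^{X,A,B}|_{X\indexcross{\cU'}I_+}=(\mathrm{pr}_X,H^B_+)$ has second coordinates of the form $H^B_t(x)$ with $0\leq t\leq 1$, and these lie in $B$ only at the endpoint $t=1$, not for the interior prism points; similarly $P^-c$ pushed forward by $(H^A_-,\mathrm{pr}_X)$ has first coordinates in $A$ only at $t=-1$. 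So the resulting class $[((\mathrm{pr}_X\times H^B_+)_*(P^+c))/\varphi-((H^A_-\times\mathrm{pr}_X)_*(P^-c))/\varphi]$ is a perfectly good, nonzero representative of $[c]\Cap[\varphi]$. This is exactly what the paper uses: the plain chain contractions $P^\pm$ (prism operators with $H=\id$), with no basepoint-operator weights built in.

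Consequently, your proposal to cure a nonexistent obstruction by redesigning the prism to ``incorporate the basepoint operators $r_a$ and $s_b$ into its combinatorial structure from the outset'' is where the argument runs off the rails: you never write this modified lift down, and it is not clear what the intended object is, since $r_a$, $s_b$ act on chains and cochains over $X$, not over $X\indexcross{\cU'}I_\pm$. The paper instead applies the basepoint operators \emph{after} slanting: it defines an explicit correction chain
\[d\coloneqq (-1)^m ((\mathrm{pr}_X\times H^B_+)_*(P^+c))/(s_b\varphi)- r_a\bigl(((H^A_-\times\mathrm{pr}_X)_*(P^-c))/\varphi\bigr) \in \CX_{m-n}(X,A;M_3)\]
and then a direct computation of $\bound d$, using $\bound P^\pm+P^\pm\bound=(\iota_\pm)_*$ together with the identities $\bound r_a+r_a\bound=\id$ and $\cobound s_b+s_b\cobound=\id$, shows that the naive slant computation differs from $\pm c\divideontimes\varphi$ by $\bound d$. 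That is the whole content of the proof. Your outline gestures at the same identities in the last paragraph, but without producing either the modified lift you ask for or the correction chain the paper actually uses, the crucial verification step is left undone.
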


\begin{proof}
Unfortunately, only part of the proof is truely dual to the proof of \Cref{lem:secprodcomparison} and some work remains to be done. The complexes and diagrams can be dualized without problems and we obtain a short exact sequence of chain complexes
\[0\to\CX_*(X,A+B;M_1)\xrightarrow{((\iota_+)_*,-(\iota_-)_*)}\CX^+_*\oplus\CX^-_*\to\CX^\cup_*\to 0\]
and a canonical map $\CX^\cup_*\to\CX_*((X,A\cup B)\indexcross{\cU'}(I,\partial I);M_1)$ such that the diagram in homology
\[\xymatrix{
\HX^\cup_{*+1}\ar[r]\ar[d]_{\bound}^{\cong}
&\HX_{*+1}((X,A\cup B)\indexcross{\cU'}(I,\partial I);M_1)\ar[d]_-{\bound}^-{\cong}
\\\HX_{*}(X,A+B;M_1)\ar[r]^-{\cong}
&{\Estack{\HX_{*}(&X\times\partial I\cup(A\cup B)\times I,
\\&X\times\{1\}\cup (A\cup B)\times I)}}
}\]
commutes.

The diagram allows us to compute the suspension homomorphism, but at this time the calculations become more complicated than in the case of the secondary cup products. To this end, we start with a coarse cycle $c\in \CX_m(X,A+B;M_1)$. Now there are chain contractions $P^\pm\colon \CX_*(X,A+B;M_1)\to \CX^\pm_{*+1}$ for $(\iota_\pm)^*$, i.\,e.\ $\bound P^\pm+P^\pm\bound=(\iota_\pm)^*$. For example, we can take the prism operators constructed in the proof of \cite[Lemma 4.4]{wulff2020equivariant} with $H=\id$. Then, using the diagram above we see that $\sigma_*[c]$ is represented by the coarse cycle 
\[P^+c-P^-c\in \CX_{m+1}(X\indexcross{\cU'}I,X\times\{-1\}\cup A\times I+ X\times\{1\}\cup B\times I;M_1)\,.\] 
It follows that
\begin{align*}
[c]\Cap [\varphi]&=((\Gamma^{X,A,B})_*[P^+c-P^-c])/[\varphi]
\\&=[((\mathrm{pr}_X\times H_B)_*(P^+c))/\varphi-((H_A\times\mathrm{pr}_X)_*P^-c)/\varphi]\,.
\end{align*}
Just as in \Cref{defn:RoeSecCap} it is straightforward to verify that 
\[d\coloneqq (-1)^m ((\mathrm{pr}_X\times H_B)_*(P^+c))/(s_b\varphi)- r_a(((H_A\times\mathrm{pr}_X)_*(P^-c))/\varphi) \]
is a coarse cochain in $\CX_{m-n}(X,A;M_3)$ and then the calculation
\begin{align*}
\bound d
&= (-1)^m ((\mathrm{pr}_X\times H_B)_*(\bound P^+c))/(s_b\varphi) 
+ ((\mathrm{pr}_X\times H_B)_*(P^+c))/(\cobound s_b\varphi)
\\&\qquad +r_a\bound (((H_A\times\mathrm{pr}_X)_*(P^-c))/\varphi)
+((H_A\times\mathrm{pr}_X)_*(P^-c))/\varphi
\\&= (-1)^m ((\mathrm{pr}_X\times H_B)_*(\iota_+)_*c)/(s_b\varphi)
+ ((\mathrm{pr}_X\times H_B)_*(P^+c))/\varphi
\\&\qquad +r_a(((H_A\times\mathrm{pr}_X)_*(\bound P^-c))/\varphi)
+((H_A\times\mathrm{pr}_X)_*(P^-c))/\varphi
\\&= (-1)^m c\cap (s_b\varphi)
+ ((\mathrm{pr}_X\times H_B)_*(P^+c))/\varphi
\\&\qquad +r_a(((H_A\times\mathrm{pr}_X)_*(\iota_-)_*c)/\varphi)
+((H_A\times\mathrm{pr}_X)_*(P^-c))/\varphi
\\&= (-1)^m c\cap (s_b\varphi)
+ ((\mathrm{pr}_X\times H_B)_*(P^+c))/\varphi
\\&\qquad +r_a(c\cap\varphi)
+((H_A\times\mathrm{pr}_X)_*(P^-c))/\varphi
\end{align*}
finishes the proof.
\end{proof}

\section{Transgression maps}
\label{sec:transgression}

A big class of coarse \mbox{(co-)}homology theories is obtained by coarsifying topological \mbox{(co-)}homology theories for $\sigma$-locally compact spaces. 
We refer to \cite[Section 5]{wulff2020equivariant} for a very detailed introduction into this topic, which even treats the more general equivariant case. It thereby widely generalizes the construction for the absolute groups in \cite[Section 4]{EngelWulff}.
Here we shall only recall the rough outlines of the construction, based on the general perception that $\sigma$-locally compact spaces are countable direct limits of locally compact Hausdorff spaces in some sense. 
The category of pairs of $\sigma$-locally compact spaces, on which we will consider \mbox{(co-)}homology theories in the sense of Eilenberg and Steenrod \cite{EilenbergSteenrod}, has the pairs $(\cX,\cA)$ of $\sigma$-locally compact spaces with $\cA\subset\cX$ closed as objects and its morphisms are the proper and continuous $\sigma$-maps.
The purpose of this section is to show that the associated transgression maps relate the secondary products on the coarse space to the primary products on its corona.

In a nutshell, the coarsification procedure is as follows:
Let $(X,A)$ be a pair of countably generated coarse spaces of bornologically bounded geometry. The latter condition means that there exist discretizations $(X',A')\subset (X,A)$, that is, $X',A'$ are locally finite (i.\,e.\ bounded subsets of $X',A'$ are finite) and the inclusions $X'\subset X$ and $A'\subset A$ are coarse equivalences (cf.\ \cite[Definition 2.4]{wulff2020equivariant}).
The Rips complex construction applied to such a discretization yields a countable directed system of pairs of locally compact Hausdorff spaces $(P_n(X'),P_n(A'))$ which constitutes a pair of $\sigma$-locally compact spaces $(\cP(X'),\cP(A'))$. If $\Homol_*$ is a homology theory or $\Cohom^*$ is a cohomology theory for $\sigma$-locally compact spaces and proper continuous maps which satisfy the homotopy, exactness and excision axioms, then the associated coarse homology and cohomology groups are defined by
\[\HomolX_*(X,A)\coloneqq\Homol_*(\cP(X'),\cP(A'))\quad\text{and}\quad \CohomX^*(X,A)\coloneqq\Cohom^*(\cP(X'),\cP(A'))\,,\]
respectively. They are independent of the choice of discretizations up to canonical isomorphism and together with certain induced maps they constitute a coarse \mbox{(co-)}homology theory satisfying the strong homotopy axiom (see \cite[Theorem 5.15]{wulff2020equivariant}).

Now, in order to introduce the products on the topological side we first have to generalize the notion of topological excisiveness.
We call $(\cX;\cA,\cB)$ with $\cA,\cB\subset\cX$ a (topologically) excisive triad of $\sigma$-locally compact spaces if the interiors of $\cA,\cB$ in $\cA\cup\cB$ cover all of $\cA\cup\cB$. 
In this case, the two inclusions $(\cA,\cA\cap\cB)\subset(\cA\cup\cB,\cB)$ and $(\cB,\cA\cap\cB)\subset(\cA\cup\cB,\cA)$ are topological excisions and hence induce isomorphisms under $\Homol_*$ and $\Cohom^*$. Generalizing this notion, we call $(\cX;\cA,\cB)$  an $\Homol_*$-excisive triad or an $\Cohom^*$-excisive triad in $\cX$ if the two inclusions of pairs induce isomorphisms under $\Homol_*$ or $\Cohom^*$, respectively (cf.\ \cite[Proposition and Definition 8.1]{DoldTopology}). 
Important examples for this notion come from coarse excisiveness, as the following reformulation of the excisiveness part of \cite[Theorem 5.15]{wulff2020equivariant} illustrates.

\begin{lem}\label{lem:Eexcisive}
If $(X';A',B')$ is a coarsely excisive triad of countably generated locally finite coarse spaces, then $(\cP(X');\cP(A'),\cP(B'))$ is an $\Homol_*$- and $\Cohom^*$-excisive triad of $\sigma$-locally compact spaces for all homology and cohomology theories $\Homol_*$, $\Cohom^*$. \qed
\end{lem}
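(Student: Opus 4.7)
The proof is essentially a translation of the excisiveness part of \cite[Theorem~5.15]{wulff2020equivariant} into the language of $\Homol_*$- and $\Cohom^*$-excisive triads. I plan to proceed in three short steps: first identify the intersection combinatorially, then reduce to a coarse excision in the sense of Section~3, and finally invoke the coarsified excision axiom.

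First, I will verify the set-theoretic identity $\cP(A')\cap\cP(B')=\cP(A'\cap B')$. This is immediate from the combinatorics of the Rips complex: a simplex $\{x_0,\dots,x_k\}$ lies in both $\cP(A')$ and $\cP(B')$ precisely when all its vertices lie in $A'\cap B'$, which is exactly the defining condition for a simplex of $\cP(A'\cap B')$.

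Second, I will pass to the coarse subspace $Y':=A'\cup B'\subset X'$ with the restricted coarse structure. Coarse excisiveness of $(X';A',B')$ restricts at once to coarse excisiveness of $(Y';A',B')$, because the entourages and penumbras in $Y'$ are contained in those of $X'$. Since now $Y'=A'\cup B'$ covers the total space, the equivalence between coarse excisions and coarsely excisive triads with $X=A\cup B$ (recalled right after the definition of excision in Section~3) tells me that the inclusion $(A',A'\cap B')\hookrightarrow(A'\cup B',B')$, and by symmetry also $(B',A'\cap B')\hookrightarrow(A'\cup B',A')$, are coarse excisions of countably generated locally finite coarse spaces.

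Finally, I will apply the excision axiom for the coarsified theories established in \cite[Theorem~5.15]{wulff2020equivariant} to these two coarse excisions, obtaining
\[\Homol_*(\cP(A'),\cP(A'\cap B')) \xrightarrow{\cong} \Homol_*(\cP(A'\cup B'),\cP(B'))\]
and the analogous isomorphism for $\Cohom^*$, together with the corresponding statements obtained by swapping $A'$ and $B'$. The step that I expect to require the most care is reconciling $\cP(A'\cup B')$ on the right with the union $\cP(A')\cup\cP(B')$ that appears in the definition of an $\Homol_*$- or $\Cohom^*$-excisive triad: these subcomplexes of $\cP(X')$ do not agree in general, since $\cP(A'\cup B')$ contains \emph{mixed} simplices with vertices drawn from both $A'\setminus B'$ and $B'\setminus A'$. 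However, coarse excisiveness forces every such mixed simplex at Rips scale $n$ to lie in a penumbra $\Pen_F(A'\cap B')$ at some larger scale, and this permits the construction of simplicial retractions from $\cP(A'\cup B')$ onto $\cP(A')\cup\cP(B')$ compatible with the filtration. In the colimit these assemble into a weak equivalence, and combined with the isomorphism displayed above this completes the verification of the $\Homol_*$- and $\Cohom^*$-excisive triad conditions.
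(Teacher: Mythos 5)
Your proposal is correct and its four steps supply the argument that the paper leaves implicit (the statement is given without proof, with only a reference to the excisiveness part of \cite[Theorem~5.15]{wulff2020equivariant}). In particular, the reconciliation in step~4 of $\cP(A'\cup B')$ with $\cP(A')\cup\cP(B')$ is exactly the barycentric-subdivision argument that reappears in the paper's proof of \Cref{lem:Ripsproducthomotopyequivalences}. One piece of wording should be sharpened: what that construction yields is a \emph{proper homotopy equivalence} of $\sigma$-locally compact spaces (with linear interpolation as the homotopies), not merely a weak equivalence; this is the notion you actually need, since $\Homol_*$ and $\Cohom^*$ are only assumed to satisfy the homotopy axiom and not any weak-equivalence invariance. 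You should also record explicitly that the subdivision map and its homotopies preserve both $\cP(A')$ and $\cP(B')$ (a barycenter of a finite set contained in $A'$, respectively $B'$, lies again in $A'$, respectively $B'$, because it is either an element of that set or an element of $A'\cap B'$), so that they are genuine maps of the pairs appearing in the excisive-triad condition and therefore give the isomorphism $\Homol_*(\cP(A')\cup\cP(B'),\cP(B'))\cong\Homol_*(\cP(A'\cup B'),\cP(B'))$ as well as the one with the roles of $A'$ and $B'$ interchanged.
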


Furthermore, we say that two pairs of $\sigma$-locally compact spaces $(\cX,\cA)$ and $(\cY,\cB)$ are $\Homol_*$-productable or $\Cohom^*$-productable if 
$(\cA\times\cY\cup\cX\times\cB;\cA\times\cY,\cX\times\cB)$ is an $\Homol_*$- or an $\Cohom^*$-excisive triad, respectively, and hence the two inclusions  $i\colon (\cA\times \cY,\cA\times \cB)\to (\cA\times \cY\cup \cX\times \cB,\cX\times \cB)$ and $j\colon (\cX\times \cB,\cA\times \cB)\to (\cA\times \cY\cup \cX\times \cB,\cA\times \cY)$ induce isomorphisms.
In contrast to the coarse excisiveness from \Cref{lem:crossproductexcisiveness}, this is not always true.

\begin{defn}\label{defn:topologicalcrossslant}
A cross product $\times\colon\Cohom[1]^*\otimes\Cohom[2]^*\to\Cohom[3]^*$ between cohomology theories for $\sigma$-locally compact spaces consists of a family of natural homomorphisms
\[\times\colon\Cohom[1]^m(\cX,\cA)\otimes\Cohom[2]^n(\cY,\cB)\to\Cohom[3]^{m+n}((\cX,\cA)\times(\cY,\cB))\]
for all $\Cohom[3]^*$-productable pairs of $\sigma$-locally compact spaces $(\cX,\cA)$ and $(\cY,\cB)$ which satisfy the obvious topological analogues of the diagrams in \Cref{defn:crossproducts}.

Similarily, a slant product $/\colon\Homol[1]_*\otimes\Cohom[2]^*\to\Homol[3]_*$ between homology and cohomology theories for $\sigma$-locally compact spaces consists of a family of natural transformations
\[/\colon\Homol[1]_m((\cX,\cA)\times(\cY,\cB))\otimes\Cohom[2]^n(\cY,\cB)\to\Homol[3]_{m-n}(\cX,\cA)\]
for all $\Homol[1]_*$-productable pairs of $\sigma$-locally compact spaces $(\cX,\cA)$ and $(\cY,\cB)$ which satisfy the obvious topological analogues of the diagrams in \Cref{defn:slantproducts}.
\end{defn}

We now use the ideas of \cite[Section 4.6]{EngelWulffZeidler} to turn them into coarse cross and slant products.
To this end we need to pass from the product of rips complexes to the Rips complex of products. 
\begin{lem}\label{lem:Ripsproducthomotopyequivalences}
Let $(X',A')$ and $(Y',B')$ be pairs of countably generated locally finite coarse spaces. Then the canonical inclusion maps 
\begin{align*}
\cP(X')\times\cP(Y')&\subset\cP(X'\times Y')
&\cP(A')\times\cP(Y')&\subset\cP(A'\times Y')
\\\cP(X')\times\cP(B')&\subset\cP(X'\times B')
&\cP(A')\times\cP(B')&\subset\cP(A'\times B')
\end{align*}
\begin{align*}
\cP(A')\times\cP(Y')\cup\cP(X')\times\cP(B')&\subset\cP(A'\times Y')\cup\cP(X'\times B')
\\\cP(A'\times Y')\cup\cP(X'\times B')&\subset \cP(A'\times Y'\cup X'\times B')
\end{align*}
are all homotopy equivalences. 
\end{lem}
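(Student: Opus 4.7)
The heart of the argument is an explicit homotopy inverse to the basic inclusion $\iota\colon\cP(X')\times\cP(Y')\hookrightarrow\cP(X'\times Y')$. Because the product coarse structure is generated by entourages of the form $E\swapcross F$, the vertices $(x_i,y_i)$ of any simplex of $P_n(X'\times Y')$ have their $x$-coordinates pairwise $E_n$-close and their $y$-coordinates pairwise $F_n$-close. This makes the \emph{linear projection}
\[
  \pi\colon\cP(X'\times Y')\to\cP(X')\times\cP(Y')\,,\qquad \sum_i\lambda_i(x_i,y_i)\mapsto\Bigl(\sum_i\lambda_ix_i,\sum_i\lambda_iy_i\Bigr)
\]
well defined, and a direct check on the explicit formula for $\iota$ (sending $(\sum_i\alpha_ix_i,\sum_j\beta_jy_j)$ to $\sum_{i,j}\alpha_i\beta_j(x_i,y_j)$) gives $\pi\circ\iota=\id$. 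Conversely, the enlarged vertex set $\{(x_i,y_j):i,j\}$ still spans a simplex of $P_n(X'\times Y')$, so the straight-line homotopy $H_t(z)=(1-t)z+t\,\iota\pi(z)$ stays inside $\cP(X'\times Y')$ and shows $\iota\pi\simeq\id$.

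Since $\pi$ merely re-packages the original $x$- and $y$-coordinates of the supporting vertices, the same formula restricts at once to each of the inclusions involving $A'\subset X'$ and/or $B'\subset Y'$: if every $x_i$ is in $A'$ then so is every $x_i$ appearing in $\pi(z)$, and the enlarged simplex $\{(x_i,y_j)\}$ carrying the straight-line homotopy still has all $x$-coordinates in $A'$. This takes care of the first four claimed homotopy equivalences in one stroke.

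For the fifth inclusion I assemble the four equivalences above into the evident commutative cube formed by $\cP(A')\times\cP(B')$, $\cP(A')\times\cP(Y')$, $\cP(X')\times\cP(B')$ and their images $\cP(A'\times B')$, $\cP(A'\times Y')$, $\cP(X'\times B')$; all the maps in sight are inclusions of subcomplexes of simplicial CW-complexes, hence cofibrations, and the individual vertical maps are homotopy equivalences by the preceding step. The gluing lemma for pushouts of homotopy equivalences along cofibrations then implies that the induced map of pushouts, which is exactly the fifth inclusion, is itself a homotopy equivalence.

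The sixth inclusion is the main obstacle, because $\cP(A'\times Y'\cup X'\times B')$ contains \emph{mixed} simplices with vertices partly in $A'\times Y'\setminus X'\times B'$ and partly in $X'\times B'\setminus A'\times Y'$, and such simplices do not lie in $\cP(A'\times Y')\cup\cP(X'\times B')$. The crucial observation is that whenever $(x,b)\in X'\times B'$ and $(a,y)\in A'\times Y'$ are $(E_n\swapcross F_n)$-close, the ``bridging'' point $(a,b)\in A'\times B'$ is automatically $(E_n\swapcross F_n)$-close to both; by picking one vertex of each type from a given mixed simplex $\sigma$, one sees more generally that $\sigma$ is always a face of a larger simplex $\tilde\sigma$ obtained by adjoining such a bridging vertex. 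On $\tilde\sigma$ one can write down an explicit mass-transport deformation which shifts weight from the $A'\times Y'$- and $X'\times B'$-parts of $\sigma$ onto the bridging vertex until the smaller of the two weights becomes zero, retracting $\tilde\sigma$ onto its union of pure faces in $\cP(A'\times Y')\cup\cP(X'\times B')$. The remaining technical work is to organise these simplex-wise retractions coherently, so that they agree on common faces and assemble into a single global deformation retraction of $\cP(A'\times Y'\cup X'\times B')$ onto $\cP(A'\times Y')\cup\cP(X'\times B')$; this book-keeping is where the bulk of the proof will actually sit.
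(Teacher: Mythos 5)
Your argument for the first five inclusions is correct and consistent with what the paper does: the paper delegates those to \cite[Lemma 4.43]{EngelWulffZeidler}, and that construction is essentially your linear projection $\pi(\sum_k\lambda_k(x_k,y_k)) = (\sum_k\lambda_k x_k,\sum_k\lambda_k y_k)$ together with the observation that the ``rectangularization'' $\{(x_i,y_j)\}_{i,j}$ of a simplex still spans a simplex in the Rips complex of the product (at a possibly larger but controlled scale), so that both $\pi\circ\iota=\id$ and the straight-line homotopy $\iota\pi\simeq\id$ make sense. The restriction of this data to $A'$ and $B'$ and the subsequent gluing-lemma argument for the fifth inclusion are both fine.

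For the sixth inclusion your proposal is not a proof: you have identified the difficulty (mixed simplices) and the key local observation (a bridging vertex $(a,b)\in A'\times B'$ is close to nearby vertices of both types), but the paragraph ends by explicitly deferring ``the bulk of the proof,'' namely the coherence of the simplexwise retractions across shared faces, and this is precisely the nontrivial content. In addition, a single bridging vertex per mixed simplex is not enough on its own: closeness of $(a_1,b_1)$ to the two vertices it bridges does not immediately give closeness to every other vertex of the mixed simplex without composing entourages, and a single choice of bridge will in general fail to be compatible with the bridges chosen on faces. The paper resolves both issues at once by reusing the coarse barycentric subdivision already built in \Cref{lem:cochaininclusion}: the excisiveness of $(X'\times Y';A'\times Y',X'\times B')$ (from \Cref{lem:crossproductexcisiveness}) furnishes barycenters $b(Q)\in A'\times B'$ for all ``mixed'' subsets $Q$, and the permutation-indexed tuples $\mathbf{x}_\sigma$ give a piecewise-linear map landing in $\cP(A'\times Y')\cup\cP(X'\times B')$ that automatically agrees on faces, with linear interpolation supplying both homotopies. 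In effect, the barycenters are exactly your bridging vertices, but chosen functorially over all subsets so that the face compatibilities you were worried about hold for free. You should replace your final paragraph with an appeal to that subdivision construction rather than trying to build the coherence by hand.
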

\begin{proof}
All but the last one are a direct consequence of the simple construction in \cite[Lemma 4.43]{EngelWulffZeidler}.

For the last one we use the barycentric subdivision from the proof of \Cref{lem:cochaininclusion}:
The simplex of $\cP(A'\times Y'\cup X'\times B')$ spanned by an $(m+1)$-tuple $\mathbf{x}\in (A'\times Y'\cup X'\times B')^{m+1}$ can be mapped piecewise linearily to the union of simplices in $\cP(A'\times Y')\cup \cP(X'\times B')$ spanned by the $(m+1)$-tuples $\mathbf{x}_\sigma\in (A'\times Y')^{m+1}\cup (X'\times B')^{m+1}$. This yields a proper continuous $\sigma$-map which is a homotopy inverse to the inclusion, where the homotopies are simply linear interpolation.
\end{proof}

\begin{cor}\label{cor:Ripsexcisivecouple}
Let $(X',A')$ and $(Y',B')$ be pairs of countably generated locally finite coarse spaces. Then $(\cP(X'),\cP(A'))$ and $(\cP(Y'),\cP(B'))$ are $\Homol_*$- and $\Cohom^*$-productable for all homology and cohomology theories $\Homol_*$, $\Cohom^*$.
\end{cor}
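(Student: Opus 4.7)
The plan is to combine the three preceding lemmas. By definition, productability of $(\cP(X'),\cP(A'))$ and $(\cP(Y'),\cP(B'))$ requires that the two symmetric inclusions
\[i\colon (\cP(A')\times\cP(Y'),\cP(A')\times\cP(B'))\to(\cP(A')\times\cP(Y')\cup\cP(X')\times\cP(B'),\cP(X')\times\cP(B'))\]
and
\[j\colon (\cP(X')\times\cP(B'),\cP(A')\times\cP(B'))\to(\cP(A')\times\cP(Y')\cup\cP(X')\times\cP(B'),\cP(A')\times\cP(Y'))\]
induce isomorphisms on $\Homol_*$ and on $\Cohom^*$. I will describe only the argument for $i$, the argument for $j$ being identical after swapping roles.

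First I would pass from the product of Rips complexes to the Rips complex of the product. Lemma \ref{lem:crossproductexcisiveness} gives that $(X'\times Y';A'\times Y',X'\times B')$ is coarsely excisive with $(A'\times Y')\cap(X'\times B')=A'\times B'$, so Lemma \ref{lem:Eexcisive} turns $(\cP(X'\times Y');\cP(A'\times Y'),\cP(X'\times B'))$ into an $\Homol_*$- and $\Cohom^*$-excisive triad. Since a point of a Rips complex is a finitely supported formal convex combination, the subcomplexes satisfy $\cP(A'\times Y')\cap\cP(X'\times B')=\cP(A'\times B')$, and hence the definition of excisive triads says that the inclusion
\[(\cP(A'\times Y'),\cP(A'\times B'))\to(\cP(A'\times Y')\cup\cP(X'\times B'),\cP(X'\times B'))\]
induces isomorphisms on $\Homol_*$ and $\Cohom^*$.

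Finally, I would transfer this to the statement about $i$ by chasing the commutative square of inclusions of pairs
\[\xymatrix@C=1em{
(\cP(A')\times\cP(Y'),\cP(A')\times\cP(B'))\ar[r]\ar[d]
&(\cP(A')\times\cP(Y')\cup\cP(X')\times\cP(B'),\cP(X')\times\cP(B'))\ar[d]
\\(\cP(A'\times Y'),\cP(A'\times B'))\ar[r]
&(\cP(A'\times Y')\cup\cP(X'\times B'),\cP(X'\times B'))
}\]
By Lemma \ref{lem:Ripsproducthomotopyequivalences} each of the four components of the two vertical arrows is an absolute homotopy equivalence, so the five-lemma applied to the long exact sequences of the pairs forces each vertical arrow to induce isomorphisms on $\Homol_*$ and $\Cohom^*$. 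Combined with the preceding paragraph this shows that $i$ does the same, finishing the proof. There is no substantial obstacle beyond bookkeeping; the only mildly non-trivial point is the identification of intersections of Rips subcomplexes with Rips complexes of intersections, which reduces to intersecting their supports.
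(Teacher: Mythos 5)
Your proposal follows the same strategy as the paper: first apply \Cref{lem:crossproductexcisiveness} and \Cref{lem:Eexcisive} to see that the triad formed by the Rips complexes $\cP(A'\times Y'),\cP(X'\times B')$ (inside the Rips complex of the product) is $\Homol_*$- and $\Cohom^*$-excisive, and then transfer this to the triad formed by the products of Rips complexes using the homotopy equivalences of \Cref{lem:Ripsproducthomotopyequivalences}. The paper leaves the second step implicit (``can now be tracked down''), whereas you make it explicit via the commutative square of pairs and the five-lemma; this is a correct and welcome unpacking, not a different route.
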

\begin{proof}
 \Cref{lem:crossproductexcisiveness,lem:Eexcisive} together show that 
\[(\cP(A'\times Y'\cup X'\times B'))\,;\quad\cP(A'\times Y')\,,\quad\cP(X'\times B'))\]
is an $\Homol_*$- and an $\Cohom^*$-excisive triad. 
The isomorphisms which are necessary to prove that 
\[(\cP(A')\times\cP(Y')\cup\cP(X')\times\cP(B')\,;\quad\cP(A')\times\cP(Y')\,,\quad\cP(X')\times\cP(B'))\]
is an $\Homol_*$- and an $\Cohom^*$-excisive triad can now be tracked down using the isomorphisms induced by the homotopy equivalences in the lemma.
\end{proof}

\begin{defn}
For all pairs $(X,A)$ and $(Y,B)$ of countably generated coarse spaces of bornologically bounded geometry we choose discretizations $(X',A')\subset(X,A)$ and $(Y',B')\subset (Y,B)$. \Cref{cor:Ripsexcisivecouple} then ensures that the topological cross and slant products from \Cref{defn:topologicalcrossslant} exist for the pairs $(\cP(X'),\cP(A'))$ and $(\cP(Y'),\cP(B'))$.  \Cref{lem:Ripsproducthomotopyequivalences} allows us to identify them with the homomorphisms appearing in \Cref{defn:crossproducts,defn:slantproducts}. The family of all of them constitute a coarse cross product $\times\colon\CohomX[1]^*\otimes\CohomX[2]^*\to\CohomX[3]^*$ or a coarse slant product $/\colon\HomolX[1]_*\otimes\CohomX[2]^*\to\HomolX[3]_*$, respectively, and it is called the \emph{coarsification} of the topological cross or slant product.
\end{defn}

The purpose of this section is to analyze the resulting secondary cup and cap products of these coarsified cross and slant products. The first step is to reformulate them in a more topological way. As before we write $I\coloneqq [-1,1]$. Note that we can define the following topological analogue to \Cref{def:secondarycupcap}.
\begin{defn}
\label{defn:topGamma}
Let $\cX$ be a $\sigma$-locally compact space, $\cA,\cB\subset \cX$ closed subspaces and 
$\widetilde H^\cA\colon(\cX,\cA)\times([0,1],\{1\})\to (\cX,\cA)$ and $\widetilde H^\cB\colon(\cX,\cB)\times([0,1],\{1\})\to (\cX,\cB)$ continuous homotopies with $\widetilde H^\cA|_{\cX\times\{0\}}=\id=\widetilde H^\cB|_{\cX\times\{0\}}$.
Then 
\[\widetilde \Gamma^{\cX,\cA,\cB}\coloneqq(\widetilde H^\cA_-,\widetilde H^\cB_+)\colon (\cX,\cA\cup\cB)\times(I,\partial I)\to (\cX,\cA)\times(\cX,\cB)\]
is a proper continuous $\sigma$-map. Given a topological cross product $\times\colon\Cohom[1]^*\otimes\Cohom[2]^*\to\Cohom[3]^*$
we define the associated topological secondary cup product $\Cup$ as $(-1)^m$ times the composition
\begin{align*}
\Cohom[1]^m(\cX,\cA)\otimes\Cohom[2]^n(\cX,\cB)&\xrightarrow{\times}\Cohom[3]^{m+n}((\cX,\cA)\times (\cX,\cB))
\\&\xrightarrow{(\widetilde\Gamma^{\cX,\cA,\cB})^*}\CohomX[3]^{m+n}((\cX,\cA\cup \cB)\times(I,\partial I))
\\&\xrightarrow{\cosusp[3]}\Cohom[3]^{m+n-1}(\cX,\cA\cup \cB)\,.
\end{align*}
and given a topological slant product $/\colon\Homol[1]_*\otimes\Cohom[2]^*\to\Homol[3]_*$ we define the associated secondary cap product $\Cap$ as $(-1)^{m+1}$ times the composition
\begin{align*}
\Homol[1]_m(\cX,\cA\cup \cB)&\otimes\Cohom[2]^n(\cX,\cB)\to
\\&\xrightarrow{\susp[1]\otimes\id}\Homol[1]_{m+1}((\cX,\cA\cup \cB)\times(I,\partial I))\otimes\Cohom[2]^n(\cX,\cB)
\\&\xrightarrow{(\widetilde\Gamma^{\cX,\cA,\cB})_*\otimes\id}\Homol[1]_{m+1}((\cX,\cA)\times (\cX,\cB))\otimes\Cohom[2]^n(\cX,\cB)
\\&\xrightarrow{/}\Homol[3]_{m-n+1}(\cX,\cA)\,.
\end{align*}
\end{defn}
The topological suspension isomorphisms in this definition are defined in the obvious way and they exist, because the inclusion 
\[(\cX,\cA\cup\cB)\times\{0\}\subset(\cX\times\partial I\cup (\cA\cup\cB)\times I, \cX\times\{1\}\cup (\cA\cup\cB)\times I)\]
is homotopy equivalent to a topological excision.
The topological products fulfil the analogue properties to those of the coarse products proven in \Cref{sec:secondaryfromdeformations}, but their proofs are a lot simpler, because we do not need to cope with the warped products.

\begin{lem}\label{lem:coarsetopologicalsecondarycomparison}
Let $(X;A,B)$ be a deformation triad of countably generated coarse space of bornologically bounded geometry and $(X';A',B')\subset (X;A,B)$ a discretization. Then the coarse secondary products for $X,A,B$ canonically identify with topologically secondary products for $(\cP(X');\cP(A'),\cP(B'))$.
\end{lem}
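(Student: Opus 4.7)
The plan is to verify that each ingredient entering \Cref{def:secondarycupcap} for $X,A,B$ corresponds, under the Rips complex construction of \cite[Section 5]{wulff2020equivariant}, to the analogous ingredient entering \Cref{defn:topGamma} for $\cX\coloneqq\cP(X')$, $\cA\coloneqq\cP(A')$, $\cB\coloneqq\cP(B')$. There are three such ingredients: the cross or slant product, the pullback/pushforward along $\Gamma^{X,A,B}$, and the (co-)suspension (iso)morphism. Since the coarse cross and slant products are defined as the coarsifications of their topological counterparts via \Cref{cor:Ripsexcisivecouple} and \Cref{lem:Ripsproducthomotopyequivalences}, the first ingredient matches by construction, so the task reduces to handling the $\Gamma$-map and the suspension.

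First, I would establish that for $\cU\in\nbhfmlysgm(X,A;I)$ supporting $\Gamma^{X,A,B}$ the warped product $X\indexcross{\cU}I$ has bornologically bounded geometry and admits a discretization of the form $(X'\times I'')$ for a suitable countable $I''\subset I$, so that the Rips complex $\cP((X\indexcross{\cU}I)')$ makes sense; moreover I would argue that there is a canonical homotopy equivalence of pairs
\[
\bigl(\cP((X\indexcross{\cU}I)')\,,\,\cP((X\times\partial I\cup A\times I)')\bigr)\;\simeq\;\bigl(\cP(X')\times I\,,\,\cP(X')\times\partial I\cup\cP(A')\times I\bigr)\,,
\]
and similarly with further subspaces required for the suspension. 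The reason is that the suspension condition on $\cU$ forces the $I$-direction to be split into arbitrarily fine pieces as one moves far from $A$, so at every fixed Rips scale the resulting simplicial complex is, up to a canonical piecewise-linear homotopy, the product of $\cP(X')$ with $I$ relative to the corresponding subspaces. Through this identification the coarse (co-)suspension of \Cref{lem:suspensionhomomorphism}, which is built from the connecting homomorphism of a triple together with an excision, matches the topological suspension built from the analogous triple and excision.

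Second, applying the Rips functor to the generalized controlled homotopy $H^A\colon(X,A)\indexcross{\cU^A}([0,1],\{1\})\to(X,A)$ and composing with the identification from the previous paragraph yields a proper continuous $\sigma$-map $\widetilde H^\cA\colon(\cX,\cA)\times([0,1],\{1\})\to(\cX,\cA)$ whose restriction to $\cX\times\{0\}$ is canonically homotopic to $\id_\cX$; after reparametrising by this homotopy we may assume equality, so that $\widetilde H^\cA$ is valid input for \Cref{defn:topGamma}. Doing the same for $H^B$, forming $\widetilde\Gamma^{\cX,\cA,\cB}=(\widetilde H^\cA_-,\widetilde H^\cB_+)$ and combining with \Cref{lem:Ripsproducthomotopyequivalences} to identify $\cP((X\times X)')$ with $\cP(X')\times\cP(X')$, one checks directly from the formulas that the map on Rips complexes induced by $\Gamma^{X,A,B}$ agrees up to canonical proper homotopy with $\widetilde\Gamma^{\cX,\cA,\cB}$. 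Combining these three matchings, together with the sign $(-1)^m$ (resp.\ $(-1)^{m+1}$) which is the same on both sides, gives the claimed identification.

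The main obstacle is the Rips-complex identification in the second paragraph, i.e.\ establishing naturally and in sufficient generality that $\cP((X\indexcross{\cU}I)')\simeq\cP(X')\times I$ as pairs compatibly with all relevant subspaces, in a way that is natural in $\cU$ under refinement and in controlled maps of pairs. All other steps, namely compatibility of the induced maps with connecting homomorphisms, excision isomorphisms, and cross or slant products, are then formal consequences of the naturality of these operations on the topological side combined with the fact that the coarse versions are defined as their coarsifications.
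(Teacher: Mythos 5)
Your plan follows the same high-level route as the paper: reduce to matching the three ingredients (cross/slant product, the $\Gamma$-map, the suspension) and observe that the first matches by definition via coarsification. The real divergence is in how you handle the suspension step, and here you propose to do strictly more work than is necessary.

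You propose to establish a homotopy equivalence of pairs
$\bigl(\cP((X\indexcross{\cU}I)'),\ \cP((X\times\partial I\cup A\times I)')\bigr)\simeq\bigl(\cP(X')\times I,\ \cP(X')\times\partial I\cup\cP(A')\times I\bigr)$
and treat this as the main technical obstacle. The paper does not prove such an equivalence and does not need one. It cites \cite[Lemma 5.13]{wulff2020equivariant}, which produces only a \emph{one-directional} proper continuous $\sigma$-map $i\colon(\cP(X'),\cP(A'))\times([0,1],\{1\})\to(\cP(Z_A),\cP(D_A))$ extending the canonical inclusion of $(\cP(X'),\cP(A'))\times\{0,1\}$, where $(Z_A,D_A)$ is a discretization of the warped product. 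Composing $i$ with $\cP(H^A)$ gives the topological homotopy $\widetilde H^A$ directly — and since $i$ extends the canonical inclusion at time $0$ and $H^A_0=\id$, one automatically gets $\widetilde H^A|_{\cX\times\{0\}}=\id$, making your reparametrization step superfluous. The reason a mere one-way map $i$ suffices is that the coarse and topological suspension homomorphisms are both connecting homomorphisms of triples, so their compatibility is naturality of the connecting homomorphism under $i$; the induced map $i^*$ (or $i_*$) sits in the \emph{middle} column of the paper's comparison diagram and is never required to be an isomorphism — the outer columns and the top and bottom rows already are. So your claimed equivalence, which you yourself flag as the "main obstacle," is not what the argument should aim at. If you replace it with the one-way map provided by \cite[Lemma 5.13]{wulff2020equivariant} and rely on naturality of connecting homomorphisms instead of an identification of pairs, the gap disappears and your argument becomes the paper's.
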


\begin{proof}
Let $H^A\colon (X,A)\indexcross{\cU_A}([0,1],\{1\})\to (X,A)$ be an associated generalized controlled homotopy with $H^A_0=\id_X$ and we may assume that it restricts to a controlled homotopy $(X',A')\indexcross{\cU}([0,1],\{1\})\to (X',A')$. Then the construction of \cite[Lemma 5.13]{wulff2020equivariant} provides us with a discretization $(Z_A,D_A)$ of $(X,A)\indexcross{\cU_A}([0,1],\{1\})$ which  contains $(X',A')\cup \{0,1\}$ and is contained in $(X',A')\cup [0,1]$ and with a proper continuous map 
\[(\cP(X'),\cP(A'))\times ([0,1],\{1\})\to (\cP(Z_A),\cP(D_A))\]
which extends the canonical inclusion of $(\cP(X'),\cP(A'))\times \{0,1\}$ into $(\cP(Z_A),\cP(D_A))$.
Note that its composition with  $\cP(H^A)\colon (\cP(Z_A),\cP(D_A))\to (\cP(X'),\cP(A'))$ is a continuous homotopy
\[\widetilde H^A\colon (\cP(X'),\cP(A'))\times ([0,1],\{1\})\to (\cP(X'),\cP(A'))\]
with $\widetilde H|_{\cP(X')\times\{0\}}=\id$.

Similarily, we pick a controlled homotopy $H^B$ associated to $(X,B)$ and turn it into a continuous homotopy $\widetilde H^B$ for $(\cP(X'),\cP(B'))$.
Combining these two constructions we obtain a discretization $(Z,D)$ of $(X,A\cup B)\indexcross{\cU}(I,\partial I)$ and a proper continuous map $i\colon (\cP(X'),\cP(A')\cup \cP(B'))\times (I,\partial I)\to (\cP(Z),\cP(D))$ extending the canonical inclusion of $(\cP(X'),\cP(A'))\times \partial I$ into $(\cP(Z),\cP(D))$ such that we have
\begin{align}
\widetilde \Gamma^{X,A,B}&\coloneqq (\widetilde H^A_-,\widetilde H^B_+)=\cP(\Gamma^{X,A,B})\circ i\colon \nonumber
\\&(\cP(X'),\cP(A')\cup \cP(B'))\times (I,\partial I)\to (\cP(X'),\cP(A'))\times (\cP(X'),\cP(B'))\,. 
\nonumber\end{align}
The identification of the coarse secondary cup product with the topological one is now given by the diagram
\[\xymatrix{
\CohomX[1]^m(X,A)\otimes\CohomX[2]^n(X,B)\ar[d]\ar@{=}[r]
&\Cohom[1]^m(\cP(X'),\cP(A'))\otimes\Cohom[2]^n(\cP(X'),\cP(B'))\ar[d]
\\\CohomX[3]^{m+n}((X,A)\times(X,B))\ar[d]^{(\Gamma^{X,A,B})^*}\ar[r]^-{\cong}
&\Cohom[3]^{m+n}((\cP(X'),\cP(A'))\times(\cP(X'),\cP(B')))\ar[d]^{(\widetilde\Gamma^{X,A,B})^*}
\\\CohomX[3]^{m+n}((X,A\cup B)\indexcross{\cU}(I,\partial I))\ar[r]^-{i^*}\ar[d]^{\cosusp[3]}
&\Cohom[3]^{m+n}((\cP(X'),\cP(A')\cup\cP(B'))\times(I,\partial I))\ar[d]^{\cosusp[3]}
\\\CohomX[3]^{m+n-1}(X,A\cup B)\ar[r]^-{\cong}
&\Cohom[3]^{m+n-1}(\cP(X'),\cP(A')\cup\cP(B'))
}\]
where the second and fourth horizontal arrows are isomorphisms induced by homotopy equivalences from \Cref{lem:Ripsproducthomotopyequivalences}. Commutativity of the upper square is the definition of the coarsification of the cross product. The middle square commutes by definition of $\widetilde\Gamma^{X,A,B}$ and the lower square commutes by naturality of the connecting homomorphism under $i$.

The proof for the identification of the secondary cap products is completely dual.
\end{proof}

We now turn our attention to transgression maps (cf.\ \cite[Section 5.3]{wulff2020equivariant}). They only exist for the so-called \emph{single-space \mbox{(co-)}homology theories} (cf.\ \cite[Definition 5.16]{wulff2020equivariant}, that is, \mbox{(co-)}homology theories $\Homol_*,\Cohom^*$ for $\sigma$-locally compact spaces which satisfy the \emph{strong excision axiom}: For all pairs of $\sigma$-locally compact spaces $(\cX,\cA)$ there are natural isomorphisms $\Homol_*(\cX,\cA)\cong\Homol_*(\cX\setminus\cA)$ and $\Cohom^*(\cX,\cA)\cong\Cohom^*(\cX\setminus\cA)$ and hence these theories can be expressed completely by their absolute groups. The most important examples are $\K$-theory, $\K$-homology and Alexander--Spanier cohomology.

Note also that we do not have to worry about excisiveness conditions for single space \mbox{(co-)}homology theories $\Homol_*,\Cohom^*$, because for every two closed subspaces $\cA,\cB\subset \cX$ the triad $(\cX;\cA,\cB)$ is $\Homol_*$- and $\Cohom^*$-excisive.

Let us briefly recall the definition of the transgression maps. Let $(X,A)$ be a pair of countably generated coarse spaces of bornologically bounded geometry and $(X',A')\subset (X,A)$ a discretization. 
Canonically associated to $X$ there is a compact Hausdorff space $\partial_hX$, the so-called \emph{Higson corona}. 
By a \emph{Higson dominated corona} we then mean a compact Hausdorff space $\partial X$ together with a natural surjection $\partial_hX\twoheadrightarrow\partial X$. It has the property that we can glue it to the Rips complex $\cP(X')$  to obtain a $\sigma$-compactification $\overline{\cP(X')}$ of $\cP(X')$ with corona $\partial X=\overline{\cP(X')}\setminus\cP(X')$. The subspace $A$ then also has an associated Higson-dominated corona $\partial A\subset\partial X$, namely $\partial A=\partial X\cap\overline{\cP(A')}$. With all this data the transgression maps are the connecting homomorphisms for the pair $(\overline{\cP(X')}\setminus\overline{\cP(A')},\partial X\setminus\partial A)$, that is,
\begin{align*}
T_*^{X,A}\colon\HomolX_{*+1}(X,A)&\cong\Homol_{*+1}(\cP(X')\setminus\cP(A'))\to \Homol_*(\partial X\setminus\partial A)\cong \Homol_*(\partial X,\partial A)
\end{align*}
and dually $T^*_{X,A}\colon\Cohom^*(\partial X,\partial A)\to\CohomX^{*+1}(X,A)$.
The main result of this section is the following.

\begin{thm}\label{thm:transgressioncompatiblewithcupandcap}
Given a topological cross product $\times\colon\Cohom[1]^*\otimes\Cohom[2]^*\to\Cohom[3]^*$ or a topological slant product $/\colon\Homol[1]_*\otimes\Cohom[2]^*\to\Homol[3]_*$ between single space \mbox{(co-)}homology theories for $\sigma$-locally compact spaces, the induced secondary cup and cap products on coarse spaces (if existent) and the induced primary cup and cap products on the coronas are related by the transgression maps:
\begin{align*}
\forall x\in\Cohom[1]^*(\partial X,\partial A),&y\in\Cohom[2]^*(\partial X,\partial B)\colon &T^*_{X,A\cup B}(x\cup y)&=T^*_{X,A}(x)\Cup T^*_{X,B}(y)
\\\forall x\in\HomolX[1]_*(X,A\cup B),&y\in\Cohom[2]^*(\partial X,\partial B)\colon&T_*^{X,A}(x\Cap T^*_{X,B}(y))&=T_*^{X,A\cup B}(x)\cap y
\end{align*}
\end{thm}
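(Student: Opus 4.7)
The strategy is to reduce both identities to topological statements via \Cref{lem:coarsetopologicalsecondarycomparison} and then to exhibit a continuous extension of $\widetilde\Gamma^{\cX,\cA,\cB}$ to the $\sigma$-compactifications whose restriction to the corona is the topological diagonal. Fix discretizations and write $\cX = \cP(X')$, $\cA = \cP(A')$, $\cB = \cP(B')$, with $\sigma$-compactifications $\overline\cX, \overline\cA, \overline\cB$ obtained by gluing $\partial X, \partial A, \partial B$. By \Cref{lem:coarsetopologicalsecondarycomparison}, it suffices to compare the topological secondary products on $(\cX;\cA,\cB)$, built from the homotopies $\widetilde H^\cA, \widetilde H^\cB$ of \Cref{defn:topGamma}, with the primary topological products on the corona $(\partial X; \partial A, \partial B)$.

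The main technical step is to construct a proper continuous $\sigma$-map
\[
\overline\Gamma \colon (\overline\cX, \overline\cA \cup \overline\cB) \times (I, \partial I) \to (\overline\cX, \overline\cA) \times (\overline\cX, \overline\cB)
\]
extending $\widetilde\Gamma^{\cX,\cA,\cB}$ whose restriction to $\partial X \times I$ equals the projection $\partial X \times I \to \partial X$ followed by the topological diagonal $\Delta\colon (\partial X, \partial A \cup \partial B) \to (\partial X, \partial A) \times (\partial X, \partial B)$. The existence of such an extension rests on the fact that the underlying generalized controlled homotopies $H^A, H^B$ are supported by collections in $\nbhfmlysgm$: outside any penumbra of $A \cup B$ the warped intervals become arbitrarily thin, so that each continuous path $t \mapsto \widetilde H^\cA(x_n, t)$ collapses uniformly in diameter as $x_n$ escapes to the corona. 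Hence the continuous extensions of $\widetilde H^\cA_-$ and $\widetilde H^\cB_+$ to $\overline\cX \times I$ become independent of $t$ on $\partial X$, and the pair of them is precisely $\Delta \circ \mathrm{pr}_{\partial X}$.

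With $\overline\Gamma$ in hand, the cup product identity becomes a diagram chase assembled from three pieces: (i) the compatibility of the cross product $\times$ with the connecting homomorphisms of the compactification pairs, which by \Cref{defn:crossproducts} identifies $T^*_{X,A}(x) \times T^*_{X,B}(y)$ with a double transgression of $x \times y$ up to a sign $(-1)^m$; (ii) the asymptotic description of $\overline\Gamma$, which combined with the suspension isomorphism $\cosusp$ (a one-sided inverse to the $(I,\partial I)$-connecting homomorphism by \Cref{lem:suspensionhomomorphism}) converts $\overline\Gamma^* \circ \cosusp$ into pullback along $\Delta$ at the corona, producing $T^*_{X,A\cup B}$ of $\Delta^*(x \times y) = x \cup y$; (iii) matching signs with the $(-1)^m$ prefactor in \Cref{def:secondarycupcap}, which exactly cancels the sign introduced in (i).

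The cap product identity follows from the completely dual diagram chase, pushing forward along $\overline\Gamma$ instead of pulling back and using the compatibility of the slant product $/$ with connecting homomorphisms as in \Cref{defn:slantproducts}. The expected main obstacle is the construction of $\overline\Gamma$ and the rigorous verification of its behaviour on the corona: one must translate the asymptotic interval-collapsing encoded by the suspension condition on $\cU$ into a pointwise continuity statement on the Higson-dominated compactification, which likely requires a careful analysis of how the Rips complex construction interacts with the warped-product coarse structure. Once that step is carried out, the result reduces to the axiomatic compatibility of cross and slant products with connecting homomorphisms.
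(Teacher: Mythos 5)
Your reduction of Theorem \ref{thm:transgressioncompatiblewithcupandcap} to a purely topological statement on $(\cP(X');\cP(A'),\cP(B'))$ via \Cref{lem:coarsetopologicalsecondarycomparison} is exactly what the paper does, and your use of the compatibility of cross and slant products with connecting homomorphisms is the right kind of ingredient. However, the central technical step you propose --- extending $\widetilde\Gamma^{\cX,\cA,\cB}$ to a continuous map $\overline\Gamma$ on the $\sigma$-compactification restricting to $\Delta\circ\mathrm{pr}_{\partial X}$ on the corona --- fails. The heuristic you give (the warped intervals become arbitrarily thin far from $A\cup B$, hence the paths $t\mapsto\widetilde H^\cA(x,t)$ collapse in diameter) has it backwards: thinness of the warped interval means the \emph{coarse} parametrization is slow, but it does not shrink the topological diameter of the homotopy tracks. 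On the contrary, since $H^A_1(x)\in A$ for all $x$, the diameter of the path $t\mapsto H^A(x,t)$ is at least $d(x,A)$, which is unbounded.

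Concretely, take $\cX = \cP(\N)\simeq[0,\infty)$, $\cA=\cB=\{0\}$, $\overline\cX=[0,\infty]$, and $H^A(x,s)=H^B(x,s)=(1-s)x$; the associated $\cU$ with $U_x=\{\,|s-t|<1/(1+x)\,\}$ satisfies the suspension condition. Then $\widetilde H^A_-(x,t)=(1+\min(t,0))x$. For any fixed $t>-1$ this tends to $\infty$ as $x\to\infty$, while $\widetilde H^A_-(x,-1)=0$; but along the sequence $(x_n,t_n)=(n,-1+1/n)$ one gets $\widetilde H^A_-(x_n,t_n)=1$. So no continuous extension to $[0,\infty]\times I$ exists, not even as a map of pairs. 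The same breakdown occurs for $\widetilde\Gamma^{\cX,\cA,\cB}$ at the corners $\partial X\times\partial I$, and it is not repaired by relativizing.

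The paper avoids this obstruction entirely. Instead of seeking a global extension of $\widetilde\Gamma$, the proof of \Cref{thm:boundariescompatiblewithcupandcap} (which is the statement you are reduced to) works on the level of function algebras: it constructs a ladder of short exact sequences (Figure~2) interpolating between $\Cz((\cX\setminus(\cA\cup\cB))\times\mathring I)$ and $\Cz(\partial\cX\setminus(\partial\cA\cup\partial\cB))$, with vertical $*$-homomorphisms induced by $\widetilde\Gamma^{\cX,\cA,\cB}$, by the auxiliary maps $\gamma(x,s)=(H^\cA(x,s^-),2s^+-1)$ and $\beta(x,s)=(H^\cA(x,s^-),x)$, and finally by the diagonal $\Delta$ on the corona alone. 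Commutativity is checked algebraically, modulo the relevant ideals, where the failure of pointwise convergence at the corners is swallowed by the quotients. Dualizing yields a diagram of open/closed inclusions of intermediate spaces $\cY_1,\dots,\cY_4$, through which the connecting homomorphisms of the slant or cross product axioms can be chained to pass from $\widetilde\Gamma$ in the interior to $\Delta$ on the boundary. In short: there is no continuous $\overline\Gamma$ mediating between $\widetilde\Gamma$ and $\Delta$, only a zig-zag of excisions and quotient $*$-homomorphisms, and that zig-zag is precisely the content of the paper's Figure~2.
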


\begin{example}
\label{ex:ConesTransgression}
Every compact metric space $K$ is a Higson dominated corona of the open cone $\cO K$ over itself and if moreover $L\subset K$ is a closed subset then $L$ is the corresponding Higson dominated corona of the subspace $\cO L\subset \cO K$.
Combining  \cite[Section 3.1, Theorem 5.7, Lemma 4.17]{EngelWulff} and the five-lemma to pass from the reduced to the relative case shows that the transgression maps $T_*^{\cO K,\cO L}$, $T^*_{\cO K,\cO L}$ are isomorphisms.

Now, if two closed subsets $L,M\subset K$ are given then $(\cO K,\cO L,\cO M)$ is a deformation triad by \Cref{ex:conedeformations}.
Therefore, \Cref{thm:transgressioncompatiblewithcupandcap} shows that the secondary cup and cap products of $(\cO K,\cO L,\cO M)$ can be identified canonically with the primary cup and cap products of $(K,L,M)$. This shows that our theory of coarse secondary products is at least as rich as the theory of the primary cup and cap products on compact metrizable spaces.
\end{example}

Instead of proving \Cref{thm:transgressioncompatiblewithcupandcap} directly we note that by \Cref{lem:coarsetopologicalsecondarycomparison} it is nothing but the special case $\cX=\cP(X')$, $\cA=\cP(A')$, $\cB=\cP(B')$, $\partial\cX=\partial X$, $\partial\cA=\partial A$, $\partial\cB=\partial B$ of the following theorem.

\begin{thm}\label{thm:boundariescompatiblewithcupandcap}
Let $(\cX;\cA,\cB)$ be a topological deformation triad of $\sigma$-locally compact spaces, that is, the assumptions of \Cref{defn:topGamma} hold.
Furthermore, let $\overline\cX$ be a $\sigma$-locally compact space which contains $\cX$ as an open subset and let $\overline\cA$ and $\overline\cB$ be closed subspaces of $\overline\cX$ such that $\cA=\overline\cA\cap\cX$ and $\cB=\overline\cB\cap\cX$. We define $\partial\cX=\overline\cX\setminus\cX$, $\partial\cA=\overline\cA\setminus\cA$ and  $\partial\cB=\overline\cB\setminus\cB$.
\begin{enumerate}
\item If $\times\colon\Cohom[1]^*\otimes\Cohom[2]^*\to\Cohom[3]^*$ is a cross product between single-space cohomology theories for $\sigma$-locally compact spaces, then the connecting homomorphisms 
\begin{align*}
\cobound[1]\colon\Cohom[1]^*(\partial\cX\setminus\partial\cA)&\to\Cohom[1]^{*+1}(\cX\setminus\cA)
\\\cobound[2]\colon\Cohom[2]^*(\partial\cX\setminus\partial\cB)&\to\Cohom[2]^{*+1}(\cX\setminus\cB)
\\\cobound[3]\colon\Cohom[3]^*(\partial\cX\setminus(\partial\cA\cup\partial\cB))&\to\Cohom[3]^{*+1}(\cX\setminus(\cA\cup\cB))
\end{align*}
are compatible with the primary and secondary cup product in the sense that $\cobound[3](x\cup y)=\cobound[1](x)\Cup\cobound[2](y)$ for all $x\in \Cohom[1]^*(\partial\cX\setminus\partial\cA)$ and $y\in \Cohom[2]^*(\partial\cX\setminus\partial\cB)$.
\item If $/\colon\Homol[1]_*\otimes\Cohom[2]^*\to\Homol[3]_*$ is a slant product between single-space cohomology theories for $\sigma$-locally compact spaces, then the connecting homomorphisms 
\begin{align*}
\bound[1]\colon\Homol[1]_{*+1}(\cX\setminus(\cA\cup\cB))&\to\Homol[1]_*(\partial\cX\setminus(\partial\cA\cup\partial\cB))
\\\cobound[2]\colon\Cohom[2]^*(\partial\cX\setminus\partial\cB)&\to\Cohom[2]^{*+1}(\cX\setminus\cB)
\\\bound[3]\colon\Homol[3]_{*+1}(\cX\setminus\cA)&\to\Homol[3]_*(\partial\cX\setminus\partial\cA)
\end{align*}
are compatible with the primary and secondary cap product in the sense that $\bound[3](x\Cap\cobound[2](y))=\bound[1](x)\cap y$ for all $x\in \Homol[1]_{*+1}(\cX\setminus(\cA\cup\cB))$ and $y\in \Cohom[2]^*(\partial\cX\setminus\partial\cB)$.
\end{enumerate}
\end{thm}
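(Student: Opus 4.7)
The two parts of the theorem are dual to each other, so I focus on part (1) (the cup product case); part (2) follows by the same pattern with homology replacing cohomology and slant products replacing cross products. The argument closely parallels the proof of \Cref{lem:secprodcomparison}, transplanted from the cochain-level coarse setting to the axiomatic topological setting using strong excision. First I would use the strong excision axiom to identify
\[\Cohom[i]^*(\cX\setminus\cdot_i)\;\cong\;\Cohom[i]^*(\overline\cX\setminus\overline\cdot_i,\,\partial\cX\setminus\partial\cdot_i)\]
for $\cdot_i\in\{\cA,\cB,\cA\cup\cB\}$, so that each $\cobound_i$ becomes the connecting homomorphism of the long exact sequence of the corresponding pair; since single-space theories have every triad excisive, the relative cross product between the pairs $(\overline\cX\setminus\overline\cA,\partial\cX\setminus\partial\cA)$ and $(\overline\cX\setminus\overline\cB,\partial\cX\setminus\partial\cB)$ is automatically available.

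Next I would reformulate the cosuspension in a Mayer--Vietoris fashion, writing $I_\pm=\pm[0,1]$ for the two halves of $I$. Because $\cX\times\{\pm 1\}$ is a deformation retract of $\cX\times I_\pm$ compatible with $(\cA\cup\cB)\times I_\pm$, homotopy invariance implies the vanishing of
\[\Cohom[3]^*\bigl(\cX\times I_\pm,\;\cX\times\{\pm 1\}\cup(\cA\cup\cB)\times I_\pm\bigr).\]
Hence the Mayer--Vietoris connecting homomorphism for the excisive decomposition $\cX\times I=\cX\times I_-\cup\cX\times I_+$ provides a natural isomorphism that agrees up to sign with $(\cosusp[3])^{-1}$, mirroring the central display in the proof of \Cref{lem:secprodcomparison}.

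Then I would exploit the splitting $\widetilde\Gamma=\widetilde\Gamma_-\cup\widetilde\Gamma_+$ given by
\[\widetilde\Gamma_-(x,t)=(\widetilde H^\cA_{-t}(x),x),\qquad \widetilde\Gamma_+(x,t)=(x,\widetilde H^\cB_t(x)),\]
which agree on the diagonal at $t=0$. Each half is a continuous homotopy: $\widetilde\Gamma_-$ interpolates between the diagonal $\Delta$ at $t=0$ and a map into $\cA\times\cX$ at $t=-1$, and dually for $\widetilde\Gamma_+$. Since any relative class on $(\cX,\cA)\times(\cX,\cB)$ restricts trivially to both $\cA\times\cX$ and $\cX\times\cB$, the Mayer--Vietoris description of the cosuspension reduces $\cosusp[3]\circ\widetilde\Gamma^*$ applied to such a class to a pullback along the diagonal at $t=0$. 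Invoking the two compatibility diagrams of \Cref{defn:crossproducts} in sequence, first for the $(\cX,\cA)$-factor and then for the $(\cX,\cB)$-factor, then identifies $\widetilde\Gamma^*(\cobound_1(x)\times\cobound_2(y))$, after applying the cosuspension, with $\cobound_3(\Delta^*(x\times y))=\cobound_3(x\cup y)$, yielding the claim.

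The hard part will be the sign bookkeeping. The prefactor $(-1)^{m+1}$ from the definition of $\Cup$ applied to $\cobound_1(x)$ of degree $m+1$, the sign $(-1)^{m+1}$ produced by the second cross-product-with-connecting compatibility diagram, the sign inherent in the Mayer--Vietoris reformulation of the cosuspension, and the signs recorded in \Cref{lem:suspensionsign} must combine to cancel and give the identity on the nose rather than merely up to sign. The geometric content, by contrast, is essentially forced by the splitting of $\widetilde\Gamma$ along $t=0$ together with the relative vanishing of $\cobound_1(x)\times\cobound_2(y)$ on the ``boundary'' subspaces $\cA\times\cX$ and $\cX\times\cB$.
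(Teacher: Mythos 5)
The overall strategy you describe—reformulating the connecting homomorphisms via strong excision, decomposing the suspension interval at $t=0$, and appealing to the cross-product compatibility axioms—points in the same direction as the paper's proof. However, the crucial step is handled by a claim that is not justified and, as stated, is not correct: you assert that ``the Mayer--Vietoris description of the cosuspension reduces $\cosusp[3]\circ\widetilde\Gamma^*$ applied to such a class to a pullback along the diagonal at $t=0$.'' The literal restriction of $\widetilde\Gamma$ to $t=0$ is $\Delta$, and $\Delta^*(\cobound[1](x)\times\cobound[2](y))$ is the \emph{primary} cup product of $\cobound[1](x)$ and $\cobound[2](y)$ on $(\cX,\cA\cup\cB)$, which vanishes precisely because $\cA,\cB$ are deformation retracts. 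So the MV description of $\cosusp[3]$ cannot be a naive restriction to $t=0$; it requires choosing lifts of $\widetilde\Gamma^*(z)$ over $I_\pm$ into the vanishing middle groups, and the result of that choice is not $\Delta^*(z)$ but something involving the connecting homomorphism $\cobound[3]$ of the corona pair. That is the entire content of the theorem, and the mechanism connecting $\cosusp[3]$, $\widetilde\Gamma^*$, and $\cobound[3]\circ\Delta^*$ is exactly what you have not supplied.

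The paper bridges this gap by introducing two auxiliary maps,
\[
\gamma(x,s)=\bigl(H^{\cA}(x,s^-),\,2s^+-1\bigr)
\quad\text{and}\quad
\beta(x,s)=\bigl(H^{\cA}(x,s^-),\,x\bigr),
\]
together with a chain of intermediate pairs of spaces (the $\cY_1,\dots,\cY_4$ of Figure~\ref{fig:Proofofmultiplicativitylemma}), and then verifies commutativity of a large diagram of function algebras with exact rows, dualizing it to spaces. The point is that $\widetilde\Gamma$ agrees with $\beta$ on $I_-$ and differs from it on $I_+$ only within an ideal with vanishing cohomology, that $\beta$ at $s=1$ is $\Delta$ while $\beta$ at $s=-1$ lands in $\cA\times\cX$, and that $\gamma$ encodes the reparametrization identifying the full interval with the half-interval. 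These auxiliary data are exactly what turn your intended ``splitting of $\widetilde\Gamma$'' into a sequence of genuinely commuting squares on which the connecting homomorphism compatibility axioms of \Cref{defn:crossproducts} and \Cref{defn:slantproducts} can be applied. Without them, your invocation of the axioms ``first for the $(\cX,\cA)$-factor and then for the $(\cX,\cB)$-factor'' has no diagram in which to take place; the step from the MV description to $\cobound[3](x\cup y)$ remains a gap. (You also explicitly leave the sign verification undone, but that is a secondary issue compared to the missing intermediate maps.)
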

The notation already suggests that we have the special case in mind in which $\overline{\cX}$ is a $\sigma$-compactification of $\cX$ and $\overline{\cA}$, $\overline{\cB}$ are the closures of $\cA$,$\cB$, respectively, in $\overline{\cX}$, but we are not restricted to this case.

\begin{proof}
In the following, we we let $\mathring J$ denote the interior of $J$ for any interval $J$.
Consider the diagrams of function algebras with exact rows in Figure~\ref{fig:Proofofmultiplicativitylemma}.
\begin{sidewaysfigure}
\resizebox{\textwidth}{!}{$\xymatrix@C=10ex@R=3ex{
 0\ar[r]&\scriptstyle \Cz(\cX\setminus\cA)\otimes\Cz(\cX\setminus\cB)\ar[r]\ar[d]^{(\widetilde\Gamma^{\cX,\cA,\cB})^*}
\ar[r]&\scriptstyle  \Cz(\cX\setminus\cA)\otimes\Cz(\overline\cX\setminus\overline\cB)\ar[r]\ar[d]^{(\widetilde\Gamma^{\cX,\cA,\cB})^*}
\ar[r]&\scriptstyle  \Cz(\cX\setminus\cA)\otimes\frac{\Cz(\overline\cX\setminus\overline\cB)}{\Cz(\cX\setminus\cB)}\ar[d]^{(\widetilde\Gamma^{\cX,\cA,\cB})^*}
\ar[r]& 0
\\ 0\ar[r]&\scriptstyle  \Cz((\cX\setminus(\cA\cup\cB))\times\mathring I)
\ar[r]&\scriptstyle  \Cb(\cX\times I,\cX\times\partial I\cup\cA\times I)
\ar[r]&\frac{\Cb(\cX\times I,\cX\times\partial I\cup\cA\times I)}{\Cz((\cX\setminus(\cA\cup\cB))\times\mathring I)}
\ar[r]&0
\\ 0\ar[r]&\scriptstyle  \Cz((\cX\setminus(\cA\cup\cB))\times\mathring I)\ar[u]_{\gamma^*\simeq\id}
\ar[r]&\scriptstyle  \Cz((\cX\setminus(\cA\cup\cB))\times [-1,1))\ar[u]_{\gamma^*}
\ar[r]_{\ev{-1}}&\scriptstyle \Cz(\cX\setminus(\cA\cup\cB))\ar@{-->}[u]\ar[r]&0
\\ 0\ar[r]&\scriptstyle \Cz(\cX\setminus\cA)\otimes\frac{\Cz(\overline\cX\setminus\overline\cB)}{\Cz(\cX\setminus\cB)}\ar[d]^{(\widetilde\Gamma^{\cX,\cA,\cB})^*}
\ar[r]&\scriptstyle \Cz(\overline\cX\setminus\overline\cA)\otimes\frac{\Cz(\overline\cX\setminus\overline\cB)}{\Cz(\cX\setminus\cB)}\ar[d]^{\beta^*}
\ar[r]&\scriptstyle \frac{\Cz(\overline\cX\setminus\overline\cA)}{\Cz(\cX\setminus\cA)}\otimes\frac{\Cz(\overline\cX\setminus\overline\cB)}{\Cz(\cX\setminus\cB)}\ar[d]_{\Delta^*}\ar@/^4pc/[dd]|\hole^(.3){\Delta^*}
\ar[r]& 0
\\ 0\ar[r]&\frac{\Cb(\cX\times I,\cX\times\partial I\cup\cA\times I)}{\Cz((\cX\setminus(\cA\cup\cB))\times\mathring I)}
\ar[r]&\frac{\Cb(\cX\times I,\cX\times\{-1\}\cup \cB\times\{1\}\cup\cA\times I)}{\Cz((\cX\setminus(\cA\cup\cB))\times(-1,1])}
\ar[r]_-{\ev{1}}&\frac{\Cb(\cX,\cA\cup\cB)}{\Cz(\cX\setminus(\cA\cup\cB))}
\ar[r]& 0
\\ 0\ar[r]&\scriptstyle \Cz(\cX\setminus(\cA\cup\cB))\ar@{-->}[u]\ar[r]&\scriptstyle \Cz(\overline\cX\setminus(\overline\cA\cup\overline\cB))\ar[u]_{(\widetilde H^{\cA}_-)^*}
\ar[r]
&\frac{\Cz(\overline\cX\setminus(\overline\cA\cup\overline\cB))}{\Cz(\cX\setminus(\cA\cup\cB))}\ar[u]^{\textnormal{incl.}}
\ar[r]&0
\save "1,1"."1,4"."3,5"."3,3"*+<0ex,1ex>[F.]\frm{}\restore
\save "4,1"."4,2"."6,4"."6,5"*[F.]\frm{}\restore
}$}

\bigskip

\resizebox{\textwidth}{!}{$\xymatrix@C=2ex@R=3ex{
(\cX\setminus\cA)\times(\cX\setminus\cB)
&\subset
&(\cX\setminus\cA)\times(\overline\cX\setminus\overline\cB)
&\supset
&(\cX\setminus\cA)\times(\partial\cX\setminus\partial\cB)
&\subset
&(\overline\cX\setminus\overline\cA)\times(\partial\cX\setminus\partial\cB)
&\supset
&(\partial\cX\setminus\partial\cA)\times(\partial\cX\setminus\partial\cB)
\\(\cX\setminus(\cA\cup\cB))\times\mathring I\ar[u]^{\widetilde\Gamma^{\cX,\cA,\cB}}\ar[d]_{\gamma}^{\simeq\,\id}
&\subset
&\cY_1\ar[d]\ar[u]
&\supset
&\cY_2\ar[d]\ar[u]
&\subset
&\cY_3\ar[d]\ar[u]
\supset
&\cY_4\ar[d]\ar[u]
\\(\cX\setminus(\cA\cup\cB))\times\mathring I
&\subset
&(\cX\setminus(\cA\cup\cB))\times[-1,1)
&\supset 
&\cX\setminus(\cA\cup\cB)
&\subset
&\overline{\cX}\setminus(\overline{\cA}\cup\overline{\cB})
&\supset
&\partial\cX\setminus(\partial \cA\cup\partial\cB)
\ar@/_7ex/[uu]_{\Delta}
\save "3,1"."1,9"*+<1ex,0ex>[F.]\frm{}\restore
}$} \caption{Proving Theorem \ref{thm:boundariescompatiblewithcupandcap}}
\label{fig:Proofofmultiplicativitylemma}
\end{sidewaysfigure}
All solid vertical maps are defined by pulling back functions along 
proper continuous maps between pairs of $\sigma$-locally compact spaces,
even the inclusion of function algebras which is obtained by pulling back along the identity. We already know the notations $\widetilde H^A,\widetilde H^B$ for the associated continuous homotopies and the resulting proper continuous map $\widetilde\Gamma^{\cX,\cA,\cB}$ and continuous map $\widetilde H^\cA_-$. Furthermore, $\Delta$ denotes the diagonal map. The other continuous $\sigma$-maps are defined as follows:
\begin{align*}
\gamma\colon\cX\times [-1,1]&\to\cX\times [-1,1]\,,&(x,s)&\mapsto (H^{\cA}(x,s^-),2s^+-1)
\\\beta\colon \cX\times [0,1]&\to \cX\times\cX\,,& (x,s)&\mapsto (H^{\cA}(x,s^-),x)
\end{align*}
It is straightforward to check that they map the function algebras as stated.
Furthermore, the $\gamma^*$ in the bottom left corner of the first upper diagram is simply the canonical $*$-homomorphism
\[\Cz((\cX\setminus(\cA\cup\cB))\times \mathring I)\cong \Cz((\cX\setminus(\cA\cup\cB))\times\mathring I_+)\subset \Cz((\cX\setminus(\cA\cup\cB))\times\mathring I)\]
which is, of course, homotopic to the identity.

The two dashed arrows are both the one which is induced by $\gamma$, that is, the one which makes the lower half of the upper diagram commute. It maps a function $f\in\Cz(\cX\setminus(\cA\cup\cB))$ to the class of the function 
\[\big((x,s)\mapsto f(H^{\cA}(x,s^-))\cdot(1-s^+)\big)\in \Cb(\cX\times I,\cX\times\partial I\cup\cA\times I)\,,\]
 because for all $f\in\Cz((\cX\setminus(\cA\cup\cB))\times [-1,1))$ the function which maps $(x,s)$ to
\begin{align*}
f(\gamma(x,s))-&f(H^{\cA}(x,s^-),-1)\cdot(1-s^+)=\\&=\begin{cases}0&s\leq 0\\f(x,2s^+-1)-f(x,-1)\cdot(1-s^+)&s\geq 0\end{cases}
\end{align*}
is an element of the ideal $\Cz((\cX\setminus(\cA\cup\cB))\times\mathring I)$.

As the upper half of the upper diagram also commutes
by definition, we just have to check commutativity of the middle diagram. For the right half this follows trivially from $\beta\circ\ev{1}=\Delta$ and $H^{\cA}_-\circ\ev{1}=\id$.
The upper left square commutes, because for $f\in\Cz(\cX\setminus\cA)$ and $g\in\Cz(\overline\cX\setminus\overline\cB)$ we have
\[(\widetilde\Gamma^{\cX,\cA,\cB})^*(f\otimes g)-\beta^*(f\otimes g)\colon (x,s)\mapsto\begin{cases}0&x\leq 0\\f(x)\cdot (g(H^{\cB}(x,s^+))-g(x))&x\geq 0\end{cases}\]
and this function lies in the ideal $\Cz((\cX\setminus(\cA\cup\cB))\times(-1,1])$. 
For the bottom  left square of the middle diagram we use that for $f\in \Cz(\cX\setminus(\cA\cup\cB))$ the function
\[(x,s)\mapsto f(H^{\cA}(x,s^-))-f(H^{\cA}(x,s^-))\cdot(1-s^+)=s^+\cdot f(x)\]
lies in $\Cz((\cX\setminus(\cA\cup\cB))\times(-1,1])$.

Dualizing these diagrams we obtain the diagram of spaces at the bottom of \Cref{fig:Proofofmultiplicativitylemma}, where the inclusions going right (i.\,e.\ the inclusions of the first into the third and the fourth into the fifth column) are inclusions of open subsets and the inclusions going left are inclusions of closed subsets. Furthermore, the spaces in the second and fourth column are the disjoint union of their left and right neighbors. This is exactly the data which yields the four upper commuting squares in the diagram
\[\mathclap{\xymatrix{
\Homol[1]_{m+1}((\cX\setminus(\cA\cup\cB))\times\mathring I)\ar[r]^-{\bound[1]}_-{\cong}\ar@{=}[d]
&\Homol[1]_{m}(\cX\setminus(\cA\cup\cB))\ar[r]^-{\bound[1]}
&\Homol[1]_{m-1}(\partial\cX\setminus(\partial \cA\cup\partial\cB))\ar@/^7ex/[dd]^{\Delta_*}
\\\Homol[1]_{m+1}((\cX\setminus(\cA\cup\cB))\times\mathring I)\ar[r]^-{\bound[1]}\ar[d]^{\widetilde\Gamma^{\cX,\cA,\cB}}
&\Homol[1]_m(\cY_2)\ar[u]\ar[r]^-{\bound[1]}\ar[d]
&\Homol[1]_{m-1}(\cY_4)\ar[u]\ar[d]
\\\Estack{\Homol[1]_{m+1}(&(\cX\setminus\cA)\\&\times(\cX\setminus\cB))}\ar[r]^-{\bound[1]}\ar[d]^{/\cobound[2](y)}
&\Estack{\Homol[1]_m(&(\cX\setminus\cA)\\&\times(\partial\cX\setminus\partial\cB))}\ar[r]^-{\bound[1]}\ar[d]^{/y}
&\Estack{\Homol[1]_{m-1}(&(\partial\cX\setminus\partial\cA)\\&\times(\partial\cX\setminus\partial\cB))}\ar[d]^{/y}
\\\Homol[3]_{m-n}(\cX\setminus\cA)\ar@{=}[r]
&\Homol[3]_{m-n}(\cX\setminus\cA)\ar[r]^-{\bound[3]}
&\Homol[3]_{m-n-1}(\partial\cX\setminus\partial\cA)
}}\]
and the bottom right square commutes whereas the bottom left square commutes up to a sign $(-1)^{m+1}$ for all $y\in\Cohom[2]^n(\partial\cX\setminus\partial\cB)$ by the axioms of slant products.
The secondary cap product with $\cobound[2](y)$ is by definition $(-1)^{m+1}$ times the composition from the top middle entry along the left side to the bottom left/middle entry. The  composition along the right hand side on the other hand is the primary cap product with $y$ itself. The second claim follows.

We omit the proof of the first claim, because it is completely analogous to the proof of the second claim and also very similar to the proof of \cite[Theorem 9.1]{WulffCoassemblyRinghomo}
\end{proof}

\section{Assembly and coassembly}
\label{sec:assembly}

The relative versions of assembly and coassembly with coefficients in a \textCstar-algebra $D$,
\begin{align*}
\mu\colon\KX_*(X,A;D)&\to \K_*(\Roe(X,A;D))\quad\text{and}
\\\mu^*\colon\K_{-*}(\sHigCor(X,A;D))&\to\KX^{1+*}(X,A;D)\,,
\end{align*}
respectively, are natural transformations of coarse homology and cohomology theories on the category of coarsely connected proper metric spaces.
(Of course, coarse connectedness of metric spaces is equivalent to saying that the metric takes on only finite values.)
We refer to \cite[Sections 6.3,6.4]{wulff2020equivariant} for their definition and note that the technical Assumption 6.8 made there is always true in the non-equivariant case, which is the only case we are interested in here.
We assume that the reader is already familiar with this theory and will only recall the relevant definitions in a very superficial manner.

The goal of this section is to show that canonical primary cup and cap products
\begin{align}
\cup\colon\K_{-m}(\sHigCor(X,A;D))\otimes\K_{-n}(\sHigCor(X,B;E))&\to \K_{-m-n}(\sHigCor(X,A\cup B;D\otimes E))\label{eq:primarycupHigson}
\\\cap\colon\K_m(\Roe(X,A\cup B;D))\otimes \K_{-n}(\sHigCor(X,B;E))&\to \K_{m-n}(\Roe(X,A;D\otimes E))\label{eq:primarycapRoe}
\end{align}
correspond to our secondary cup and cap products
\begin{align}
\Cup\colon\KX^{1+m}(X,A;D)\otimes\KX^{1+n}(X,B;D)&\to\KX^{1+m+n}(X,A\cup B;D\otimes E)\label{eq:KXsecondarycup}
\\\Cap\colon\KX_m(X,A\cup B;D)\otimes\KX^{1+n}(X,B;D)&\to\KX_{m-n}(X,A;D\otimes E)\label{eq:KXsecondarycap}
\end{align}
under assembly and co-assembly. 
Regarding the tensor products on the right hand sides we have to mention that throughout this section we deviate from the usual conventions and use the symbol $\otimes$ for the \emph{maximal} tensor product of \textCstar-algebras instead of the minimal one.

\begin{thm}\label{thm:cupcapassembly}
Let $(X;A,B)$ be a deformation triad of coarsely connected proper metric spaces and let $D,E$ be \textCstar-algebras.
Then 
\[\mu^*(x\cup y)=\mu^*(x)\Cup\mu^*(y)\]
for all $x\in \K_{-m}(\sHigCor(X,A;D)), y\in \K_{-n}(\sHigCor(X,B;E))$. If in addition $X$ has coarsely bounded geometry, then
\[\mu(x\Cap\mu^*(y))=\mu(x)\cap y\]
for all $x\in \KX_m(X,A\cup B;D), y\in \K_{-n}(\sHigCor(X,B;E))$.
\end{thm}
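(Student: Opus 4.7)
My plan is to establish both identities by a strategy parallel to the proof of the transgression compatibility (\Cref{thm:transgressioncompatiblewithcupandcap}): one compares the secondary coarse formula defining $\Cup$ and $\Cap$ to the primary product on the $\sHigCor$ or $\Roe$ side via naturality of (co-)assembly together with the specific geometric structure of $\Gamma^{X,A,B}$.

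For the cup identity, unwinding the definition gives
\[
\mu^*(x) \Cup \mu^*(y) \;=\; (-1)^{1+m}\,\sigma^*\bigl((\Gamma^{X,A,B})^*(\mu^*(x) \times \mu^*(y))\bigr),
\]
where $\times$ is the coarsification of the topological cross product from \Cref{sec:crossslant,sec:transgression}. I would show that this coincides with $\mu^*(x \cup y)$ by using (i) the compatibility of coassembly with the primary cross product on the $\sHigCor$ side (established in \cite{WulffCoassemblyRinghomo, WulffTwisted}), which together with naturality of coassembly under $\Gamma^{X,A,B}$ reduces both sides to a common expression on the Higson corona of the warped product $(X, A \cup B) \indexcross{\cU} (I, \partial I)$; and (ii) the fact that $\Gamma^{X,A,B}$ restricts to the coarse diagonal $\Delta$ at $t=0$ and factors through $(A,A)\times(X,B)$ or $(X,A)\times(B,B)$ at $t=\pm 1$, so that the suspension isomorphism $\sigma^*$ applied to $(\Gamma^{X,A,B})^*(\mu^*(x)\times\mu^*(y))$ recovers exactly the image of $x \cup y$ under $\mu^*$, with the sign accounted for by the sign conventions in $\Cup$ and in the suspension homomorphism of \Cref{lem:suspensionhomomorphism}.

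The cap identity is handled dually: I would unwind $x \Cap \mu^*(y) = (-1)^{m+1} (\Gamma^{X,A,B})_* \sigma_*(x) / \mu^*(y)$ and apply $\mu$, using the compatibility of assembly with the primary slant product between $\K(\Roe)$ and $\K(\sHigCor)$ from \cite{EngelWulffZeidler} together with the same geometric property of $\Gamma^{X,A,B}$ and \Cref{lem:crossslantcompatiblewithsuspension} to identify the result with $\mu(x) \cap y$.

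The main obstacle I anticipate is making the intertwining of (co-)assembly with the primary cross and slant products on the Higson corona and Roe algebra side completely explicit, and in particular carefully handling the $+1$ degree shift inherent in coassembly (which on the coarse side is precisely compensated by the suspension factor in the definitions of $\Cup$ and $\Cap$). I expect the detailed argument to proceed via a diagram chase closely analogous to the proof of \Cref{thm:boundariescompatiblewithcupandcap}, in which the coarse warped-interval suspensions $(X, A\cup B) \indexcross{\cU} (I, \partial I)$ play the role that the topological cylinders $(\cX, \cA \cup \cB)\times(I, \partial I)$ play in that theorem. The strong homotopy axiom for $\KX_*$ and $\KX^*$ is essential, in order to freely exchange $\Gamma^{X,A,B}$ with homotopic maps when comparing to the pure diagonal $\Delta$.
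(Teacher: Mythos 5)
For the first identity your sketch is in the same spirit as the paper, which simply says the claim is obtained by generalizing \cite[Theorem 9.1]{WulffCoassemblyRinghomo} along the lines of the proof of \Cref{thm:boundariescompatiblewithcupandcap}; your outline is a reasonable description of how that generalization would go.

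For the cap identity, however, there is a real gap. You propose to ``apply $\mu$, using the compatibility of assembly with the primary slant product from \cite{EngelWulffZeidler} together with the same geometric property of $\Gamma^{X,A,B}$ and \Cref{lem:crossslantcompatiblewithsuspension}.'' This conflates two different operations: the primary cap product $\cap$ on the right-hand side is built from the multiplication $*$-homomorphism $\fm$, which is pointwise multiplication by a Higson function and makes \emph{no reference whatsoever} to the contractions $H^A,H^B$; in contrast, $\Cap$ is built from $\Gamma^{X,A,B}_*$, suspension, and a genuine \emph{slant} product over the second factor $X$. Naturality of $\mu$ under coarse maps and compatibility of $\mu$ with slant products (the EWZ result) only get you from $\mu\bigl((\Gamma_*\sigma_*x)/\mu^*(y)\bigr)$ to an expression involving a Roe-algebra \emph{slant} product over the diagonal; you still have to identify this with the $\fm$-cap product $\cap$, and that identification is exactly where the deformations must cancel out. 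This is the nontrivial operator-algebraic computation that the paper carries out: it chooses a very specific constant covering isometry $V$ built from $\widetilde H^A_-$ and $\widetilde H^B_+$, and proves the operator identities (such as \eqref{eq:RelatingPsiwithfm} and the vanishing \eqref{eq:Lplusvanishing}) that intertwine the slant map $\Psi_\rL$ with $\fm$. Nothing in your sketch does this work. Note also that you cannot invoke \Cref{lem:crossslantcompatiblewithsuspension} on the $\K(\Roe)$/$\K(\sHigCor)$ side: those theories are precisely the ones for which the strong homotopy axiom is not known (as the paper stresses in \Cref{sec:framework}), so the coarse suspension homomorphism $\sigma_*$ is not available there. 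This is the structural reason the paper routes the argument through the localization algebras $\Loc(\blank)$, whose $\K$-theory \emph{is} identified with $\KX_*$. Finally, you miss the paper's key intermediate step: it first proves the stronger \Cref{thm:localizedcap}, which establishes the identity on the level of localization algebras (and is strictly stronger, since the localized cap product $\eqref{eq:capKhomstableHigsoncorona}$ exists without any deformation hypotheses), and then obtains the second part of \Cref{thm:cupcapassembly} trivially by evaluating at $t=1$.
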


Here, coarsely bounded geometry denotes the usual notion of bounded geometry for metric spaces: A space $X$ has coarsely bounded geometry if it contains a coarsely dense uniformly locally finite subset $X'\subset X$, which we call a uniform discretization. Uniform finiteness means that for each $R>0$ there is an upper bound on the number of points in $B_R(x)\cap X'$ for all $x\in X'$.

Before we can prove this theorem, we have to briefly recall the definitions of the objects appearing here and introduce the primary cup and cap products. 

We start with the stable Higson coronas, which were first introduced in \cite{EmeMeyDualizing}. A continuous function $f\colon X\to D$ from a proper metric space $X$ into a \textCstar-algebra $D$ is said to have \emph{vanishing variation} if the function
\[\Var_E(f)\colon X\to[0,\infty]\,,\quad x\mapsto \sup\{\|f(x)-f(y)\|\mid (x,y)\in E\}\]
vanishes at infinity for every entourage $E$. Let $\ell^2\coloneqq\ell^2(\N)$ denote our favorite infinite dimensional separable Hilbert space and $\Kom\coloneqq\Kom(\ell^2)$ the \textCstar-algebra of compact operators on it. The \emph{stable Higson compactification} of a proper metric space $X$ with coefficients in a \textCstar-algebra $D$ is the \textCstar-algebra $\sHigCom(X;D)$ of all bounded continuous functions $f\colon X\to D\otimes \K$ of vanishing variation.
It contains $\Cz(X)\otimes D\otimes\K$ as an ideal and the quotient \textCstar-algebra $\sHigCor(X;D)\coloneqq\sHigCom(X;D)/\Cz(X)\otimes D\otimes\K$ is called the \emph{stable Higson corona}. Given furthermore a closed subspace $A\subset X$, we can define the relative versions of these \textCstar-algebras as
\begin{align*}
\sHigCom(X,A;D)&\coloneqq\ker(\sHigCom(X;D)\to\sHigCom(A;D))
\\\sHigCor(X,A;D)&\coloneqq\ker(\sHigCor(X;D)\to\sHigCor(A;D))\,.
\end{align*}
\begin{defn}
Given any coarsely connected proper metric space $X$ with closed subspaces $A,B\subset X$ and \textCstar-algebras $D,E$, the primary cap product \eqref{eq:primarycupHigson} is defined as the composition of the exterior product with the homomorphism induced by the multiplication $*$-homomorphism 
\begin{align*}
\nabla\colon&\sHigCor(X,A;D) \otimes \sHigCor(X,B;E)\to \sHigCor(X,A\cup B;D\otimes E)
\\ &[f]\otimes [g]\mapsto [x\mapsto f(x)\otimes g(x)\in D\otimes \Kom\otimes E\otimes\Kom\cong D\otimes E\otimes\Kom]\,.
\end{align*}
\end{defn}
The definition involved the choice of an identification $\Kom\otimes\Kom\cong \Kom$, but this is unique up to homotopy. Hence the cup product is unambiguous.

The coarse $\K$-theory $\KX^*(-,-;D)$ with coefficients in $D$ is defined by coarsification as in the previous section of the cohomology theory for $\sigma$-locally compact spaces defined by
\[\K^*(\cX,\cA;D)\coloneqq \K_{-*}(\Cz(\cX\setminus\cA)\otimes D)\,.\]
Here, $\Cz(\cX\setminus\cA)$ is a $\sigma$-\textCstar-algebra and hence use Phillips' $\K$-theory for $\sigma$-\textCstar-algebras on the right hand side (see \cite{PhiRep}, but also \cite{PhiFre}). The latter can be equipped with an exterior tensor product (see \cite[Section 5]{WulffCoassemblyRinghomo}) and if $E$ is another \textCstar-algebra, then the resulting cross products
\[\times\colon \K^m(\cX,\cA;D)\otimes \K^n(\cY,\cB;E)\to \K^{m+n}((\cX,\cA)\times(\cY,\cB);D\otimes E)\]
indeed constitute a cross product in the sense of \Cref{defn:topologicalcrossslant}. Therefore, we also obtain cross products 
\[\times\colon\KX^*(-,-;D)\otimes \KX^*(-,-;E)\to \KX^*(-,-;D\otimes E)\]
in the sense of \Cref{defn:crossproducts}, which induce the secondary cup products \eqref{eq:KXsecondarycup} as in the previous section.

Now, the \emph{uncoarsified co-assembly map} is nothing but the connecting homomorphism $\K_{-*}(\sHigCor(X,A;D))\to\K^{1+*}(X,A;D)$ associated to the short exact sequence
\[0\to\Cz(X\setminus A)\otimes D\otimes \Kom\to \sHigCom(X,A;D)\to \sHigCor(X,A;D)\to 0\,.\]
In order to obtain the coarsified co-assembly map $\mu^*$, we need to perform the same construction on the pair of Rips complexes $(\cP(X'),\cP(A'))$ of a discretization $(X',A')\subset (X,A)$ instead of on $(X,A)$ itself. 
To this end, we equip $\cP(X')$ with the metric constructed in \cite[Lemma 6.10]{wulff2020equivariant} whose restriction to each finite scale Rips complex $P_n(X')$ is a proper metric and for which all the inclusions $X\supset X'\subset X_m\subset X_n$ are isometric coarse equivalences for all $0\leq m\leq n$.
Then we have $\sHigCor(X,A;D)\cong\varprojlim_{n\in\N}\sHigCor(P_{E_n}(X'),P_{E_n}(A');D)$ and we define the $\sigma$-\textCstar-algebra 
\(\sHigCom(\cP(X'),\cP(A');D)\coloneqq \varprojlim_{n\in\N}\sHigCom(P_{E_n}(X'),P_{E_n}(A');D)\).
The \emph{co-assembly map} $\mu^*$ is defined as the connecting homomorphism in $\K$-theory for $\sigma$-\textCstar-algebras associated to the short exact sequence
\[0\to\Cz(\cP(X')\setminus \cP(A'))\otimes D\otimes \Kom\to \sHigCom(\cP(X'),\cP(A');D)\to \sHigCor(X,A;D)\to 0\,.\]

\begin{proof}[How to prove the first part of \Cref{thm:cupcapassembly}]
The special case of the theorem with $X$ being coarsely contractible to $A=B=\{p\}$ was the main result of \cite{WulffCoassemblyRinghomo}. 
In order to prove \Cref{thm:cupcapassembly} one only needs to generalize \cite[Theorem 9.1]{WulffCoassemblyRinghomo}, which is a straightforward task. The essential ideas which are necessary to do so can be found in our proof of \Cref{thm:boundariescompatiblewithcupandcap} above.
\end{proof}

Next up, we briefly recall the definition of the relative Roe algebras $\Roe(X,A;D)$ with coefficients in $D$.
We fix an ample representation $\rho_X\colon\Cz(X)\to \Lin(H_X)$ on a separable Hilbert space $H_X$ and consider the Hilbert-$D$-module $H_X\otimes D$ equipped with the representation $\rho_X\otimes \id_D$ of $\Cz(X)$. In the following, we will consider adjointable operators $T\in\Lin(H_X\otimes D)$ on the Hilbert-$D$-module $H_X\otimes D$. We say that:
\begin{itemize}
\item $T$ has \emph{finite propagation} if $\supp(T)$ is an entourage in $X\times X$.
In this case, the \emph{propagation} $\Prop(T)$ of $T$ is the smallest $R\geq 0$ such that $d(x,y)\leq R$ for all $(x,y)\in\supp(T)$.

(Recall that the support of an operator $T\colon\HilbertMod_1\to\HilbertMod_2$ between two Hilbert modules $\HilbertMod_{1,2}$ equipped with representations $\rho_{1,2}\colon \Cz(X_{1,2})\to\Lin(\HilbertMod_{1,2})$ is the largest closed subset  $\supp(T)\subset X_1\times X_2$ such that $\rho_2(g)\circ T\circ\rho_1(f)=0$ for all $f\in \Cz(X_1)$ and $g\in\Cz(X_2)$ with $\supp(f)\times\supp(g)\cap \supp(T)=\emptyset$.)
\item $T$ is locally compact if $(\rho_X(f)\otimes \id_D)\circ T,T\circ(\rho_X(f)\otimes \id_D)\in \Kom(H_X\otimes D)=\Kom(H_X)\otimes D$ for all $f\in\Cz(X)$.
\item $T$ is supported near $A$
 if there is $R\geq 0$ such that $\supp(T)\subset \Pen_R(A)\times \Pen_R(A)$, where $\Pen_R(A)\coloneqq \Pen_{E_R}(A)$ denotes the $R$-neighborhood of $A$ or equivalently
  $(\rho_X(f)\otimes \id_D)\circ T=0=T\circ(\rho_X(f)\otimes \id_D)$ for all $f\in\Cz(X)$ with $\dist(\supp(f),A)\geq R$.
\end{itemize}
Now we can define the following \emph{Roe algebras}: The \textCstar-algebra $\Roe(X;D)\subset\Lin(H_X\otimes D)$ is the norm closure of all locally compact operators of finite propagation. It contains the ideal $\Roe(A\subset X;D)\subset \Roe(X;D)$ which is the norm closure of all locally compact operators of finite propagation which are supported near $A$.
Furthermore, the quotient $\Roe(X,A;D)\coloneqq\Roe(X;D)/\Roe(A\subset X;D)$ is the relative Roe algebra. Note that $\Roe(X,\emptyset;D)=\Roe(X;D)$.

If $Y$ is another proper metric spaces and $\rho_Y\colon\Cz(Y)\to\Lin(H_Y)$ an ample representation on another separable Hilbert space $H_Y$, then an isometry $V\colon H_X\to H_Y$ is said to \emph{cover} a coarse map $\alpha\colon X\to Y$ if $(\id\times\alpha)(\supp(V))\subset Y\times Y$ is an entourage. 
Adjoining with $V\otimes \id_D$ yields a $*$-homomorphism $\Ad_{V\otimes \id_D}\colon\Roe(X;D)\to \Roe(Y;D)$. If $\alpha$ maps $A\subset X$ to $B\subset Y$, then $\Ad_{V\otimes \id_D}(\Roe(A\subset X;D))\subset\Roe(B\subset Y;D)$ and we also obtain a $*$-homomorphism between the relative Roe algebras.

Covering isometries always exist and the induced maps on $\K$-theory
\[\alpha_*\coloneqq (\Ad_{V\otimes \id_D})_*\colon\K_*(\Roe(X,A;D))\to K_*(\Roe(Y,B;D))\]
are independent of the choice of $V$. 
Applied to $\alpha=\id$ one can deduce that the $\K$-theory of the Roe algebras are independent of the choice of $\rho_X,H_X$ up to canonical isomorphism.
Furthermore, for an inclusion map $\alpha\colon A\subset X$ we have $\im(\Ad_{V\otimes \id_D})\subset \Roe(A\subset X;D)$ and the induced map on $\K$-theory is a canonical natural isomorphism $\K_*(\Roe(A;D))\cong\K_*(\Roe(A\subset X;D))$.

We can now introduce the cap product \eqref{eq:primarycapRoe} as a generalization of the module multiplication defined in \cite[Section 8]{WulffTwisted}.

\begin{lem}
\label{lem:definitionfm}
Let $X$ be a coarsely connected proper metric space of coarsely bounded geometry, $A,B\subset X$ closed subspaces with $A$ non-empty and
$D,E$ \textCstar-algebras. Choose an ample representation $\rho_X$ of $\Cz(X)$ on a separable Hilbert space $H_X$ and assume that the Roe algebras $\Roe(X,A\cup B;D)$ and $\Roe(X,A;D\otimes E)$ have been constructed using the representations $\rho_X\otimes\id_D$ on $H_X\otimes D$ and $\rho_X\otimes\id_{D\otimes E\otimes \ell^2}$ on $H_X\otimes D\otimes E\otimes \ell^2$, respectively. Furthermore, $\rho_X$ gives rise to a representation $\Cz(X)\otimes E\otimes\Kom$ on $H_X\otimes E\otimes \ell^2$ which extends by strong continuity to a representation $\bar\rho_X$ of $\multiplier(\Cz(X)\otimes E\otimes\Kom)$.
Then 
\begin{align*}
\fm\colon\Roe(X,A\cup B;D)\otimes\sHigCor(X,B;E)&\to\Roe(X,A;D\otimes E)
\\ [T]\otimes [f]&\mapsto [(T\otimes \id_{E\otimes \ell^2})\circ (\bar\rho_X(f)\otimes\id_D)]
\end{align*}
defines a $*$-homomorphism.
\end{lem}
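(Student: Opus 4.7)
My plan is to establish the lemma in four steps: first verifying that the formula produces an element of $\Roe(X;D\otimes E)$ for arbitrary representatives, then checking that changing $T$ or $f$ within their equivalence classes alters the result only by an element of $\Roe(A\subset X;D\otimes E)$, then verifying the $*$-homomorphism property on the algebraic tensor product, and finally extending to the maximal tensor product by the universal property. The technical heart will be the descent when $T$ is supported near $B$ but not near $A$, which will require combining a Tietze extension with the vanishing variation of $f$.

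For the first step, $\bar\rho_X(f)$ is a multiplication operator and hence has propagation zero, so the composition $(T\otimes\id_{E\otimes\ell^2})(\bar\rho_X(f)\otimes\id_D)$ inherits finite propagation from $T$. For local compactness on the Hilbert $(D\otimes E)$-module $H_X\otimes D\otimes E\otimes\ell^2$, I multiply by $\rho_X(g)\otimes\id$ for a compactly supported $g\in\Cz(X)$: local compactness of $T$ yields $\rho_X(g)T\in\Kom(H_X)\otimes D$, and combining with the pointwise $E\otimes\Kom(\ell^2)$-values of $f$ produces a compact operator on the module as required.

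For the descent, I handle the two ambiguities separately. If $f$ is replaced by $f+h$ with $h\in\Cz(X)\otimes E\otimes\Kom$, then the correction $(T\otimes\id)(\bar\rho_X(h)\otimes\id_D)$ is norm-approximated by operators $(T\otimes\id)(\bar\rho_X(\chi h)\otimes\id_D)$ for $\chi\in\Cz(X)$ of compact support; the result has the same propagation as $T$ and bounded support, and since $A$ is non-empty and $X$ coarsely connected, every bounded set lies in some $\Pen_R(A)$, so the correction is in $\Roe(A\subset X;D\otimes E)$. If $T$ is replaced by $T+S$ with $S\in\Roe(A\cup B\subset X;D)$, I approximate $S$ by operators of finite propagation $P$ supported in $\Pen_R(A\cup B)$ and decompose the support into a piece inside $\Pen_R(A)$ (trivially staying near $A$) and a piece inside $\Pen_R(B)\setminus\Pen_R(A)$. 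For the second piece, I first reduce to the case $f|_B=0$ by subtracting a vector-valued Tietze extension $g\in\Cz(X)\otimes E\otimes\Kom$ of $f|_B$ (absorbed by the first case), and then invoke vanishing variation: $\|f(x)\|\le\Var_{E_{R+P}}(f)(x)$ for $x\in\Pen_{R+P}(B)$, which tends to zero at infinity, so the correction is norm-approximable by compactly supported operators and again falls into $\Roe(A\subset X;D\otimes E)$.

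Multiplicativity of $\fm$ on the algebraic tensor product reduces to showing that the commutator $[T_2\otimes\id,\bar\rho_X(f_1)\otimes\id_D]$ lies in $\Roe(A\subset X;D\otimes E)$ for all $T_2\in\Roe(X;D)$ and $f_1\in\sHigCom(X;E)$: for $T_2$ of propagation $P$ the local operator norm of this commutator is controlled by $\Var_{E_P}(f_1)$, which vanishes at infinity, and the preceding argument applies verbatim. Preservation of the involution is immediate from $\bar\rho_X(f)^*=\bar\rho_X(f^*)$. To promote the resulting algebraic $*$-homomorphism to the maximal tensor product, I extract the two induced commuting $*$-homomorphisms of the factors (via multipliers or unitizations as needed) and invoke the universal property of $\otimes_{\max}$.
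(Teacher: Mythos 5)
Your argument uses exactly the same key ingredients as the paper's proof (which is an adaptation of \cite[Lemma 8.5]{WulffTwisted}): compactness of the commutators $[T\otimes\id_{E\otimes\ell^2},\bar\rho_X(f)\otimes\id_D]$, the vanishing-variation and Tietze reduction for elements of $\sHigCor(X,B;E)$, the sum decomposition $\Roe(A\subset X;D)+\Roe(B\subset X;D)=\Roe(A\cup B\subset X;D)$, and the universal property of the maximal tensor product. The paper simply orders them differently: it leads with the commutator compactness to produce a $*$-homomorphism $\Roe(X;D)\otimes\sHigCom(X;E)\to\Roe(X,A;D\otimes E)$ directly from the universal property and then descends through the quotients, whereas you verify well-definedness on representatives first and invoke the universal property at the end. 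That reordering is acceptable, but your multiplicativity step contains a genuine gap.

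The claim that ``for $T_2$ of propagation $P$ the local operator norm of this commutator is controlled by $\Var_{E_P}(f_1)$'' is precisely the step that requires the coarsely bounded geometry hypothesis, and you never invoke that hypothesis anywhere in the proposal. For a general locally compact $T_2$ of finite propagation, the norm of the far-out part of $[T_2\otimes\id,\bar\rho_X(f_1)\otimes\id_D]$ is \emph{not} controlled by $\Var_{E_P}(f_1)$ alone; one needs to decompose $H_X$ along a uniformly locally finite net in $X$ and run a Schur-type estimate whose constant depends on the uniform bound on the number of interacting blocks -- that is, on bounded geometry. Absent this hypothesis the commutators need not be compact and the argument collapses. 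The paper flags this explicitly, stating that the proof of compactness of these commutators ``requires the assumption of coarsely bounded geometry on $X$.'' You should make the dependence explicit and supply or cite the estimate. Note that your steps 1 and 2 genuinely do not need bounded geometry, since there $\bar\rho_X(\,\cdot\,)\otimes\id_D$ appears only as a multiplication operator of propagation zero, where the relevant norm is just a sup-norm; it is only commutation against a general finite-propagation $T_2$ that is delicate.
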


\begin{proof}
The lemma is a generalization of the first part of \cite[Lemma 8.5]{WulffTwisted}, just without the gradings.
Thanks to the fact that $\rho_X$ always extends to a representation of all bounded Borel functions, the same arguments can be used in our case where $X$ is not necessarily a manifold and $H_X$ not necessarily an $L^2$-space. 
The proof shows that the commutators $[T\otimes\id_{E\otimes \ell^2},\bar\rho_X(f)\otimes\id_D]$ are compact operators for all $T\in \Roe(X;D)$ and all $f\in \sHigCom(X;E)$. This step requires the assumption of coarsely bounded geometry on $X$. Now, as $A$ is non-empty the compact operators are contained in $\Roe(A\subset X;D\otimes E)$ and hence we obtain a $*$-homomorphism
\begin{equation}\label{eq:fminducingstarhom}
\Roe(X;D)\otimes\sHigCom(X;E)\to\Roe(X,A;D\otimes E)
\end{equation}
by the universal property of the maximal tensor product, which we denoted by $\otimes$.
It clearly maps $\Roe(A\subset X;D)\otimes \sHigCom(X;E)$ to zero and the same ist true for $\Roe(B\subset X;D)\otimes \sHigCom(X,B;E)$. To see the latter, note that for all $R\geq 0$ each element of $\sHigCom(X,B;E)$ and can be represented by a function $f$ which vanishes on an $R$-neighborhood of $B$. Therefore, if $T$ is supported near $B$ we just have to choose $R$ sufficiently large and obtain $(T\otimes \id_{E\otimes \ell^2})\circ \bar\rho_X(f)=0$.
We now use the fact that 
\begin{equation}
\label{eq:SumRoeIdeals}
\Roe(A\subset X;D)+\Roe(B\subset X;D)=\Roe(A\cup B\subset X;D)
\end{equation}
and see that $\fm$ is well-defined as the quotient of a restriction of \eqref{eq:fminducingstarhom}.
\end{proof}

To avoid possible confusion about an alleged necessity for excisiveness, we note that \eqref{eq:SumRoeIdeals} holds for \emph{all} triads: If $T\in \Roe(X;D)$ is supported in $\Pen_R(A\cup B)\times \Pen_R(A\cup B)$ and $P\in\Lin(H_X\otimes D)$ is the projection corresponding to the characteristic function of $\Pen_R(A)$, then $PT\in\Roe(A\subset X;D)$ and $(1-P)T\in\Roe(B\subset X;D)$. Excisiveness would only be necessary for the other condition for coarse Mayer--Vietoris, i.\,e.\ $\Roe(A\subset X;D)\cap \Roe(B\subset X;D)=\Roe(A\cap B\subset X;D)$, which we do not need here.

\begin{defn}
Given a triad $(X;A,B)$ of coarsely connected proper metric spaces of coarsely bounded geometry with $A$ non-empty and \textCstar-algebras $D,E$, the cap product \eqref{eq:primarycapRoe} is defined as the composition of the exterior product with the homomorphism induced by $\fm$.
\end{defn}

\begin{lem}
The cap product is natural under coarse maps $\alpha\colon X\to X'$ taking the subspaces $A,B$ into subspaces $A',B'$ in the sense that
\[\alpha_*(x\cap \alpha^*y)=(\alpha_*x)\cap y\]
for all $x\in\K_m(\Roe(X,A\cup B;D))$ and $y\in \K_{-n}(\sHigCor(X',B';E))$.
\end{lem}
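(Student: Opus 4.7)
The plan is to unpack the definition of the cap product and reduce the claim to an equality of two concrete $*$-homomorphisms at the level of $\K$-theory. Choose a covering isometry $V\colon H_X\to H_{X'}$ for $\alpha$ and set $V_1\coloneqq V\otimes\id_D$, $V_2\coloneqq V\otimes\id_{E\otimes\ell^2}$, and $W\coloneqq V\otimes\id_{D\otimes E\otimes\ell^2}$, so that $\alpha_*=(\Ad_{V_1})_*$ on $\K_*(\Roe(X,A\cup B;D))$ and $\alpha_*=(\Ad_W)_*$ on $\K_*(\Roe(X,A;D\otimes E))$. Using functoriality of the $\K$-theoretic exterior product under $*$-homomorphisms, both sides of the equation become the image of $x\times y\in\K_*(\Roe(X,A\cup B;D)\otimes\sHigCor(X',B';E))$ under a single $*$-homomorphism:
\begin{align*}
\alpha_*(x\cap\alpha^*y)&=\bigl(\Ad_W\circ\fm_X\circ(\id\otimes\alpha^*)\bigr)_*(x\times y)\,,\\
(\alpha_*x)\cap y&=\bigl(\fm_{X'}\circ(\Ad_{V_1}\otimes\id)\bigr)_*(x\times y)\,.
\end{align*}
It therefore suffices to show that these two $*$-homomorphisms from $\Roe(X,A\cup B;D)\otimes\sHigCor(X',B';E)$ to $\Roe(X',A';D\otimes E)$ induce the same map on $\K$-theory.

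Evaluating both $*$-homomorphisms on an elementary tensor $[T]\otimes[f]$, with $T\in\Roe(X,A\cup B;D)$ of finite propagation and locally compact and $f\in\sHigCom(X',B';E)$, a direct calculation shows that both produce operators of the form $(\Ad_{V_1}(T)\otimes\id_{E\otimes\ell^2})\cdot(S\otimes\id_D)$ in $\Roe(X',A';D\otimes E)$, differing only in the Higson factor $S$ acting on $H_{X'}\otimes E\otimes\ell^2$: the first uses $S_1\coloneqq V_2\bar\rho_X(\alpha^*f)V_2^*$, while the second uses $S_2\coloneqq\bar\rho_{X'}(f)$. Thus the claim reduces to showing that $\Ad_{V_1}(T)(S_2-S_1)\otimes\id_D$ lies in $\Roe(A'\subset X';D\otimes E)$.

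Since $A'\supset\alpha(A)$ is non-empty, the Roe ideal $\Roe(A'\subset X';D\otimes E)$ contains the compact operators on the Hilbert module, so it suffices to show that $\Ad_{V_1}(T)(S_2-S_1)$ is compact. Writing $P\coloneqq V_2V_2^*$ for the range projection of $V_2$ and using the identity $\Ad_{V_1}(T)\cdot P=\Ad_{V_1}(T)$, a short manipulation yields
\[\Ad_{V_1}(T)(S_2-S_1)=\Ad_{V_1}(T)\bigl[P,\bar\rho_{X'}(f)\bigr](1-P)+\Ad_{V_1}(T)\,V_2\bigl(V_2^*\bar\rho_{X'}(f)V_2-\bar\rho_X(\alpha^*f)\bigr)V_2^*\,.\]
Both summands are compact by standard Higson--Roe estimates: the commutator $[P,\bar\rho_{X'}(f)]$ is compact because $f$ has vanishing variation and $P$ has finite propagation (inherited from $V$), while $V_2^*\bar\rho_{X'}(f)V_2-\bar\rho_X(\alpha^*f)$ is compact because $V$ covers $\alpha$ and $f$ has vanishing variation.

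The main obstacle is the formal verification of the two compactness estimates in the last display, which parallel both the commutator estimates underlying the proof of \Cref{lem:definitionfm} and the standard arguments establishing that the pushforward on Roe algebra $\K$-theory is independent of the choice of covering isometry. Once these estimates are in place, the desired equality of $\K$-theory classes follows at once.
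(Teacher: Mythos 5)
Your overall strategy agrees with the paper's: unwind the cap product to operator level, note that both sides of the naturality equation are images of $x\times y$ under $*$-homomorphisms, and show the two $*$-homomorphisms agree modulo the Roe ideal $\Roe(A'\subset X';D\otimes E)$, i.e.\ up to compact operators. Where the paper condenses this to the single assertion that $V_2\bar\rho_X(g)-\bar\rho_{X'}(f)V_2$ is compact for all $g,f$ with $g-\alpha^*f\to 0$ at infinity and then cites \cite[Theorem 4.28]{EngelWulffZeidler} for the verification, you make the reduction explicit: you identify the difference as $\Ad_{V_1}(T)(S_2-S_1)\otimes\id_D$ and split it, using $P=V_2V_2^*$ and $\Ad_{V_1}(T)P=\Ad_{V_1}(T)$, into a commutator term and a ``covering isometry'' term. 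I checked the algebra: the identity
\[\Ad_{V_1}(T)(S_2-S_1)=\Ad_{V_1}(T)[P,\bar\rho_{X'}(f)](1-P)+\Ad_{V_1}(T)V_2\bigl(V_2^*\bar\rho_{X'}(f)V_2-\bar\rho_X(\alpha^*f)\bigr)V_2^*\]
is correct (using $P\bar\rho_{X'}(f)(1-P)=[P,\bar\rho_{X'}(f)](1-P)$ since $\bar\rho_{X'}(f)P(1-P)=0$). The two estimates you invoke are precisely the two conceptual ingredients one needs — that finite-propagation projections almost commute with vanishing-variation functions, and that covering isometries intertwine the corresponding Higson multipliers up to a vanishing error — and each is indeed of the same type as the commutator estimate in \Cref{lem:definitionfm} and the covering estimate behind functoriality of Roe algebras. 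So this is a somewhat more explicit packaging of essentially the same argument.

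One caution on the final compactness step: as stated, the claims that $[P,\bar\rho_{X'}(f)]$ and $V_2^*\bar\rho_{X'}(f)V_2-\bar\rho_X(\alpha^*f)$ are themselves compact are a bit too strong. Both operators have finite propagation and norm decaying at infinity, and their matrix blocks take values in $\Lin(\ell^2)\otimes E\otimes\Kom$; but the $\Lin(\ell^2)$ factor coming from $P$ resp.\ $V$ is not compact, so neither operator is in $\Kom(H_{X'})\otimes E\otimes\Kom$ on its own. Compactness of the full summands relies on carrying along the locally compact Roe factor $\Ad_{V_1}(T)$ (or $T$, after conjugating back by $W$): the product of a locally compact finite-propagation operator with a finite-propagation operator of vanishing norm at infinity and values in $E\otimes\Kom$ is compact, by the same bounded-geometry argument used for the commutator estimate in \Cref{lem:definitionfm}. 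You do include that factor in both summands, so your decomposition and conclusion stand; you just should phrase the estimates at the level of the products $\Ad_{V_1}(T)[P,\bar\rho_{X'}(f)]$ and $\Ad_{V_1}(T)V_2(V_2^*\bar\rho_{X'}(f)V_2-\bar\rho_X(\alpha^*f))V_2^*$ rather than of the inner operators alone.
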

\begin{proof}
It is easy to see that it suffices to show that $(V\otimes\id_{E\otimes \ell^2})\circ\bar\rho_X(g)-\bar\rho_{X'}(f)\circ (V\otimes\id_{E\otimes \ell^2})$ is compact for all $g\in\sHigCom(X;D)$ and $f\in\sHigCom(X';E)$ such that the (possibly non-continuous) function $g-\alpha^*f$ converges to zero at infinity. This can be done exactly as in the proof of \cite[Theorem 4.28]{EngelWulffZeidler}.
\end{proof}

The second part of \cite[Lemma 8.5]{WulffTwisted} also directly generalizes to our case, yielding the following 
\begin{lem}
Let $X$ be a coarsely connected proper metric space of coarsely bounded geometry with  non-empty closed subspaces $A,B,C\subset X$
 and let and $D,E,F$ be \textCstar-algebras. 
Then the diagram
\[\xymatrix@C=8em{
{\Estack{&\Roe(X,A\cup B\cup C;D)\otimes\\&\sHigCor(X,B;E)\otimes\sHigCor(X,C;F)}}
\ar[r]^{\fm\otimes\id}
\ar[d]_{\id\otimes\nabla}
&{\Estack{\Roe(X,A\cup C;D\otimes E)&\\\otimes\sHigCor(X,C;F)&}}
\ar[d]^{\fm}
\\{\Estack{\Roe(X,A\cup B\cup C;D)&\\\otimes\sHigCor(X,B\cup C;E\otimes F)&}}
\ar[r]^{\fm}
&\Roe(X,A;D\otimes E\otimes F)
}\]
commutes after adequately identifying the underlying Hilbert modules. Consequently, the cup and cap products \eqref{eq:primarycupHigson},\eqref{eq:primarycapRoe} are associative: $x\cap(y\cup z)=(x\cap y)\cap z$.
\qed
\end{lem}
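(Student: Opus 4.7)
The plan is to verify the commutativity of the displayed diagram of $*$-homomorphisms by a direct computation on elementary tensors, and then deduce associativity of the cup and cap products from this together with standard properties (associativity and naturality) of the exterior product in $\K$-theory.

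To verify the diagram, I would lift to representatives $T \in \Roe(X; D)$ of a class in $\Roe(X, A \cup B \cup C; D)$, and $f \in \sHigCom(X; E)$, $g \in \sHigCom(X; F)$ of classes in $\sHigCor(X, B; E)$, $\sHigCor(X, C; F)$. The top-right path applies $\fm$ twice in succession, producing an operator on the iterated Hilbert module $H_X \otimes D \otimes E \otimes \ell^2 \otimes F \otimes \ell^2$, whereas the bottom-left path first forms the pointwise tensor product $f \otimes g \in \sHigCom(X, B \cup C; E \otimes F)$ (using the canonical identification $\Kom \otimes \Kom \cong \Kom$) and then applies $\fm$ once, producing an operator on $H_X \otimes D \otimes E \otimes F \otimes \ell^2$. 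After identifying these two modules via a tensor flip together with $\ell^2 \otimes \ell^2 \cong \ell^2$, both operators become $(T \otimes \id) \circ (\bar\rho_X(f \otimes g) \otimes \id_D)$. The key point is that $\bar\rho_X(f)$ and $\bar\rho_X(g)$ (suitably amplified) act on disjoint auxiliary tensor factors and share only the $H_X$ factor, where they commute as multipliers of $\rho_X(\Cz(X))$, so their composition equals $\bar\rho_X(f \otimes g)$.

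Once the diagram is known to commute, the associativity identity $x \cap (y \cup z) = (x \cap y) \cap z$ is essentially formal. By definition $y \cup z = \nabla_*(y \times z)$ and $u \cap v = \fm_*(u \times v)$, where $\times$ denotes the exterior $\K$-theory product with respect to the maximal tensor product. Writing out both sides in terms of $x \times y \times z$ and invoking associativity and naturality of the exterior product, together with the commuting diagram applied on $\K$-theory, shows that the two parenthesizations agree.

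The main obstacle will be checking that the two different Hilbert-module identifications implicit in iterating $\fm$ versus applying it once to the pointwise tensor product $f \otimes g$ differ only by an inner unitary conjugation, so that they induce the same map on $\K$-theory. This is the technical heart of the argument, but it is entirely analogous to the bookkeeping already handled in the proof of \cite[Lemma 8.5]{WulffTwisted} in the special case without gradings, so the present proof should go through with only notational changes.
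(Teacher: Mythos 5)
Your proposal is correct and follows essentially the same route as the paper: the paper states that this lemma is obtained by generalizing \cite[Lemma 8.5]{WulffTwisted} (the second part thereof) to the present setting, and you correctly identify the key computation — that $\bar\rho_X(f)$ and $\bar\rho_X(g)$ act on disjoint auxiliary factors and commute exactly on the shared $H_X$ factor, so that both compositions give $(T\otimes\id)\circ(\bar\rho_X(f\otimes g)\otimes\id_D)$ after an inner tensor-flip identification of the Hilbert modules — as well as the formal deduction of associativity from the commuting diagram together with associativity and naturality of the external $\K$-theory product.
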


The coarse $\K$-homology $\KX_*(-,-;D)$ with coefficients in $D$ is defined by coarsification of the $\K$-homology with coefficients in $D$. We define the latter for pairs of second countable locally compact Hausdorff spaces via $\EE$-theory for \textCstar-algebras by
\[\K_*(X,A;D)\coloneqq \EE_*(\Cz(X\setminus A),D)\]
and, subsequently, if a pair $(\cX,\cA)$ of $\sigma$-locally compact spaces is given by a sequence of pairs $(X_r,A_r)$ (with $A_r=\cA\cap X_r$) of locally compact spaces which is second countable, then we define it by
\[\K_*(\cX,\cA;D)\coloneqq \varinjlim_{r\in\N} \K_*(X_r,A_r;D)\,.\]
As the finite scale Rips complexes of countable discrete coarse spaces are always second countable, the coarsification process of \Cref{sec:transgression} still works out fine for $\K$-homology.
Using the fact that on the cohomological side there are canonical natural epimorphisms
\[\K^*(\cX,\cA;D)\to \varprojlim_{r\in\N} \EE_{-*}(\C,\Cz(X_r\setminus A_r)\otimes D)\]
which are compatible with cross products,
the composition product in $\EE$-theory immediately gives rise to slant products
\[/\colon\K_m((\cX,\cA)\times(\cY,\cB);D\otimes E)\otimes\K^n(\cY,\cB;E)\to\K_{m-n}(\cX,\cA;D)\]
in the sense of \Cref{defn:topologicalcrossslant}, and the cross and slant products are compatible in a topological analogue sense to the condition in \Cref{thm:AssociativityCupCap}.
This provides us with all the data needed to obtain the secondary cap products and show that they are even associative with the secondary cup products.

The coarse assembly map $\mu$ can only be defined by using other pictures of $\K$-homology. 
One such picture is provided by the localization algebras introduced in \cite{YuLocalization}. 
See also \cite{WillettYuHigherIndexTheory} for a comprehensive exposition on this topic, but be aware that the ``localized Roe algebras'' introduced there are slightly bigger than Yu's original algebras.
We briefly recall their definition, but we additionally implement the coefficients.

With the notation introduced above, we denote by $\Loc(X;D)$ the sub-\textCstar-algebra of $\Cb([1,\infty),\Roe(X;D))$ generated by all uniformly continuous families $(T_t)_{t\geq 1}$ of locally compact adjointable operators of finite propagation with $\Prop(T_t)\xrightarrow{t\to\infty}0$. It contains the ideal $\Loc(A\subset X;D)$ generated by those families for which there is a function $\varepsilon\colon [1,\infty)\to(0,\infty)$ with $\varepsilon(t)\xrightarrow{t\to\infty}0$ such that $T_t$ is supported in the $\varepsilon(t)$-neighborhood of $A\times A$ for all $t\geq 1$ and we define the relative localization algebra $\Loc(X,A;D)\coloneqq\Loc(X;D)/\Loc(A\subset X;D)$.

Now, as disscussed in the context of \cite[Assumption 6.8]{wulff2020equivariant}, which was only formulated as an assumption because of the unclear situation in the equivariant set-up, there are canonical natural isomorphisms 
\begin{equation*}\Delta\colon \K_*(\Loc(X,A;D))\cong\K_*(X,A;D)\,.
\end{equation*}
Here, naturality is with respect to uniformly continuous coarse maps $\alpha\colon (X,A)\to(Y,B)$, which induce homomorphisms between the 
domains of $\Delta$
 as follows:
We say that a uniformly continuous family of isometries $V\colon [1,\infty)\to\Lin(H_X,H_Y),t\mapsto V_t$ \emph{covers} $\alpha$ if 
\begin{equation}\label{eq:continuouscoveringfamily}
\sup\{d_Y(y,\alpha(x))\mid (y,x)\in\supp(V_t)\}\begin{cases}<\infty&\text{for all }t\geq 1\text{ and}\\\to 0&\text{for }t\to\infty\,.\end{cases}
\end{equation}
Such families of covering isometries always exist, because we have assumed that $\rho_Y$ is ample (compare \cite[Proposition 3.2]{QiaoRoe}, \cite[Theorem 6.6.3]{WillettYuHigherIndexTheory}), and adjoining with $V\otimes\id_D$ gives rise to the induced maps
\[\alpha_*\coloneqq (\Ad_{V\otimes \id_D})_*\colon\K_*(\Loc(X,A;D))\to K_*(\Loc(Y,B;D))\,.\]
As a special case of functoriality, the inclusions $A\subset X$ induce canonical natural isomorphisms $\K_*(\Loc(A;D))\cong\K_*(\Loc(A\subset X;D))$, which is in complete analogy to what we wrote about of Roe algebras. 

Now, the \emph{uncoarsified assembly map} is simply the map 
\[\K_*(X,A;D)\cong\K_*(\Loc(X,A;D))\xrightarrow{(\ev{1})_*}\K_*(\Roe(X,A;D))\]
induced by evaluation at 1, and this construction is readily coarsified:
Again, we let $(X',A')\subset (X,A)$ be a discretization and equip the Rips complexes $\cP_n(X')$ with the metric from \cite[Lemma 6.10]{wulff2020equivariant} such that all the inclusions $X\supset X'\subset P_n(X)\subset P_m(X)$ (for $0\leq m\leq n$) are isometric coarse equivalences.
By exploiting the functoriality of Roe and localization algebras under coarse and uniformly continuous coarse maps, respectively, the \emph{assembly map} $\mu$ is simply defined as
\begin{align*}
\mu\colon\KX_*(X,A;D)\cong &\varinjlim_{r\in\N} \K_*(\Loc(P_r(X'),P_r(A');D))
\\\xrightarrow{(\ev1)_*} &\varinjlim_{r\in\N} \K_*(\Roe(P_r(X'),P_r(A');D))\cong \K_*(\Roe(X,A;D))\,,
\end{align*}
and we see that this definition is independent of the choice of refinement.
Furthermore, by using a sufficiently thinned out discretization $(X',A')$, the fourth part of \cite[Lemma 6.10]{wulff2020equivariant} then directly implies that the assembly map $\mu$ is natural under coarse maps.

We have now introduced all objects appearing in the second part of \Cref{thm:cupcapassembly}, but instead of proving it directly, we prove a stronger localized version. It involves the following localized cap product.
To this end, we note that similarily to \eqref{eq:SumRoeIdeals} we also have 
$\Loc(A\subset X;D)+\Loc(B\subset X;D)=\Loc(A\cup B\subset X;D)$ for arbitrary triads $(X;A,B)$ of proper metric spaces and hence the construction of the multiplication map $\fm$ in \Cref{lem:definitionfm} can be performed pointwise over $[1,\infty)$ to obtain a $*$-homomorphism
\[\fm_\rL\colon\Loc(X,A\cup B;D)\otimes\sHigCor(X,B;E)\to\Loc(X,A;D\otimes E)\,.\]
It induce a natural primary cap product
\begin{equation}\label{eq:capKhomstableHigsoncorona}
\cap\colon\K_m(\Loc(X,A\cup B;D))\otimes\K_{-n}(\sHigCor(X,B;E))\to \K_m(\Loc(X,A;D\otimes E))\,.
\end{equation}

\begin{thm}\label{thm:localizedcap}
Let $(X;A,B)$ be a deformation triad of proper metric space of coarsely bounded geometry and let $D,E$ be \textCstar-algebras.
Then the coarsification 
\[\cap\colon\KX_m(X,A\cup B;D)\otimes\K_{-n}(\sHigCor(X,B;E))\to \KX_{m-n}(X,A;D\otimes E)\]
of \eqref{eq:capKhomstableHigsoncorona} corresponds to the secondary cap product under coassembly in the sense that
$x\Cap\mu^*(y)=x\cap y$
for all $x\in \KX_m(X,A\cup B;D), y\in \K_{-n}(\sHigCor(X,B;E))$.
\end{thm}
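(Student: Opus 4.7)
The plan is to follow the blueprint of the proof of \Cref{thm:cupcapassembly} (second part), but executed at the level of localization algebras rather than Roe algebras. This substitution is what allows the statement to dispense with the ``$A$ non-empty'' hypothesis present in \Cref{thm:cupcapassembly}: on the localization side, operators whose propagation tends to $0$ already behave as if they were supported near any subset, so the compact-operator trick of \Cref{lem:definitionfm} goes through unconditionally (as already observed when defining $\fm_\rL$).

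First, I would pass to a sufficiently fine discretization $(X',A',B')\subset (X,A,B)$ and identify $\KX_m(X,A\cup B;D)$ with $\varinjlim_r\K_m(\Loc(P_r(X'),P_r(A'\cup B');D))$. The coassembled class $\mu^*(y)\in\KX^{1+n}(X,B;E)$ is by definition the image of $y$ under the connecting homomorphism of the short exact sequence
\[0\to\Cz(\cP(X')\setminus\cP(B'))\otimes E\otimes\Kom\to\sHigCom(\cP(X'),\cP(B');E)\to\sHigCor(X,B;E)\to 0\]
in $\sigma$-$\K$-theory. The task is to show that the composition ``suspend $x$, push forward along $\Gamma^{X,A,B}$, slant with $\mu^*(y)$'' equals, up to the sign $(-1)^{m+1}$, the primary cap product induced by $\fm_\rL$.

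Second, I would construct a diagram of $*$-homomorphisms between localization and Higson algebras mirroring the one in \Cref{fig:Proofofmultiplicativitylemma}, with localization algebras playing the role of the topological function algebras and $\fm_\rL$-type pairings playing the role of multiplication. The crucial new ingredient is a localization-algebra analog of the topological map $\widetilde\Gamma^{X,A,B}$ from \Cref{defn:topGamma}: using that the generalized controlled homotopies $H^A,H^B$ can be lifted to uniformly continuous families of covering isometries whose propagations at time $t$ stay under control and tend to zero as $t\to\infty$ away from $B$ (resp.\ $A$), one obtains a $*$-homomorphism from the localization algebra of the warped suspension into a Hilbert-module pairing algebra that supports both the slant with a lift of $y$ and the primary action of $\fm_\rL$. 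The compatibility of the slant product with the suspension homomorphism (\Cref{lem:crossslantcompatiblewithsuspension}) and with connecting homomorphisms (the second diagram of \Cref{defn:slantproducts}) will then reduce $x\Cap\mu^*(y)$ to a $\sigma$-$\K$-theoretic boundary applied to a pairing with a lift of $y$ in $\sHigCom$.

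Third, this lifted pairing is computed by the operator-algebraic extension of $\fm_\rL$ to the Higson compactification, and taking the subsequent boundary map recovers exactly the primary cap $x\cap y$. Careful bookkeeping of the signs $(-1)^{m+1}$ from the definition of $\Cap$, the $(-1)^m$ from the slant/connecting-homomorphism axiom, and the sign contributed by the suspension isomorphism leaves an overall sign of $+1$, establishing the theorem. The main obstacle is the construction of the localized $\widetilde\Gamma^{X,A,B}$; this requires lifting the generalized \emph{controlled} (not merely coarse) homotopies to families of covering isometries $V_t^A,V_t^B$ that are uniformly continuous in $t$ and whose propagation in the interval direction tends to $0$ together with $1/t$, while respecting the warped-product metric structure on $X\indexcross{\cU}(I,\partial I)$. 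The fact that $\cU$ satisfies the suspension condition (\Cref{def:suspensioncondition}) is what makes this lift possible: away from $B$ (resp.\ $A$), the interval direction has arbitrarily small $\cU$-entourages, forcing the propagation in the $I$-direction to shrink accordingly. Once this technical input is in place, the rest of the argument is a diagram chase closely parallel to those of \Cref{thm:boundariescompatiblewithcupandcap} and \Cref{thm:cupcapassembly}.
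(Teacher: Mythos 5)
Your high-level plan---replicate the transgression/assembly argument at the level of localization algebras via a localized $\widetilde\Gamma^{X,A,B}$ and a cylinder-type diagram---matches the paper's strategy, but you misidentify the central technical mechanism, and the resulting gap is not a suppressed detail. You propose lifting $H^A,H^B$ to families of covering isometries $V^A_t,V^B_t$ whose propagation in the interval direction decays as $t\to\infty$, and you attribute the possibility of such a lift to the suspension condition on $\cU$. Neither part is right. The paper instead re-metrizes the warped product (using the metric of \cite[Lemma 6.10]{wulff2020equivariant} on a discretization $Z\subset X\indexcross{\cU}I$) so that the finite-scale restrictions of $\widetilde\Gamma^{X,A,B}$ become honest uniformly continuous \emph{coarse} maps $X_r\indexcross{\cU}I\to X_R\times X_R$, and then writes down one fixed isometry $V$ on Hilbert spaces spanned by rational barycentric points so that the \emph{constant} family $t\mapsto V$ covers $\widetilde\Gamma^{X,A,B}$ with covering function identically zero. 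The suspension condition plays no role in that step; it only enters earlier to make the suspension homomorphism exist.

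More decisively, your proposal never confronts the role of $\FiProLoc$, which is where the actual work lives. The proof hinges on the fact that on the half $I_-$ the restriction of $V$ covers $\widetilde H^A_-|_{X_r\indexcross{\cU}I_-}$, a uniformly continuous \emph{controlled but not proper} map, and that for such a map adjoining with the covering isometry sends $\FiProLoc$ to $\FiProLoc$ but does \emph{not} send $\Loc$ to $\Loc$. This is what furnishes the $*$-homomorphism $L_-\otimes\sHigCor(X_R,B_R;E)\to\FiProLoc(X_R,A_R;D\otimes E)$ lifting $\Psi_\rL\circ((\widetilde\Gamma^{X,A,B})_*\otimes\id)$, restricting at $0\in I$ to the primary cap $\fm_\rL$ and filling in the mapping cone. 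Symmetrically, on the half $I_+$ one needs the vanishing of $\Psi_\rL\circ((\widetilde\Gamma^{X,A,B})_*\otimes\id)$ on the whole subalgebra $\Loc((X_r,A_r\cup B_r)\indexcross{\cU}(I_+,\{1\});D)$ rather than merely on the ideal $\Loc(X_r\times I_+\subset X_r\indexcross{\cU}I;D)$, which is the reason the paper passes from ideals $L_\pm^\subset$ to subalgebras $L_\pm$ in the cylinder/cone diagram. These two facts are what make the diagram commute; calling the remaining step ``a diagram chase closely parallel to the earlier ones'' hides exactly the mechanism that makes this theorem work (and, incidentally, why it requires no hypothesis on $A$ being non-empty).
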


\begin{proof}[Proof of the second part of \Cref{thm:cupcapassembly}]
The claim follows immediately from \Cref{thm:localizedcap}, because the cap product \eqref{eq:capKhomstableHigsoncorona} clearly corresponds to the cap product on the $\K$-theory of the Roe-algebras under evaluation at $1$.
\end{proof}

In order to prove \Cref{thm:localizedcap}, we first need to pass from the $\EE$-theoretic slant product to a slant product on the $\K$-theory of the localization algebras.
In the absolute case without coefficients, this was done in \cite[Section 4.3]{EngelWulffZeidler}.
We shall briefly recall the constructions, simultaneously adapting them to the relative case with coefficients.
The proofs work exactly the same way in this more general set-up.

Let $(X,A)$ and $(Y,B)$ be pairs of proper metric spaces and $\rho_X\colon \Cz(X)\to\Lin(H_X)$, $\rho_Y\colon\Cz(Y)\to\Lin(H_Y)$ be ample representations as before. 
The tensor product of $\rho_Y$, and the canonical representations of $E$ on itself and of $\Kom$ on $\ell^2$ is a non-degenerate representation of $\Cz(Y,E\otimes \Kom)$ on the Hilbert module $H_Y\otimes E\otimes\ell^2$. 
By \cite[Theorem II.7.3.9]{BlackadarOA} it extends uniquely to a strictly continuous representation 
\[\bar\rho_Y\colon\multiplier(\Cz(Y,E\otimes\Kom))\to\Lin(H_Y\otimes E\otimes\ell^2)\]
of the multiplier algebra.
Furthermore, we assume that the localization algebra $\Loc(X\times Y;D)$ has been constructed using the representation $\rho_X\otimes\rho_Y\otimes\id_D$ on $H_X\otimes H_Y\otimes D$ and $\Loc(X;D\otimes E)$ has been constructed using the representation $\tilde\rho\coloneqq\rho_X\otimes\id_{H_Y\otimes D\otimes E\otimes\ell^2}$ on $H_X\otimes H_Y\otimes D\otimes E\otimes\ell^2$.
The latter is an ideal in the sub-\textCstar-algebra 
\[\FiProLoc(X;D\otimes E)\subset\Cb( [1,\infty),\Lin(H_X\otimes H_Y\otimes D\otimes E\otimes\ell^2))\]
generated by all bounded and uniformly continuous functions $T\colon t\mapsto T_t$ such that the propagation $\Prop(T_t)$ with respect to the representation $\tilde\rho_X$ is finite for all $t\geq 1$ and tends to zero as $t\to\infty$.

Now, if $Y$ has bounded geometry, then implementing coefficients into \cite[Lemma 4.8(iii)]{EngelWulffZeidler} shows that there is a canonical $*$-homomorphism
\begin{align*}
\Psi_\rL\colon\Loc(X\times Y;D)\otimes\sHigCor(Y;E)&\to {\FiProLoc(X;D\otimes E)}/{\Loc(X;D\otimes E)}
\\T\otimes [f]&\mapsto [t\mapsto (T_t\otimes\id_{E\otimes\ell^2})\circ (\id_{H_X\otimes D}\otimes\bar\rho_Y(f))]\,.
\end{align*}
It maps $\Loc(X\times B\subset X\times Y;D)\otimes\sHigCor(Y,B;E)$ to zero, because each element of $\sHigCor(Y,B;E)$ can be represented by a function $f$ which vanishes on the $1$-neighborhood $\Pen_1(B)$ of $B$ and for each $T\in \Loc(X\times B\subset X\times Y;D)$ the $T_t$ are supported within $\Pen_1(X\times B)\times \Pen_1(X\times B)$ for $t$ large enough.
Furthermore, 
it maps $\Loc(A\times Y\subset X\times Y;D)\otimes\sHigCor(Y;E)$ to the subset of equivalence classes represented by elements of
the ideal $\FiProLoc(A\subset X;D\otimes E)\subset \FiProLoc(X;D\otimes E)$
 which we define
 as the closure of all $T$ for which there is in addition a function $\varepsilon\colon [1,\infty)\to(0,\infty)$ with $\varepsilon(t)\xrightarrow{t\to\infty}0$ such that $T_t$ is supported in the $\varepsilon(t)$-neighborhood of $A\times A$ for all $t\geq 1$. Let $\FiProLoc(X,A;D\otimes E)\coloneqq\FiProLoc(X;D\otimes E)/\FiProLoc(A\subset X;D\otimes E)$ and note that it contains $\Loc(X,A;D\otimes E)$ canonically as an ideal, because $\Loc(A\subset X;D\otimes E)=\FiProLoc(A\subset X;D\otimes E)\cap\Loc(X;D\otimes E)$.
Then the above considerations show that $\Psi_\rL$ induces a $*$-homomorphism 
\[\Psi_\rL\colon\Loc((X,A)\times (Y,B);D)\otimes\sHigCor(Y,B;E)\to {\FiProLoc(X,A;D\otimes E)}/{\Loc(X,A;D\otimes E)}\]
which we denote by the same letter.
\begin{defn}[Relative version with coefficients of {\cite[Definition 4.9]{EngelWulffZeidler}}]
\label{defn:slantproductKhomologyKstableHigsonCorona}
We define a slant product between the $\K$-theory of the localization algebra and the $\K$-theory of the stable Higson corona
as $(-1)^m$ times the composition
\begin{align*}
\K_m(\Loc((X,A)\times (Y,B);D))&\otimes \K_{1-n}(\sHigCorRed(Y,B;E))\to
\\&\to \K_{m+1-n}(\Loc((X,A)\times (Y,B);D)\otimes\sHigCorRed(Y,B;E))
\\&\xrightarrow{(\Psi_\rL)_*}\K_{m+1-n}\left({\FiProLoc(X,A;D\otimes E)}/{\Loc(X,A;D\otimes E)} \right)
\\&\xrightarrow{\partial} \K_{m+1-n}(\Loc(X,A;D\otimes E))\,,
\end{align*}
where the first arrow is the external tensor product and the last arrow is the boundary map of $\K$-theory.
\end{defn}

In the next lemma we have to assume that $Y$ has continuously bounded geometry (cf.\ \cite[Definition 4.1 (b)]{EngelWulffZeidler}), which is a notion of bounded geometry for metric spaces which is modelled after bounded geometry for complete Riemannian manifolds and implies coarsely bounded geometry.
It says the following: For every $r>0$ and $R>0$ there exists a constant $K_{r,R}>0$ such that
\begin{itemize}
\item for every $r>0$ there is a subset $\hat Y_r\subset Y$ such that $Y=\bigcup_{\hat y\in \hat Y_r}B_r(\hat y)$ and such that for all $r,R>0$ and $y\in Y$ the number $\#(\hat Y_r\cap \overline{B}_R(y))$ is bounded by $K_{r,R}$ and
\item for all $\alpha>0$ we have $\limsup_{r\to 0}K_{r,\alpha r}<\infty$.
\end{itemize}
Important to us is that this property holds for all Rips complexes $P_n(Y')$ equipped with the metric from \cite[Lemma 6.10]{wulff2020equivariant} of uniformly locally finite proper metric spaces $Y'$: 
In this case, the Rips complexes are locally finite and finite dimensional and hence it is easy to write down subsets $\hat Y_r\subset P_n(Y')$ witnessing the continuously bounded geometry, e.\,g.\ the subset of all points whose barycentric coordinates are multiples of $\frac1N$ for $N\in\N$ large enough. 

\begin{lem}[Relative version with coefficients of {\cite[Theorem 4.13]{EngelWulffZeidler}}]
Let $Y$ have continuously bounded geometry. Then the slant product from \ref{defn:slantproductKhomologyKstableHigsonCorona} and the slant product coming from $\EE$-theory are related via the uncoarsified coassembly map $\mu^*$ and the isomorphism $\Delta$ by the commutative diagram
\[\xymatrix{
\K_m(\Loc((X,A)\times (Y,B);D))\otimes \K_{1-n}(\sHigCorRed(Y,B;E))
\ar[r]^-{/}\ar[d]_{\Delta\otimes\mu^*}
&\K_{m-n}(\Loc(X,A;D\otimes E))\ar[d]^{\cong}_{\Delta}
\\\K_m((X,A)\times (Y,B);D)\otimes\K^{n}(Y,B;E)
\ar[r]^-{/}
&\K_{m-n}(X,A;D\otimes E)\,.
}\]
\end{lem}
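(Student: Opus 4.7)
The plan is to reduce the claim to naturality of $\K$-theory boundary maps applied to an appropriate morphism of short exact sequences. To this end, I would first extend the $*$-homomorphism $\Psi_\rL$ to the level of the stable Higson compactification. Using the identical formula $T\otimes f\mapsto [t\mapsto (T_t\otimes\id_{E\otimes\ell^2})\circ(\id_{H_X\otimes D}\otimes\bar\rho_Y(f))]$, I would construct
\[
\tilde\Psi_\rL\colon\Loc((X,A)\times(Y,B);D)\otimes\sHigCom(Y,B;E) \longrightarrow \FiProLoc(X,A;D\otimes E),
\]
and verify that it fits into a morphism of short exact sequences whose right-hand quotient is exactly the extension used to define $\Psi_\rL$, and whose left-hand ideal
\[
\Loc((X,A)\times(Y,B);D)\otimes \Cz(Y\setminus B)\otimes E\otimes\Kom
\]
is carried by a $*$-homomorphism $\psi_\rL$ into $\Loc(X,A;D\otimes E)$. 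Naturality of the $\K$-theory boundary map then identifies the slant product of \Cref{defn:slantproductKhomologyKstableHigsonCorona}, up to the sign already built into its definition, with the external tensor product followed by $\partial$ for the defining extension of $\sHigCor(Y,B;E)$, and then by $(\psi_\rL)_*$.

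The analogous reduction on the $\EE$-theoretic side uses that the uncoarsified coassembly $\mu^*$ is by definition the boundary map of the same Higson extension, and that the $\EE$-theoretic slant product, being a composition product, is itself natural under such boundary maps. Thus $(/)\circ(\Delta\otimes\mu^*)$ factors as the external tensor product with the class in $\K_0$ of $\Cz(Y\setminus B)\otimes E\otimes\Kom$, the boundary $\partial$ of the Higson extension, and the map induced by the canonical pointwise-multiplication $*$-homomorphism
\[
\Cz\big((X\setminus A)\times(Y\setminus B)\big)\otimes D \otimes \Cz(Y\setminus B)\otimes E\otimes\Kom \longrightarrow \Cz(X\setminus A)\otimes D\otimes E\otimes\Kom,
\]
composed with $\Delta$. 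Both sides of the diagram to be proved commutative are therefore obtained from the same exterior product by composition with $\partial$ and with a different multiplication-type $*$-homomorphism; the claim reduces to verifying that $\Delta$ intertwines $(\psi_\rL)_*$ with the $\K$-theoretic effect of the above pointwise multiplication.

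This last, boundary-free comparison is the main obstacle. Unwinding the definition of $\psi_\rL$, it multiplies a localized family $(T_t)$ of operators by $\bar\rho_Y(f)$ on the $Y$-tensor factor, for $f\in\Cz(Y\setminus B)\otimes E\otimes\Kom$. Unwinding the isomorphism $\Delta$ in terms of uniformly continuous covering families of isometries satisfying \eqref{eq:continuouscoveringfamily}, the desired compatibility becomes the statement that such a family for the projection $X\times Y\to X$ can be chosen so as to asymptotically intertwine multiplication by $\bar\rho_Y(f)$ on $X\times Y$ with pointwise evaluation of $f$ on $X$. Here the hypothesis of continuously bounded geometry on $Y$ enters in an essential way: it supplies a locally finite, combinatorially controlled partition of unity on $Y$ from which one can assemble a covering family for $X\times Y\to X$ with propagation shrinking at a rate compatible with this pointwise evaluation, paralleling the construction for the absolute case in \cite{EngelWulffZeidler}. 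Once this asymptotic intertwining has been verified, the two compositions of boundary map and multiplication-type map coincide, and the diagram commutes.
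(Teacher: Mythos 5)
Your proposal follows essentially the same route as the paper: factor both slant products through a common multiplication-type $*$-homomorphism defined at the $\Cz(Y\setminus B)\otimes E\otimes\Kom$ level — the paper's $\Upsilon_\rL$, which is given by exactly the formula you write for $\psi_\rL$ — and invoke naturality of $\K$-theory boundary maps with respect to the stable Higson extension on one side and the extension defining the slant product of \Cref{defn:slantproductKhomologyKstableHigsonCorona} on the other, deferring the final comparison to the coefficient-and-relative version of \cite[Section 4.3]{EngelWulffZeidler}.

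One technical detail to fix: your $\psi_\rL$ cannot map into $\Loc(X,A;D\otimes E)$ as a $*$-homomorphism, because the formula $T\otimes f\mapsto\big(t\mapsto(T_t\otimes\id_{E\otimes\ell^2})\circ(\id_{H_X\otimes D}\otimes\bar\rho_Y(f))\big)$ is only multiplicative up to the commutators $[T_t\otimes\id,\id\otimes\bar\rho_Y(f)]$, which tend to zero in norm but do not vanish; the paper's $\Upsilon_\rL$ therefore takes values in the quotient $\Loc(X,A;D\otimes E)/\Cz([1,\infty),\Roe(X,A;D\otimes E))$, and the identification of this quotient's $\K$-theory with $\K_*(\Loc(X,A;D\otimes E))$ (the ideal being $\K$-contractible) supplies the missing arrow. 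The same caveat applies to your proposed lift $\tilde\Psi_\rL$. This is a local repair that does not change the structure of your argument.
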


\begin{proof}Continuously bounded geometry of $Y$ implies that there is a $*$-homo\-mor\-phism
\begin{align*}
\Upsilon_\rL\colon\Loc(X\times Y;D)\otimes\Cz(Y;E\otimes\Kom)&\to \Loc(X;D\otimes E)/\Cz([1,\infty),\Roe(X;D\otimes E))
\\T\otimes [f]&\mapsto [t\mapsto (T_t\otimes\id_{E\otimes\ell^2})\circ (\id_{H_X\otimes D}\otimes\bar\rho_Y(f))]
\end{align*}
as one easily sees by implementing coefficients into the arguments leading to \cite[Display (4.10)]{EngelWulffZeidler}.
It clearly vanishes for $T\in\Loc(X\times B\subset X\times Y;B)$ and $f\in\Cz(Y\setminus B;E\otimes\Kom)$, because for every $\varepsilon>0$ the support of $T_t$ will be contained in $f^{-1}(-\varepsilon,\varepsilon)\times f^{-1}(-\varepsilon,\varepsilon)$ for $t$ large enought and hence $\|(T_t\otimes\id_{E\otimes\ell^2})\circ (\id_{H_X\otimes D}\otimes\bar\rho_Y(f))\|\leq \|T\|\cdot\varepsilon$.
It clearly also maps $\Loc(A\times Y\subset X\times Y;B)\otimes\Cz(Y;E\otimes\Kom)$ to the ideal $\Loc(A\subset X;D\otimes E)/\Cz([1,\infty),\Roe(A\subset X;D\otimes E))$ and hence we obtain an induced $*$-homomorphism
\[\Upsilon_\rL\colon\Loc((X,A)\times (Y,B);D)\otimes\Cz(Y,B;E\otimes\Kom)\to \frac{\Loc(X,A;D\otimes E)}{\Cz([1,\infty),\Roe(X,A;D\otimes E))}\,.\]
The proof can then be finished just like in \cite[Section 4.3]{EngelWulffZeidler} by showing that the two slant products agree with the composition
\begin{align*}
\K_m(\Loc((X,A)&\times (Y,B);D))\otimes \K^{n}(Y,B;E)\to
\\&\to \K_{m-n}(\Loc((X,A)\times (Y,B);D)\otimes\Cz(Y,B;E\otimes\Kom))
\\&\xrightarrow{(\Upsilon_\rL)_*}\K_{m+1-n}\left(\frac{\Loc(X,A;D\otimes E)}{\Cz([1,\infty),\Roe(X,A;D\otimes E))} \right)
\\&\cong \K_{m-n}(\Loc(X,A;D\otimes E))\,.\qedhere
\end{align*}
\end{proof}

\begin{proof}[Proof of \Cref{thm:localizedcap}]
Let $X'\subset X$ be a uniform discretization such that $A'\coloneqq A\cap X'$ and $B'\coloneqq B\cap X'$ are also uniform discretizations. We may also assume that $X'$ was chosen in a $\delta$-separated manner for some $\delta>0$, that is, the distance between distinct points of $X'$ is at least $\delta$. 
For each $r\geq 0$ we introduce the short notations $X_r\coloneqq P_r(X')$, $A_r\coloneqq P_r(A')$, $B_r\coloneqq P_r(B')$ for the associated Rips complexes at scale $r$.
As in the last few sections, we also write $I\coloneqq [-1,1]$, $I_+\coloneqq[0,1]$, $I_-\coloneqq[-1,0]$.

In \Cref{lem:coarsetopologicalsecondarycomparison} we had reformulated the coarse secondary products as special cases of the topological secondary products. 
Recall that its proof involved the proper continuous $\sigma$-map $\widetilde\Gamma^{X,A,B}=(\widetilde H^A_-,\widetilde H^B_+)\colon \cP(X')\times I\to \cP(X')\times \cP(X')$.
In order to express the topological secondary product in terms of localization algebras and covering isometries, we thus have to  to metrize every $X_r\times I$ in such a way that all the restrictions of $\widetilde\Gamma^{X,A,B}$ of the form $X_r\times I\to X_R\times X_R$ are uniformly continuous coarse maps.  
This can be done as follows. First we equip $X\times I$ with the largest metric such that all the slices $X\times\{t\}\subset X\times I$ are isometric to $X$ and the points $(x,s),(x,t)$ have distance at most $1$ if $(s,t)\in U_x$. We do not care that this metric is not proper (it induces the topology which is the disjoint union topology of the slices $X\times\{t\}$) but it obviously induces the coarse structure of $X\indexcross{\cU}I$ and its restriction to the discretization $Z\subset X\indexcross{\cU}I$ appearing in the proof of \Cref{lem:coarsetopologicalsecondarycomparison} is a proper metric. 
Now, we may furthermore assume that the discretization $Z\subset X\indexcross{\cU}I$ containing $X'\cup\{-1,0,1\}$ is also $\delta$-separated, because otherwise we can simply thin it out.
Thus, if we equip all the Rips complexes with the metrics from \cite[Lemma 6.10]{wulff2020equivariant}, then all the restrictions of $\cP(\Gamma^{X,A,B})\colon\cP(Z)\to \cP(X')\times \cP(X')$ to finite scale Rips complexes are uniformly continuous coarse maps by the fourth part of that lemma. As $\widetilde\Gamma^{X,A,B}$ is the composition of $\cP(\Gamma^{X,A,B})$ with the embedding $i\colon\cP(X')\times I\to \cP(Z)$, the pull-back metric on $\cP(X')\times I$ does the job.
We denote the subspaces $X_r\times I$ equipped with the restrictions of this metric by $X_r\indexcross{\cU}I$.
Recall in particular that $Z$ contained $X'\times\{0\}$, so the subspace $\cP(X')\times\{0\}\subset \cP(X')\times I$ is isometric to $\cP(X')$ by construction.

We do have some degree of freedom in the choice of the representations in the construction of the various localization algebras. 
We chose $\rho_r$ to be the canonical ample representation of $\Cz(X_r)$ on $H_{X_r}\coloneqq \ell^2(X_r^\Q)\otimes\ell^2$ by multiplication operators, where $X_r^\Q\subset X_r$ denotes the subset of all points whose barycentric coordinates with respect to the simplex they lie in are rational.
The big advantage of this particular representation is that $H_X$ has an obvious canonical basis of vectors which are supported at single points, which makes working with covering isometries much easier later on.
For the same reason we choose for each closed interval $J$ the canonical representation $\rho_J\colon \Ct(J)\to\Lin(\ell^2(J\cap\Q))$ by multiplication. 
In the following, $J$ is always one of $I,I_\pm$.

We now consider a fixed $r\geq 0$ and choose $R\geq 0$ large enough such that $\widetilde\Gamma^{X,A,B}$ maps $X_r\times I$ into $X_R\times X_R$.
For these particular fixed $r,R$, we use the representations indicated in the following table to construct the various localization algebras.
\begin{center}
\begin{tabular}{|l|l|l|}
\hline 
\textCstar-algebra & representation & Hilbert module \\ 
\hline 
$\Loc(X_r;D)$ & $\rho_r\otimes\id_D$ & $H_{X_r}\otimes D$ \\ 
\hline 
$\Loc(X_r;D\otimes E)$ & $\rho_r\otimes\id_{D\otimes E\otimes \ell^2}$ & $H_{X_r}\otimes D\otimes E\otimes \ell^2$ \\ 
\hline 
$\Loc(X_r\indexcross{\cU}J;D)$&$\rho_r\otimes\rho_J\otimes\id_D$& $H_{X_r}\otimes\ell^2(J\cap\Q)\otimes D$ \\
\hline 
$\Loc(X_R\times X_R;D)$ & $\rho_R\otimes\rho_R\otimes\id_D$ & $H_{X_R}\otimes H_{X_R}\otimes D$ \\ 
\hline 
${\begin{matrix}\FiProLoc(X_R;D\otimes E)\\\Loc(X_R;D\otimes E)\end{matrix}}$ & $\rho_R\otimes\id_{H_{X_r}\otimes D\otimes E\otimes \ell^2}$ & $H_{X_R}\otimes H_{X_R}\otimes D\otimes E\otimes \ell^2$ \\ 
\hline 
\end{tabular} 
\end{center}
Of course, all of the derive ideals and quotients as well as the corresponding Roe algebras are assumed to be constructed using the same representations. 
Note also that we use different representation for $\Loc(X_r;D\otimes E)$ and $\Loc(X_R;D\otimes E)$ even if $r=R$.

With these choices of representations we have \emph{canonical} inclusions of \textCstar-algebras 
\[\Loc(X_r\indexcross{\cU}I_\pm;D)\subset\Loc(X_r\times I_\pm\subset X_r\indexcross{\cU}I;D)\]
with $\Loc(X_r\indexcross{\cU}I_+;D)\cap \Loc(X_r\indexcross{\cU}I_-;D)\cong\Loc(X_r;D)$.
The quotients of these \textCstar-algebras associated to closed subspaces clearly satisfy the analogue statements. More precisely, we are interested in the \textCstar-algebras 
\begin{align*}
L\coloneqq&\Loc(X_r,A_r\cup B_r)\indexcross{\cU}(I,\partial I);D)
\\L_\pm\coloneqq&\Loc((X_r,A_r\cup B_r)\indexcross{\cU}(I_\pm,\{\pm1\});D)
\\L_0\coloneqq&\Loc(X_r,A_r\cup B_r;D)
\end{align*}
and for these the above inclusions induce $*$-monomorphisms $L_\pm\hookrightarrow L$ and $L_0\hookrightarrow L_\pm$, which we write as inclusions, and then we have $L_0=L_+\cap L_-$, too.
Furthermore, the three \textCstar-algebras
\begin{align*}
L_\pm^\subset\coloneqq&\Loc((X_r,A_r\cup B_r)\times(I_\pm,\{\pm1\})\subset X_r\indexcross{\cU}I;D)
\\L_0^\subset\coloneqq&\Loc((X_r,A_r\cup B_r)\times\{0\}\subset X_r\indexcross{\cU}I;D)
\end{align*}
can be identified canonically with ideals in $L$ which satisfy $L=L_+^\subset+L_-^\subset$ and $L_0=L_+^\subset\cap L_-^\subset$.

Given any \textCstar-algebra $C$ with subalgebras $C_0,C_1$, we define the two sided mapping cylinder
\[\Zyl(C,C_0,C_1)\coloneqq\{f\in\Ct([0,1],C)\mid f(0)\in C_0,f(1)\in C_1\}\,.\]
Furthermore we recall that the mapping cone of a $*$-epimorphism $\varphi\colon C_1\to C_2$ is
\[\Cone(\varphi)\coloneqq \{(T,f)\in C_1\oplus\Cz([0,1),C_2)\mid \varphi(T)=f(0)\}\,.\]
We can then consider the following diagram, in which all arrows marked with $\simeq$ induce isomorphisms on $\K$-theory:
\[\xymatrix{
\Cz((0,1),L_-^\subset/L_0^\subset)\ar[r]
&\Cone\left(L_-^\subset\to L_-^\subset/L_0^\subset\right)
&L_0^\subset\ar[l]_-{\simeq}
\\\Cz((0,1),L)\ar[r]\ar[u]_-{\simeq}
&\Zyl(L,L_-^\subset,L_+^\subset)\ar[u]_-{\simeq}
&\Ct([0,1], L_0^\subset)\ar[l]_-{\simeq}\ar[u]^-{\ev{0}}_-{\simeq}
\\\Cz((0,1),L)\ar[r]\ar@{=}[u]
&\Zyl(L,L_-,L_+)\ar[u]_-{\simeq}
&L_0\ar[l]_-{\simeq}\ar[u]_-{\simeq}^-{\begin{smallmatrix}\text{incl. as}\\\text{const. fu.}\end{smallmatrix}}
}\]
The left and middle vertical arrows from the second to the first row are induced by the canonical $*$-epimorphism $L\to L/L_+^\subset\cong L_-^\subset/L_0^\subset$. They induce isomorphisms on $\K$-theory, because homotopy invariance of $\K$-homology implies $\K_*(L_+^\subset;D)\cong \K_*((X,A\cup B)\times(I_+,\{1\};D)=0$ and hence the respective kernels $\Cz((0,1),L_+^\subset)$ and $\Cz((0,1],L_+^\subset)$ of the two arrows have zero $\K$-theory, too.

The other arrows are defined in the obvious way. Recall that the first two rows are actually well known, as well as the fact that the arrows from the third to the second column in these rows induce isomorphisms on $\K$-theory: 
The first row is the one inducing the connecting homomorphism $\K_{m+1}(L_-^\subset/L_0^\subset)\to \K_m(L_0^\subset)$
 and the second row induces the boundary map in the Mayer--Vietoris sequence associated to the decomposition $L=L_+^\subset+L_-^\subset$ with $L_0=L_+^\subset\cap L_-^\subset$  (cf.\ \cite[Exercise 4.10.21]{HigRoe}).

Using the fact that the inclusions $L_\pm\subset L_\pm^\subset$ and $L_0\subset L_0^\subset$ induce isomorphisms in $\K$-theory, it is easy to deduce that the arrows from the third to the second row (and hence also the arrow going left in the third row) induce isomorphisms as well. 

Now, note that the connecting homomorphism $\K_{m+1}(L_-^\subset/L_0^\subset)\to \K_m(L_0^\subset)$ induced by the first row identifies with the connecting homomorphism in $\K$-homology $\K_*(-,-;D)$ associated to the triple 
\[(X_r\times [-1,0]\,,\quad X_r\times\{-1,0\}\cup (A_r\cup B_r)\times [-1,0]\,,\quad X_r\times\{-1\}\cup (A_r\cup B_r)\times [-1,0])\,.\]
Therefore, combining it with orientation preserving homeomorphism $[-1,0]\to I$ we see that the top row and left column together induce exactly the \emph{negative} of the inverse of suspension
\[\susp\colon \K_m(X_r,A_r\cup B_r;D)\to \K_{m+1}((X_r,A_r\cup B_r)\times (I,\partial I);D)\,.\]
The sign appeared because we have flipped the roles of the endpoints of the interval.
Hence, the bottom row of the diagram is (up to sign) a reformulation of suspension and it will enable us to compare the cap products.

To finish the proof we construct the following diagram.
\[\xymatrix@R=8ex{
\Estack{&\Cz(0,1)\otimes L\\&\otimes\sHigCor(X_R,B_R;E)}\ar[r]\ar[d]^{\Estack{&\scriptstyle\id_{\Cz(0,1)}\otimes\\&\scriptstyle\Psi_\rL\circ((\widetilde\Gamma^{X,A,B})_*\otimes\id)}}
&\Estack{&\Zyl(L,L_-,L_+)\\&\otimes\sHigCor(X_R,B_R;E)}\ar[d]^{\fM}
&L_0\otimes\sHigCor(X_R,B_R;E)\ar[l]_-{\simeq}\ar[d]^{\fM_0}
\\\Estack{&\Cz(0,1)\otimes\\&\frac{\FiProLoc(X_R,A_R;D\otimes E)}{\Loc(X_R,A_R;D\otimes E)}}\ar[r]
&\Cone{\begin{pmatrix}
{\scriptstyle\FiProLoc(X_R,A_R;D\otimes E)}\\\downarrow\\\frac{\FiProLoc(X_R,A_R;D\otimes E)}{\Loc(X_R,A_R;D\otimes E)}
\end{pmatrix}}
&\Loc(X_R,A_R;D\otimes E)\ar[l]_-{\simeq}
}\]
The left vertical arrow is obtained by applying first 
\[(\widetilde\Gamma^{X,A,B})_*\colon L=\Loc(X_r,A_r\cup B_r)\indexcross{\cU}(I,\partial I);D)\to \Loc((X_R,A_R)\times(X_R,B_R);D)\]
and then 
\[\Psi_\rL\colon\Loc((X_R,A_R)\times(X_R,B_R);D)\otimes \sHigCor(X_R,B_R;E)\to \frac{\FiProLoc(X_R,A_R;D\otimes E)}{\Loc(X_R,A_R;D\otimes E)}\]
pointwise. 
The right vertical arrow $\fM_0$ is the composition of
\[\fm\colon\Loc(X_r,A_r\cup B_r;D)\otimes \sHigCor(X_r,B_r;E)\to \Loc(X_r,A_r;D\otimes E)\subset \Loc(X_R,A_R;D\otimes E)\]
with the canonical identification $\iota_{r,R}^*\colon\sHigCor(X_R,B_R;E)\cong \sHigCor(X_r,B_r;E)$ and the $*$-homomorphism
\[(\iota_{r,R})_*\colon \Loc(X_r,A_r;D\otimes E)\to \Loc(X_R,A_R;D\otimes E)\]
induced by the inclusion $\iota_{r,R}\colon X_r\to X_R$.

It remains to combine these two constructions to obtain the middle vertical arrow $\fM$, which is still a lot of work. At this point it is very helpful to choose a very specific covering isometry yielding $(\widetilde\Gamma^{X,A,B})_*$. We may assume that the discretization $Z\subset X\indexcross{\cU}I$ is even contained in $X\times (I\cap \Q)$ and then $\widetilde\Gamma^{X,A,B}=(\widetilde H^A_-,\widetilde H^B_+)$ maps $X_r^\Q\times(I\cap\Q)$ into $X_R^\Q\times X_R^\Q$.
This allows us to directly write down the isometry 
\begin{alignat*}{3}
V\colon \underbrace{\ell^2(X_r^\Q)\otimes\ell^2}_{=H_{X_r}}&\otimes\ell^2(I\cap \Q) &\to&& \underbrace{\ell^2(X_R^\Q)\otimes\ell^2}_{=H_{X_R}} &\otimes \underbrace{\ell^2(X_R^\Q)\otimes\ell^2}_{=H_{X_R}}
\\\delta_x\otimes\delta_n&\otimes\delta_t &\mapsto&& \delta_{\widetilde H^A_-(x,t)}\otimes\delta_{n}&\otimes\delta_{\widetilde H^B_+(x,t)}\otimes\delta_{\beta(t)}\,.
\end{alignat*}
Here, $\{\delta_x\}$, $\{\delta_n\}$ and $\{\delta_t\}$ denote the obvious bases of the $\ell^2$ space and $\beta\colon I\cap\Q\to\N$ denotes some fixed bijection. In fact, it is rather irrelevant that we have defined $V$ using the isometry $\delta_n\otimes\delta_t\mapsto \delta_n\otimes\delta_{\beta(t)}$ from $\ell^2\otimes\ell^2(I\cap \Q)$ to $\ell^2\otimes\ell^2$, as any other one also has the same properties that we use in the following.

Note first of all that the constant family of isometries $t\mapsto V$ clearly covers $\widetilde\Gamma^{X,A,B}$, because the associated function \eqref{eq:continuouscoveringfamily} is the zero function. Hence we can define $(\widetilde\Gamma^{X,A,B})_*$ as adjoining with the constant families  $V_D\coloneqq V\otimes\id_D$ or $V_{D\otimes E}\coloneqq V\otimes\id_{D\otimes E\otimes\ell^2}$, depending on the coefficient algebras.

The second important property is the following. Let $\bar\rho_R\colon\Cb(X_R,E\otimes\Kom)\to\Lin(H_{X_R}\otimes E\otimes\ell^2)$ and $\bar\rho_{r,J}\colon\Cb(X_r\times J, E\otimes\Kom)\to\Lin(H_{X_r}\otimes\ell^2(J\cap\Q)\otimes E\otimes\ell^2)$ be the canonical representations. Then we have
\begin{align*}
(\id_{H_{X_R}\otimes D}\otimes\bar\rho_R(f))\circ V_{D\otimes E}=V_{D\otimes E}\circ(\bar\rho_{r,I}((\widetilde H^B_+|_{X_r})^*f)\otimes \id_D)
\end{align*}
for all $f\in\sHigCom(X_R;E)$ and therefore
\begin{align}
(V_DTV_D^*&\otimes\id_{E\otimes \ell^2})\circ(\id_{H_R\otimes D}\otimes\bar\rho_R(f))= \nonumber
\\&=V_{D\otimes E}\circ(T\otimes\id_{E\otimes\ell^2})\circ(\bar\rho_{r,I}((\widetilde H^B_+|_{X_r})^*f)\otimes \id_D)\circ V_{D\otimes E}^* \label{eq:RelatingPsiwithfm}
\end{align}
holds for all $f$ as above and $T\in\Lin(H_{X_r}\otimes\ell^2(I\cap\Q)\otimes D)$.
Similarily, we have 
\begin{align*}
(V_DTV_D^*&\otimes\id_{E\otimes \ell^2})\circ(\rho_R(g)\otimes\id_{H_R\otimes D\otimes E\otimes \ell^2})=
\\&=V_{D\otimes E}\circ(T\otimes\id_{E\otimes\ell^2})\circ( (\rho_r\otimes\rho_I)((\widetilde H^A_-|_{X_r})^*g)\otimes\id_{D\otimes E\otimes \ell^2} )\circ V_{D\otimes E}^*
\end{align*}
and
\begin{align*}
(\rho_R(g)\otimes&\id_{H_R\otimes D\otimes E\otimes \ell^2})\circ(V_DTV_D^*\otimes\id_{E\otimes \ell^2})=
\\&=V_{D\otimes E}\circ( (\rho_r\otimes\rho_I)((\widetilde H^A_-|_{X_r})^*g)\otimes\id_{D\otimes E\otimes \ell^2} )\circ(T\otimes\id_{E\otimes\ell^2})\circ V_{D\otimes E}^*
\end{align*}
for all $g\in\Cz(X_R)$ and $T\in\Lin(H_{X_r}\otimes\ell^2(I\cap\Q)\otimes D)$.

On $X_r\indexcross{\cU} I_+$ the map $\widetilde H^A_-$ agrees with the projection $\pi(x,t)\coloneqq x$. Therefore, if $T\in\Roe(X_r\indexcross{\cU} I_+;D)$ then 
\begin{align*}
(V_DTV_D^*&\otimes\id_{E\otimes \ell^2})\circ(\id_{H_R\otimes D}\otimes\bar\rho_R(f))\circ(\rho_R(g)\otimes\id_{H_R\otimes D\otimes E\otimes \ell^2})=
\\&=V_{D\otimes E}\circ(T\otimes\id_{E\otimes\ell^2})\circ(\bar\rho_{r,I}(\underbrace{(\widetilde H^B_+|_{X_r})^*f)\cdot\pi^*g}_{\in\Cz(X_r\indexcross{\cU}I_+)\otimes E\otimes \Kom})\otimes \id_D)\circ V_{D\otimes E}^*
\end{align*}
is compact for all $g\in\Cz(X_R)$ due to the locally compactneess of $T$. 
If additionally $g$ is compactly supported and $T$ has finite propagation, then we can choose a function $g'\in\Cz(X_r\indexcross{\cU}I_+)$ which is constantly one on the $\Prop(T)$-neighborhood of the compact subset $\supp(\pi^*g)$ and thus $((\rho_r\otimes\rho_I)(\pi^*g)\otimes\id_D)\circ T=((\rho_r\otimes\rho_I)(\pi^*g)\otimes\id_D)\circ T\circ((\rho_r\otimes\rho_I)(g')\otimes\id_D)$. Then we also obtain that
\begin{align*}
(\rho_R(g)\otimes&\id_{H_R\otimes D\otimes E\otimes \ell^2})\circ(V_DTV_D^*\otimes\id_{E\otimes \ell^2})\circ(\id_{H_R\otimes D}\otimes\bar\rho_R(f))=
\\&=V_{D\otimes E}\circ(\rho_r\otimes\rho_I)(\pi^*g)\otimes\id_{D\otimes E\otimes \ell^2} )\circ
\\&\phantom{V_{D\otimes E}\circ} \circ(T\otimes\id_{E\otimes\ell^2})\circ(\bar\rho_{r,I}(\underbrace{(\widetilde H^B_+|_{X_r})^*f)\cdot g'}_{\in\Cz(X_r\indexcross{\cU}I_+)\otimes E\otimes \Kom})\otimes \id_D)\circ V_{D\otimes E}^*
\end{align*}
is compact. 
What we have just shown is that $(V_DTV_D^*\otimes\id_{E\otimes \ell^2})\circ(\id_{H_R\otimes D}\otimes\bar\rho_R(f))$ is locally compact for all $T\in \Roe(X_r\indexcross{\cU} I_+;D)$ and $f\in\sHigCom(X_R;E)$ and therefore 
\begin{equation}\label{eq:Lplusvanishing}
\Psi_\rL((\widetilde\Gamma^{X,A,B})_*[T]\otimes [f])=0\text{ for all }[T]\in L_+\text{ and }[f]\in\sHigCor(X_R;E)\,.
\end{equation}

Note that the above computations went through so easily, because $T$ was an element of $\Roe(X_r\indexcross{\cU} I_+;D)$ and not just of $\Roe(X_r\indexcross{\cU} I_+\subset X_r\indexcross{\cU} I;D)$. This is the reason why we have passed from the ideal $L_+^\subset$ to the subalgebra $L_+$ in the first big diagram of this proof. Similar reasoning in the next paragraph also explains why we have passed from $L_-^\subset$ to $L_-$.

In order to see what happens with $L_-$, we first make the following observation. Recall that if a uniformly continuous family of isometries $V$ covers a uniformly continuous coarse map $\alpha\colon (X,A)\to (Y,B)$ in the sense of \eqref{eq:continuouscoveringfamily}, then adjoining with $V\otimes D$ maps $\Loc(X,A;D)$ into $\Loc(Y,B;D)$ and $\FiProLoc(X,A;D)$ into $\FiProLoc(Y,B;D)$. If however, $\alpha$ is only a uniformly continuous controlled map (i.\,e.\ it lacks properness), then we can perform the same definitions and the same proofs will still show that adjoining with $V\otimes D$ still maps $\FiProLoc(X,A;D)$ into $\FiProLoc(Y,B;D)$, but it does not map  $\Loc(X,A;D)$ into $\Loc(Y,B;D)$ anymore.

We exploit this fact in our case as follows. Note that our constant family of isometries $V\colon H_{X_r}\otimes\ell^2(I\cap\Q)\to H_{X_R}\otimes H_{X_R}$ not only covers the uniformly continuous coarse map $\widetilde\Gamma^{X,A,B}$ with respect to the representations $\rho_r\otimes\rho_I$ and $\rho_R\otimes\rho_R$, but its restriction to $H_{X_r}\otimes\ell^2(I_-\cap\Q)$ also covers the uniformly continuous controlled map $\widetilde H^A_-|_{X_r\indexcross{\cU} I_-}$ with respect to the representations $\rho_r\otimes\rho_{I_-}$ and $\rho_R\otimes\id_{H_{X_R}}$. Thus, adjoining with $V\otimes\id_{D\otimes E\otimes \ell^2}$ yields a $*$-homomorphism
\[(\widetilde H^A_-|_{X_r\indexcross{\cU} I_-})_*\colon \Loc((X_r,A_r)\indexcross{\cU}(I_-,\{-1\});D\otimes E)\to \FiProLoc(X_R,A_R;D\otimes E)\,.\]
Composing it with the $*$-homomorphism 
\[\fm_\rL\colon L_-\otimes \sHigCor((X_r,B_r)\indexcross{\cU}I_-;E)\to \Loc((X_r,A_r)\indexcross{\cU}(I_-,\{-1\});D\otimes E)\,,\]
which is associated to the space $X_r\indexcross{\cU}I_-$ and its subspaces $X_r\times\{-1\}\cup A_r\times I_-$ and $B_r\times I_-$, as well as with the $*$-homomorphism $\pi^*\colon\sHigCor(X_R,B_R;E)\to \sHigCor((X_r,B_r)\indexcross{\cU}I_-;E)$ induced by the projection $\pi(x,t)=x$, which happens to agree with $\widetilde H^B_+$ on $X_r\indexcross{\cU} I_-$, we obtain a $*$-homomorphism
\[\fM_-\colon (\widetilde H^A_-|_{X_r\indexcross{\cU} I_-})_*\circ(\fm_\rL\otimes\pi^*)\colon L_-\otimes \sHigCor(X_R,B_R;E)\to \FiProLoc(X_R,A_R;D\otimes E)\,.\]

Note that the restriction of $V$ to $H_{X_r}\cong H_{X_r}\otimes\ell^2(\{0\})$ also covers the inclusion $\iota_{r,R}\colon X_r\to X_R$ with respect to the representations $\rho_r$ and $\rho_R\otimes\id_{H_{X_R}}$, and if define 
$(\iota_{r,R})_*$ in the definition of $\fM_0$ as adjoining with 
$V\otimes\id_{D\otimes E\otimes \ell^2}$, then $\fM_-$ clearly extends $\fM_0$. 
Furthermore, \eqref{eq:RelatingPsiwithfm} immediately implies that $\fM_-$ lifts the restriction of 
\[\Psi_\rL\circ((\widetilde\Gamma^{X,A,B})_*\otimes\id)\colon L\otimes \sHigCor(X_R,B_R;E)\to\frac{\FiProLoc(X_R,A_R;D\otimes E)}{\Loc(X_R,A_R;D\otimes E)}\]
to $L_-\otimes \sHigCor(X_R,B_R;E)$.
Combining these two properties and taking \eqref{eq:Lplusvanishing} into account, we see that a $*$-homomorphisms $\fM$ making the second diagram commute is obtained by applying $\Psi_\rL\circ((\widetilde\Gamma^{X,A,B})_*\otimes\id)$ pointwise over $(0,1]$ and $\fM_-$ over $\{0\}$.

Now, what the two diagrams together give us in $\K$-theory is the commutative rectangle
\[\xymatrix@C=10ex{
\Estack{&\K_{m+1}((X_r,A_r\cup B_r)\times (I,\partial I);D)\\&\otimes\K_{-n}(\sHigCor(X_R,B_R;E))}
\ar[r]^-{-(\susp)^{-1}}_-{\cong}\ar[d]^{(\widetilde\Gamma^{X,A,B})_*\otimes\id}
&\Estack{&\K_m(X_r,A_r\cup B_r;D)\\&\otimes\K_{-n}(\sHigCor(X_R,B_R;E))}
\ar[dd]_{(\iota_{r,R})_*\circ\cap\circ(\id\otimes\iota_{r,R}^*)}^{=\cap\circ((\iota_{r,R})_*\otimes\id)}
\\\Estack{&\K_{m+1}(X_R,A_R)\times (X_R,B_R);D)\\&\otimes\K_{-n}(\sHigCor(X_R,B_R;E))}
\ar[d]^{(\Psi_\rL)_*\circ\times}
&\\\K_{m+1-n}\left(\frac{\FiProLoc(X_R,A_R;D\otimes E)}{\Loc(X_R,A_R;D\otimes E)}\right)
\ar[r]^{\partial}
&\K_{m-n}(X_R,A_R;D\otimes E)
}\]
which is obviously natural under enlarging $R$ and (if $R$ is large enough) also natural under enlarging $r$.
Recall that the two arrows in the bottom left corner together are nothing but $(-1)^{m+1}$ times the slant product.

Now if we use the canonical identification $\sHigCor(X,B;E)\cong \sHigCor(X_R,B_R;E)$, then we can first let $R$ go to infinity and subsequently also let $r$ go to infinity and the diagram becomes the following.
\[\xymatrix@C=10ex{
\Estack{&\KX_{m+1}((X,A\cup B)\indexcross{\cU} (I,\partial I);D)\\&\otimes\K_{-n}(\sHigCor(X,B;E))}
\ar[r]^-{-(\susp)^{-1}}_-{\cong}
\ar[d]^{(\Gamma^{X,A,B})_*\otimes\id}
&\Estack{&\K_m(X_r,A_r\cup B_r;D)\\&\otimes\K_{-n}(\sHigCor(X,B;E))}
\ar[d]^{\cap}
\\\Estack{&\KX_{m+1}(X_R,A_R)\times (X_R,B_R);D)\\&\otimes\K_{-n}(\sHigCor(X_R,B_R;E))}
\ar[r]^-{(-1)^{m+1}\cdot /}
&\KX_{m-n}(X_R,A_R;D\otimes E)
}\]
The composition of the left vertical and the two horizontal arrows is exactly the secondary cap product, because even the sign $(-1)^m=-(-1)^{m+1}$ is the one we need.
\end{proof}

\bibliographystyle{alpha}

\begin{thebibliography}{{Phi}91}

\bibitem[BE16]{BunkeEngel_homotopy}
Ulrich Bunke and Alexander Engel.
\newblock Homotopy theory with bornological coarse spaces.
\newblock arXiv:1607.03657, 2016.

\bibitem[BG96]{BridsonGilman_FormalLanguageTheory}
Martin~R. Bridson and Robert~H. Gilman.
\newblock Formal language theory and the geometry of {$3$}-manifolds.
\newblock {\em Comment. Math. Helv.}, 71(4):525--555, 1996.

\bibitem[Bla06]{BlackadarOA}
B.~Blackadar.
\newblock {\em Operator algebras}, volume 122 of {\em Encyclopaedia of
  Mathematical Sciences}.
\newblock Springer-Verlag, Berlin, 2006.
\newblock Theory of $C^*$-algebras and von Neumann algebras, Operator Algebras
  and Non-commutative Geometry, III.

\bibitem[DG17]{DeeleyGoffeng_RealizingI}
Robin~J. Deeley and Magnus Goffeng.
\newblock Realizing the analytic surgery group of {H}igson and {R}oe
  geometrically, part {I}: the geometric model.
\newblock {\em J. Homotopy Relat. Struct.}, 12(1):109--142, 2017.

\bibitem[Dol95]{DoldTopology}
Albrecht Dold.
\newblock {\em Lectures on algebraic topology}.
\newblock Classics in Mathematics. Springer-Verlag, Berlin, 1995.
\newblock Reprint of the 1972 edition.

\bibitem[EM06]{EmeMeyDualizing}
Heath Emerson and Ralf Meyer.
\newblock Dualizing the coarse assembly map.
\newblock {\em J. Inst. Math. Jussieu}, 5(2):161--186, 2006.

\bibitem[ES52]{EilenbergSteenrod}
Samuel Eilenberg and Norman Steenrod.
\newblock {\em Foundations of algebraic topology}.
\newblock Princeton University Press, Princeton, New Jersey, 1952.

\bibitem[EW17]{EngelWulff}
Alexander Engel and Christopher Wulff.
\newblock {Coronas for properly combable spaces}.
\newblock arXiv:1711.06836, 2017.

\bibitem[EWZ19]{EngelWulffZeidler}
Alexander Engel, Christopher Wulff, and Rudolf Zeidler.
\newblock Slant products on the {H}igson-{R}oe exact sequence.
\newblock To appear in Ann. Inst. Fourier, arXiv:1909.03777, 2019.

\bibitem[HR00]{HigRoe}
Nigel Higson and John Roe.
\newblock {\em Analytic {$K$}-homology}.
\newblock Oxford Mathematical Monographs. Oxford University Press, Oxford,
  2000.
\newblock Oxford Science Publications.

\bibitem[{Phi}89]{PhiRep}
N.~Christopher {Phillips}.
\newblock {Representable K-theory for $\sigma$-C${}\sp*$-algebras.}
\newblock {\em {$K$-Theory}}, 3(5):441--478, 1989.

\bibitem[{Phi}91]{PhiFre}
N.~Christopher {Phillips}.
\newblock {$K$-theory for Fr\'echet algebras.}
\newblock {\em {Int. J. Math.}}, 2(1):77--129, 1991.

\bibitem[QR10]{QiaoRoe}
Yu~Qiao and John Roe.
\newblock On the localization algebra of {G}uoliang {Y}u.
\newblock {\em Forum Math.}, 22(4):657--665, 2010.

\bibitem[Roe93]{RoeCoarseCohomIndexTheory}
John Roe.
\newblock Coarse cohomology and index theory on complete {R}iemannian
  manifolds.
\newblock {\em Mem. Amer. Math. Soc.}, 104(497):x+90, 1993.

\bibitem[Roe95]{RoeFoliations}
John Roe.
\newblock From foliations to coarse geometry and back.
\newblock In {\em Analysis and geometry in foliated manifolds ({S}antiago de
  {C}ompostela, 1994)}, pages 195--205. World Sci. Publ., River Edge, NJ, 1995.

\bibitem[Sie12]{Siegel_Thesis}
Paul Siegel.
\newblock {\em Homological calculations with the analytic structure group}.
\newblock ProQuest LLC, Ann Arbor, MI, 2012.
\newblock Thesis (Ph.D.)--The Pennsylvania State University.

\bibitem[Wul16a]{WulffCoassemblyRinghomo}
Christopher Wulff.
\newblock Coarse co-assembly as a ring homomorphism.
\newblock {\em J. Noncommut. Geom.}, 10(2):471--514, 2016.

\bibitem[Wul16b]{WulffFoliations}
Christopher Wulff.
\newblock Ring and module structures on {$K$}-theory of leaf spaces and their
  application to longitudinal index theory.
\newblock {\em J. Topol.}, 9(4):1074--1108, 2016.

\bibitem[Wul19]{WulffTwisted}
Christopher Wulff.
\newblock Coarse indices of twisted operators.
\newblock {\em J. Topol. Anal.}, 11(4):823--873, 2019.

\bibitem[Wul20]{wulff2020equivariant}
Christopher Wulff.
\newblock Equivariant coarse (co-)homology theories.
\newblock arXiv:2006.02053, to be updated to version 2 shortly, 2020.

\bibitem[WY20]{WillettYuHigherIndexTheory}
Rufus Willett and Guoliang Yu.
\newblock {\em Higher Index Theory}.
\newblock Cambridge Studies in Advanced Mathematics. Cambridge University
  Press, 2020.

\bibitem[XY14]{XieYu_PSCrhoLoc}
Zhizhang Xie and Guoliang Yu.
\newblock Positive scalar curvature, higher rho invariants and localization
  algebras.
\newblock {\em Adv. Math.}, 262:823--866, 2014.

\bibitem[Yu95]{YuCyclicCohomology}
Guoliang Yu.
\newblock Cyclic cohomology and higher indices for noncompact complete
  manifolds.
\newblock {\em J. Funct. Anal.}, 133(2):442--473, 1995.

\bibitem[Yu97]{YuLocalization}
Guoliang Yu.
\newblock Localization algebras and the coarse {B}aum-{C}onnes conjecture.
\newblock {\em $K$-Theory}, 11(4):307--318, 1997.

\bibitem[Zei16]{ZeidlerPSCProductSecondary}
Rudolf Zeidler.
\newblock Positive scalar curvature and product formulas for secondary index
  invariants.
\newblock {\em J. Topol.}, 9(3):687--724, 2016.

\bibitem[Zen17]{Zenobi_SurgerytoAnalysis}
Vito~Felice Zenobi.
\newblock Mapping the surgery exact sequence for topological manifolds to
  analysis.
\newblock {\em J. Topol. Anal.}, 9(2):329--361, 2017.

\end{thebibliography}

\ \\
\textsc{
Mathematisches Institut, 
Georg--August--Universit\"at G\"ottingen,
Bunsenstr. 3-5, 
D-37073 G\"ottingen, 
Germany}

\noindent
\textit{E-mail address:} \url{christopher.wulff@mathematik.uni-goettingen.de}

\end{document}